\numberwithin{equation}{section}
\newtheorem{theorem}{Theorem}[section]
\newtheorem{lemma}[theorem]{Lemma}
\newtheorem{proposition}[theorem]{Proposition}
\newtheorem{corollary}[theorem]{Corollary}
\newtheorem{problem}[theorem]{Problem}
\theoremstyle{definition}
\newtheorem{definition}[theorem]{Definition}
\newtheorem{remark}[theorem]{Remark}
\newcommand{\be}{\begin{equation}}
\newcommand{\ee}{\end{equation}}
\newcommand{\bes}{\begin{equation*}}
\newcommand{\ees}{\end{equation*}}
\newcommand{\cC}{\mathcal{C}}
\newcommand{\cD}{\mathcal{D}}
\newcommand{\cE}{\mathcal{E}}
\newcommand{\cH}{H}
\newcommand{\cB}{\mathcal{B}}
\newcommand{\cS}{\mathcal{S}}
\newcommand{\cT}{\mathcal{T}}
\newcommand{\cW}{\mathcal{W}}
\newcommand{\AND}{\text{ and }}
\newcommand{\FOR}{\text{ for }}
\newcommand{\FORAL}{\text{ for all }}
\newcommand{\bB}{\mathbb{B}}
\newcommand{\bC}{\mathbb{C}}
\newcommand{\bD}{\mathbb{D}}
\newcommand{\bM}{\mathbb{M}}
\newcommand{\bN}{\mathbb{N}}
\newcommand{\bR}{\mathbb{R}}
\newcommand{\bS}{\mathbb{S}}
\newcommand{\bZ}{\mathbb{Z}}
\newcommand{\bMd}{\bM^d}
\newcommand{\bMsad}{\bM^d_{sa}}
\newcommand{\conv}{\operatorname{conv}}
\newcommand{\Int}{\operatorname{int}}
\newcommand{\spn}{\operatorname{span}}
\newcommand{\UCP}{\operatorname{UCP}}
\newcommand{\Wmin}[1]{\cW^{\text{min}}_{#1}}
\newcommand{\Wmax}[1]{\cW^{\text{max}}_{#1}}
\newcommand{\ol}{\overline}
\begin{document}

\title{Minimal and maximal matrix convex sets}

\author[Passer]{Benjamin Passer}
\thanks{The work of B. Passer is partially supported by a Zuckerman Fellowship at the Technion.}
\address{Faculty of Mathematics\\
Technion - Israel Institute of Technology\\
Haifa\; 3200003\\
Israel}
\email{benjaminpas@technion.ac.il}

\author[Shalit]{Orr Moshe Shalit}
\thanks{The work of O.M. Shalit is partially supported by ISF Grants no. 474/12 and 195/16.}
\address{Faculty of Mathematics\\
Technion - Israel Institute of Technology\\
Haifa\; 3200003\\
Israel}
\email{oshalit@technion.ac.il}

\author[Solel]{Baruch Solel}
\address{Faculty of Mathematics\\
Technion - Israel Institute of Technology\\
Haifa\; 3200003\\
Israel}
\email{mabaruch@technion.ac.il}

\subjclass[2010]{47A20, 47A13, 46L07, 47L25}
\keywords{matrix convex set; dilation; abstract operator system;
matrix range}
\begin{abstract}
To every convex body $K \subseteq \bR^d$, one may associate a minimal matrix convex set $\Wmin{}(K)$, and a maximal matrix convex set $\Wmax{}(K)$, which have $K$ as their ground level.
The main question treated in this paper is: under what conditions on a given pair of convex bodies $K,L \subseteq \bR^d$ does $\Wmax{}(K) \subseteq \Wmin{}(L)$ hold?
For a convex body $K$, we aim to find the optimal constant $\theta(K)$ such that $\Wmax{}(K) \subseteq \theta(K) \cdot \Wmin{}(K)$; we achieve this goal for all the $\ell^p$ unit balls, as well as for other sets.
For example, if $\ol{\bB}_{p,d}$ is the closed unit ball in $\bR^d$ with the $\ell^p$ norm, then
\[
\theta(\ol{\bB}_{p,d}) = d^{1-|1/p - 1/2|}.
\]
This constant is sharp, and it is new for all $p \neq 2$.
Moreover, for some sets $K$ we find a minimal set $L$ for which $\Wmax{}(K) \subseteq \Wmin{}(L)$.
In particular, we obtain that a convex body $K$ satisfies $\Wmax{}(K) = \Wmin{}(K)$ if and only if $K$ is a simplex.

These problems relate to dilation theory, convex geometry, operator systems, and completely positive maps.
We discuss and exploit these connections as well.
For example, our results show that every $d$-tuple of self-adjoint operators of norm less than or equal to $1$, can be dilated to a commuting family of self-adjoints, each of norm at most $\sqrt{d}$.
We also introduce new explicit constructions of these (and other) dilations.
\end{abstract}

\maketitle

\section{Introduction}

\subsection{Overview}
This paper treats containment problems for matrix convex sets.
A {\em matrix convex set} in $d$-variables is a set $\cS = \cup_n \cS_n$, where every $\cS_n$ consists of $d$-tuples of $n \times n$ matrices, that is closed under direct sums, unitary conjugation, and the application of completely positive maps.
Matrix convex sets are closely connected to operator systems and have been investigated for several decades.
Recently, they appeared in connection with the interpolation problem for UCP maps (see, e.g., \cite{DDSS,HKM13,Zalar}), and also in the setting of relaxation of spectrahedral containment problems \cite{HKM13,HKMS15}.

Given a closed convex set $K \subseteq \bR^d$, one can define several matrix convex sets $\cS$ such that $\cS_1 = K$.
One may ask, to what extent does the ``ground level" $\cS_1$ determine the structure and the size of $\cS$?
For example: given two matrix convex sets $\cS = \cup_n \cS_n$ and $\cT = \cup_n \cT_n$, what does containment at the first level $\cS_1 \subseteq \cT_1$ imply about the relationship between $\cS$ and $\cT$?
Of course, there is (usually) no reason that $\cS \subseteq \cT$ would follow as a consequence of $\cS_1 \subseteq \cT_1$, but in many cases --- given some conditions on $\cS_1$ --- one can find a constant $C$ such that
\[
\cS_1 \subseteq \cT_1 \Longrightarrow \cS \subseteq C \cdot \cT .
\]
One way to treat the problem follows from the observation that given a closed convex set $K \subseteq \bR^d$, there exist a minimal matrix convex set $\Wmin{}(K)$, and a maximal matrix convex set $\Wmax{}(K)$, which have $K$ as their ground level: $K = \Wmin{1}(K) = \Wmax{1}(K)$.
To solve the above problem, one can let $K = \cS_1$, and concentrate on the problem of finding the best constant $C$ for which
\[
\Wmax{}(K) \subseteq C \cdot \Wmin{}(K).
\]
This problem was treated in \cite{DDSS,FNT,HKMS15}.
For example, if $K$ enjoys some symmetry properties, then it was shown that $C = d$ works.
If $K$ is the Euclidean ball $\ol{\bB}_{2,d} \subseteq \bR^d$, then $C = d$ is the optimal constant.

In this paper we find the optimal constant for a large class of sets.
For example, it was known that for the cube $K = [-1,1]^d$ (which is just the unit ball of the $\ell^\infty$ norm), the constant $C = d$ works, but optimality was not known.
We find that the best constant for the cube is actually $C = \sqrt{d}$, as part of a more general technique which can assign unequal norms to the dilations. Moreover, we find sharp constants for the $\ell^p$ balls, $p \in [1,\infty]$.

We also treat the closely related problem of finding conditions on two convex bodies $K,L$ such that $\Wmax{}(K) \subseteq \Wmin{}(L)$.
In particular, we show that $\Wmin{}(K) = \Wmax{}(K)$ if and only if $K$ is a simplex (somewhat improving a result from \cite{FNT}, where this was essentially obtained under the assumption that $K$ is a polytope).

In order to describe our main results more clearly, we now turn to setting the notation and reviewing some preliminaries.

\subsection{Notation and preliminaries}\label{sec:notation}

\subsubsection{General background and notation}\label{subsec:general}

Let $d \in \bN$, and let $M_n^d$ and $(M_{n})_{sa}^d$ denote $d$-tuples of matrices or self-adjoint matrices, respectively.
We consider the disjoint unions $\bMd = \cup_n M_n^d$ and $\bMsad = \cup_n(M_n^d)_{sa}$.
The algebra of bounded operators on a Hilbert space $H$ is denoted by $\cB(H)$, $\cB(H)^d$ denotes $d$-tuples of operators, and $\cB(H)_{sa}^d$ denotes $d$-tuples of self-adjoint operators.

We will use basic results on C*-algebras and operator systems freely (see \cite{DavBook} and \cite{PauBook}, respectively), but let us recall a few definitions.
If $A$ and $B$ are C*-algebras, a linear map $\phi : A \to B$ is said to be {\em positive} if $\phi(a) \geq 0$ whenever $a\geq 0$.
If $A$ and $B$ have units, then $\phi$ is said to be {\em unital} if $\phi(1) = 1$.
The $n \times n$ matrix algebra over $A$ is denoted $M_n(A)$, and is also a C*-algebra.
A linear map $\phi : A \to B$ can be promoted to a map $\phi_n : M_n(A) \to M_n(B)$ by acting componentwise, and a map $\phi$ is said to be {\em completely positive} if $\phi_n$ is positive for all $n$.
A unital and completely positive map will be called, briefly, a {\em UCP map}, and the set of all UCP maps between $A$ and $B$ will be denoted $\UCP(A,B)$.

By Stinespring's theorem \cite{Sti55}, for every UCP map $\phi : A \to \cB(H)$, there exists a Hilbert space $K$, an isometry $v : H \to K$, and a unital $*$-representation $\pi : A \to \cB(K)$, such that
\[
\phi(a) = v^* \pi(a) v \,\, , \,\, a \in A.
\]
Moreover, one can arrange that the closed subspace spanned by $\pi(A)vH$ is equal to $K$; under this condition $(K,v, \pi)$ is determined uniquely, and is referred to as the {\em (minimal) Stinespring dilation} of $\phi$.

UCP maps between matrix algebras $M_n = M_n(\bC)$ can be characterized using the unital case of Choi's theorem (see \cite{Choi75}): a map
$\phi : M_n \to M_k$ is UCP if and only if it has the form
\[
\phi(a) = \sum_{j=1}^N t_i a t_i^*\, \, , \,\, a \in M_n,
\]
for some $N \leq kn$ and $t_1, \ldots, t_N \in M_{k,n}$ with $t_1t_1^* + \ldots + t_Nt_N^* = I_{M_k}$. Choi's theorem is a special case of Stinespring dilation.

An {\em operator system} is a vector subspace $S$ of a unital C*-algebra $A$ such that $1_A \in S$ and $S = S^*$. Given a tuple $M = (M_1, \ldots, M_d)$ of elements in the same unital C*-algebra, we let $S(M)$ or $S_M$ denote the operator system generated by $M_1, \ldots, M_d$, and we write $C^*(S_M)$ or $C^*(M)$ for the {\em unital} C*-algebra generated by $M_1, \ldots, M_d$. Note that since an operator system $S$ is defined in reference to a larger unital $C^*$-algebra, we may discuss positivity of elements in $S$; in fact, $S$ is spanned by its positive elements. Since the same reasoning applies to the operator systems $M_n(S) \subseteq M_n(A)$, the definitions of positive, completely positive, and UCP maps carry over to the discussion of linear maps between operator systems. Arveson's extension theorem \cite[Theorem 1.2.3]{Arv69} states that if $S$ is an operator system contained in a unital C*-algebra $A$, then every UCP map of $S$ into $\cB(H)$ extends to a UCP map from $A$ into $\cB(H)$.

\subsubsection{Matrix convex sets}\label{subsec:matrixconvex}
We now give some background on matrix convex sets; see \cite[Sections 2,3]{DDSS} for some more details.

A {\em free set $($in $d$ free dimensions$)$} $\cS$ is a disjoint union $\cS = \cup_n \cS_n \subseteq \bMd$, where $\cS_n \subseteq M^d_n$.
In this paper, the focus will be on free sets in $\bMsad$.
Containment is defined in the obvious way: we say that $\cS \subseteq \cT$ if $\cS_n \subseteq \cT_n$ for all $n$.
A free set $\cS$ is said to be {\em open/closed/convex} if $\cS_n$ is open/closed/convex for all $n$.
It is said to be {\em bounded} if there is some $C$ such that for all $n$ and all $A \in \cS_n$, it holds that $\|A_i\|\leq C$ for all $i$.

An {\em nc set} is a free set that is closed under direct sums and under simultaneous unitary conjugation.

An nc set is said to be {\em matrix convex} if it is closed under the application of UCP maps, meaning that whenever
$X$ is in $\cS_n$ and $\phi \in \UCP(M_n, M_k)$, the tuple $\phi(X) := (\phi(X_1), \ldots, \phi(X_d))$ is in $\cS_k$.
The main examples of matrix convex sets are given by {\em free spectrahedra} and {\em matrix ranges}.

A {\em monic linear pencil} is a free function of the form
\[
L(x) = L_A(x) = I + \sum A_j x_j,
\]
where $A \in \cB(H)^d$; in this paper we will concentrate on the case where $A \in \cB(H)_{sa}^d$.
The pencil $L$ acts on a $d$-tuple $X=(X_1,\dots,X_d)$ in $M_n^d$ by
\[
L(X) = I \otimes I_n + \sum_{j=1}^n A_j \otimes X_j .
\]
We write
\[
\cD_L = \cD_{L_A} = \cup_n \cD_L(n) = \cup_n \cD_{L_A}(n),
\]
where
\[
\cD_L(n) = \{X = (X_j) \in (M_{n})_{sa}^d:  L(X)\geq 0\} .
\]
The set $\cD_L$ is said to be a {\em free spectrahedron}.
Some authors use ``free spectrahedron'' for pencils with matrix coefficients, but we allow operator coefficients.
There is also a nonself-adjoint version, but we do not discuss it in this paper.

We shall require the homogenization of a monic pencil $L_A$, which gives a so called {\em truly linear} pencil ${}^hL_A$ in $d+1$ variables, defined by
\[
{}^h L_A(X_1, \ldots, X_d, X_{d+1}) = \sum_{j=1}^d A_j \otimes X_j + I \otimes X_{d+1},
\]
and its corresponding positivity set $\cD_{{}^h L_A} = \{X \in \bM^{d+1}_{sa} : {}^h L_A(X) \geq 0\}$ (see \cite{Zalar}).

The {\em matrix range} \cite[Section 2.4]{Arv72} of a tuple $A$ in $\cB(H)^d$ is defined to be the set
\[
\cW(A) = \cup_n \cW_n(A),
\]
where
\[
\cW_n(A) = \{(\phi(A_1), \ldots, \phi(A_d)) : \phi \in \UCP(C^*(S_A), M_n)\}.
\]
By \cite[Proposition 3.5]{DDSS}, a set $\cS = \cup_n \cS_n \subseteq \bMd$ (respectively, in $\bMsad$) is a closed and bounded matrix convex set if and only if $\cS = \cW(A)$ for some $A \in \cB(H)^d$ (respectively, $A \in \cB(H)_{sa}^d$).

\subsubsection{Minimal and maximal matrix convex sets}\label{subsec:minmaxmatrixconvex}

Given a closed convex set $K \subseteq \bR^d$ (or $K \subseteq \bC^d$), one may ask whether it is equal to $\cS_1$ for some matrix convex set $\cS$.
The minimal matrix convex set $\Wmin{}(K)$ and the maximal matrix convex set $\Wmax{}(K)$ for which
\[
\Wmin{1}(K) = \Wmax{1}(K) = K
\]
were described in several places; we follow the conventions of \cite[Section 4]{DDSS} (see also \cite{FNT}, \cite{HKM16} and \cite{PTT11}, noting that analogous constructions in the theory of operator spaces have appeared as far back as \cite{Pau92}).
First, if we let
\be\label{eq:Wmax_def1}
\Wmax{n}(K) = \{ X \in (M_n)_{sa}^d :  \sum_{i=1}^d \alpha_i X_i \le a I_n \text{  whenever  } \sum \alpha_i x_i \leq a \FORAL x \in K\},
\ee
then $\Wmax{}(K) = \cup \Wmax{n}(K)$ is clearly seen to be a closed matrix convex set, which satisfies all the linear inequalities that determine $K$.
Since satisfying the linear inequalities that determine $K$ is a necessary requirement for any matrix convex set that has ground level equal to $K$, we see that $\Wmax{}(K)$ is indeed maximal.
An alternative description for $\Wmax{}(K)$ is
\be\label{eq:Wmax_def2}
\Wmax{n}(K) = \{ X \in (M_n)_{sa}^d :  \cW_1(X) \subseteq K\}.
\ee

The minimal matrix convex set $\Wmin{}(K)$ (called the {\em matrix convex hull of} $K$ in \cite{HKM16}) clearly exists, as the intersection of all matrix convex sets containing $K$, but here is a more useful description.
A $d$-tuple $X \in \cB(H_1)^d$ is said to be a {\em compression} of $A \in \cB(H_2)^d$ if there is an isometry $V:H_1\to H_2$ such that $X_i = V^* A_i V$ for $1 \le i \le d$.
Conversely, $A$ is said to be a {\em dilation} of $X$ if $X$ is a compression of $A$.
We will write $X \prec A$ when $X$ is a compression of $A$.
A tuple $N = (N_1, \ldots, N_d)$ will be called a {\em normal} tuple if $N_1, \ldots, N_d$ are normal commuting operators, and in this case, the unital $C^*$-algebra generated by the $N_i$ is commutative. In other words, if $N$ is normal, $C^*(S_N)$ is of the form
\bes
C(X) = \{f: X \to \bC: f \text{ is continuous}\}
\ees
for a unique compact Hausdorff space $X$, which is the maximal ideal space of $C^*(S_N)$. The \textit{joint spectrum} of $N = (N_1, \ldots, N_d)$ is then the compact set
\bes
\sigma(N) := \{(N_1(x), \ldots, N_d(x)): x \in X\} \subseteq \bC^d,
\ees
and if $N$ acts on a finite dimensional space, $\sigma(N)$ is nothing but the finite set of all $d$-tuples of joint eigenvalues.
For a normal tuple $N$, the individual operators $N_1, \ldots, N_d$ are all self-adjoint if and only if the joint spectrum of $N$ satisfies $\sigma(N) \subseteq \bR^d$. If a normal tuple $N$ happens to consist of self-adjoint operators, we still call it a normal tuple, and avoid calling it a self-adjoint tuple.
In \cite[Proposition 4.3]{DDSS}, it was shown that
\be\label{eq:Wmin_def1}
\Wmin{n}(K) = \{ X \in (M_n)_{sa}^d : \exists N \textrm{ normal, s.t. }  X  \prec N  \text \AND \sigma(N) \subseteq K \} .
\ee
In \cite[Section 4]{FNT}, a slightly different version was given, which we write here as
\be\label{eq:Wmin_def2}
\Wmin{n}(K) = \left\{ \sum x^{(j)} \otimes P_j  \in (M_n)_{sa}^d : x^{(j)} \in K,  P_j \geq 0 \AND \sum P_j = I\right\}.
\ee
(See also \cite[Definition 3.8]{PTT11}, with {\em min} and {\em max} reversed.)
In \cite{FNT} this was shown to be the minimal matrix convex set containing $K$, so this also defines $\Wmin{}(K)$.
One can also see directly --- using Naimark's dilation theorem --- that the definitions \eqref{eq:Wmin_def1} and \eqref{eq:Wmin_def2} are equivalent.

\subsection{Main results}\label{subsec:mainresults}

At this point we can finally state our main results.
In Section \ref{sec:convex_geometry} we introduce the dilation constants
\bes
\theta(K, L) := \inf\{C > 0: \Wmax{}(K) \subseteq C \cdot \Wmin{}(L)\} ,
\ees
\bes
\theta(K) := \theta(K, K) ,
\ees
\bes
\mathring{\theta}(K) := \inf\{\theta(T(x + K)): x \in \bR^d, T \in GL_d(\bR)\} .
\ees
In particular, $\mathring{\theta}(K)$ is a shift invariant version of $\theta(K)$.
A leitmotif in this paper is that the numerical values of these constants are determined by the geometry of convex sets. To this end, we first
catalog the natural relationship between these constants and the familiar Banach-Mazur distance $\rho(K,L)$ between convex bodies $K$ and $L$. Namely,
\be\label{eq:theta_leq_rhotheta}
\theta(K) \leq \rho(K, L) \, \theta(L) ,
\ee
and a corresponding inequality holds for the shift invariant constants (Proposition \ref{prop:scaletoscale}).
When $K$ is symmetric ($K = -K$), one would expect that the constants $\theta(K)$ and $\mathring{\theta}(K)$ are equal.
We show that this is indeed the case in Proposition \ref{prop:centered}.

Our first main result is that for a compact convex set $K \subset \bR^d$, $\theta(K) = 1$ if and only if $K$ is a simplex, which holds if and only if $\Wmax{2^{d-1}}(K) = \Wmin{2^{d-1}}(K)$ (Theorem \ref{thm:simplex_unique}).
This will be used in Section \ref{sec:cones} to improve the corresponding part of \cite[Theorem 4.7]{FNT}, which is a result of Fritz, Netzer and Thom. (After a translation from the setting of matrix convex sets over cones to convex bodies, the corresponding part of \cite[Theorem 4.7]{FNT} says that for \textit{polyhedral} $K$, $\theta(K) = 1$ if and only if $K$ is a simplex, and the result does not give control over the matrix level). Just as in \cite{FNT}, bounds on the Banach-Mazur distances between simplices and convex bodies show that $\mathring{\theta}(K) \leq d+2$ for any convex body $K \subset \bR^d$, and $\theta(K) \leq d$ if $K$ is also symmetric (Corollary \ref{cor:dplustwo_symmetric}). This implies Corollary \ref{cor:dilation_sym}:
{\em if $K \subseteq \bR^d$ is a symmetric convex set, $H$ is a Hilbert space,
and $A \in \cB(H)^d_{sa}$ has $\cW_1(A) \subseteq K$, then there exists a normal dilation $A \prec N$ with $\sigma(N) \subseteq d \cdot K$}. In contrast, the scale $d$ was seen in \cite{DDSS} under conditions which did not appear to capture all symmetric sets, but were also not limited to the symmetric setting.
In Theorem \ref{thm:dilate_sym_n}, we give an alternative proof of the fact that $\theta(K) \leq d$ for symmetric $K$ using the techniques of \cite{DDSS}.

The next two sections are devoted to studying a family of convex sets of special interest.
In Section \ref{sec:ball} we completely resolve the dilation problem for the $\ell^2$-ball.
We show that if $B_1 = x +  C_1 \cdot \ol{\bB}_{2,d}$ and $B_2 = y +  C_2 \cdot \ol{\bB}_{2,d}$ are $\ell^2$-balls in $\bR^d$, then $\Wmax{}(B_1) \subseteq \Wmin{}(B_2)$ if and only if there is a $d$-simplex $\Pi$ with $B_1 \subseteq \Pi \subseteq B_2$.
This result appears in Theorem \ref{thm:everythingball}, together with explicit conditions on the centers and radii of the balls:
\[
\Wmax{}(B_1) \subseteq \Wmin{}(B_2) \Longleftrightarrow C_2 \geq \sqrt{||y - x||^2 + C_1^2 (d-1)^2} + C_1 .
\]
Two special cases of the above are the computations $\theta(\ol{\bB}_{2,d}) = d$ and $\theta(x + \ol{\bB}_{2,d}) = \infty$ for $||x|| = 1$, which were obtained in \cite{DDSS}. A key role in the analysis of the dilation problem for the ball is played by a universal $d$-tuple $F = (F_1, \ldots, F_d)$ of $2^{d-1} \times 2^{d-1}$ self-adjoint, mutually anti-commuting unitary matrices.

After solving the dilation problem for $\ell^2$-balls, we turn to treat the $\ell^p$-balls
\bes
\ol{\bB}_{p,d} := \{x \in \bR^d : \|x\|_p\leq 1 \},
\ees
as well as their positive sections
\bes
\ol{\bB}_{p,d}^+ := \ol{\bB}_{p,d} \cap [0,1]^d .
\ees
Note that $\ol{\bB}_{\infty,d} = [-1,1]^d$ and that $\ol{\bB}_{1,d}^+$ is a simplex.
By the results in \cite{DDSS}, it was known that each of these sets has dilation constant at most $d$, but except for the $\ell^2$-ball $\ol{\bB}_{2,d}$, optimality of the constant $d$ was not claimed.
In Theorems \ref{thm:BplusWminmax} and \ref{thm:balldilationgeneral}, we find that
\[
\theta(\ol{\bB}_{p,d}^+) = d^{1-1/p} \quad \,  \textrm{ and } \quad  \, \theta(\ol{\bB}_{p,d}) = d^{1-|1/p-1/2|},
\]
among several other sharp variants of these constants, such as $\theta(\ol{\bB}_{p,d}^+, \ol{\bB}_{1,d}^+) = d^{1-1/p}$. However, note that not all of these dilation constants are obtained from simplex containment. For example, while $\theta([-1, 1]^2) = \sqrt{2}$, there is no simplex $\Pi$ with $[-1,1]^2 \subseteq \Pi \subseteq [-\sqrt{2}, \sqrt{2}]^2$.

The calculation of $\theta(\ol{\bB}_{p,d})$ for all $p$ relies on the special cases $p=1,2,\infty$. While the constants $\theta(\ol{\bB}_{1,d})$ and $\theta(\ol{\bB}_{\infty, d})$ must be equal by duality, we provide explicit dilation constructions that give rise to both constants (Theorems \ref{thm:diamonddilationgeneral} and \ref{thm:cubedilationgeneral}), and these constructions also include positive parameters $a_1, \ldots, a_d$. Namely, if $D(a_1, \ldots, a_d)$ denotes the convex hull of $(\pm a_1, 0, \ldots, 0), \ldots, (0, \ldots, 0, \pm a_d)$, then
\[
\Wmax{}(\ol{\bB}_{1,d}) \subseteq \Wmin{}(D(a_1, \ldots, a_d)) \hspace{-.1 cm} \iff \hspace{-.1 cm} \Wmax{}(\ol{\bB}_{\infty, d}) \subseteq \Wmin{}\left(\prod [-a_j, a_j] \right) \hspace{-.1 cm}  \iff  \hspace{-.1 cm} \sum \cfrac{1}{a_j^2} \leq 1.
\]
In particular, {\em every $d$-tuple of self-adjoint contractions can be dilated to a commuting $d$-tuple of self-adjoints with norm at most $\sqrt{d}$}. The unit balls of the complex $\ell^p$ spaces inside $\bR^{2d}$ are also briefly treated; see Corollary \ref{cor:complex_balls} and the surrounding discussion.

\begin{remark}
The constants we compute in this paper depend on the geometry of the convex sets, and in particular, they depend on the number of variables.
On the other hand, they are independent of the rank of the operators to which one may apply them.
In contrast, the lion's share in \cite{HKMS15} is dedicated to finding constants that are rank dependent, but independent of the number of variables.
By one of the main results of \cite{HKMS15}, any $d$ self-adjoint $n \times n$ matrices can be dilated to $d$ commuting self-adjoints of norm less than or equal to $\vartheta(n)$, which is a constant independent of $d$.
It is known that $\vartheta(n)$ behaves like $\sqrt{n}$, so when $d$ is large relative to $n$, the constant $\vartheta(n)$ from \cite{HKMS15} is better than $\theta([-1,1]^d) = \sqrt{d}$, and inversely.
\end{remark}

We were very much inspired by the paper \cite{FNT} of Fritz, Netzer and Thom.
In that paper, the main objects of study were matrix convex sets living over cones, whereas we usually consider convex bodies.
In Section \ref{sec:cones} we catalog a simple device for passing from one setting to the other.
We use it to import results or send some of our results to their setting, with improvements and alternative proofs when possible. First, we consider the translation of the problem of finite dimensional realizability treated in \cite{FNT}.
Importing results shows that for a  compact convex set $K \subset \bR^d$,
$\Wmin{}(K) = \cW(A)$ for $A \in \bMsad$ if and only if $K$ is a polytope, and if $K$ is a polytope, then
$\Wmax{}(K) = \cW(A)$ for $A \in \bMsad$ if and only if $K$ is a simplex.
However, for the statement regarding $\Wmin{}(K)$, we offer an alternative proof. In Corollary \ref{cor:Wminmaxcones}, we translate our Theorem \ref{thm:simplex_unique} to show {\em for a salient convex cone with nonempty interior $C \subseteq \bR^d$, it holds that $\Wmin{}(C) = \Wmax{}(C)$ if and only if $C$ is a simplicial cone.} This is an improvement of the corresponding part of \cite[Theorem 4.7]{FNT} in that we drop the assumption that $C$ is polyhedral.

If $K,L \subseteq \bR^d$ are compact convex sets and $\Wmax{}(K) \subseteq \Wmin{}(L)$, then we say that $L$ is a {\em dilation hull} of $K$.
In Section \ref{sec:minhulls} we consider the collection of all dilation hulls of a fixed $K$, and we search for minimal elements of this collection, which we call {\em minimal dilation hulls}.
First, we are able to show that minimal dilation hulls always exist in Proposition \ref{prop:minhullsdoexist}, and that the diamond $d \cdot \ol{\bB}_{1,d}$ is a minimal dilation hull of the $\ell^2$-ball $\ol{\bB}_{2,d}$ in Proposition \ref{prop:mindil_B1B2}. Therefore, the dilation of an $\ell^2$-ball to an $\ell^1$-ball is not characterized by simplex containment, in contrast to the case when both sets are $\ell^2$-balls. On the other hand, certain simplices over $\ol{\bB}_{2,d}$ are also minimal dilation hulls (Proposition \ref{prop:minsimplexball}), so the shape of minimal dilation hulls is not unique. Finally, the simplex is a minimal dilation hull of ``simplex-pointed sets" (Theorem \ref{thm:pointednonsense}), and from this it follows that $d^{1-1/p} \cdot \ol{\bB}_{1,d}^+$ is a minimal dilation hull of $\ol{\bB}_{p,d}^+$ (Corollary \ref{cor:mindil_B1plus}).
Many natural questions remain.

\section{Additional background material}

\subsection{Connection to completely positive maps}\label{subsec:maps}

A major motivation to study matrix convex sets is their relation to unital completely positive (UCP) maps between operator systems.
By \cite[Theorem 5.1]{DDSS} (which essentially goes back to Arveson \cite[Theorem 2.4.2]{Arv72}), $\cW(B) \subseteq \cW(A)$ if and only there exists a UCP map $\phi : S_A \to S_B$ such that $\phi(A_i) = B_i$ for all $i$.
As a consequence, one obtains the following result (which has already been elaborated on in \cite{DDSS,HKMS15}).
\begin{proposition}
Let $K_1, K_2 \subseteq \bR^d$ be compact convex sets, $A \in \cB(H_1)_{sa}^d$, and $B \in \cB(H_2)_{sa}^d$.
Suppose that $\cW_1(A) \supseteq K_1$ and $\cW_1(B) \subseteq K_2$.
\begin{enumerate}
\item If $\Wmax{}(K_2) \subseteq \Wmin{}(K_1)$, then there exists a UCP map $\phi: S_A \to S_B$ such that $\phi(A_i) = B_i$ for all $i$.
\item If $\theta(K_1) < \infty$, $K_1 = \cW_1(A)$, and there exists a unital positive map with $A_i \mapsto B_i$, then for all $C \geq \theta(K_1)$, the unital map that sends $I \mapsto I$ and $A_i \mapsto \frac{1}{C}B_i$ ($i=1,\ldots,d$) is completely positive.
\item  If $\theta(K_2) < \infty$, $K_2 = \cW_1(B)$, and there exists a unital positive map with $A_i \mapsto B_i$, then for all $C \geq \theta(K_2)$, the unital map that sends $I \mapsto I$ and $A_i \mapsto \frac{1}{C}B_i$ ($i=1,\ldots,d$) is completely positive.
\end{enumerate}
\end{proposition}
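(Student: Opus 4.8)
The plan is to deduce all three parts from the dictionary between matrix range containment and UCP maps recorded just above the statement (\cite[Theorem 5.1]{DDSS}), using only formal properties of $\cW(\cdot)$, $\Wmin{}(\cdot)$, $\Wmax{}(\cdot)$. First I would isolate four bookkeeping facts. \textbf{(a) Scaling:} for $c>0$ and compact convex $K\subseteq\bR^d$ one has $\Wmin{}(cK)=c\,\Wmin{}(K)$ and $\Wmax{}(cK)=c\,\Wmax{}(K)$, and $\cW(cA)=c\,\cW(A)$ for a self-adjoint tuple $A$; each is immediate from the defining formulas \eqref{eq:Wmax_def1}, \eqref{eq:Wmin_def2} and the definition of the matrix range. \textbf{(b) $\Wmax{}$-adjunction:} for $X$ a $d$-tuple of self-adjoint matrices, $\cW_1(X)\subseteq K$ iff $\cW(X)\subseteq\Wmax{}(K)$; the forward direction holds because $\cW(X)$ satisfies exactly the linear inequalities that $\cW_1(X)$ does (a self-adjoint combination $\sum\alpha_iX_i$ is $\leq aI$ precisely when $\sum\alpha_ix_i\leq a$ on $\cW_1(X)$, and this passes through UCP maps), so it satisfies all inequalities defining $K$; the converse is evaluation at level $1$. \textbf{(c) $\Wmin{}$-adjunction:} since $\Wmin{}(K)$ is the smallest matrix convex set containing $K$ at level $1$, for matrix convex $\cS$ one has $K\subseteq\cS_1\iff\Wmin{}(K)\subseteq\cS$; applied to $\cS=\cW(A)$ this gives $K\subseteq\cW_1(A)\iff\Wmin{}(K)\subseteq\cW(A)$. \textbf{(d) Level-one positivity:} if there is a unital positive map $\psi:S_A\to S_B$ with $\psi(A_i)=B_i$ for all $i$, then $\cW_1(B)\subseteq\cW_1(A)$, because every state of $S_B$ composes with $\psi$ to a state of $S_A$ sending $A_i$ to the value it assigns $B_i$, and $\cW_1$ is exactly the set of such value tuples.

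\textbf{Part (1).} From $\cW_1(B)\subseteq K_2$ and (b), $\cW(B)\subseteq\Wmax{}(K_2)$. By hypothesis $\Wmax{}(K_2)\subseteq\Wmin{}(K_1)$, and from $K_1\subseteq\cW_1(A)$ and (c), $\Wmin{}(K_1)\subseteq\cW(A)$. Chaining the three inclusions gives $\cW(B)\subseteq\cW(A)$, so \cite[Theorem 5.1]{DDSS} produces a UCP map $\phi:S_A\to S_B$ with $\phi(A_i)=B_i$ for all $i$.

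\textbf{Parts (2) and (3).} Write $j=1$ in case (2) and $j=2$ in case (3), and fix $C\geq\theta(K_j)$, so $\Wmax{}(K_j)\subseteq C\cdot\Wmin{}(K_j)$. By (d) the positive-map hypothesis yields $\cW_1(B)\subseteq\cW_1(A)$; in case (2) this combined with $K_1=\cW_1(A)$ gives $\cW_1(B)\subseteq K_1$, so $\cW(B)\subseteq\Wmax{}(K_1)$ by (b), while in case (3) we get $\cW(B)\subseteq\Wmax{}(K_2)$ directly from (b) since $\cW_1(B)=K_2$. Either way $\cW(B)\subseteq\Wmax{}(K_j)\subseteq C\cdot\Wmin{}(K_j)$, so dividing by $C$ and using (a), $\cW(\tfrac1C B)\subseteq\Wmin{}(K_j)$. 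Finally $\Wmin{}(K_j)\subseteq\cW(A)$: in case (2) by (c) applied to $K_1=\cW_1(A)$, and in case (3) by (c) applied to $K_2=\cW_1(B)\subseteq\cW_1(A)$. Hence $\cW(\tfrac1C B)\subseteq\cW(A)$, and \cite[Theorem 5.1]{DDSS} gives a UCP (hence unital and completely positive) map $\phi:S_A\to S_B$ with $\phi(I)=I$ and $\phi(A_i)=\tfrac1C B_i$. Since the $A_i$ are self-adjoint, $S_A=\spn\{I,A_1,\dots,A_d\}$, so $\phi$ is precisely the unital map in the statement (and its mere existence shows that assignment is well defined).

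\textbf{Main obstacle.} There is no serious analytic difficulty here; everything is organizing inclusions and invoking \cite[Theorem 5.1]{DDSS} at the end. The one point that needs care is (d) — converting the hypothesis of positivity \emph{at the first level} into the matrix-range inclusion $\cW_1(B)\subseteq\cW_1(A)$ (and, relatedly, recording that the ``unital map $I\mapsto I$, $A_i\mapsto\tfrac1CB_i$'' is meaningful at all). I would also be slightly careful to invoke the correct adjunction in (b)/(c) in each of the two sub-cases, since (2) uses $K_1=\cW_1(A)$ on the $\Wmin{}$ side while (3) uses $K_2=\cW_1(B)$ on the $\Wmax{}$ side.
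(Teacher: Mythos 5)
Your proof is correct and follows essentially the same route as the paper: chain $\cW(B)\subseteq\Wmax{}(K_2)\subseteq\Wmin{}(K_1)\subseteq\cW(A)$ for part (1), and $\cW(B)\subseteq\Wmax{}(K_j)\subseteq C\cdot\Wmin{}(K_j)\subseteq\cW(C\cdot A)$ for parts (2) and (3), then invoke \cite[Theorem 5.1]{DDSS}. Your bookkeeping facts (a)--(d) are exactly the implicit steps the paper leaves to the reader, and your observation (d) about converting a unital positive map into the level-one inclusion $\cW_1(B)\subseteq\cW_1(A)$ is precisely the reduction the paper uses.
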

\begin{proof}
(1) follows from the above remarks together with the inclusions $\cW(B) \subseteq \Wmax{}(K_2) \subseteq \Wmin{}(K_1) \subseteq \cW(A)$.
For (2), note that if $\cW_1(A) = K_1$ and there is a unital positive map $A_i \mapsto B_i$, then $\cW_1(B) \subseteq \cW_1(A) = K_1$.
Therefore, for $C \geq \theta(K_1)$, there are containments $\cW(B) \subseteq \Wmax{}(K_1)  \subseteq C \cdot \Wmin{}(K_1) \subseteq \cW(C\cdot A)$, and (2) follows.
Likewise, if $\cW_1(B) = K_2$, then the existence of a unital positive map gives $K_2 \subseteq \cW_1(A)$.
Therefore, for $C \geq \theta(K_2)$, there are containments
$\cW(B) \subseteq \Wmax{}(K_2) \subseteq C \cdot  \Wmin{}(K_2) \subseteq \cW(C \cdot A)$.
\end{proof}

\subsection{Polar duality}\label{subsec:polardual}

If $\cS$ is an nc subset of $\bMsad$ then, following Effros and Winkler \cite{EW97}, we define the {\em (matrix) polar dual} of $\cS$ to be the set $\cS^\bullet = \cup_n \cS_n^\bullet$, where
\[
\cS_n^\bullet = \{X \in (M_n)^d_{sa} :  \sum_j A_j \otimes X_j \leq I \textrm{ for all } A \in \cS\}.
\]
Recall that the usual {\em (scalar)  polar dual} $C'$ of a set $C \subseteq \bR^d$ is defined to be
\[
C' = \{x \in \bR^d : \sum_j x_j y_j \leq 1 \FOR y \in C \}.
\]
By \cite[Theorem 4.7]{DDSS},
\[
\Wmin{}(C)^\bullet = \Wmax{}(C') ,
\]
and if $0 \in C$, then
\[
\Wmax{}(C)^\bullet = \Wmin{}(C') .
\]

\begin{lemma}\label{lem:duality_inclusion}
Let $0 \in K \subseteq L$ be closed convex subsets of $\bR^d$.
If $\Wmax{}(K) \subseteq \Wmin{}(L)$, then $\Wmax{}(L') \subseteq \Wmin{}(K')$.
In particular,
\[
\Wmax{}(K) \subseteq C\cdot \Wmin{}(K) \Longrightarrow \Wmax{}(K') \subseteq C \cdot \Wmin{}(K') .
\]
\end{lemma}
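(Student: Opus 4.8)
The plan is to deduce the containment of polar duals directly from the two facts recalled just before the statement: the behaviour of the matrix polar dual on $\Wmin{}$ and $\Wmax{}$ of a set containing the origin, together with the elementary order-reversing property of the matrix polar dual. First I would observe that, since $\cS \mapsto \cS^\bullet$ reverses inclusions (immediate from the defining inequality $\sum_j A_j \otimes X_j \le I$ — a smaller $\cS$ imposes fewer constraints on $X$), the hypothesis $\Wmax{}(K) \subseteq \Wmin{}(L)$ yields
\[
\Wmin{}(L)^\bullet \subseteq \Wmax{}(K)^\bullet .
\]
Now I would substitute the identities from \cite[Theorem 4.7]{DDSS}. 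For the left-hand side we have unconditionally $\Wmin{}(L)^\bullet = \Wmax{}(L')$. For the right-hand side, here is where the hypothesis $0 \in K$ is needed: it gives $\Wmax{}(K)^\bullet = \Wmin{}(K')$. Combining these two substitutions produces exactly $\Wmax{}(L') \subseteq \Wmin{}(K')$, which is the first assertion.

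For the ``in particular'' clause I would simply take $L = C \cdot K$. Since $0 \in K \subseteq C\cdot K$ whenever $C \ge 1$ (and the case $C < 1$ is vacuous, as $\Wmax{}(K)\subseteq C\cdot\Wmin{}(K)$ forces $C\ge 1$ by looking at the ground level), the first part applies with this choice of $L$, giving $\Wmax{}((C\cdot K)') \subseteq \Wmin{}(K')$. It remains only to note the scalar identity $(C\cdot K)' = \tfrac1C \cdot K'$, so that $\Wmax{}(\tfrac1C K') \subseteq \Wmin{}(K')$; multiplying both sides by $C$ (matrix convex sets scale in the obvious way, level by level) yields $\Wmax{}(K') \subseteq C\cdot\Wmin{}(K')$, as claimed.

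The only point requiring any care — and the one I would single out as the ``main obstacle,'' though it is minor — is bookkeeping around the hypothesis $0 \in K$: the polar-dual formula for $\Wmax{}$ is only asserted in the excerpt under that assumption, so one must make sure it is the $\Wmax{}(K)$ term (not $\Wmin{}(L)$) to which it gets applied, and that the containment $0\in K\subseteq L$ is genuinely used only through $0\in K$. Everything else is formal: the inclusion-reversal of $\bullet$, the scaling behaviour of $\Wmin{},\Wmax{}$, and the scalar polar identity $(C\cdot K)' = C^{-1} K'$.
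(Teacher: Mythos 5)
Your proof is correct and takes essentially the same route as the paper: apply the inclusion-reversing matrix polar dual to the hypothesis, then substitute $\Wmin{}(L)^\bullet = \Wmax{}(L')$ and (using $0\in K$) $\Wmax{}(K)^\bullet = \Wmin{}(K')$, and derive the ``in particular'' clause from $(C\cdot K)' = \tfrac1C K'$. The extra bookkeeping you spell out for the scaling step is implicit in the paper's one-line remark but does not constitute a different argument.
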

\begin{proof}
Applying the polar dual to $\Wmax{}(K) \subseteq \Wmin{}(L)$ reverses the inclusion, and we obtain
\[
\Wmax{}(L') = (\Wmin{}(L))^\bullet\subseteq (\Wmax{}(K))^\bullet = \Wmin{}(K') .
\]
The final assertion follows from $(C\cdot K)' = \frac{1}{C}\cdot K'$.
\end{proof}

\subsection{Dilations for operators on finite and on infinite dimensional spaces}\label{subsec:finitetoinfinite}

Let $K, L \subseteq \bC^d$ be compact convex sets.
Consider the following three related --- but not tautologically equivalent --- dilation problems:
\begin{enumerate}
\item For every $d$-tuple of matrices $A$ with $\cW_1(A) \subseteq K$, find a $d$-tuple of normal commuting operators $N$ on some Hilbert space, such that $A \prec N$ and $\sigma(N) \subseteq L$.
\item For every $d$-tuple of matrices $A$ with $\cW_1(A) \subseteq K$, find a $d$-tuple of normal commuting operators $N$, acting on a finite dimensional Hilbert space, such that $A \prec N$ and $\sigma(N) \subseteq L$.
\item For every $d$-tuple of operators $A$ with $\cW_1(A) \subseteq K$, find a $d$-tuple of normal commuting operators $N$ on some Hilbert space, such that $A \prec N$ and $\sigma(N) \subseteq L$.
\end{enumerate}

It turns out that these problems are equivalent; thus, whenever we prove a dilation theorem for matrices we obtain one for operators, and whenever we prove a dilation theorem with operators, we obtain, in fact, a theorem that says that we can dilate matrices to  matrices.
For the record, we state this fact here.

\begin{proposition}\label{prop:dilation_finite_infinite}
Let $K,L \subseteq \bC^d$ be compact convex sets.
Then the following conditions are equivalent:
\begin{enumerate}
\item For every $d$-tuple $A \in \cB(H)^d$ with joint numerical range $\cW_1(A) \subseteq K$, there is a normal $d$-tuple $N$ of bounded operators acting on some Hilbert space, such that $\sigma(N) \subseteq L$ and $A \prec N$.
\item For every $d$-tuple $A \in \bMd$ with joint numerical range $\cW_1(A) \subseteq K$, there is a normal $d$-tuple $N$, acting on a finite dimensional Hilbert space, such that $\sigma(N) \subseteq L$ and $A \prec  N$.
\item $\cW^{max}(K) \subseteq \cW^{min}(L)$.
\end{enumerate}
Moreover, if $A \in (M_n)^d$ has a normal dilation $A \prec N$ such that $\sigma(N) \subseteq L$, then there exists a normal dilation $A \prec \tilde{N}$ of bounded operators, acting on some Hilbert space of dimension at most $2n^3(d+1)+1$, such that $\sigma(\tilde{N}) \subseteq  \ol{\operatorname{ext}(L)}$.
\end{proposition}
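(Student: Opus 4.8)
The plan is to prove the three equivalences in a cycle, $(2)\Rightarrow(1)\Rightarrow(3)\Rightarrow(2)$, and then establish the dimension bound in the ``Moreover'' clause as a refinement of the argument for $(3)\Rightarrow(2)$. The implication $(2)\Rightarrow(1)$ is almost trivial: a finite-dimensional normal dilation is in particular a bounded-operator normal dilation. For $(1)\Rightarrow(3)$, I would argue by matrix range: if $A\in(M_n)^d_{sa}$ has $\cW_1(A)\subseteq K$, then $A\in\Wmax{n}(K)$ by \eqref{eq:Wmax_def2}, and applying hypothesis $(1)$ produces a normal $N$ with $A\prec N$ and $\sigma(N)\subseteq L$; by \eqref{eq:Wmin_def1} this forces $A\in\Wmin{n}(L)$. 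Running over all $n$ (and using that $\Wmax{}(K)$ is the closed matrix convex set generated by $K$, so it suffices to check matrix levels) gives $\Wmax{}(K)\subseteq\Wmin{}(L)$. One subtlety: hypothesis $(1)$ is phrased for operators on a Hilbert space, so it covers the matrix case a fortiori.

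The substantive implication is $(3)\Rightarrow(2)$, and this is where I expect the main work. Given $A\in(M_n)^d$ with $\cW_1(A)\subseteq K$, condition $(3)$ together with \eqref{eq:Wmin_def1}--\eqref{eq:Wmin_def2} tells us $A\in\Wmin{n}(L)$, i.e.\ $A=\sum_j x^{(j)}\otimes P_j$ with $x^{(j)}\in L$ and $(P_j)$ a POVM on $\bC^n$; equivalently, $A$ compresses to a normal tuple $N$ with $\sigma(N)\subseteq L$ \emph{acting on some Hilbert space, a priori infinite-dimensional} via Naimark. The point is to replace this by a finite-dimensional, finitely-supported model. The mechanism is a Carath\'eodory-type argument on the compact convex set of $d$-tuples of $n\times n$ matrices representable as $\int_L x\, d\mu(x)$ where the ``measure'' is operator-valued: writing things in terms of the finitely many real parameters recording $A$ (the entries of the $A_i$, which live in a real vector space of dimension $2n^2 d$ — or one uses $n^2 d$ suitably if one is careful with Hermitian structure), one extracts a finitely-supported POVM representing the same $A$. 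To get the spectrum into $\overline{\operatorname{ext}(L)}$ one first represents each atom $x^{(j)}\in L$ as a finite average of extreme points (Minkowski--Carath\'eodory in $\bR^d$), absorbs these into the POVM, and then replaces the resulting finitely-supported POVM $(Q_k)$ by a Naimark dilation on a finite-dimensional space: one can realize $Q_k = V^* E_k V$ with $E_k$ mutually orthogonal projections by a direct-sum/block construction, giving a genuine commuting normal tuple $\tilde N = \operatorname{diag}$ of the extreme points, on $\bigoplus_k \bC^n$ truncated appropriately, hence of dimension $\le n\cdot(\text{number of atoms})$.

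The dimension count $2n^3(d+1)+1$ should come out of bookkeeping the above. The homogenization trick (passing to ${}^hL$ in $d+1$ variables, as set up in the excerpt) is the natural device: working with truly linear pencils replaces affine Carath\'eodory in $\bR^d$ by conic Carath\'eodory in $\bR^{d+1}$, which is why the $(d+1)$ appears; the real parameter space for $A$ then has dimension roughly $2n^2(d+1)$, Carath\'eodory in the positive-semidefinite cone of $M_n$ contributes another factor $n$, and the ``$+1$'' is the base point of the cone. So the chain is: $A\in\Wmin{}(L)$ $\Rightarrow$ homogenize $\Rightarrow$ finite conic combination of rank-one positive matrices times extreme rays $\Rightarrow$ at most $2n^2(d+1)$ of them by Carath\'eodory in a cone of dimension $\le 2n^2(d+1)$, each contributing an $n$-dimensional block $\Rightarrow$ Naimark on dimension $\le 2n^3(d+1)$, plus possibly one more dimension. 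I would expect the main obstacle to be making the Carath\'eodory reduction rigorous in the operator-valued setting while keeping track of self-adjointness and of the exact constant — this is a place where being sloppy about real versus complex dimension counts, or about whether one needs extreme points or merely boundary points, changes the bound. A clean way to sidestep part of the difficulty is to invoke \cite[Proposition 3.5]{DDSS} and the structure of $\Wmin{}(L)$ directly, reducing to the purely finite-dimensional statement that a finitely-supported POVM-representation with atoms in $L$ can be post-composed with Minkowski--Carath\'eodory and then Naimark-dilated, and only then optimize the dimension.
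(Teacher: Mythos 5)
There is a real gap at the step you label trivial. You propose the cycle $(2)\Rightarrow(1)\Rightarrow(3)\Rightarrow(2)$ and dismiss $(2)\Rightarrow(1)$ with ``a finite-dimensional normal dilation is in particular a bounded-operator normal dilation.'' But read the quantifiers: condition $(2)$ asserts something for every $A \in \bMd$, i.e.\ every tuple of \emph{matrices}, while condition $(1)$ asserts something for every $A \in \cB(H)^d$, i.e.\ tuples of operators on an \emph{arbitrary} Hilbert space, including infinite-dimensional ones. Knowing $(2)$ hands you a dilation only when $A$ is a matrix tuple; it says nothing about $A \in \cB(H)^d$ with $\dim H = \infty$. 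So $(2)\Rightarrow(1)$ is in fact the hard implication, and your cycle has no argument for it. The paper's proof handles this as $(3)\Rightarrow(1)$: compress $A$ to $A^{(n)} = P_n A P_n$ along a net of finite-rank projections, use $(3)$ to produce UCP maps $S(Z) \to S(A^{(n)})$ sending the coordinate functions $Z_i$ of $C(L)$ to $A^{(n)}_i$, extend by Arveson, take a bounded point-weak limit of the resulting CCP maps $\phi_n : C(L) \to \cB(H)$ to get a UCP map $\phi$ with $\phi(Z_i) = A_i$, and then apply Stinespring to $\phi$. None of this is a formality, and without some version of it your proof of the proposition is incomplete.

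Your remaining steps are fine but also redundant with one another. The implication $(1)\Rightarrow(3)$ you sketch is really the same check as $(2)\Rightarrow(3)$ (both just unwind \eqref{eq:Wmax_def2} and \eqref{eq:Wmin_def1} at matrix levels), and is correct. Your $(3)\Rightarrow(2)$ via operator-valued Carath\'eodory plus Naimark is a plausible route to the dimension bound, and it is morally the content of \cite[Theorem 7.1]{DDSS}, which the paper simply invokes (for $(1)\Rightarrow(2)$ and for the ``moreover'' clause). If you want to give a self-contained proof of the $2n^3(d+1)+1$ bound you would need to make the parameter counting precise, but citing \cite[Theorem 7.1]{DDSS} is entirely acceptable and is what the paper does. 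The part of the ``moreover'' clause pushing the spectrum into $\ol{\operatorname{ext}(L)}$ the paper handles separately: the operator system generated by the coordinate functions in $C(L)$ is completely isometrically isomorphic to the one generated by the coordinate functions in $C(\ol{\operatorname{ext}(L)})$, so one can Arveson-extend and Stinespring-dilate over $C(\ol{\operatorname{ext}(L)})$ first, and only then apply the finite-dimensionality reduction. You gesture at Minkowski--Carath\'eodory for the same purpose, which can be made to work, but the main thing to fix is the $(2)\Rightarrow(1)$ gap.
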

\begin{proof}
(1) $\Rightarrow$ (2) follows from Theorem 7.1 of \cite{DDSS}, which says that if there exists any normal dilation $A \prec N$ for $A \in (M_n)^d$, then there exists a normal dilation $A \prec \tilde{N}$, where $\tilde{N}$ is a tuple of operators on a space of dimension $2n^3(d+1)+1$, and $\sigma(\tilde{N}) \subseteq \sigma(N)$.

(2) $\Rightarrow$ (3) is immediate from the characterizations \eqref{eq:Wmax_def2} and \eqref{eq:Wmin_def1}.

Now for (3) $\Rightarrow$ (1), fix $A \in \cB(H)^d$ such that $\cW_1(A) \subseteq K$, and assume $\dim H = \infty$, as this is the only case requiring consideration.
Let $C(L)$ denote the space of continuous complex-valued functions on $L$, and let $Z = (Z_1, \ldots, Z_d) \in C(L)^d$ be the normal tuple of coordinate functions on $L$.
We first prove that there exists a UCP map $\phi: S(Z) \to \cB(H)$ that maps every $Z_i$ onto $A_i$. Let $P_n$ be an increasing net of finite dimensional projections weakly converging to the identity, and put $A^{(n)} = P_n A P_n$, which we may view as an operator on either $H$ or $P_n H$.
By assumption, for all $n$ there is a normal tuple $N^{(n)}$ with $\sigma(N^{(n)}) \subseteq L$ such that $A^{(n)} \prec N^{(n)}$. By \cite[Corollary 4.4]{DDSS}, the matrix range of a normal tuple $N$ is equal to $\Wmin{}(\sigma(N))$.
Thus, $\cW(N^{(n)}) \subseteq \Wmin{}(L) = \cW(Z)$.
It follows from \cite[Theorem 5.1]{DDSS} that there exists a UCP map $S(Z) \to S(N^{(n)})$ mapping $Z_i$ to $N^{(n)}_i$, and after compressing we obtain a UCP map $\psi_n: S(Z) \to S (A^{(n)})$ that maps $Z_i$ to $A^{(n)}_i$ for all $i$.
By Arveson's extension theorem, $\psi_n$ extends to a UCP map $\psi_n : C(L) \to \cB(P_n H)$. Enlarging the Hilbert space gives a contractive and completely positive (CCP) map $\phi_n : C(L) \to \cB(H)$ which sends the constant function $1$ to $P_n$ and $Z_i$ to $A^{(n)}_i$ for all $i$.
Any bounded point-weak limit point $\phi$ of the net $\{\phi_n\}$ in $\textrm{CCP}(C(L),\cB(H))$ is a UCP map that sends $Z_i$ to $A_i$ for all $i$. Finally, we let $\pi : C(L) \to \cB(\widetilde{H})$ be the Stinespring dilation of $\phi$, so the tuple $N := \pi(Z) = (\pi(Z_1), \ldots, \pi(Z_d))$ is a normal dilation for $A$ with $\sigma(N) \subseteq \sigma(Z) = L$. This completes  the proof of (3) $\Rightarrow$ (1).

We now prove the final claim.
Given $A \in (M_n)^d$ and a normal dilation $N \in \cB(K)^d$ with $\sigma(N) \subseteq L$, we note that as above, there is a UCP map $\phi : C(L) \to \cB(K)$ that sends $Z_i$ to $N_i$, where $Z_1, \ldots, Z_d$ are the coordinate functions in $C(L)$. The operator system $S(Z)$ generated by $Z_1, \ldots, Z_d \in C(L)$ is completely isometrically isomorphic to the operator system $S(W)$ generated by the coordinate functions $W_1, \ldots, W_d \in C(\ol{\operatorname{ext}(L)})$. Thus, there is a UCP map $\psi : S(W) \to S(Z)$ mapping $W_i$ to $Z_i$. This gives rise to a UCP map  $\theta = \phi \circ \psi : S(W) \to \cB(K)$ mapping $W_i$ to $N_i$. Applying Arveson extension and then Stinespring dilation to $\theta$, we obtain a unital $*$-representation $\pi : C(\ol{\operatorname{ext}(L)}) \to \cB(\widetilde{K})$ with $N_i  = \theta(W_i)= P_{K}\pi(W_i) P_K$. Setting $M_i = \pi(W_i)$ shows that there is a normal dilation $M$ of $N$, and hence of $A$, with $\sigma(M) \subseteq \ol{\operatorname{ext}(L)}$. Finally, we apply \cite[Theorem 7.1]{DDSS} to the normal dilation $A \prec M$ to obtain a normal dilation $A \prec \tilde{N}$, where $\tilde{N}$ acts on a space of dimension at most $2n^3(d+1)+1$ and $\sigma(\tilde{N}) \subseteq \sigma(M) \subseteq \ol{\operatorname{ext}(L)}$.

\end{proof}

We will also have need of the following reformulation of \cite[Theorem 7.11]{DDSS}. Recall that a set $K \subseteq \bR^d$ is a \textit{convex body} if it is compact, convex, and has nonempty interior.

\begin{theorem} \label{general_dilation}
Let $K$ and $L$ be convex bodies in $\bR^d$.
Assume that there is  a $k$-tuple of real $d\times d$ rank one matrices $\lambda$
such that $I_d \in \conv \{\lambda^{(1)},\ldots,\lambda^{(k)}\}$ and such that $\lambda^{(m)}K\subseteq L$ for all $1\leq m  \leq k$.
Then $\Wmax{}(K) \subseteq \Wmin{}(L)$.
That is, for every $X$ such that $\cW_1(X) \subseteq K$, there is a $d$-tuple $T=(T_1,\ldots,T_d)$ of self-adjoint matrices such that
\begin{enumerate}
\item[(1)] $\{T_1,\ldots,T_d\}$ is a commuting family of operators,
\item[(2)] $\sigma(T) \subseteq L$,
\item[(3)] $X \prec T$.
\end{enumerate}
\end{theorem}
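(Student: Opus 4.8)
The plan is to reduce the operator-coefficient dilation statement to an explicit finite-dimensional construction via the equivalences already established in Proposition \ref{prop:dilation_finite_infinite}. By that proposition, it suffices to prove condition (2) there: for every $d$-tuple of matrices $A \in \bMsad$ with $\cW_1(A) \subseteq K$, there is a normal tuple $N$ on a \emph{finite-dimensional} space with $\sigma(N) \subseteq L$ and $A \prec N$. So fix such an $A$, say $A \in (M_n)_{sa}^d$.

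The heart of the argument is the following construction. Write each rank-one matrix as $\lambda^{(m)} = u^{(m)} (v^{(m)})^{\mathsf T}$ for suitable vectors $u^{(m)}, v^{(m)} \in \bR^d$, and write $I_d = \sum_{m=1}^k c_m \lambda^{(m)}$ with $c_m \geq 0$, $\sum c_m = 1$ (using the convex-combination hypothesis). The idea is to build, on the Hilbert space $\bC^n \otimes \bC^k$ (or a suitable subspace), a normal tuple whose joint spectrum is controlled by the images $\lambda^{(m)} K$. Concretely, for each $m$ consider the single self-adjoint operator $B_m := \sum_{j=1}^d v^{(m)}_j A_j$, which is a "slice" of $A$ in the direction $v^{(m)}$. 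Since $\cW_1(A) \subseteq K$, the spectrum of $B_m$ lies inside the interval $\{\langle v^{(m)}, x\rangle : x \in K\}$. Each such single self-adjoint operator trivially has a normal (indeed scalar-spectrum) dilation --- it already is normal --- but the point is to assemble these directional pieces so that the \emph{joint} tuple one gets after compression is $A$ itself while the dilated joint spectrum sits in $L$. This is exactly the mechanism of \cite[Theorem 7.11]{DDSS}: one dilates each directional slice, tensors the dilations together over the index $m$ weighted by $c_m$, and uses $I_d = \sum c_m u^{(m)}(v^{(m)})^{\mathsf T}$ to recover $A_j = \sum_m c_m u^{(m)}_j B_m$ after compressing back. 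The condition $\lambda^{(m)} K \subseteq L$ guarantees that each dilated block has joint spectrum in $L$, and since $L$ is convex and matrix convexity is preserved under the direct-sum/compression operations, the assembled normal tuple $N$ has $\sigma(N) \subseteq L$.

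More precisely, I would proceed as follows. First, observe $\cW_1(A) \subseteq K$ means $\sum_j x_j^{(m)} v^{(m)}_j$ ranges in $[\alpha_m,\beta_m]$ where $[\alpha_m,\beta_m] = \{\langle v^{(m)},x\rangle : x\in K\}$, so $B_m = \sum_j v^{(m)}_j A_j$ has spectrum in $[\alpha_m,\beta_m]$. Apply the spectral theorem to write $B_m = \int t\, dE_m(t)$ and define the candidate normal tuple on $\bC^n\otimes\bC^k$ (or the appropriate Naimark/Stinespring space) by letting the $j$-th coordinate act, on the $m$-th summand, as multiplication by the scalar function $t \mapsto u^{(m)}_j t$ — i.e.\ the $m$-th block is the normal tuple supported on the line segment $\lambda^{(m)}(\{x\in K : \langle v^{(m)},x\rangle \in \operatorname{spec}(B_m)\})$, which lies in $\lambda^{(m)}K \subseteq L$. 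Then check that compressing the weighted direct sum $\bigoplus_m \sqrt{c_m}(\cdots)$ back to $\bC^n$ via the isometry $h \mapsto \bigoplus_m \sqrt{c_m}\, h$ recovers $\sum_m c_m u^{(m)}_j B_m = \big(\sum_m c_m \lambda^{(m)}\big)_{j\bullet} A = A_j$, using $\sum_m c_m\lambda^{(m)} = I_d$. This gives a normal, commuting, finite-dimensional dilation $N$ with $\sigma(N)\subseteq L$, establishing (1), (2), (3) for matrices.

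Finally, the passage from matrices to operators is automatic: condition (2) of Proposition \ref{prop:dilation_finite_infinite} implies condition (3) (which is $\Wmax{}(K)\subseteq\Wmin{}(L)$), and condition (1) of that proposition is the operator-coefficient dilation statement. Since $K,L$ are convex bodies in $\bR^d\subseteq\bC^d$, the proposition applies directly and we are done.

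The step I expect to be the main obstacle is verifying cleanly that the compression of the assembled normal tuple returns \emph{exactly} $A$ (not merely something with the right numerical range): one must choose the dilation of each directional slice $B_m$ and the gluing isometry compatibly so that the rank-one structure $\lambda^{(m)} = u^{(m)}(v^{(m)})^{\mathsf T}$ and the identity $\sum c_m\lambda^{(m)} = I_d$ interact correctly. This is a bookkeeping issue about tensoring spectral measures and tracking which vector enters as the "compression direction" versus the "output coefficient," but it is where a careless argument would silently fail; the convexity of $L$ and the inclusion $\lambda^{(m)}K\subseteq L$ then handle the spectral constraint with little effort.
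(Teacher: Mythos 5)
Your proof is correct, and it matches the construction the paper intends: the paper does not reprove this theorem (it is a reformulation of \cite[Theorem 7.11]{DDSS}), but the remark immediately following it confirms that the normal dilation acts on $H \otimes \bC^k$, which is exactly the space on which you build $N = \bigoplus_m\bigl(u_j^{(m)} B_m\bigr)_{j=1}^d$ and compress via $h \mapsto \bigoplus_m \sqrt{c_m}\, h$. The concern you raise at the end is unwarranted: the compression identity
\[
V^* N_j V \;=\; \sum_{m} c_m u_j^{(m)} B_m \;=\; \sum_{l}\Bigl(\sum_m c_m u_j^{(m)} v_l^{(m)}\Bigr) A_l \;=\; \sum_l (I_d)_{jl} A_l \;=\; A_j
\]
falls out cleanly once $\lambda^{(m)} = u^{(m)}(v^{(m)})^{\mathsf T}$ is fixed, and the spectral constraint follows from $\sigma(B_m) \subseteq \{\langle v^{(m)}, x\rangle : x \in K\}$ and the hypothesis $\lambda^{(m)}K \subseteq L$. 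Two small observations: each block $\bigl(u_1^{(m)} B_m, \ldots, u_d^{(m)} B_m\bigr)$ consists of real scalar multiples of the single self-adjoint $B_m$, so it is already normal with no need for Naimark or Stinespring; and the construction works verbatim for $A \in \cB(H)^d_{sa}$ with $H$ infinite-dimensional, so the detour through Proposition \ref{prop:dilation_finite_infinite} is harmless but not needed.
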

It is worth noting that, given $X \in \cB(H)_{sa}^d$, the proof of this theorem provides a concrete construction of the normal dilation $T$ acting on $H \otimes \bC^k$.

\begin{remark}
Applying the above theorem to $\ol{\bB}_{1,d}$ and $\ol{\bB}_{\infty,d}$, respectively, one obtains Theorems 10.6 and and 10.10 from \cite{DDSS}, which show that $\Wmax{}(\ol{\bB}_{1,d}) \subseteq \Wmin{}(\ol{\bB}_{\infty,d})$ and that $\Wmax{}(\ol{\bB}_{\infty,d}) \subseteq d \cdot \Wmin{}(\ol{\bB}_{1,d})$, respectively.
\end{remark}

\section{Change of variables and convex geometry}\label{sec:convex_geometry}

\subsection{Affine change of variables}

For $A = (A_1, \ldots, A_d)$ and $M \in M_{m,d}(\bC)$, let $MA$ be the tuple $B = (B_1, \ldots, B_m)$ given by
\[
B_i = \sum_{j=1}^d M_{ij} A_j.
\]
Likewise, given $y \in \bC^m$, we consider the affine transformation $F(x) = Mx + y$ and its action on tuples
\[
F(A) = MA + y,
\]
where $y$ is shorthand for $(y_1 I, \ldots y_m I)$.

Now fix $A \in \cB(H)^d$ and put $B = F(A)$, where $F$ is an affine map $F(x) = Mx + y$ as above. Then $C^*(S_B) \subseteq C^*(S_A)$, so every $\phi \in \UCP(C^*(S_A), M_n)$ restricts to a UCP map $C^*(S_B) \to M_n$. Also, every $\phi \in \UCP(C^*(S_B), M_n)$ extends to a UCP map $C^*(S_A) \to M_n$ by Arveson's extension theorem.
If $X = \phi(A) \in \cW(A)$, then
\[
F(X) = M\phi(A) + y = \phi(MA + y) = \phi(B) \in \cW(B).
\]
On the other hand, if $X = \phi(B)$ then
\[
X = \phi(MA + y) = M \phi(A)  + y = F(\phi(A)).
\]
We conclude
\be\label{eq:FcWA}
\cW(B) = F(\cW(A)).
\ee

\begin{lemma}\label{lem:affinemapWminmax}
Let $F: \bR^d \to \bR^d$ be an affine and invertible map and let $n \in \bN$.
If $\Wmax{n}(K) \subseteq \Wmin{n}(L)$, then $\Wmax{n}(F(K)) \subseteq \Wmin{n}(F(L))$.
In particular, if $F$ is linear and invertible with $\Wmax{}(K) \subseteq C \cdot \Wmin{}(K)$, then $\Wmax{}(F(K)) \subseteq C \cdot \Wmin{}(F(K))$.
\end{lemma}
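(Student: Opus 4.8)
The plan is to prove the stronger, more symmetric statement that at each fixed level $n$ the affine map $F$ intertwines the two constructions, that is,
$\Wmax{n}(F(K)) = F(\Wmax{n}(K))$ and $\Wmin{n}(F(K)) = F(\Wmin{n}(K))$,
where $F$ acts on a self-adjoint $n$-tuple $X$ by $F(X) = MX + y$ (writing $F(x) = Mx + y$ with $M \in M_d(\bR)$, so that $F(X)$ is again a self-adjoint $n$-tuple and the matrix size $n$ is unchanged). Granting these two identities, one applies $F$ to both sides of the hypothesis $\Wmax{n}(K) \subseteq \Wmin{n}(L)$ and reads off $\Wmax{n}(F(K)) \subseteq \Wmin{n}(F(L))$.

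For the $\Wmax{}$ identity I would argue from the description \eqref{eq:Wmax_def2}, $X \in \Wmax{n}(K) \iff \cW_1(X) \subseteq K$. A one-line computation with a state $s$ on $C^*(S_X)$ gives $s(F(X)_i) = s\big(\sum_j M_{ij} X_j + y_i\big) = (M\,s(X) + y)_i$, hence $\cW_1(F(X)) = F(\cW_1(X))$ (the level-one case of \eqref{eq:FcWA}). Since $F$ is a bijection of $\bR^d$, the condition $\cW_1(X) \subseteq K$ is equivalent to $F(\cW_1(X)) = \cW_1(F(X)) \subseteq F(K)$, i.e.\ to $F(X) \in \Wmax{n}(F(K))$; therefore $F(\Wmax{n}(K)) = \Wmax{n}(F(K))$.

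For the $\Wmin{}$ identity I would use the parametrization \eqref{eq:Wmin_def2}. If $X = \sum_j x^{(j)} \otimes P_j$ with $x^{(j)} \in K$, $P_j \geq 0$ and $\sum_j P_j = I$, then componentwise $MX = \sum_j (M x^{(j)}) \otimes P_j$ and $y = \sum_j y \otimes P_j$ (here $\sum_j P_j = I$ is exactly what lets the translation be distributed over the $P_j$), so $F(X) = MX + y = \sum_j F(x^{(j)}) \otimes P_j$. As each $F(x^{(j)}) \in F(K)$, this shows $F(X) \in \Wmin{n}(F(K))$, that is, $F(\Wmin{n}(K)) \subseteq \Wmin{n}(F(K))$; applying the same inclusion to the affine bijection $F^{-1}$ in place of $F$ and to $F(K)$ in place of $K$ gives the reverse inclusion, hence equality.

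Finally, for the ``in particular'' clause, observe that \eqref{eq:Wmin_def2} also gives $C \cdot \Wmin{}(K) = \Wmin{}(C \cdot K)$ at every level, so the hypothesis $\Wmax{}(K) \subseteq C \cdot \Wmin{}(K)$ reads $\Wmax{}(K) \subseteq \Wmin{}(C \cdot K)$; applying the first part with the \emph{linear} map $F$ yields $\Wmax{}(F(K)) \subseteq \Wmin{}(F(C \cdot K)) = \Wmin{}(C \cdot F(K)) = C \cdot \Wmin{}(F(K))$, using that a linear $F$ commutes with scalar multiplication. The only points needing care are bookkeeping ones — that $M$ being real keeps all tuples self-adjoint and fixes the matrix size $n$, and that the unit-sum normalization $\sum_j P_j = I$ is precisely what makes affine (not merely linear) maps interact cleanly with $\Wmin{}$ — so I do not expect a genuine obstacle here.
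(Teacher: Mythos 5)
Your proof is correct and follows essentially the same plan as the paper's: push a point of $\Wmax{n}(F(K))$ back through $F^{-1}$ to land in $\Wmax{n}(K)$, apply the hypothesis, and push forward by $F$. The one cosmetic difference is that the paper invokes the dilation description \eqref{eq:Wmin_def1} of $\Wmin{}$ directly (if $Y \prec N$ with $\sigma(N) \subseteq L$, then $F(Y) \prec F(N)$ with $\sigma(F(N)) \subseteq F(L)$), whereas you use the POVM parametrization \eqref{eq:Wmin_def2} and package the argument as the two identities $\Wmax{n}(F(K)) = F(\Wmax{n}(K))$ and $\Wmin{n}(F(K)) = F(\Wmin{n}(K))$; these two descriptions of $\Wmin{}$ are equivalent (Naimark), so the content is the same, and your slightly more systematic formulation is a clean way to record the intertwining.
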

\begin{proof}
Let $X \in \Wmax{n}(F(K))$.
This means that $\cW_1(X) \subseteq F(K)$.
Then by \eqref{eq:FcWA},
\[
\cW_1(F^{-1}(X)) = F^{-1}(\cW_1(X)) \subseteq K,
\]
so $Y:=F^{-1}(X) \in \Wmax{n}(K)$.
By assumption, $Y$ has a normal dilation $N$ with $\sigma(N) \subseteq L$.
But then $F(N)$ is a normal dilation for $F(Y) = X$, and $\sigma(F(N)) \subseteq F(L)$.
\end{proof}

Let $H$ be an affine subspace in $\bR^d$ and let $K$ be a closed convex subset in $\bR^d$, so $H\cap K$ is a closed and convex set.
If the dimension of $H$ is $k$, then $H$ can be identified with $\bR^k$. One can also define the projection onto $H$, in the direction orthogonal to $H$; we let $P_H(K)$ denote the projection of $K$ under this orthogonal projection.

\begin{lemma}\label{lem:projWminmax}
Let $K,L \subseteq \bR^d$ be closed convex sets, and let $H$ be an affine subspace of $\bR^d$.
If $\Wmax{}(K) \subseteq \Wmin{}(L)$, then
\[\Wmax{}(H \cap K) \subseteq\Wmin{}(P_H(L)).
\]
\end{lemma}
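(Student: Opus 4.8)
The plan is to follow the proof of Lemma \ref{lem:affinemapWminmax} almost verbatim: given $X$ with $\cW_1(X) \subseteq H \cap K$, use the hypothesis to obtain a normal dilation with spectrum in $L$, and then apply the projection $P_H$ to it. The only new ingredient is that the constraint $\cW_1(X) \subseteq H$ is rigid enough to ensure that the projected dilation still dilates $X$ itself, not merely $P_H(X)$.

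First I would reduce to the case $0 \in H$. This costs nothing: translating $K, L, H$ by a fixed vector $t$ sends the hypothesis $\Wmax{}(K) \subseteq \Wmin{}(L)$ and the desired conclusion to equivalent statements, by Lemma \ref{lem:affinemapWminmax} applied to $F(x) = x + t$, since $(H+t) \cap (K+t) = (H\cap K)+t$ and $P_{H+t}(L+t) = P_H(L)+t$. So I may assume $H$ is a linear subspace; write $P \in M_d(\bR)$ for the orthogonal projection of $\bR^d$ onto $H$, so that $P_H$ is $P$, acting on vectors and, in the notation of Section \ref{sec:convex_geometry}, on tuples.

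The key step is the claim: if $X \in (M_n)^d_{sa}$ has $\cW_1(X) \subseteq H$, then $PX = X$. Indeed, for each $u \in H^\perp$ the linear functional $x \mapsto \sum_i u_i x_i$ vanishes on $H$, so $\cW_1(X) \subseteq H$ forces $\langle (\sum_i u_i X_i)\xi,\xi\rangle = 0$ for every unit vector $\xi$; since $\sum_i u_i X_i$ is self-adjoint, it is $0$. Letting $u$ run over a basis of $H^\perp$ gives $(I-P)X = 0$, i.e.\ $PX = X$. Granting this, take $X \in \Wmax{n}(H\cap K)$, so $\cW_1(X) \subseteq H \cap K$. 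Then $\cW_1(X) \subseteq K$, hence $X \in \Wmax{n}(K) \subseteq \Wmin{n}(L)$, which provides a normal tuple $N$ and an isometry $W$ with $X_i = W^* N_i W$ and $\sigma(N) \subseteq L$. Put $M := PN$, i.e.\ $M_i = \sum_j P_{ij} N_j$. As in Lemma \ref{lem:affinemapWminmax}, $M$ is a normal tuple (real linear combinations of the commuting self-adjoints $N_j$) with $\sigma(M) = P(\sigma(N)) \subseteq P(L) = P_H(L)$, while $W^* M_i W = \sum_j P_{ij} W^* N_j W = \sum_j P_{ij} X_j = (PX)_i = X_i$ by the claim. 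Thus $X \prec M$, so $X \in \Wmin{n}(P_H(L))$. As $n$ and $X$ were arbitrary, this is the conclusion. (When $L$ is compact so is $P_H(L)$; if $L$ is only closed, one reads $\Wmin{}(P_H(L))$ through the dilation description \eqref{eq:Wmin_def1}, the construction above producing a dilation with spectrum genuinely inside $P_H(L)$.)

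I expect the only non-routine point to be the key step: upgrading the scalar-level inclusion $\cW_1(X) \subseteq H$ to the operator identity $PX = X$, which is exactly what makes $PN$ a dilation of $X$. Everything else is bookkeeping already carried out in the proof of Lemma \ref{lem:affinemapWminmax}.
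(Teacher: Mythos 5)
Your proof is correct and follows essentially the same route as the paper's. The paper normalizes $H$ all the way to the coordinate subspace $\{(x_1,\dots,x_k,0,\dots,0)\}$ via Lemma \ref{lem:affinemapWminmax} and then simply asserts that $\cW_1(X) \subseteq H$ forces $X = (X_1,\dots,X_k,0,\dots,0)$; you instead normalize only to $0 \in H$ and work with the orthogonal projection matrix $P$ directly, supplying the short justification (vanishing numerical range of the self-adjoint $\sum_i u_i X_i$ for $u \in H^\perp$) that the paper leaves tacit. Same idea, slightly more explicit bookkeeping.
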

\begin{proof}
By Lemma \ref{lem:affinemapWminmax}, we may assume that
\[
H = \{(x_1, \ldots, x_k, 0, \ldots, 0) : (x_1, \ldots, x_k) \in \bR^k\}.
\]
Assume that $X \in \Wmax{}(H \cap K)$.
Then $\cW_1(X) \subseteq H \cap K \subseteq K$, and necessarily $X$ has the form $X = (X_1, \ldots, X_k, 0, \ldots, 0)$.
By assumption, there is a dilation $X \prec Y = (Y_1, \ldots, Y_d)$ consisting of commuting self-adjoints such that $\sigma(Y) \subseteq L$.
But then
\[
Y':= (Y_1, \ldots, Y_k, 0, \ldots, 0) = P_H(Y)
\]
is a dilation of $X$, and $\sigma(Y) \subseteq P_H(L)$.
\end{proof}
\begin{remark}\label{remark:cuttingoffzeroes}
Note that nothing changes in the above result if we think of $X$ and $Y'$ as $k$-tuples, rather than $d$-tuples padded with $0$s.
\end{remark}

\subsection{Convex Geometry}\label{subsec:convex_geometry}

If $K$ and $L$ are closed convex sets in $\bR^d$, then there are two established notions of distance which compare $K$ and $L$. First,
\be\label{eq:banachmazursets}
\rho(K,L) := \inf\{C > 0: \exists T \in GL_d(\bR),  K \subseteq T(L) \subseteq C \cdot K\}.
\ee
Note that $\rho(K,L) = \rho(L, K)$.
Even if $K$ and $L$ have interior, it is still possible that $\rho(K, L) = \infty$ (which is understood to mean that there are no $C, T$ such that $K \subseteq T(L) \subseteq C \cdot K$).
However, if $K$ and $L$ are compact with $0$ in the interior of both sets, then $\rho(K, L)$ is finite, and if $K$ and $L$ are also symmetric with respect to the origin, then $\rho(K,L)$ is just the Banach-Mazur distance of the corresponding norms on $\bR^d$. Second,
\be
\mathring{\rho}(K, L) := \inf\{C > 0: \exists x, y \in \bR^d,  T \in GL_d(\bR), y + K \subseteq T(x + L) \subseteq C \cdot (y + K) \}.
\ee
Note that $\mathring{\rho}(K, L) = \mathring{\rho}(L, K)$.

Recall that a \textit{convex body} is a compact convex subset of $\bR^d$ with nonempty interior, and that every point in a convex body may be approximated by interior points. For any convex bodies $K$ and $L$, $\mathring{\rho}(K,L)$ is finite, as we can shift the interiors of $K$ and $L$ to include the origin. Moreover, if $\ol{\bB}_{2,d}$ denotes the closed unit ball of real $\ell^2_d$ space, then it is known that
\be\label{eq:distbounded}
K \subseteq \bR^d \textrm{ is a convex body } \implies \mathring{\rho}(K, \ol{\bB}_{2,d}) \leq d,
\ee
and in \cite{Lei59} Leichtweiss showed that the simplex is the unique maximizer (see also \cite{Palmon}):
\be\label{eq:distoptimizer}
K \subseteq \bR^d \textrm{ is a convex body with } \mathring{\rho}(K, \ol{\bB}_{2,d}) = d \hspace{.05 cm} \iff \hspace{.05 cm} K \textrm{ is a } d\textrm{-simplex}.
\ee
Here and below, a {\em $k$-simplex} is the convex hull of $k+1$ affinely independent points.

The distances $\rho$ and $\mathring{\rho}$ may also be tied to (not necessarily symmetric!) scaling constants that arise naturally in dilation theory:
\be\label{eq:thetadef1}
\theta(K, L) := \inf\{C > 0: \Wmax{}(K) \subseteq C \cdot \Wmin{}(L)\} ,
\ee
\be\label{eq:thetadef2}
\theta(K) := \theta(K, K) ,
\ee
\be\label{eq:mathringthetadef}
\mathring{\theta}(K) := \inf\{\theta(T(x + K)): x \in \bR^d, T \in GL_d(\bR)\} .
\ee
One can use Proposition \ref{prop:dilation_finite_infinite} to show that if $K$ and $L$ are compact and convex, then these infima are actually obtained.
It was also shown in Lemma \ref{lem:affinemapWminmax} and the preceding discussion that an invertible affine transformation factors through $\Wmax{}$ and $\Wmin{}$.
\bes
T \in GL_d(\bR), x \in \bR^d \implies \begin{array}{c} \Wmax{}(x + T(K)) = x + T(\Wmax{}(K))  \\  \\ \Wmin{}(x + T(K)) = x + T(\Wmin{}(K)) \end{array}
\ees
It follows that $\theta$ is invariant under invertible \textit{linear} transformations.
\bes
T \in GL_d(\bR) \implies \begin{array}{c} \theta(K, L) =  \theta(T(K), T(L)) \\ \\ \theta(K) = \theta(T(K)) \end{array}
\ees
Note, however, there is no reason to believe that $\theta$ is invariant under invertible \textit{affine} transformations, as translations and linear maps do not generally commute. In fact, there exist numerous counterexamples to affine invariance of $\theta$ in \cite{DDSS} and this manuscript. Applying reductions to $\mathring{\theta}$ yields
\bes
\mathring{\theta}(K) = \inf\{\theta(a + K): a \in \bR^d\}.
\ees
Similarly, since
\bes
\Wmax{}(a + K) \subseteq C \cdot \Wmin{}(a + K) \iff \Wmax{}(K) \subseteq \Wmin{}((C-1)a + C\cdot K),
\ees
it also holds that
\be\label{eq:simpledist}
\mathring{\theta}(K) = \inf\{C > 0: \exists b \in \bR^d, \Wmax{}(K) \subseteq \Wmin{}(b + C\cdot K)\}.
\ee

The following proposition gives a way of estimating the constants $\theta(K)$ and $\mathring{\theta}(K)$, when $K$ can be approximated by a set $L$ for which we know the dilation constants.
\begin{proposition}\label{prop:scaletoscale}
Let $K$ and $L$ be closed convex subsets of $\bR^d$. Then
\bes
\theta(K) \leq \rho(K, L) \, \theta(L)
\ees
and
\bes
\mathring{\theta}(K) \leq \mathring{\rho}(K, L) \, \mathring{\theta}(L).
\ees
\end{proposition}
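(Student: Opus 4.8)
The plan is to prove both inequalities by the same mechanism: unwind the definition of the Banach--Mazur-type distances, apply the scaling and linear-invariance properties of $\theta$ recorded just above (together with Lemma~\ref{lem:affinemapWminmax}), and use monotonicity of $\Wmax{}$ and $\Wmin{}$ under inclusions of convex sets. First I would treat the non-shifted inequality $\theta(K)\leq\rho(K,L)\,\theta(L)$. Fix $C>\rho(K,L)$ and $C'>\theta(L)$; by definition of $\rho$ there is $T\in GL_d(\bR)$ with $K\subseteq T(L)\subseteq C\cdot K$, and by definition of $\theta(L)$ (and linear invariance, which gives $\theta(T(L))=\theta(L)$) we have $\Wmax{}(T(L))\subseteq C'\cdot\Wmin{}(T(L))$. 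Now chain the three basic facts: $K\subseteq T(L)$ gives $\Wmax{}(K)\subseteq\Wmax{}(T(L))$; then $\Wmax{}(T(L))\subseteq C'\cdot\Wmin{}(T(L))$; and finally $T(L)\subseteq C\cdot K$ gives $\Wmin{}(T(L))\subseteq C'\cdot\Wmin{}(C\cdot K)=CC'\cdot\Wmin{}(K)$. Hence $\Wmax{}(K)\subseteq CC'\cdot\Wmin{}(K)$, so $\theta(K)\leq CC'$; letting $C\to\rho(K,L)$ and $C'\to\theta(L)$ yields the claim.

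The monotonicity statements I am invoking — $A\subseteq B\implies\Wmax{}(A)\subseteq\Wmax{}(B)$ and $\Wmin{}(A)\subseteq\Wmin{}(B)$ — are immediate from the descriptions \eqref{eq:Wmax_def2} and \eqref{eq:Wmin_def1}: if $\cW_1(X)\subseteq A\subseteq B$ then $X\in\Wmax{}(B)$, and if $X\prec N$ with $\sigma(N)\subseteq A\subseteq B$ then $X$ witnesses membership in $\Wmin{}(B)$. The homogeneity $\Wmin{}(C\cdot K)=C\cdot\Wmin{}(K)$ and $\Wmax{}(C\cdot K)=C\cdot\Wmax{}(K)$ is the special case of Lemma~\ref{lem:affinemapWminmax} for the scaling map $x\mapsto Cx$. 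These are the only ingredients.

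For the shift-invariant inequality $\mathring{\theta}(K)\leq\mathring{\rho}(K,L)\,\mathring{\theta}(L)$ I would run the same argument but carry translations along. Pick $C>\mathring{\rho}(K,L)$ and $C'>\mathring{\theta}(L)$. By definition of $\mathring{\rho}$ there are $x,y\in\bR^d$ and $T\in GL_d(\bR)$ with $y+K\subseteq T(x+L)\subseteq C\cdot(y+K)$; by definition of $\mathring{\theta}(L)$ there is $a\in\bR^d$ with $\theta(a+L)\leq C'$, and applying linear invariance to $T$ we get $\theta(T(a+L))\leq C'$, i.e. after absorbing $T(a) - T(x)$ into a translation, $\theta(T(x+L)+b)\leq C'$ for a suitable $b$; equivalently there is a translate $M$ of $T(x+L)$ with $\theta(M)\leq C'$. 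Replacing $T(x+L)$ by its translate $M$ in the sandwich $y+K\subseteq T(x+L)\subseteq C\cdot(y+K)$ costs only a further translation on all three terms, so we may as well assume $\theta(T(x+L))\leq C'$ outright after relabeling; then the inclusions $y+K\subseteq T(x+L)$ and $T(x+L)\subseteq C(y+K)$ combined with $\Wmax{}(T(x+L))\subseteq C'\cdot\Wmin{}(T(x+L))$ and the monotonicity/homogeneity facts give $\Wmax{}(y+K)\subseteq CC'\cdot\Wmin{}(C^{-1}(\text{shift})+ (y+K))$, which via \eqref{eq:simpledist} says $\mathring{\theta}(y+K)=\mathring{\theta}(K)\leq CC'$. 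Letting $C\to\mathring{\rho}(K,L)$, $C'\to\mathring{\theta}(L)$ finishes it.

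The main obstacle is purely bookkeeping: in the shift-invariant case, translations and the linear map $T$ do not commute, so one must be careful that the translate of $L$ for which $\theta$ is small is the same translate appearing in the $\mathring\rho$-sandwich. The clean way to handle this — which I would adopt in the writeup — is to use the reformulation \eqref{eq:simpledist}, $\mathring{\theta}(K)=\inf\{C:\exists b,\ \Wmax{}(K)\subseteq\Wmin{}(b+C\cdot K)\}$, and the analogous characterization implicit in the definition of $\mathring\rho$, so that everything is phrased in terms of ``there exists a translate'' and the non-commutativity never has to be confronted explicitly; one just composes the inclusions $\Wmax{}(K)\subseteq\Wmax{}(\text{translate of }T^{-1}(\cdot))\subseteq\cdots$ directly, exactly paralleling the first paragraph.
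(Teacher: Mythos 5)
Your first inequality is proved exactly as in the paper: the same chain $\Wmax{}(K)\subseteq\Wmax{}(T(L))\subseteq C'\cdot\Wmin{}(T(L))\subseteq CC'\cdot\Wmin{}(K)$, using monotonicity, linear invariance of $\theta$, and homogeneity. No issues there.

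For the shift-invariant inequality, your overall strategy — carry the translations along and close the loop via \eqref{eq:simpledist} — is the paper's approach as well, but your intermediate reduction step is incorrect as stated. You claim that replacing $T(x+L)$ by its translate $M=T(a+L)=u+T(x+L)$ (where $u=T(a)-T(x)$) ``costs only a further translation on all three terms'' and therefore we may relabel so that $\theta(T(x+L))\leq C'$. Translating all three terms of $y+K\subseteq T(x+L)\subseteq C(y+K)$ by $u$ gives $(u+y)+K\subseteq M\subseteq u+C(y+K)=C\bigl((\tfrac{u}{C}+y)+K\bigr)$, and since $u+y\neq \tfrac{u}{C}+y$ for $u\neq 0$, $C\neq1$, the resulting sandwich is \emph{not} of the form $\tilde y+K\subseteq M\subseteq C(\tilde y+K)$ with a single $\tilde y$. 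So there is no genuine relabeling that lets you assume $\theta(T(x+L))\leq C'$ outright; the two translates differ and this discrepancy must be tracked. The repair is exactly what the paper does: compose the three inclusions $\Wmax{}(u+y+K)\subseteq\Wmax{}(M)\subseteq C'\Wmin{}(M)\subseteq C'\Wmin{}(u+Cy+CK)$ directly, extract $\Wmax{}(K)\subseteq b'+CC'\Wmin{}(K)$ with $b'=(C'-1)u+(CC'-1)y$, and invoke \eqref{eq:simpledist}. Your final line effectively does this (leaving a ``shift'' unspecified), but the ``we may as well assume $\theta(T(x+L))\leq C'$'' step should be removed since it does not hold.
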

\begin{proof}
We may certainly assume that $\theta(L), \mathring{\theta}(L), \rho(K,L),$ and $\mathring{\rho}(K, L)$  are finite.

Let $C_1 > \theta(L)$ and let $C_2 > 0, T \in GL_d(\bR)$ be such that $ K \subseteq T(L) \subseteq C_2 \cdot K$. Since $\theta(T(L)) = \theta(L)$, it follows that
\bes
\Wmax{}(K) \subseteq \Wmax{}(T(L)) \subseteq   C_1 \cdot \Wmin{}(T(L)) \subseteq C_1  C_2 \cdot \Wmin{}(K),
\ees
which shows that $\theta(K) \leq C_1  C_2$. Taking an infimum over all applicable $C_1$ and $C_2$ yields $\theta(K) \leq \rho(K,L) \, \theta(L)$.

Similarly, following the reduction in (\ref{eq:simpledist}), let $C_1 > 0$, $b \in \bR^d$ be such that
\bes
\Wmax{}(L) \subseteq \Wmin{}(b + C_1 \cdot L) = b + C_1 \cdot \Wmin{}(L)
\ees
and choose $x, y \in \bR^d, C_2 > 0$ with
\bes
y + L \subseteq T(x + K) \subseteq C_2 \cdot (y + L).
\ees
Then it follows that
\begin{align*}
\Wmax{}(T(x + K))
&\subseteq C_2 \cdot \Wmax{}(y + L) \\
&= C_2 y + C_2 \cdot \Wmax{}(L) \\
&\subseteq C_2 y + C_2 \cdot (b + C_1 \cdot \Wmin{}(L)) \\
& = C_2 y + C_2 b - C_2 C_1 y + C_2 C_1 \cdot \Wmin{}(y + L) \\
&\subseteq C_2 y + C_2 b - C_2 C_1 y + C_2 C_1 \cdot \Wmin{}(T(x+ K))\\
&= C_2 y + C_2 b - C_2 C_1 y + C_2 C_1 T(x) + C_2 C_1 \cdot\Wmin{}(T(K)).
\end{align*}

Setting $z := C_2 y + C_2 b - C_2 C_1 y + C_2 C_1 T(x)$ and rearranging the given containment $\Wmax{}(T(x + K)) \subseteq z + C_2 C_1 \cdot \Wmin{}(T(K))$ shows that
\bes
\Wmax{}(K) \subseteq x + T^{-1}(z) + C_2 C_1 \cdot \Wmin{}(K),
\ees
so by (\ref{eq:simpledist}), $\mathring{\theta}(K) \leq C_2  C_1$.
Taking an infimum over $C_2$ and $C_1$ gives $\mathring{\theta}(K) \leq \mathring{\rho}(K, L) \, \mathring{\theta}(L)$.
\end{proof}

Letting
\bes
\ol{\bB}_{p,d} = \{ {x} \in \bR^d: || {x}||_p \leq 1 \}
\ees
denote the closed unit ball of real $\ell^p_d$ space, we know from \cite[Example 7.24]{DDSS} that $\theta(\ol{\bB}_{2,d}) = d$. Since the Banach-Mazur distance between $\ell^p$ norms is governed by $\rho(\ol{\bB}_{p,d}, \ol{\bB}_{q,d}) \leq d^{|1/p-1/q|}$, the estimate
\bes
\theta(\ol{\bB}_{p,d}) \geq \cfrac{d}{d^{|1/p - 1/2|}} = d^{1-|1/p - 1/2|}
\ees
holds from setting $q = 2$ and applying Proposition \ref{prop:scaletoscale}. Note that the inequality $\rho(\ol{\bB}_{p,d}, \ol{\bB}_{q,d}) \leq d^{|1/p-1/q|}$ follows from examining the identity map from real $\ell^p_d$ space to real $\ell^q_d$ space, and it is generally not an equality, as seen in Chapter 1, Section 8 of \cite{JohnsonBook}. However, such complications only arise when $p < 2 < q$ or $q < 2 < p$, and we will not need to use this case to estimate dilation constants. Theorem \ref{thm:balldilationgeneral} shows that $\theta(\ol{\bB}_{p,d})$ is exactly $d^{1-|1/p - 1/2|}$.

The distance $\rho$ of (\ref{eq:banachmazursets}) is calculated using many different invertible linear transformations $T$. Alternatively, one may use only scaling maps, and from calculations similar to those in Proposition \ref{prop:scaletoscale}, it follows that
\be\label{eq:singlescale}
\alpha \cdot K_1 \subseteq K_2, \hspace{.5 cm} L_2 \subseteq \beta \cdot L_1 \hspace{.5 cm} \implies \hspace{.5 cm} \theta(K_1, L_1) \leq \cfrac{\beta}{\alpha} \cdot \theta(K_2, L_2).
\ee

\begin{proposition}\label{prop:centered}
Let $K \subseteq \bR^d$ be a closed convex set with $K = -K$.
Then $\mathring{\theta}(K) = \theta(K)$.
\end{proposition}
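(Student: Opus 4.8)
The plan is to show $\mathring{\theta}(K) \le \theta(K)$, since the reverse inequality $\mathring{\theta}(K) \le \theta(K)$ is immediate — wait, let me restate: the nontrivial direction is $\mathring{\theta}(K) \ge \theta(K)$, because by definition $\mathring{\theta}(K) = \inf\{\theta(a+K): a \in \bR^d\} \le \theta(0 + K) = \theta(K)$ automatically. So the whole content is to prove that shifting $K$ never helps when $K$ is symmetric, i.e. $\theta(a + K) \ge \theta(K)$ for every $a \in \bR^d$. I would fix $a$, set $C = \theta(a+K)$, and aim to produce from the containment $\Wmax{}(a+K) \subseteq C \cdot \Wmin{}(a+K)$ a containment $\Wmax{}(K) \subseteq C \cdot \Wmin{}(K)$.

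The key device is the symmetry $K = -K$, which gives $-(a+K) = -a + K$, together with the fact that $\Wmax{}$ and $\Wmin{}$ commute with invertible linear maps (the displayed identities before Proposition \ref{prop:scaletoscale}), in particular with $X \mapsto -X$. Applying the map $x \mapsto -x$ to $\Wmax{}(a+K) \subseteq C\cdot \Wmin{}(a+K)$ yields $\Wmax{}(-a + K) \subseteq C \cdot \Wmin{}(-a + K)$, so that $\theta(-a+K) = \theta(a+K)$ as well. Now I would average: take $X \in \Wmax{n}(K)$, so $\cW_1(X) \subseteq K$. Then $a + X \in \Wmax{n}(a+K)$ and $-a + X \in \Wmax{n}(-a+K)$ (here $\pm a$ is shorthand for $\pm a \otimes I_n$). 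By hypothesis each has a normal dilation: $a + X \prec N^+$ with $\sigma(N^+) \subseteq C(a+K)$, and $-a + X \prec N^-$ with $\sigma(N^-) \subseteq C(-a+K) = -C(a+K)$. Forming $N := \tfrac12\big((N^+ - a) \oplus (N^- + a)\big)$ — more precisely the direct sum $(N^+ - a) \oplus (N^+ + a)$ compressed appropriately — should give a normal dilation of $X$ itself whose joint spectrum lands in $\tfrac12\big(C(a+K) - a\big) \cup \tfrac12\big(-C(a+K)+a\big) \subseteq $ something one can bound inside $C\cdot K$ using convexity and $K=-K$.

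Let me sharpen the arithmetic I expect to need: a point of $\sigma(N^+)$ has the form $a + C k_1$ with $k_1 \in K$, and a point of $\sigma(N^-)$ has the form $-a + Ck_2$ with $k_2 \in K$. Their midpoint is $\tfrac{C}{2}(k_1 + k_2) \in \tfrac{C}{2}\cdot 2K = C\cdot K$ by convexity. So the construction should be: dilate $a+X$ to $N^+$, dilate $-a + X$ to $N^-$, observe $X \prec \tfrac12\big((N^+ \ominus a)\oplus(N^- \oplus a)\big)$ where $\ominus a$ means subtracting $a\otimes I$ — the point being that $X = \tfrac12\big((a+X) + (-a+X)\big)$, so $X$ is a compression of the direct sum of $(N^+ - a)$ and $(N^- + a)$ followed by the averaging compression $H \to H\oplus H$, $h \mapsto \tfrac{1}{\sqrt2}(h,h)$ — and this averaged dilation is normal (direct sum of commuting normal tuples is commuting normal, and compressing by this particular isometry preserves the "block-scalar average" structure so that the result is still a dilation of $X$; one should double-check normality survives, but it does because the isometry $h\mapsto\tfrac1{\sqrt2}(h,h)$ intertwines appropriately with the reducing structure). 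Its joint spectrum is contained in $\tfrac12\big(\sigma(N^+)-a\big) + \tfrac12\big(\sigma(N^-)+a\big)$...

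Actually the cleanest route avoids fiddling with whether a compression of a direct sum of normals stays normal: instead use the formulation \eqref{eq:Wmin_def2} or the UCP-map characterization. Here is the plan I would actually write down. Since $\theta(a+K) = \theta(-a+K) = C$, for any $X$ with $\cW_1(X)\subseteq K$ we get $a+X \in C\cdot\Wmin{}(a+K)$ and $-a+X \in C\cdot\Wmin{}(-a+K) = C\cdot\Wmin{}(a+K)$ wait that last step needs $-(a+K)$ related to $a+K$ — it isn't, in general $-a+K \ne a+K$. So keep them separate: $a + X \in C\cdot\Wmin{}(a+K)$ and $-a+X \in C\cdot\Wmin{}(-a+K)$. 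By \eqref{eq:Wmin_def1}, write $a+X \prec N^+$, $\sigma(N^+)\subseteq C(a+K)$, and $-a+X\prec N^-$, $\sigma(N^-)\subseteq C(-a+K)$. Then $N := (N^+ - a\otimes I)\oplus(N^- + a\otimes I)$ is a normal tuple (commuting normal, since each summand is a linear shift of a commuting normal tuple by scalars), with $\sigma(N) = (\sigma(N^+) - a)\cup(\sigma(N^-)+a) \subseteq C\cdot K \cup C\cdot K = C\cdot K$ (using $k \in K \Rightarrow -k\in K$ for the second piece: a point of $\sigma(N^-)+a$ is $(-a + Ck_2)+a = Ck_2 \in CK$; similarly a point of $\sigma(N^+)-a$ is $Ck_1 \in CK$ — no convexity even needed, just the shifts cancel!). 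Finally $X \prec N$ via the isometry $V: h \mapsto \tfrac1{\sqrt2}(h\oplus h)$: indeed $V^*\big((N^+-a)\oplus(N^-+a)\big)V = \tfrac12\big((V^*_1 A V_1^{...})\big)$... compute: $V^*\,[(N^+_i - a_i)\oplus(N^-_i+a_i)]\,V = \tfrac12\big[(\text{compression of }N^+_i - a_i) + (\text{compression of }N^-_i+a_i)\big]$. But $X$ sits inside $H$ with $N^\pm$ acting on larger spaces $H_\pm \supseteq H$; compressing $N^+_i$ to $H$ gives $a_i + X_i$ and compressing $N^-_i$ to $H$ gives $-a_i + X_i$, so the average is $\tfrac12\big[(a_i + X_i - a_i) + (-a_i + X_i + a_i)\big] = X_i$. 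Hence $X \prec N$ with $\sigma(N)\subseteq C\cdot K$, i.e. $X \in C\cdot\Wmin{}(K)$. Since $X \in \Wmax{}(K)$ was arbitrary, $\Wmax{}(K)\subseteq C\cdot\Wmin{}(K)$, giving $\theta(K) \le C = \theta(a+K)$ for all $a$, hence $\theta(K)\le\mathring{\theta}(K)$, and combined with the trivial reverse inequality, $\theta(K)=\mathring{\theta}(K)$.

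The main obstacle — really the only place to be careful — is verifying that the compression $V^*NV$ by the "diagonal" isometry $V(h) = \tfrac1{\sqrt2}(h\oplus h)$ genuinely recovers $X$ and not something else; this hinges on correctly bookkeeping the ambient Hilbert spaces $H \subseteq H_\pm$ and the fact that the \emph{scalar} shifts $\pm a\otimes I$ are exactly what is subtracted/added back. Everything else (normality of linear-in-scalar shifts of commuting normal tuples, the joint-spectrum computation $\sigma(N^\pm \mp a) \subseteq C\cdot K$ using $K = -K$ only for the sign, and invariance of $\theta$ under $X\mapsto -X$) is routine given the machinery already developed in the excerpt. I would present the argument in exactly the order above: reduce to showing $\theta(a+K)\ge\theta(K)$; produce the two shifted dilations; form the direct sum and translate back; compress by the diagonal isometry; read off the spectrum bound.
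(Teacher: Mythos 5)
Your overall strategy—reduce to showing $\theta(a+K)\ge\theta(K)$, dilate the two shifted operators $\pm a + X$, and then recombine by a direct sum followed by the diagonal isometry $h\mapsto \tfrac1{\sqrt2}(h,h)$—is the same \lq\lq shift and average\rq\rq\ idea the paper uses, except the paper invokes convexity of the set $\Wmin{n}(C\cdot K)$ directly rather than constructing the averaged dilation by hand. However, your spectrum computation contains a genuine error that the paper's phrasing is specifically designed to avoid.

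The error: from $\theta(a+K)=C$ you correctly get $a+X\in C\cdot\Wmin{}(a+K)=\Wmin{}(C(a+K))$, so $a+X\prec N^+$ with $\sigma(N^+)\subseteq C(a+K)=Ca+CK$. You then write that a point of $\sigma(N^+)$ has the form $a+Ck_1$; it is actually $Ca+Ck_1$. Consequently $\sigma(N^+-a)\subseteq (C-1)a+CK$, not $CK$, and likewise $\sigma(N^-+a)\subseteq -(C-1)a+CK$. Your direct sum $N=(N^+-a)\oplus(N^-+a)$ therefore has joint spectrum contained in $\big((C-1)a+CK\big)\cup\big(-(C-1)a+CK\big)$, which is not contained in $CK$ unless $a=0$ or $C=1$. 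Note also that the spectrum of a direct sum is the \emph{union} of the summands' spectra, not an average of them, so the \lq\lq midpoint lies in $CK$\rq\rq\ observation you make earlier does not transfer to this construction: you need each summand's spectrum already inside $CK$.

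Both issues disappear if you shift by $Ca$ rather than $a$: then $\sigma(N^+-Ca)\subseteq CK$ and $\sigma(N^-+Ca)\subseteq CK$, the compressions give $V_+^*(N^+-Ca)V_+ = X-(C-1)a$ and $V_-^*(N^-+Ca)V_- = X+(C-1)a$, and the diagonal isometry averages these to $X$. This is exactly the paper's reformulation in disguise: writing $b=(C-1)a$, equation \eqref{eq:simpledist} gives $\Wmax{}(K+b)\subseteq\Wmin{}(C\cdot K)$, so using $K=-K$ one gets $X\pm b\in\Wmin{n}(C\cdot K)$ with the \emph{same} target set for both signs, and then one can simply cite convexity of $\Wmin{n}(C\cdot K)$ to conclude $X=\tfrac12(X+b)+\tfrac12(X-b)\in\Wmin{n}(C\cdot K)$. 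So your route is salvageable and morally identical, but as written the claim $\sigma\big((N^+-a)\oplus(N^-+a)\big)\subseteq C\cdot K$ is false.
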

\begin{proof}
The main idea is reminiscent of the proof of \cite[Theorem 5.8]{FNT}.
First, rewrite \eqref{eq:simpledist} as
\[
\mathring{\theta}(K) = \inf\{C > 0: \exists b \in \bR^d, \Wmax{}(K+b) \subseteq \Wmin{}(C\cdot K)\}.
\]
Now let $C > \mathring{\theta}(K)$ and $b \in \bR^d$ be such that $\Wmax{}(K+b) \subseteq \Wmin{}(C\cdot K)$.
Since $K = -K$, $\Wmax{}(K-b) \subseteq \Wmin{}(C\cdot K)$.

Next, let $X \in \Wmax{}(K)$.
Then $X \pm b \in \Wmax{}(K \pm b)$, so by the previous paragraph, $X \pm b \in \Wmin{}(C \cdot K)$.
By convexity of $\Wmin{}(C \cdot K)$, we find that $X = \frac{1}{2}(X+b) + \frac{1}{2}(X-b) \in \Wmin{}(C \cdot K)$. Thus $\theta(K) \leq C$, and taking an infimum gives $\theta(K) \leq \mathring{\theta}(K)$. The reverse inequality is trivial, so we are done.
\end{proof}


\section{The simplex is the only convex body that determines a unique matrix convex set}

It is natural to ask, for what compact convex sets $K \subseteq \bR^d$ does it hold that
\be\label{eq:min=maxsets}
\Wmax{}(K) = \Wmin{}(K)?
\ee

We shall now prove that simplices are the unique compact convex sets such that (\ref{eq:min=maxsets}) holds. In Section \ref{sec:cones}, we see that this may be used to improve the analogous uniqueness statement \cite[Corollary 5.3]{FNT} by removing a polyhedral assumption.

\begin{theorem}\label{thm:simplex_unique}
Let $K \subseteq \bR^d$ be a compact convex set.
Then the following are equivalent.
\begin{enumerate}
\item $K$ is a simplex.
\item $\Wmax{}(K) = \Wmin{}(K)$.
\item $\Wmax{2^{d-1}}(K) = \Wmin{2^{d-1}}(K)$.
\end{enumerate}
\end{theorem}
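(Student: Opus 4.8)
The plan is to prove the cyclic chain of implications $(1)\Rightarrow(2)\Rightarrow(3)\Rightarrow(1)$, where the first two are relatively quick and the real work is in the last arrow. For $(1)\Rightarrow(2)$: if $K$ is a $d$-simplex with vertices $v_0,\dots,v_d$, then every $x\in K$ has unique barycentric coordinates, and the coordinate functions $\lambda_0,\dots,\lambda_d$ are affine on $\bR^d$. Given $X\in\Wmax{n}(K)$, one sets $P_j := \lambda_j(X)$ (interpreting the affine function $\lambda_j$ applied to the tuple $X$); since $\sum_j\lambda_j \equiv 1$ and each $\lambda_j\geq 0$ on $K$, the matrix inequality $0\leq P_j$ follows from the defining linear inequalities of $\Wmax{}$, and $\sum P_j = I$. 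Then $X = \sum_j v_j\otimes$ ... more precisely $X_i = \sum_j (v_j)_i P_j$, which exhibits $X$ in the form \eqref{eq:Wmin_def2}, so $X\in\Wmin{n}(K)$. Hence $\Wmax{}(K)\subseteq\Wmin{}(K)$, and the reverse inclusion is automatic, giving $(2)$. The implication $(2)\Rightarrow(3)$ is trivial, since equality of the full matrix convex sets forces equality at every level, in particular at level $2^{d-1}$.

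The substance is $(3)\Rightarrow(1)$, i.e. from equality at the single level $n=2^{d-1}$ we must recover that $K$ is a simplex. After an affine change of coordinates (which preserves both $\Wmax{}$, $\Wmin{}$ at each level by Lemma \ref{lem:affinemapWminmax}, and the property of being a simplex), we may normalize $K$; the natural normalization is to place $K$ inside the cube and use the anti-commuting universal tuple. I would argue by contraposition: assume $K$ is not a simplex and produce a tuple $X\in\Wmax{2^{d-1}}(K)$ that is not in $\Wmin{2^{d-1}}(K)$. The key device, hinted at in the introduction, is the $d$-tuple $F=(F_1,\dots,F_d)$ of $2^{d-1}\times 2^{d-1}$ self-adjoint anti-commuting unitaries (a Clifford/CAR representation). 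The idea is that for suitable affine images, a rescaled translate of $F$ sits in $\Wmax{2^{d-1}}(K)$ precisely when $K$ contains a certain ball-like set, but $F$ refuses to be in $\Wmin{2^{d-1}}$ of anything much smaller than the cube because anti-commuting unitaries have no nontrivial commuting dilation on a space of the same (or only slightly larger) dimension. More concretely, one uses the characterization that $X\in\Wmin{2^{d-1}}(L)$ requires a normal dilation with spectrum in $L$, and one shows via a dimension/trace count that for anti-commuting $F_i$ the only way this can happen at level $2^{d-1}$ forces $L$ to contain the full cube $[-1,1]^d$ (the circumscribed situation), whereas $\cW_1(F)$ is a proper ball-type subset. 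So if $K$ is not a simplex, one can choose an affine copy where $\cW_1(F)\subseteq K$ but $K\subsetneq$ the cube in the relevant direction, violating $(3)$.

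More carefully, I expect the argument to go through the extreme points of $K$. A compact convex set $K$ in $\bR^d$ is a simplex iff it has exactly $d+1$ extreme points and they are affinely independent; if $K$ is not a simplex, then either it has infinitely many extreme points, or it has at least $d+2$ extreme points. In either case one finds $d+2$ points $x^{(0)},\dots,x^{(d+1)}$ in $K$, together with a point $y$ in their convex hull admitting two genuinely different representations as convex combinations. The strategy is then to build $X\in\Wmax{2^{d-1}}(K)$ — using the $F_i$ machinery to get a tuple whose $\cW_1$ is contained in $K$ but whose "matrix-level" structure is rigid — and show any expression $X_i = \sum_j (x^{(j)})_i P_j$ with $P_j\geq 0$, $\sum P_j = I$, and $x^{(j)}\in K$ is impossible at size $2^{d-1}$. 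The bound $2^{d-1}$ is exactly the size of the Clifford representation, which is why this particular level is singled out. The main obstacle, and the part I would need to be most careful about, is the quantitative rigidity statement: proving that the anti-commuting unitary tuple $F$ (suitably placed) lies in $\Wmax{2^{d-1}}(K)$ for a non-simplex $K$ but not in $\Wmin{2^{d-1}}(K)$ — this requires knowing $\cW_1(F)$ precisely (a spectrahedron/ball computation) and a genuine obstruction to small commuting dilations, likely via a rank or trace argument showing that a commuting normal tuple dilating $F$ on a $2^{d-1}$-dimensional space would have to have too large a spectrum. I would expect to invoke, or re-derive, the relationship $\cW(F) = \Wmax{}(\cW_1(F))$ for this specific $F$ (which is special to anti-commuting unitaries) to set up the contradiction cleanly.

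Thus the skeleton is: $(1)\Rightarrow(2)$ by barycentric coordinates via \eqref{eq:Wmin_def2}; $(2)\Rightarrow(3)$ trivially; $(3)\Rightarrow(1)$ by contraposition using the universal anti-commuting tuple $F$ of size $2^{d-1}$ and a rigidity argument that a non-simplex $K$ admits an affine copy of $F$ in $\Wmax{2^{d-1}}$ but not in $\Wmin{2^{d-1}}$. I anticipate the write-up will lean on the dilation-dimension results already cited (Proposition \ref{prop:dilation_finite_infinite}) to control normal dilations at finite levels, and on the characterization $\cW(N)=\Wmin{}(\sigma(N))$ for normal tuples.
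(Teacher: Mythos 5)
Your $(1)\Rightarrow(2)$ argument is correct and is a genuine (if minor) variant of the paper's: the paper normalizes to the standard simplex, views $X\in\Wmax{}(K)$ as a POVM on $\{1,\dots,d\}$, and invokes Naimark; you instead set $P_j=\lambda_j(X)$ via barycentric coordinates and land directly in the description \eqref{eq:Wmin_def2}, sidestepping Naimark entirely. Both work. (You would also want to first reduce to the full-dimensional case via Lemma \ref{lem:projWminmax}, as the paper does, since barycentric coordinates for a lower-dimensional simplex live on an affine subspace.)

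The gap is in $(3)\Rightarrow(1)$, and it is substantive. Your plan is to argue by contraposition using the Clifford tuple $F$ and a "dimension/trace count," but the decisive step — getting a non-simplex $K$ into a position where the rigidity of $F$ applies — is missing, and the rigidity you ascribe to $F$ is misidentified. You claim $F$ refuses to lie in $\Wmin{2^{d-1}}(L)$ unless $L$ "contains the full cube $[-1,1]^d$"; that is false. The cube is not the relevant obstacle: by Theorems \ref{thm:diamonddilationgeneral}--\ref{thm:cubedilationgeneral}, $F\in\Wmax{}(\ol{\bB}_{2,d})\subseteq\Wmax{}([-1,1]^d)$ dilates normally into the diamond $d\cdot\ol{\bB}_{1,d}$ and into the cube $[-\sqrt d,\sqrt d\,]^d$, both far smaller than $[-d,d]^d$. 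The actual rigidity of $F$ concerns Euclidean balls: by Lemma \ref{lem:totallyrad} (or \cite[Example 7.24]{DDSS}), $F\in\Wmin{2^{d-1}}(y+C\cdot\ol{\bB}_{2,d})$ forces $C\geq d$. You also hint at $\cW(F)=\Wmax{}(\cW_1(F))$, which is only established in the paper for $d=2$ and is not what the argument uses.

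The missing ingredient that lets the ball rigidity bite is a non-trivial theorem from convex geometry, namely the Leichtweiss--Palmon result \eqref{eq:distoptimizer}: among convex bodies $K\subseteq\bR^d$, $\mathring{\rho}(K,\ol{\bB}_{2,d})\leq d$ with equality iff $K$ is a simplex. The paper's proof of $(3)\Rightarrow(1)$ is: reduce to $K$ full-dimensional; if $K$ is not a simplex, find an affine image with $\ol{\bB}_{2,d}\subseteq K\subseteq C\cdot\ol{\bB}_{2,d}$ for some $C<d$; then $(3)$ gives
\[
\Wmax{2^{d-1}}(\ol{\bB}_{2,d})\subseteq\Wmax{2^{d-1}}(K)=\Wmin{2^{d-1}}(K)\subseteq\Wmin{2^{d-1}}(C\cdot\ol{\bB}_{2,d}),
\]
contradicting the ball rigidity with constant $d$. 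Your proposed route via counting extreme points and forbidding a decomposition $X_i=\sum_j(x^{(j)})_iP_j$ at size $2^{d-1}$ does not have an obvious way to close; without the Banach--Mazur bound sharply pinned to the simplex, you cannot sandwich a general non-simplex between balls of ratio $<d$, and the "dimension/trace count" you gesture at does not substitute for it.
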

\begin{proof}
If $K$ is contained in a proper affine subspace $H$, then $H \cap K = P_H(K)$.
By Lemma \ref{lem:projWminmax} and Remark \ref{remark:cuttingoffzeroes}, we may assume, without loss of generality, that $K$ is not contained in a proper subspace, and therefore it has non-empty interior.

To show that (1) implies (2), we may first apply Lemma \ref{lem:affinemapWminmax} and assume that $K$ is the standard simplex $\spn \{0,e_1, \ldots, e_d\}$. Now, every $X \in \Wmax{}(K)$ is just a tuple $X = (X_1, \ldots, X_d)$ with $X_i \geq 0$ and $\sum X_i \leq I$, and thus can be considered as a positive operator valued measure on $\{1, \ldots, d\}$.
By Naimark's theorem, $X$ can be dilated to a spectral measure $P$ on $\{1, \ldots, d\}$, which is just a tuple $P = (P_1, \ldots, P_d)$ of mutually orthogonal (hence commuting) projections dilating $X$ (see also Theorem \ref{thm:positivediamond} for a more explicit dilation).

Since (2) clearly implies (3), it remains to prove that $\Wmax{2^{d-1}}(K) = \Wmin{2^{d-1}}(K)$ implies that $K$ is a simplex.
Suppose that (3) holds, and that $K$ is a convex body (so it has non-empty  interior) in $\bR^d$ that is not a simplex.
From (\ref{eq:distoptimizer}), the fact that $K$ is not a simplex implies that $\mathring{\rho}(K, \ol{\bB}_{2,d}) < d$. This means that after applying an invertible affine transformation, we may assume that $\ol{\bB}_{2,d} \subseteq K \subseteq C \cdot \ol{\bB}_{2,d}$ for some $C \in (0, d)$.
By Lemma \ref{lem:affinemapWminmax}, the equation $\Wmax{2^{d-1}}(K) = \Wmin{2^{d-1}}(K)$ is not affected by an affine transformation.
Thus,
\[
\Wmax{2^{d-1}}(\ol{\bB}_{2,d}) \subseteq \Wmax{2^{d-1}}(K) = \Wmin{2^{d-1}}(K) \subseteq \Wmin{2^{d-1}}(C \cdot \ol{\bB}_{2,d}).
\]
By definition of $\Wmax{}$ and $\Wmin{}$, for every $d$-tuple of self-adjoint $2^{d-1} \times 2^{d-1}$ matrices $X$, if $\sum v_i X_i \leq I$ for all $v \in \ol{\bB}_{2,d}$, then $X$ has a normal dilation $N$ with spectrum $\sigma(N) \subseteq C \cdot \ol{\bB}_d$.
But this contradicts \cite[Example 7.24]{DDSS}, which shows that there is a $d$-tuple $X \in \Wmax{2^{d-1}}(\ol{\bB}_d)$ that has no such normal dilation (see also Theorem \ref{thm:everythingball} below).
This contradiction shows that $K$ must be a simplex.
\end{proof}

Theorem \ref{thm:simplex_unique} also implies the following.

\begin{corollary}\label{cor:maxKequalsminL}
Suppose that $K, L \subset \bR^d$ are compact convex sets with $\Wmax{n}(K) = \Wmin{n}(L)$ for at least one $n \geq 2^{d-1}$.
Then $K = L$ is a simplex.
\end{corollary}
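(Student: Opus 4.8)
The plan is to prove $K=L$ first, using only the single level $n$ at which the equality is assumed, and then to propagate the equality $\Wmax{n}(K)=\Wmin{n}(K)$ down to level $2^{d-1}$ so that Theorem \ref{thm:simplex_unique} applies and forces $K$ to be a simplex.

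For the inclusion $K\subseteq L$, I would argue as follows. Given $x\in K=\Wmax{1}(K)$, the scalar tuple $xI_n:=(x_1I_n,\dots,x_dI_n)$ is a direct sum of $n$ copies of $x$ and hence lies in $\Wmax{n}(K)$, since $\Wmax{}(K)$ is matrix convex (so closed under direct sums). By hypothesis $xI_n\in\Wmin{n}(L)$, so by \eqref{eq:Wmin_def2} we may write $xI_n=\sum_j y^{(j)}\otimes P_j$ with $y^{(j)}\in L$, $P_j\geq 0$, and $\sum_j P_j=I_n$. Comparing (normalized) traces in each coordinate gives $x=\sum_j \frac{\operatorname{tr}(P_j)}{n}\,y^{(j)}$, a convex combination of points of $L$; since $L$ is convex, $x\in L$. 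For the reverse inclusion $L\subseteq K$, take $y\in L=\Wmin{1}(L)$. Then the scalar tuple $yI_n$ lies in $\Wmin{n}(L)=\Wmax{n}(K)$, so by \eqref{eq:Wmax_def2} we get $\{y\}=\cW_1(yI_n)\subseteq K$, where we used that the joint numerical range of a scalar tuple is a single point. Thus $K=L$.

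Now I have $\Wmax{n}(K)=\Wmin{n}(K)$ with $n\geq 2^{d-1}$, and I would pass to level $2^{d-1}$ by a direct sum / compression argument. Given $X\in\Wmax{2^{d-1}}(K)$, fix any $a\in K$ and let $Z$ be the scalar tuple $a\cdot I_{n-2^{d-1}}$, which lies in $\Wmax{n-2^{d-1}}(K)$; then $X\oplus Z\in\Wmax{n}(K)=\Wmin{n}(K)$. Compressing $X\oplus Z$ to its first $2^{d-1}$ coordinates is the application of a UCP map, under which the matrix convex set $\Wmin{}(K)$ is closed, so $X\in\Wmin{2^{d-1}}(K)$. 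Since $\Wmin{2^{d-1}}(K)\subseteq\Wmax{2^{d-1}}(K)$ always holds, we conclude $\Wmax{2^{d-1}}(K)=\Wmin{2^{d-1}}(K)$, and Theorem \ref{thm:simplex_unique} then gives that $K$ is a simplex.

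I do not anticipate a genuine obstacle here. Everything rests on two elementary observations — that $\cW_1$ of a scalar tuple is a single point, and that $\Wmax{}(K)$ and $\Wmin{}(K)$ are matrix convex (hence closed under direct sums and compressions). The only minor points to watch are the bookkeeping in the trace computation establishing $K\subseteq L$ and the degenerate case $n=2^{d-1}$, in which the reduction step is vacuous.
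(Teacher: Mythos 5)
Your proof is correct and follows essentially the same route as the paper's: direct sums of scalar tuples to establish $K=L$ at level $n$, then padding an element of $\Wmax{2^{d-1}}(K)$ with scalar blocks and compressing to descend to level $2^{d-1}$, whereupon Theorem~\ref{thm:simplex_unique} applies. The only cosmetic difference is in showing $K\subseteq L$: the paper invokes $\Wmin{n}(L)\subseteq\Wmax{n}(L)$ and restricts to the diagonal, whereas you unpack the $\Wmin{}$ representation and take normalized traces, both of which are valid.
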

\begin{proof}
Given $x \in L$, we know that $X := \bigoplus\limits_{i=1}^n x \in \Wmin{n}(L) = \Wmax{n}(K)$. Therefore $X$ satisfies the linear inequalities that characterize $K$, which proves $x \in K$ by restricting to the diagonal, so we may conclude $L \subseteq K$. Similarly, the claim that $\Wmax{n}(K) = \Wmin{n}(L) \subseteq \Wmax{n}(L)$ may be used to show that $K \subseteq L$, so we have that $K = L$.

Now, $\Wmax{n}(K) = \Wmin{n}(K)$ for some $n \geq 2^{d-1}$. Fix $T \in \Wmax{2^{d-1}}(K)$, so that for any $x \in K$, $S = T \oplus \bigoplus\limits_{i=1}^{n-2^{d-1}} x \in \Wmax{n}(K) = \Wmin{n}(K)$, meaning $S$ has a normal dilation with joint spectrum in $K$. Certainly this also applies to $T$, and $T \in \Wmin{2^{d-1}}(K)$. Since this proves $\Wmax{2^{d-1}}(K) = \Wmin{2^{d-1}}(K)$, we may conclude that $K$ is a simplex using Theorem \ref{thm:simplex_unique}.
\end{proof}

Of course, with any changes in the minimum matrix dimension $2^{d-1}$ used in the statement of Theorem \ref{thm:simplex_unique}, Corollary \ref{cor:maxKequalsminL} changes as well.

\begin{problem}\label{prob:whatdimensionmin=max}
Is there some $n < 2^{d-1}$ such that if $K \subset \bR^d$ is compact and convex with $\Wmax{n}(K) = \Wmin{n}(K)$, then $K$ is a simplex?
\end{problem}

In Section \ref{sec:minhulls} we will see that there is a large family $\mathcal{C}$ of convex bodies, which we call \lq\lq simplex-pointed sets,\rq\rq\hspace{0pt} for which it suffices to check up to level \textit{two}:
\bes
K \in \mathcal{C}, \hspace{.2 cm} \Wmax{2}(K) = \Wmin{2}(K) \hspace{.1 cm} \implies \hspace{.1 cm} K \textrm{ is a simplex}.
\ees

We also recover the following reformulations of Theorems 5.7  and 5.8 from \cite{FNT}. As in \cite{FNT}, a crucial part of the proof is the application of scaling results from convex geometry.
\begin{corollary}\label{cor:dplustwo_symmetric}
If $K \subseteq \bR^d$ is a convex body, then $\mathring{\theta}(K) \leq d+2$.
If, in addition, $K$ is symmetric with respect to the origin (i.e., $K = -K$), then $\theta(K)\leq d$.
\end{corollary}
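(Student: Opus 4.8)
The plan is to deduce both bounds from the convex-geometry facts \eqref{eq:distbounded} and \eqref{eq:distoptimizer} together with the comparison inequalities of Proposition \ref{prop:scaletoscale}, using the trivial observation that a simplex $\Pi$ has $\theta(\Pi) = 1$ (which is the content of the implication $(1)\Rightarrow(2)$ in Theorem \ref{thm:simplex_unique}, since $\theta(\Pi)=1$ is equivalent to $\Wmax{}(\Pi)\subseteq\Wmin{}(\Pi)$, and the reverse containment always holds). So the strategy is: reduce the general estimate to the statement that the Banach--Mazur distance from $K$ to \emph{some} simplex is at most $d+2$ (respectively, the linear Banach--Mazur distance from a symmetric $K$ to a symmetric simplex-like body is at most $d$), and then multiply.

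For the general bound, first I would recall that by Leichtweiss's theorem \eqref{eq:distbounded}, $\mathring{\rho}(K,\ol{\bB}_{2,d})\le d$, and that the Euclidean ball is affinely equivalent to no simplex, so one pays a little more to squeeze a simplex between two dilates of $K$. The standard convex-geometry fact (this is exactly what \cite{FNT} use, tracing back to classical results on the Banach--Mazur distance between a convex body and a simplex) is that for every convex body $K\subseteq\bR^d$ there is a $d$-simplex $\Pi$ with $\mathring{\rho}(K,\Pi)\le d+2$; more precisely one can position things so that $\Pi \subseteq K' \subseteq (d+2)\Pi$ after a suitable affine copy. Granting that, Proposition \ref{prop:scaletoscale} gives $\mathring{\theta}(K)\le \mathring{\rho}(K,\Pi)\,\mathring{\theta}(\Pi)\le (d+2)\cdot 1 = d+2$, since $\mathring{\theta}(\Pi)\le\theta(\Pi)=1$ for a simplex.

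For the symmetric case, the point is that one can do better when $K=-K$: a symmetric convex body admits a symmetric ``simplex'' comparison only in a weakened sense, but the relevant classical estimate is that the Banach--Mazur distance of a symmetric $K$ to the cross-polytope (or equivalently to a parallelepiped, and these are simplices only after we allow the relevant cone/affine picture) is at most $d$. Concretely, the fact we need is $\rho(K, \Pi)\le d$ for an appropriate simplex-type body $\Pi$ with $\theta(\Pi)=1$; combined with Proposition \ref{prop:scaletoscale}, $\theta(K)\le\rho(K,\Pi)\,\theta(\Pi)\le d$. Since $K$ is symmetric, Proposition \ref{prop:centered} identifies $\theta(K)=\mathring{\theta}(K)$, so this also sharpens the first bound in the symmetric case. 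I would cite the precise geometric statements from \cite{FNT} (Theorems 5.7, 5.8 there) or \cite{Palmon}/\cite{Lei59} rather than reprove them.

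The main obstacle is purely bookkeeping of the convex-geometry input: one must cite (or verify) the correct sharp constants $d+2$ and $d$ for the Banach--Mazur-type distance from an arbitrary (resp.\ symmetric) convex body to a simplex, and be careful about which variant of the distance ($\rho$ vs.\ $\mathring{\rho}$) and which symmetry hypotheses are in force, because Proposition \ref{prop:scaletoscale} pairs $\theta$ with $\rho$ and $\mathring{\theta}$ with $\mathring{\rho}$. Once the matching is set up correctly, the proof is a one-line application of Proposition \ref{prop:scaletoscale} plus $\theta(\text{simplex})=1$; no operator theory beyond Theorem \ref{thm:simplex_unique} enters.
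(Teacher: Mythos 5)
Your reduction for the general bound $\mathring{\theta}(K)\le d+2$ is essentially the paper's argument: cite a convex-geometry result giving a simplex $\Delta$ with $\mathring{\rho}(K,\Delta)\le d+2$ (the paper cites \cite{Las11}), then apply Proposition \ref{prop:scaletoscale} and $\mathring{\theta}(\Delta)\le\theta(\Delta)=1$ from Theorem \ref{thm:simplex_unique}. That part is fine.

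The symmetric case has a genuine gap. You write that the relevant fact is ``$\rho(K,\Pi)\le d$ for an appropriate simplex-type body $\Pi$ with $\theta(\Pi)=1$,'' and you indicate that $\Pi$ might be a cross-polytope or parallelepiped, ``simplices only after we allow the relevant cone/affine picture.'' This does not work: the cross-polytope $\ol{\bB}_{1,d}$ and the cube $[-1,1]^d$ are not simplices, and by Theorems \ref{thm:diamonddilationgeneral} and \ref{thm:cubedilationgeneral} of this very paper they satisfy $\theta(\ol{\bB}_{1,d})=\theta([-1,1]^d)=\sqrt{d}$, not $1$. Feeding $\theta(\Pi)=\sqrt{d}$ into $\rho(K,\Pi)\,\theta(\Pi)$ gives a bound of order $d^{3/2}$, not $d$. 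Moreover, you pair the non-translated distance $\rho$ with this argument, but for symmetric $K$ and a genuine (non-symmetric) simplex $\Delta$ the estimate $\rho(K,\Delta)\le d$ is not a standard fact and is not what the references provide.

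The correct route is the paper's: for $K=-K$, cite \cite[Corollary 5.8]{GLMP} to get $\mathring{\rho}(K,\Delta)\le d$ for $\Delta$ a $d$-simplex; the translation-allowing distance $\mathring{\rho}$ is essential precisely because a simplex cannot be centered. Then Proposition \ref{prop:scaletoscale} gives $\mathring{\theta}(K)\le \mathring{\rho}(K,\Delta)\,\mathring{\theta}(\Delta)\le d\cdot 1 = d$, and finally Proposition \ref{prop:centered} (symmetry implies $\theta(K)=\mathring{\theta}(K)$) upgrades this to $\theta(K)\le d$. You invoke Proposition \ref{prop:centered} but in the wrong direction: you use it as an afterthought to pass from a (flawed) $\theta$ bound to $\mathring{\theta}$, whereas the actual proof needs it to pass from the $\mathring{\theta}$ bound, which is what the simplex-comparison naturally gives, to the $\theta$ bound claimed in the statement.
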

\begin{proof}
Let $\Delta$ be a nondegenerate simplex in $\bR^d$.
By \cite{Las11}, $\mathring{\rho}(K, \Delta) \leq d+2$.
Moreover, if $K = -K$, then by \cite[Corollary 5.8]{GLMP}, $\mathring{\rho}(K,\Delta) \leq d$.
The proof is completed by invoking Theorem \ref{thm:simplex_unique}, together with Propositions \ref{prop:scaletoscale} and \ref{prop:centered}.
\end{proof}

We also give an alternative proof of the fact that $\theta(K) \leq d$ for symmetric sets, using the techniques of \cite{DDSS}. In \cite{DDSS}, the dilation scale of $d$ was seen under conditions which did not appear to capture all symmetric sets, but which were also not limited to the symmetric setting.

\begin{theorem}\label{thm:dilate_sym_n}
If $K \subseteq \bR^d$ is a symmetric convex body, then $\theta(K) \leq d$. In fact, if $X \in \cB(H)_{sa}^d$ satifies $\cW_1(X) \subseteq K$, then there exist $k \leq \frac{d(d+1)}{2}+1$ points $x_1, \ldots, x_k \in K$ and a normal dilation $X \prec T$ acting on $H \otimes \bC^k$ such that $\sigma(T) \subseteq d \cdot \conv\{x_1, \ldots, x_k\}$.
\end{theorem}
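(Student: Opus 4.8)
The plan is to mimic the argument in \cite{DDSS} that produced the scale $d$ under symmetry-type hypotheses, but to run it for an arbitrary symmetric convex body $K$. The starting observation is that it suffices to prove the ``in fact'' statement: producing, for each $X \in \cB(H)^d_{sa}$ with $\cW_1(X) \subseteq K$, a normal dilation $X \prec T$ on $H \otimes \bC^k$ with $\sigma(T) \subseteq d \cdot \conv\{x_1,\dots,x_k\}$ for some $x_1,\dots,x_k \in K$, immediately yields $\Wmax{}(K) \subseteq d \cdot \Wmin{}(K)$, hence $\theta(K) \le d$. By Proposition \ref{prop:dilation_finite_infinite} we may also assume $X$ acts on a finite-dimensional space, which is convenient for counting dimensions.

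The core device should be Theorem \ref{general_dilation}: if we can find real rank-one $d \times d$ matrices $\lambda^{(1)},\dots,\lambda^{(k)}$ with $I_d \in \conv\{\lambda^{(1)},\dots,\lambda^{(k)}\}$ and $\lambda^{(m)} K \subseteq d \cdot \conv\{x_1,\dots,x_k\} =: L$ for each $m$, then $\Wmax{}(K) \subseteq \Wmin{}(L)$, and moreover the proof of that theorem constructs the dilation explicitly on $H \otimes \bC^k$. So the real content is a convex-geometry statement: \emph{for a symmetric convex body $K$, there exist rank-one matrices $\lambda^{(m)}$ averaging (in a convex combination) to $I_d$, together with points $x_m \in K$, with $\lambda^{(m)} K \subseteq d \cdot \conv\{x_m\}$, and with $k$ controlled by $\tfrac{d(d+1)}{2}+1$}. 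The natural way to get such rank-one matrices is via John's theorem / a decomposition of the identity: after a linear change of variables (which affects neither $\theta(K)$ nor the statement, by Lemma \ref{lem:affinemapWminmax}), we may place $K$ so that the John ellipsoid of $K$ is the Euclidean ball, and by John's theorem there are contact points $u_1,\dots,u_k \in \partial K \cap S^{d-1}$ (which by symmetry can be taken in $\pm$ pairs) and weights $c_m > 0$ with $\sum_m c_m u_m u_m^* = I_d$ and $\sum_m c_m = d$; Carathéodory applied in the space of symmetric matrices gives $k \le \tfrac{d(d+1)}{2}+1$. Setting $\lambda^{(m)} = \tfrac{c_m}{d} u_m u_m^*$ (so $\sum \lambda^{(m)} = \tfrac1d I_d$... — more precisely one rescales so the convex combination is exactly $I_d$, using $\sum_m (c_m/d) = 1$), the map $\lambda^{(m)}$ sends $x \mapsto (\text{scalar}) u_m$, i.e. collapses $K$ onto the segment through $\pm u_m$; since $u_m \in K$ and $K$ is symmetric, one checks $\lambda^{(m)} K \subseteq d\cdot[-u_m, u_m] \subseteq d \cdot \conv\{\pm u_m\}$ provided the support function of $K$ in direction $u_m$ is at most $d$, which is exactly what the John position delivers ($K \subseteq \sqrt{d}\cdot \ol{\bB}_{2,d}$ in the symmetric case, even better than needed). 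Taking $\{x_m\}$ to be the set of contact points $\{u_m\}$ then gives $\lambda^{(m)} K \subseteq d \cdot \conv\{x_1,\dots,x_k\}$ for all $m$, and Theorem \ref{general_dilation} finishes it.

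The step I expect to be the main obstacle is getting the constant in $\lambda^{(m)} K \subseteq d \cdot \conv\{x_j\}$ to come out as exactly $d$ rather than something like $d^{3/2}$ or $\sqrt d$: one has to be careful about \emph{which} John-type decomposition to use. A clean route, closer in spirit to \cite{DDSS}, is not to collapse all the way to segments but to use the decomposition $\sum_m c_m u_m u_m^* = I_d$ directly as in the proof of Theorem \ref{general_dilation} with the rank-one maps $\lambda^{(m)}$ chosen proportional to $u_m u_m^*$, and then bound $\|\lambda^{(m)} x\|$ for $x \in K$ using $|\langle u_m, x\rangle| \le h_K(u_m)$ together with the John bound $h_K(u_m) \le d$ on contact directions — the factor $d$ (not $\sqrt d$) appears because the dilation construction of Theorem \ref{general_dilation} already absorbs one factor from the averaging $I_d \in \conv\{\lambda^{(m)}\}$. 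I would therefore organize the write-up as: (i) reduce to finite dimensions and to the geometric claim; (ii) put $K$ in John position and invoke John's theorem, with Carathéodory to bound $k$; (iii) define the $\lambda^{(m)}$ and the points $x_m$, verify $I_d \in \conv\{\lambda^{(m)}\}$ and $\lambda^{(m)} K \subseteq d \cdot \conv\{x_m\}$; (iv) apply Theorem \ref{general_dilation}, noting its explicit dilation lives on $H \otimes \bC^k$, to conclude.
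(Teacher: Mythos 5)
Your proposal matches the paper's proof essentially line for line: put $K$ in John position, take John's decomposition $I_d = \sum_m c_m\, x_m \otimes x_m$ with contact points $x_m \in \partial K \cap \partial \ol{\bB}_{2,d}$, weights summing to $\sum_m c_m = d$, and $k \le \frac{d(d+1)}{2}+1$; set $\lambda^{(m)} = d\, x_m \otimes x_m$ (so that $I_d = \sum_m \frac{c_m}{d}\lambda^{(m)}$ is the desired convex combination); and apply Theorem \ref{general_dilation}. The one spot where you hedged --- whether a factor of $\sqrt d$ or $d^{3/2}$ might leak in --- is not actually an issue: for a contact point $x_m$ the hyperplane $x_m + \{x_m\}^\perp$ tangent to the John ball is also a supporting hyperplane for $K$, so $\langle x, x_m\rangle \in [-1,1]$ for all $x \in K$ (not merely $|\langle x,x_m\rangle|\le d$), whence $\lambda^{(m)} K \subseteq d\cdot[-x_m, x_m] \subseteq d\cdot\conv\{x_1, \ldots, x_k\}$ and the factor $d$ comes only from the definition of $\lambda^{(m)}$, not from any bound on the support function of $K$.
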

\begin{proof}
We may assume that $K$ is in {\em John position}, meaning that the ellipsoid of maximal volume contained in $K$ is the unit ball (see \cite[Section 2.1]{AGM}).
Indeed, if $K$ is not in John position then it may be moved to John position by a linear transformation ($K$ is symmetric, so no translation is required).
By \cite[Theorem 2.1.10]{AGM}, there exist $k \leq \frac{d(d+1)}{2}+1$, $x_1, \ldots, x_k \in \partial K \cup \partial \bB_{d}$, and positive numbers $c_1, \ldots, c_m$ such that $\sum_m c_m = d$ and
\[
I_d = \sum_{m=1}^k c_m x_m \otimes x_m,
\]
where $x_m \otimes x_m$ denotes the rank one operator $x_m \otimes x_m (x) = \langle x, x_m \rangle x_m$. (This result goes back to F. John, and it appears in passing in \cite[Section 4]{John}).

Now set $\lambda^{(m)} = d x_m \otimes x_m$, so $I_d \in \conv \{\lambda^{(1)},\ldots,\lambda^{(k)}\}$.
Moreover, the fact that $\ol{\bB}_d\subseteq K$ implies that $x_m + \{x_m\}^\perp$ is a supporting hyperplane for $K$ for all $m$; in other words $\lambda^{(m)} K \subseteq d K$ for all $m=1, \ldots, k$.
In fact, $\lambda^{(m)} K \subseteq L$ for all $m$, where $L = d \cdot \conv\{x_1, \ldots, x_k\}$.
Applying Theorem \ref{general_dilation}, we obtain, for every $\cW_1(X) \subseteq K$, the normal dilation $T$ as claimed.
\end{proof}


From the dilation-theoretic characterization of $\Wmin{}$, we get the following dilation result.
\begin{corollary}\label{cor:dilation_sym}
Let $K \subseteq \bR^d$ be a symmetric compact convex set and $H$ a Hilbert space.
If $A \in \cB(H)^d_{sa}$ has $\cW_1(A) \subseteq K$, then there exists a normal dilation $A \prec N$ with $\sigma(N) \subseteq d \cdot K$.
\end{corollary}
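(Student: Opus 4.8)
The plan is to deduce this directly from Theorem~\ref{thm:dilate_sym_n} (equivalently, from Corollary~\ref{cor:dplustwo_symmetric}), which already yields $\theta(K) \le d$ --- and in fact the explicit dilation --- whenever $K$ is a symmetric convex \emph{body}. Unwinding the dilation-theoretic description \eqref{eq:Wmin_def1} of $\Wmin{}$, the inequality $\theta(K) \le d$ says precisely that every $A \in \cB(H)^d_{sa}$ with $\cW_1(A) \subseteq K$ admits a commuting normal dilation $A \prec N$ with $\sigma(N) \subseteq d \cdot K$, which is exactly the assertion. Thus, when $K$ has nonempty interior there is nothing to do: one applies Theorem~\ref{thm:dilate_sym_n} to $A$, obtaining points $x_1, \ldots, x_k \in K$ and a normal dilation $A \prec T$ on $H \otimes \bC^k$ with $\sigma(T) \subseteq d \cdot \conv\{x_1, \ldots, x_k\} \subseteq d \cdot K$, and sets $N = T$.

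The only point needing attention is that here $K$ is an arbitrary symmetric compact convex set, possibly with empty interior. To handle this I would first reduce to the convex body case, as in the opening of the proof of Theorem~\ref{thm:simplex_unique}. Since $K = -K$ is nonempty, $0 \in K$, so the affine hull of $K$ is a linear subspace $V \subsetneq \bR^d$, say of dimension $d' \le d$; after an orthogonal change of coordinates (harmless by Lemma~\ref{lem:affinemapWminmax}) we may assume $V = \bR^{d'} \times \{0\}^{d-d'}$. Because $\cW_1(A) \subseteq K \subseteq V$, each coordinate $A_j$ with $j > d'$ has numerical range $\{0\}$, hence $A_j = 0$. Regarding $K$ as a symmetric convex body in $\bR^{d'}$, apply the convex body case to $(A_1, \ldots, A_{d'})$ to produce a commuting normal dilation $(N_1, \ldots, N_{d'})$ with joint spectrum inside $d' \cdot K$; then $N := (N_1, \ldots, N_{d'}, 0, \ldots, 0)$ is a commuting normal dilation of $A$ with $\sigma(N) \subseteq d' \cdot K \subseteq d \cdot K$ (cf.\ Remark~\ref{remark:cuttingoffzeroes}).

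There is no genuine obstacle here: the substance is entirely in Theorem~\ref{thm:dilate_sym_n}, and what remains is bookkeeping --- checking that the coordinates transverse to $V$ vanish, and that padding a lower-dimensional normal dilation with zeros still gives a valid commuting normal dilation whose joint spectrum lies in $d \cdot K$. One could instead phrase everything through the inclusion $\Wmax{}(K) \subseteq d \cdot \Wmin{}(K)$ and invoke Lemma~\ref{lem:projWminmax} for the dimension reduction, but the concrete version above is the cleanest.
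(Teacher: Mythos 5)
Your proof is correct and is essentially the paper's intended argument. The paper introduces Corollary~\ref{cor:dilation_sym} with "From the dilation-theoretic characterization of $\Wmin{}$, we get the following dilation result," i.e., it reads off the corollary from $\theta(K)\le d$ (Theorem~\ref{thm:dilate_sym_n}/Corollary~\ref{cor:dplustwo_symmetric}) via the description \eqref{eq:Wmin_def1} of $\Wmin{}$, exactly as you do. You correctly observe that the "in fact" clause of Theorem~\ref{thm:dilate_sym_n} already produces the dilation for $A\in\cB(H)^d_{sa}$ with $H$ arbitrary, so no separate appeal to Proposition~\ref{prop:dilation_finite_infinite} is needed, and you correctly handle the one issue the paper leaves implicit --- that Corollary~\ref{cor:dilation_sym} is stated for symmetric compact convex sets, not bodies --- by the same affine-hull reduction the paper itself carries out in the opening of the proof of Theorem~\ref{thm:simplex_unique} (Lemma~\ref{lem:projWminmax} and Remark~\ref{remark:cuttingoffzeroes}). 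The small details you check --- that $K=-K\neq\emptyset$ forces $0\in K$ so the affine hull is linear, that $\cW_1(A)\subseteq V$ forces $A_j=0$ for the transverse coordinates, and that $d'\cdot K\subseteq d\cdot K$ because $0\in K$ --- are all sound.
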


The constant $d$ cannot be improved in general, because the ball $\ol{\bB}_{2,d}$ is symmetric but satisfies $\theta(\ol{\bB}_{2,d})=d$, as seen in \cite{DDSS}. On the other hand, we will see in the remaining sections that for certain symmetric sets, the constant can be significantly improved.


\section{The Euclidean ball}\label{sec:ball}

In this section, we solve the dilation problem for the ball: if $K$ and $L$ are $\ell^2$-balls in $\bR^d$ (perhaps with different centers and radii), then $\Wmax{}(K) \subseteq \Wmin{}(L)$ if and only if there is a $d$-simplex $\Pi$ with $K \subseteq \Pi \subseteq L$.
This information is catalogued, along with a numerical estimate, in Theorem \ref{thm:everythingball}.
Crucial to this pursuit is a modification of \cite[Lemma 7.23]{DDSS}, which considers the self-adjoint unitary $2^{d-1} \times 2^{d-1}$ matrices $F_1^{[d]}, \ldots, F_d^{[d]}$, defined as follows.
\be\label{eq:recursion} \begin{aligned}
F_1^{[1]} &= 1 \\
F_j^{[d]} &= \begin{pmatrix} 0 & 1 \\ 1 & 0 \end{pmatrix} \otimes F_j^{[d-1]}, \hspace{.5 cm} 1 \leq j \leq d - 1, \\
F_d^{[d]} &= \begin{pmatrix} 1 & 0 \\ 0 & -1 \end{pmatrix} \otimes I_{2^{d-2}}.
\end{aligned} \ee
When the number of matrices $d$ is not needed as a superscript (i.e., whenever induction in $d$ is not needed), we shall denote the matrices  as $F_1, \ldots, F_d$.

It was shown in \cite[Lemma 7.23]{DDSS} that the tuple $(F_1, \ldots, F_d)$ is in $\Wmax{}(\ol{\bB}_{2,d})$, and also that $F_1 \otimes F_1 + \ldots + F_d \otimes F_d$ has $d$ as an eigenvalue. From the construction, it is evident that the self-adjoint matrices $F_1, \ldots, F_d$ pairwise anti-commute and are unitary, though these facts were not emphasized. Consider also the \textit{universal} unital $C^*$-algebra
\bes
\mathcal{A} := C^*( x_1, \ldots, x_d \hspace{.2 cm} | \hspace{.2 cm} x_j = x_j^*, \hspace{.07 cm} x_j^2 = 1, \hspace{.07 cm} x_i x_j = - x_j x_i \textrm{ for } i \not= j)
\ees
generated by pairwise anti-commuting self-adjoint unitaries. The relations on the generators of $\mathcal{A}$ impose that $\mathcal{A}$ is spanned by $1$ and by monomial terms $x_{i_1} x_{i_2} \ldots x_{i_j}$ where $i_1 < i_2 < \ldots < i_j$. Thus the vector space dimension of $\mathcal{A}$ is at most $\sum\limits_{k=0}^d \left( \begin{array}{c} d \\ k \end{array} \right) = 2^{d}$. By definition, there is a unital $C^*$-homomorphism $\phi: \mathcal{A} \to C^*(F_1, \ldots, F_d)$ defined by $\phi: x_j \mapsto F_j$, but from direct examination (and inductive proof) we see that for $d \geq 2$, the vector space dimension of $C^*(F_1, \ldots, F_d)$ is also $2^{d}$. This means the kernel of $\phi$ is trivial, so
\be\label{eq:universality}
C^*(F_1, \ldots, F_d) \cong C^*( x_1, \ldots, x_d \hspace{.2 cm} | \hspace{.2 cm} x_j = x_j^*, \hspace{.07 cm} x_j^2 = 1, \hspace{.07 cm} x_i x_j = - x_j x_i \textrm{ for } i \not= j) \textrm{ for } d \geq 2.
\ee

Below we modify the inductive step of \cite[Lemma 7.23]{DDSS} to include a parameter $y \in \bR^d$, which will help us to strengthen the dilation estimates. Note, however, that much like (\ref{eq:universality}), this lemma must start at $d = 2$.

\begin{lemma}\label{lem:themagicmatrices}
Fix $d \geq 2$. Then the tuple $(F_1, \ldots, F_d) = (F_1^{[d]}, \ldots, F_d^{[d]})$ of pairwise anti-commuting, self-adjoint, unitary matrices of dimension $2^{d-1} \times 2^{d-1}$ defined by (\ref{eq:recursion}) has the property that for any $y \in \bR^d$, $\left|\left| \sum\limits_{j=1}^{d} (F_j - y_j) \otimes F_j \right|\right| \geq \sqrt{||y||^2 + (d-1)^2} + 1$.
\end{lemma}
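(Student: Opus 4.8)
The plan is to locate a two-dimensional subspace of $\bC^{2^{d-1}}\otimes\bC^{2^{d-1}}$ on which the self-adjoint operator $T := \sum_{j=1}^d (F_j - y_j)\otimes F_j$ restricts, and then to read off the bound from the norm of the resulting $2\times 2$ block. I would begin by writing $T = S - R$ with $S := \sum_{j=1}^d F_j \otimes F_j$ and $R := \sum_{j=1}^d y_j\,(I\otimes F_j)$, and I would also record the auxiliary operator $\widetilde R := \sum_{j=1}^d y_j\,(F_j \otimes I)$. The whole argument rests on three elementary properties of the matrices in \eqref{eq:recursion}: each $F_j$ is real and symmetric (immediate induction on $d$); for $d\geq 2$ each $F_j$ is traceless (again an easy induction, since every $F_j^{[d]}$ with $d\geq 2$ is a tensor product with a trace-zero $2\times 2$ factor); and the Clifford relations $F_iF_j + F_jF_i = 2\delta_{ij}I$ hold.

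The next step is to introduce the normalized maximally entangled vector $\xi := 2^{-(d-1)/2}\sum_k e_k\otimes e_k$. Since each $F_j$ is real symmetric with $F_j^2 = I$, a short computation gives $(F_j\otimes F_j)\xi = \xi$ for all $j$, hence $S\xi = d\,\xi$ (this is the eigenvalue statement underlying \cite[Lemma 7.23]{DDSS}). Symmetry of $F_j$ also yields $(I\otimes F_j)\xi = (F_j\otimes I)\xi$, so $R\xi = \widetilde R\,\xi$; tracelessness of $F_j$ gives $\langle R\xi,\xi\rangle = 0$; and the Clifford relations give $R^2 = \|y\|^2 I$, hence $\|R\xi\| = \|y\|$. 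From here I would treat $y\neq 0$ (the case $y=0$ follows at once from $S\xi = d\xi$, or by continuity of both sides in $y$) and set $\zeta := \|y\|^{-1}R\xi$, a unit vector orthogonal to $\xi$.

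With these in hand, I would compute the action of $S$ and $R$ on $\zeta$. Using $SR + RS = 2\widetilde R$ (a further consequence of the Clifford relations) together with $S\xi = d\xi$ and $\widetilde R\xi = R\xi$, one finds $SR\,\xi = 2\widetilde R\xi - dR\xi = (2-d)R\xi$, so $S\zeta = (2-d)\zeta$; and $R\zeta = \|y\|^{-1}R^2\xi = \|y\|\,\xi$. Consequently $V := \spn\{\xi,\zeta\}$ is invariant under $S$ and under $R$, hence under $T$, and in the orthonormal basis $(\xi,\zeta)$ the restriction $T|_V$ is the symmetric matrix $\begin{pmatrix} d & -\|y\| \\ -\|y\| & 2-d \end{pmatrix}$, whose eigenvalues are $1 \pm \sqrt{(d-1)^2 + \|y\|^2}$. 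Since $(d-1)^2 + \|y\|^2 \geq 1$ for $d\geq 2$, the spectral radius of $T|_V$ equals $1 + \sqrt{(d-1)^2 + \|y\|^2}$, and because $T$ is self-adjoint with $V$ reducing, $\|T\| \geq \|T|_V\| = \sqrt{\|y\|^2 + (d-1)^2} + 1$, which is the assertion.

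I do not anticipate a serious obstacle. The one genuinely creative move is spotting the invariant subspace $\spn\{\xi, R\xi\}$; once that is in place, every remaining step is a one-line manipulation with the Clifford relations and with the reality, symmetry, and tracelessness of the $F_j$. The only bookkeeping to watch is that $R$ and $\widetilde R$ must be distinguished in general even though they agree on $\xi$, and that the degenerate case $y=0$ needs its own (trivial) mention.
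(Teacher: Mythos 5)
Your argument is correct, and it is a genuinely different route from the one in the paper. The paper's proof proceeds in two stages: first, for a vector of the special form $y=(0,\ldots,0,a)$, it carries out an induction in $d$ on the recursive block structure of \eqref{eq:recursion}, using the eigenvalue-$(d-1)$ statement from \cite[Lemma 7.23]{DDSS} to write down an explicit $4$-block eigenvector for $\sum_{j<d} F_j\otimes F_j + (F_d - a)\otimes F_d$; second, it reduces general $y$ to this special case by applying an orthogonal rotation to the tuple $(F_1,\ldots,F_d)$ and invoking the universality statement \eqref{eq:universality} to obtain a $*$-homomorphism that contracts the norm. Your proof bypasses both the induction and the universality argument: by isolating the maximally entangled vector $\xi$ and observing that $V=\spn\{\xi,R\xi\}$ is invariant under both $S=\sum F_j\otimes F_j$ and $R=\sum y_j(I\otimes F_j)$ (via $S\xi=d\xi$, $R^2=\|y\|^2 I$, and the anticommutator identity $SR+RS=2\widetilde R$), you reduce everything to the eigenvalues of a $2\times 2$ matrix, and you extract the full statement for all $y$ at once. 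The tradeoff is that the paper's version produces a concrete eigenvector tied to the recursion, whereas yours is more conceptual and shorter, resting only on the Clifford relations together with the reality, symmetry, and tracelessness of the $F_j$ — properties you correctly note are essential (the transpose trick $(I\otimes A)\xi=(A^{T}\otimes I)\xi$ uses $F_j^{T}=F_j$, which holds here because the matrices are real symmetric, not merely Hermitian). Both proofs are valid; yours is arguably the cleaner one.
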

\begin{proof}
First, we prove the case $(y_1, \ldots, y_d) = (0, \ldots, 0, a)$ inductively in $d$. Now, \cite[Lemma 7.23]{DDSS} shows that $T := \sum\limits_{j=1}^{d-1} F_j^{[d-1]} \otimes F_j^{[d-1]}$ has $(d-1)$ as an eigenvalue, with eigenvector denoted $x^t$. (Note that the case $d - 1 = 1$ is included here.) Since $F_j^{[d]} = \begin{pmatrix} 0 & 1 \\ 1 & 0 \end{pmatrix} \otimes F_j^{[d-1]}$ for $1 \leq j \leq d - 1$ and $F_d^{[d]} = \begin{pmatrix} 1 & 0 \\ 0 & -1 \end{pmatrix} \otimes I_{2^{d-2}}$, it follows that
\bes
\sum_{j=1}^{d-1} F_j^{[d]} \otimes F_j^{[d]} + (F_{d}^{[d]} - a I) \otimes F_d^{[d]} = \begin{pmatrix} (1 - a)I& 0 & 0 & T \\ 0 & (-1 - a)I & T & 0 \\ 0 & T & (-1 + a)I & 0 \\ T & 0 & 0 & (1 + a)I \end{pmatrix}
\ees
has an eigenvalue  $\sqrt{a^2 + (d-1)^2} + 1$ with eigenvector $\left( \cfrac{\sqrt{a^2+(d-1)^2} - a}{d-1} \cdot x, 0, 0, x \right)^t$. Therefore the theorem holds for $y$ of the form $(0, \ldots, 0, a)$.

Next, consider any $y \in \bR^d \setminus \{0\}$, $d \geq 2$. There is an orthogonal matrix $U \in O_d(\bR)$ whose final row is $\frac{1}{||y||} y$, and we may apply the transformation $U$ to $(F_1, \ldots, F_d)$ to produce a tuple $(H_1, \ldots, H_d)$. A simple calculation shows that because $U$ is orthogonal, the matrices $H_1, \ldots, H_d$ pairwise anti-commute and are self-adjoint unitaries. Moreover, the vector space dimension of $C^*(H_1, \ldots, H_d)$ is still $2^d$, so just as in (\ref{eq:universality}), $H_1, \ldots, H_d$ is universal for these relations, and there is a unital $C^*$-homomorphism $\phi: H_j \mapsto F_j$. It follows that $\phi \otimes \phi$ is a contraction, so
\be\label{eq:rotationhappens}
\begin{aligned}
\left|\left|\sum\limits_{j=1}^{d-1} H_j \otimes H_j  + (H_d - ||y||) \otimes H_d \right|\right|  &\geq \left|\left|\sum\limits_{j=1}^{d-1} F_j \otimes F_j + (F_d - ||y||) \otimes F_d \right|\right| \\ &\geq \sqrt{||y||^2 + (d-1)^2} + 1.
\end{aligned}
\ee

Finally, because the last row of $U$ is $\cfrac{1}{||y||} \hspace{2pt} y$, we know that by definition,
\bes
H_d = \sum\limits_{j=1}^d \cfrac{1}{||y||} \hspace{2 pt} y_j \cdot F_j,
\ees and because $U$ is orthogonal, we also know that
\bes
\sum\limits_{j=1}^d H_j \otimes H_j = \sum\limits_{j=1}^d F_j \otimes F_j.
\ees
Therefore, it follows that
\bes
\sum\limits_{j=1}^{d-1} H_j \otimes H_j  + (H_d - ||y||) \otimes H_d = \sum\limits_{j = 1}^d (F_j - y_j) \otimes F_j,
\ees
so by (\ref{eq:rotationhappens}), we have reached the estimate $\left|\left| \sum\limits_{j = 1}^d (F_j - y_j) \otimes F_j \right| \right| \geq \sqrt{||y||^2 + (d-1)^2} + 1$.
\end{proof}
Since $F_1, \ldots, F_d$ are anti-commuting self-adjoint unitaries such that $F_1 \otimes F_1 + \ldots + F_d \otimes F_d$ has an eigenvalue $d$, it follows from the diagonalization of the normal tuple $N = (F_1 \otimes F_1, \ldots, F_d \otimes F_d)$ and the triangle inequality that $(1, \ldots, 1)$ is in the joint spectrum of $N$. This claim may certainly be strengthened to include other tuples with $\pm 1$ entries by examining conjugations, but we will not need it. From
\bes
(1, \ldots, 1) \in \sigma(F_1 \otimes F_1, \ldots, F_d \otimes F_d)
\ees
it follows that for $t_1, \ldots, t_d > 0$,
\be\label{eq:antitocomm}
||t_1 F_1 \otimes F_1 + \ldots + t_d F_d \otimes F_d|| = t_1 + \ldots + t_d.
\ee

Moreover, the claim in \cite{DDSS} that
\be\label{eq:themagicmatrices}
(F_1, \ldots, F_d) \in \Wmax{}(\ol{\bB}_{2,d})
\ee
may be viewed as a direct result of pairwise anti-commutation.

\begin{lemma}\label{lem:antinorm}
If $x_1, \ldots, x_n$ are self-adjoint, pairwise anti-commuting elements of a $C^*$-algebra, then it follows that
\[
||x_1 + \ldots + x_n|| = \sqrt{||x_1^2 + \ldots + x_n^2||} \leq \sqrt{||x_1||^2 + \ldots + ||x_n||^2}.
\]
\end{lemma}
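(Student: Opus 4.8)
The key observation is that if $x_1, \ldots, x_n$ are self-adjoint and pairwise anti-commute, then their squares commute with everything relevant, and all the cross terms in $(x_1 + \ldots + x_n)^2$ cancel. The plan is to compute this square directly: expanding,
\[
(x_1 + \ldots + x_n)^2 = \sum_{i} x_i^2 + \sum_{i < j}(x_i x_j + x_j x_i) = \sum_i x_i^2,
\]
since $x_i x_j + x_j x_i = 0$ for $i \neq j$ by the anti-commutation hypothesis. Let $s = x_1 + \ldots + x_n$, which is self-adjoint. Then $\|s\|^2 = \|s^2\| = \|s^* s\|$ by the $C^*$-identity, so $\|s\| = \sqrt{\|x_1^2 + \ldots + x_n^2\|}$, giving the first equality.

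For the inequality, I would note that each $x_i^2$ is a positive element, so $\sum_i x_i^2 \leq \sum_i \|x_i^2\| \cdot 1 = \left(\sum_i \|x_i\|^2\right) 1$ (using $\|x_i^2\| = \|x_i\|^2$ for self-adjoint $x_i$), and hence $\|\sum_i x_i^2\| \leq \sum_i \|x_i\|^2$. Taking square roots finishes the proof. One small point to be careful about is that $\sum_i x_i^2 \leq C \cdot 1$ as operators does imply $\|\sum_i x_i^2\| \leq C$ precisely because $\sum_i x_i^2$ is positive (its norm equals its spectral radius, and the spectrum lies in $[0, C]$).

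There is essentially no obstacle here — the entire content is the cancellation of cross terms plus the $C^*$-identity. The only thing worth stating explicitly is why anti-commutation is being used (it forces $x_i x_j + x_j x_i = 0$, which is exactly $\{x_i, x_j\} = 0$). If anything, the subtlety is purely bookkeeping: making sure the chain $\|s\| = \|s\|$, $\|s\|^2 = \|s^2\|$, $s^2 = \sum x_i^2$, and $\|\sum x_i^2\| \leq \sum \|x_i\|^2$ is assembled in the right order, and noting that the displayed claim will subsequently be applied with $x_j = t_j^{1/2} \cdots$ or directly to the anti-commuting unitaries $F_j$ to recover \eqref{eq:themagicmatrices}, where $\|x_j\| \leq 1$ gives $\|\sum F_j \otimes (\text{scalars})\|$-type bounds landing in $\Wmax{}(\ol{\bB}_{2,d})$.
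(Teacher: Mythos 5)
Your proposal is correct and takes the same approach as the paper: square the self-adjoint sum, cancel the cross terms via anti-commutation, apply the $C^*$-identity, and bound the norm of $\sum x_i^2$ by $\sum \|x_i\|^2$. The paper states this in one line; you simply spell out the final inequality via positivity, which is fine.
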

\begin{proof}
Apply the $C^*$-norm identity $||A|| = \sqrt{||AA^*||}$ and cancel $x_ix_j + x_jx_i = 0$ for $i \not= j$:
\bes
||x_1 + \ldots + x_n|| = \sqrt{||(x_1 + \ldots + x_n)^2||} = \sqrt{||x_1^2 + \ldots + x_n^2||} \leq \sqrt{||x_1||^2 + \ldots + ||x_n||^2}.
\ees
\end{proof}

Indeed, $(F_1, \ldots, F_d) \in \Wmax{}(\ol{\bB}_{2,d})$ follows easily, as whenever $(\lambda_1, \ldots, \lambda_d) \in \ol{\bB}_{2,d}$, the inequality  $-1 \leq \lambda_1 F_1 + \ldots + \lambda_d F_d \leq 1$ holds due to anti-commutation and Lemma \ref{lem:antinorm}.
The properties of the tuple $(F_1, \ldots, F_d)$ are sufficient to characterize exactly when the maximal matrix convex set over a ball is contained in the minimal matrix convex set over another ball. First, we obtain a numerical estimate.

\begin{lemma}\label{lem:totallyrad}
Let $C_1, C_2 > 0$, $x, y \in \bR^d$ be such that $x + C_1 \cdot (F_1, \ldots, F_d) \in \Wmin{2^{d-1}}(y + C_2 \cdot \ol{\bB}_{2,d})$, where $F_1, \ldots, F_d$ are as in Lemma \ref{lem:themagicmatrices}. Then $C_2 \geq \sqrt{||y-x||^2 + C_1^2(d-1)^2} + C_1$.
\end{lemma}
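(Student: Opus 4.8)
The plan is to translate the hypothesis $x + C_1 \cdot F \in \Wmin{2^{d-1}}(y + C_2 \cdot \ol{\bB}_{2,d})$ into a dilation statement and then pair it against the extremal behavior of the anti-commuting tuple $F = (F_1,\dots,F_d)$ established in Lemma \ref{lem:themagicmatrices}. First I would unpack the membership in $\Wmin{}$ using \eqref{eq:Wmin_def1}: there exists a normal tuple $N = (N_1,\dots,N_d)$ with joint spectrum $\sigma(N) \subseteq y + C_2\cdot\ol{\bB}_{2,d}$ and an isometry $V$ so that $x_j I + C_1 F_j = V^* N_j V$ for each $j$. Equivalently, writing $M_j := \frac{1}{C_1}(N_j - x_j I)$, the normal tuple $M$ dilates $F$ and has $\sigma(M) \subseteq \frac{1}{C_1}\big((y - x) + C_2\cdot\ol{\bB}_{2,d}\big)$, i.e. $\|M(\omega) - \frac{1}{C_1}(y-x)\| \le \frac{C_2}{C_1}$ for every point $\omega$ in the spectrum.

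The heart of the argument is to test $F$ against itself through the dilation. Consider the self-adjoint operator $\sum_{j=1}^d (F_j - z_j I)\otimes F_j$ for the choice $z := \frac{1}{C_1}(y - x)$, which by Lemma \ref{lem:themagicmatrices} (applied with the vector $z$ in place of $y$) has norm at least $\sqrt{\|z\|^2 + (d-1)^2} + 1$. Now I would use that $F$ compresses from $M$: since $F_j = V^* M_j V$, the operator $\sum_j (F_j - z_j I)\otimes F_j$ is the compression $(V\otimes V)^* \big(\sum_j (M_j - z_j I)\otimes M_j\big)(V\otimes V)$ — here I use that $F_j$ appears in the second tensor leg as well, so one must compress in both legs; but compressing the second leg only shrinks the norm, which is the direction we want. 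Hence
\[
\Big\| \sum_{j=1}^d (M_j - z_j I)\otimes M_j \Big\| \;\ge\; \Big\| \sum_{j=1}^d (F_j - z_j I)\otimes F_j \Big\| \;\ge\; \sqrt{\|z\|^2 + (d-1)^2} + 1 .
\]
(To be careful about the second leg: $F_j\otimes F_j = (I\otimes V)^*(I\otimes M_j)\,(\text{stuff})$ — the clean way is to note $\sum_j(F_j - z_jI)\otimes F_j$ is a compression of $\sum_j(M_j-z_jI)\otimes M_j$ by the isometry $V\otimes V$, using $F_j = V^*M_jV$ in both factors and bilinearity, so the inequality is immediate.)

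On the other hand, since $M$ is a normal (commuting self-adjoint) tuple, $\sum_j(M_j - z_j I)\otimes M_j$ is a normal operator whose spectrum is contained in $\{\sum_j(\mu_j - z_j)\nu_j : \mu,\nu \in \sigma(M)\}$, so by Cauchy--Schwarz its norm is at most $\sup\{\|\mu - z\|\,\|\nu\| : \mu,\nu\in\sigma(M)\} \le \frac{C_2}{C_1}\cdot\sup_{\nu\in\sigma(M)}\|\nu\|$. To bound $\|\nu\|$ I use $\|\nu\| \le \|\nu - z\| + \|z\| \le \frac{C_2}{C_1} + \|z\|$. Combining,
\[
\frac{C_2}{C_1}\Big(\frac{C_2}{C_1} + \|z\|\Big) \;\ge\; \sqrt{\|z\|^2 + (d-1)^2} + 1 .
\]
Substituting $\|z\| = \|y-x\|/C_1$ and clearing denominators turns this into $C_2^2 + C_2\|y-x\| \ge C_1\sqrt{\|y-x\|^2 + C_1^2(d-1)^2} + C_1^2$, and a short algebraic manipulation (complete the square, or verify that $C_2 = \sqrt{\|y-x\|^2 + C_1^2(d-1)^2} + C_1$ is the threshold) yields exactly $C_2 \ge \sqrt{\|y-x\|^2 + C_1^2(d-1)^2} + C_1$. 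The main obstacle I anticipate is getting the two-sided tensor compression argument stated cleanly — making sure the norm inequality goes the right way when $F$ sits in both tensor legs — and then checking that the resulting quadratic inequality in $C_2$ is genuinely equivalent to the claimed bound; both are routine once set up correctly, and the sharpness of Lemma \ref{lem:themagicmatrices} is doing all the real work.
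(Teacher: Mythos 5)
Your upper bound is too loose, and the inequality you obtain does \emph{not} imply the stated conclusion. Concretely, after clearing denominators you arrive at $C_2^2 + C_2\|y-x\| \geq C_1\sqrt{\|y-x\|^2 + C_1^2(d-1)^2} + C_1^2$, and you assert this is ``equivalent'' to $C_2 \geq \sqrt{\|y-x\|^2 + C_1^2(d-1)^2} + C_1$. It is not: set $x=y$, $C_1=1$, so $z=0$. Your inequality becomes $C_2^2 \geq (d-1)+1 = d$, i.e.\ $C_2 \geq \sqrt{d}$, while the lemma requires $C_2 \geq d$. For $d \geq 2$ this is a genuine gap, not a routine algebraic check. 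The loss comes from compressing in \emph{both} tensor legs and then bounding the resulting normal operator via the Cauchy--Schwarz estimate $|\langle \mu - z, \nu\rangle| \leq \|\mu-z\|\cdot\|\nu\|$; this product bound is far from the true operator norm.

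The paper's proof keeps $F$ in the second tensor leg, and this is where the sharpness lives. After normalizing to $x = 0$, $C_1 = 1$, one considers the operator
\[
\sum_{j=1}^d (N_j - y_j I) \otimes F_j ,
\]
where $N$ is the normal dilation of $F$ with $\sigma(N)\subseteq y + C_2\cdot\ol{\bB}_{2,d}$. The summands are self-adjoint and pairwise anti-commute (the $N_j - y_jI$ commute among themselves, the $F_j$ anti-commute), so Lemma \ref{lem:antinorm} gives an \emph{equality}
\[
\left\| \sum_j (N_j - y_j I)\otimes F_j \right\| = \sqrt{\left\| \sum_j (N_j - y_j I)^2 \right\|} \leq C_2,
\]
using $F_j^2 = I$ and $\sum_j(\nu_j - y_j)^2 \leq C_2^2$ for $\nu \in \sigma(N)$. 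Compressing only in the first leg (by $V\otimes I$) yields $\left\|\sum_j (F_j - y_j I)\otimes F_j\right\| \leq C_2$, and Lemma \ref{lem:themagicmatrices} then gives $C_2 \geq \sqrt{\|y\|^2 + (d-1)^2} + 1$ directly. The point you miss is that replacing $F$ by $M$ in the second leg destroys exactly the anti-commutation structure that makes the norm computation tight; Lemma \ref{lem:antinorm} applied to $\{(N_j-y_jI)\otimes F_j\}$ gives an identity, whereas your Cauchy--Schwarz on $\sum_j(M_j-z_jI)\otimes M_j$ gives only an inequality with a quadratic defect in $C_2$.
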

\begin{proof}
After translation and scaling, it suffices to consider $x = 0$, $C_1 = 1$, and $(F_1, \ldots, F_d) \in \Wmin{2^{d-1}}(y + C_2 \cdot \ol{\bB}_{2,d})$. We will show that $C_2 \geq \sqrt{||y||^2 + (d-1)^2} + 1$.

If there is a normal dilation $(N_1, \ldots, N_d)$ of $(F_1, \ldots, F_d)$ with joint spectrum in $y + C_2 \cdot \ol{\bB}_{2,d}$, then by Lemma \ref{lem:antinorm},
\bes
|| ( N_1 - y_1) \otimes F_1 + \ldots + (N_d - y_d) \otimes F_d|| \leq C_2.
\ees
On the other hand, by Lemma \ref{lem:themagicmatrices},
\bes
|| (F_1 - y_1) \otimes F_1 + \ldots +  (F_d - y_d) \otimes F_d|| \geq \sqrt{||y||^2 + (d-1)^2} + 1.
\ees
Comparing these norms shows that $C_2 \geq \sqrt{||y||^2 + (d-1)^2} + 1$.
\end{proof}

We also show that if $C_2 \geq \sqrt{||y-x||^2 + C_1^2(d-1)^2} + C_1$, then there is a simplex in between $x + C_1 \cdot \ol{\bB}_{2,d}$ and $y + C_2 \cdot \ol{\bB}_{2,d}$. The following lemma provides an estimate that will help prove this claim.

\begin{lemma}\label{lem:optimalsimplexplacement}
Suppose $b > 1$, $K \subseteq \bR^{d-1}$ is compact and convex, and $\Pi \subseteq \bR^d$ is the convex hull of $K \times \{-1\}$ and $(0, \ldots, 0, b)$.
Then
\bes
\ol{\bB}_{2,d} \subseteq \Pi \hspace{.1 cm} \iff \sqrt{\cfrac{b+1}{b-1}} \cdot \ol{\bB}_{2,d-1} \subseteq K
\ees
\end{lemma}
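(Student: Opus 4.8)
The plan is to slice everything by the horizontal hyperplanes $\{x_d = s\}$ and reduce the asserted equivalence to a one-variable optimization. First I would record the cross-section description of $\Pi$: writing a point of $\Pi = \conv\big((K \times \{-1\}) \cup \{(0, \ldots, 0, b)\}\big)$ as a convex combination of points of $K \times \{-1\}$ and the apex $(0,\ldots,0,b)$, one checks that for $s \in [-1, b]$ the slice of $\Pi$ at height $x_d = s$ is exactly $\tfrac{b-s}{b+1} K \times \{s\}$, and that $\Pi$ has no points with $x_d \notin [-1,b]$. On the other hand, the slice of $\ol{\bB}_{2,d}$ at height $s$ is $\sqrt{1-s^2}\,\ol{\bB}_{2,d-1} \times \{s\}$ for $s \in [-1,1]$, and is empty otherwise. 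Since $[-1,1] \subseteq [-1,b]$, this yields
\[
\ol{\bB}_{2,d} \subseteq \Pi \iff \sqrt{1-s^2}\,\ol{\bB}_{2,d-1} \subseteq \tfrac{b-s}{b+1}\,K \quad \text{for all } s \in [-1,1].
\]

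Next I would remove the ``ball inside $K$'' quantifier. If $0 \notin K$, then both sides of the claimed equivalence fail (the right-hand side because $\sqrt{\tfrac{b+1}{b-1}}\,\ol{\bB}_{2,d-1}$ contains $0$, and the left-hand side by inspecting the slice $s=0$), so we may assume $0 \in K$. Then, since $K$ is convex and contains $0$, the set $\{r \geq 0 : r\,\ol{\bB}_{2,d-1} \subseteq K\}$ is an interval, and it is closed because $K$ is closed; write it as $[0,R]$ with $R \in [0,\infty)$. Since $b - s \geq b - 1 > 0$ for $s \leq 1$, the displayed condition becomes
\[
\frac{(b+1)\sqrt{1-s^2}}{b-s} \leq R \text{ for all } s \in [-1,1], \qquad\text{i.e.}\qquad \sup_{s \in [-1,1]} \frac{(b+1)\sqrt{1-s^2}}{b-s} \leq R .
\]

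Finally I would compute this supremum. Putting $g(s) = \tfrac{1-s^2}{(b-s)^2}$, a short differentiation gives $g'(s) = \tfrac{2(1-bs)}{(b-s)^3}$, which is positive on $[-1, 1/b)$ and negative on $(1/b, 1]$; hence $g$ attains its maximum on $[-1,1]$ at $s = 1/b$, where $g(1/b) = \tfrac{1}{b^2-1}$. Therefore the supremum above equals $(b+1)\cdot\tfrac{1}{\sqrt{b^2-1}} = \sqrt{\tfrac{b+1}{b-1}}$, so the condition is $\sqrt{\tfrac{b+1}{b-1}} \leq R$, which by the definition of $R$ is precisely $\sqrt{\tfrac{b+1}{b-1}}\,\ol{\bB}_{2,d-1} \subseteq K$. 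The only genuine computation is the elementary maximization in this last step; the rest is bookkeeping with cross-sections, so I do not anticipate a serious obstacle.
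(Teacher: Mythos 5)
Your proof is correct and follows essentially the same route as the paper's: slice by the hyperplanes $\{x_d = s\}$, reduce the ball-in-$\Pi$ containment to a one-variable condition, and maximize $\sqrt{1-s^2}/(b-s)$ at $s = 1/b$. The only difference is that you explicitly compute the slice $\Pi(s) = \tfrac{b-s}{b+1}K$ (and note the degenerate case $0 \notin K$), whereas the paper introduces $C(x_d)$ and asserts its linearity directly; these are minor presentational variations on the same argument.
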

\begin{proof}
Parametrize $\Pi$ by its final coordinate $x_d \in [-1, b]$, $\Pi(x_d) = \{y \in \bR^{d-1}: (y, x_d) \in \Pi\}$. Then $\ol{\bB}_{2,d} \subseteq \Pi$ holds if and only if for each $x_d \in [-1, 1]$, $\sqrt{1 - x_d^2} \cdot \ol{\bB}_{2,d-1} \subseteq \Pi(x_d)$. If
\bes
C(x_d) = \max\{C \geq 0: C \cdot \ol{\bB}_{2,d-1} \subseteq \Pi(x_d)\},
\ees
then $C(x_d)$ is linear, with $C(b) = 0$. Setting $\alpha = C(-1)$, we have $C(x_d) = \cfrac{\alpha(b - x_d)}{b+1}$.
Therefore, $\ol{\bB}_{2,d} \subseteq \Pi$ if and only if for each $x_d \in [-1, 1]$, $\cfrac{\alpha (b-x_d)}{b + 1} \geq \sqrt{1 - x_d^2}$, or rather,
\bes
\alpha \geq \max\limits_{x_d \in [-1,1]} \left\{ \cfrac{b + 1}{b - x_d} \sqrt{1 - x_d^2} \right\}.
\ees
The maximum occurs at the critical point $x_d = 1/b$, so the inequality is equivalent to $\alpha \geq \cfrac{b + 1}{b - \frac{1}{b}}\sqrt{1 - \frac{1}{b^2}} = \sqrt{\cfrac{b + 1}{b - 1}}$.
Since $\alpha = C(-1)$ is exactly the maximum scale of $\ol{\bB}_{2,d-1}$ sitting inside $K$, we are done.
\end{proof}

\begin{lemma}\label{lem:simplexballcontainment}
Let $x, y \in \bR^d$, $C_1 > 0$, and $C_2 = \sqrt{||y-x||^2 + C_1^2(d-1)^2} + C_1$.
Then there is a simplex $\Pi$ with $x + C_1 \cdot \ol{\bB}_{2,d} \subset \Pi \subset y + C_2 \cdot \ol{\bB}_{2,d}$.
\end{lemma}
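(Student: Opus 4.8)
The plan is to build $\Pi$ by hand as a cone over a regular simplex, tuning a single parameter (the apex height) so that Lemma~\ref{lem:optimalsimplexplacement} forces the inscribed ball and an elementary distance computation forces the circumscribed ball. First I would normalize: rescaling by $1/C_1$ and applying a rigid motion, we may assume $C_1 = 1$, that the small ball $x + C_1\cdot\ol{\bB}_{2,d}$ is $\ol{\bB}_{2,d}$ centered at the origin, and (rotating the line through $x$ and $y$ onto the last coordinate axis, and reflecting if necessary) that $y = (0,\dots,0,-\sigma)$ with $\sigma := \|y-x\|\geq 0$; then $C_2 = \sqrt{\sigma^2 + (d-1)^2} + 1$. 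The case $d=1$ is trivial (take $\Pi$ to be the segment $\ol{\bB}_{2,1}$ itself), so assume $d\geq 2$.

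Next I would set
\[
b := \sqrt{\sigma^2 + (d-1)^2} - \sigma + 1 ,
\]
which satisfies $b>1$ because $d\geq 2$ forces $\sqrt{\sigma^2+(d-1)^2}>\sigma$, and take $\Pi := \conv\big((K\times\{-1\})\cup\{(0,\dots,0,b)\}\big)$, where $K\subseteq\bR^{d-1}$ is a regular $(d-1)$-simplex centered at the origin of $\bR^{d-1}$ with inradius $\rho := \sqrt{(b+1)/(b-1)}$, hence circumradius $(d-1)\rho$ (the in- to circumradius ratio of a regular $k$-simplex being $1:k$). Then $\Pi$ is a genuine $d$-simplex, and since $\rho\cdot\ol{\bB}_{2,d-1}\subseteq K$, Lemma~\ref{lem:optimalsimplexplacement} gives $\ol{\bB}_{2,d}\subseteq\Pi$, which is the left-hand inclusion. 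For the right-hand inclusion, since $y + C_2\cdot\ol{\bB}_{2,d}$ is closed and convex it suffices to check that all $d+1$ vertices of $\Pi$ lie within distance $C_2$ of $y$. The apex gives $\dist(y,(0,\dots,0,b)) = b+\sigma = \sqrt{\sigma^2+(d-1)^2}+1 = C_2$; a base vertex $v$ has $(d-1)$-dimensional part of norm $(d-1)\rho$ and last coordinate $-1$, so $\dist(y,v)^2 = (d-1)^2\rho^2 + (\sigma-1)^2$, and the choice of $b$ makes this simplify: using $\rho^2 = 1 + 2/(b-1)$ and $(d-1)^2/(b-1) = \sqrt{\sigma^2+(d-1)^2}+\sigma$ (obtained by rationalizing, since $b-1 = \sqrt{\sigma^2+(d-1)^2}-\sigma$), one gets $\dist(y,v)^2 = (d-1)^2 + \sigma^2 + 2\sqrt{\sigma^2+(d-1)^2} + 1 = C_2^2$. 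Hence $\Pi\subseteq y + C_2\cdot\ol{\bB}_{2,d}$.

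I expect the only delicate point to be the bookkeeping in the second paragraph — getting the apex height $b$ exactly right so that the base-vertex distance collapses to $C_2$ (the algebra rationalizes cleanly, but one must keep the in-/circumradius ratio and the sign of $\sigma$ straight), together with checking $b>1$ so that Lemma~\ref{lem:optimalsimplexplacement} is actually applicable. The inclusions produced are automatically proper, since a simplex equals neither $\ol{\bB}_{2,d}$ nor $y + C_2\cdot\ol{\bB}_{2,d}$, so this yields the asserted $x + C_1\cdot\ol{\bB}_{2,d}\subset\Pi\subset y + C_2\cdot\ol{\bB}_{2,d}$ after undoing the normalization.
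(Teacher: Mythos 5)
Your proof is correct, and the algebra checks out: rationalizing $b-1=\sqrt{\sigma^2+(d-1)^2}-\sigma$ makes the vertex distance collapse to $C_2$ exactly as you claim, and $b>1$ holds for $d\geq 2$. The overall strategy is the same as the paper's: cone a regular $(d-1)$-simplex in the hyperplane $\{x_d=-1\}$ up to an apex on the $x_d$-axis, use Lemma~\ref{lem:optimalsimplexplacement} to get $\ol{\bB}_{2,d}\subseteq\Pi$, and verify $\Pi\subseteq y+C_2\cdot\ol{\bB}_{2,d}$ at the vertices. The genuine difference is in the geometry of the cone. You place $y-x$ in the \emph{negative} $x_d$ direction and take the apex at the \emph{near} pole of the big sphere, $be_d$ with $b=C_2-\sigma$, then rig $b$ and the base inradius $\rho=\sqrt{(b+1)/(b-1)}$ so that all $d+1$ vertices lie exactly on $\partial(y+C_2\cdot\ol{\bB}_{2,d})$, with no free parameter left over. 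The paper instead places $y-x$ in the \emph{positive} $x_d$ direction, takes the apex at the \emph{far} pole $(a+C_2)e_d$, keeps the base inradius free as $r/(d-1)$, and shows the two resulting constraints, $(a+1)^2+r^2\le C_2^2$ and $r/(d-1)\ge\sqrt{(a+C_2+1)/(a+C_2-1)}$, admit a common $r$. The two simplices are not related by any rigid motion fixing the two ball centers, so these really are different inscribed simplices realizing the same bound; your exact tuning closes the verification in a single rationalization rather than an inequality chase, which is arguably cleaner. One small nit: your closing remark that the inclusions are \lq\lq automatically proper\rq\rq\ fails for $d=1$, where you take $\Pi=\ol{\bB}_{2,1}$ and the left-hand inclusion is an equality; this is harmless since the paper also dismisses $d=1$ as trivial and is reading $\subset$ as $\subseteq$.
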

\begin{proof}
The result is trivial for $d = 1$, so we assume $d \geq 2$. After a rotation, translation, and scaling, we may suppose that $x = (0, \ldots, 0)$, $y = (0, \ldots, 0, a)$ for $a > 0$, $C_1 = 1$, and $C := C_2 = \sqrt{||y-x||^2 + (d-1)^2} + 1$. We will produce a simplex $\Pi$ with $\ol{\bB}_{2,d} \subset \Pi \subset (0, \ldots, 0, a) + C \cdot \ol{\bB}_{2,d}$.

For any $r > d-1$, let $\Delta = \Delta(r) \subseteq \bR^{d-1}$ be a $(d-1)$-simplex with $\frac{r}{d-1} \cdot \ol{\bB}_{2,d-1} \subset \Delta \subset r \cdot \ol{\bB}_{2,d-1}$, as an application of (\ref{eq:distoptimizer}) in dimension $d - 1$.
We wish to find $r$ so that the convex hull
\bes
\Pi = \Pi(r) = \mathrm{Conv}(\Delta \times \{-1\}, \, (0, \ldots, 0, a + C)),
\ees
which is itself a $d$-simplex, satisfies
\bes
\ol{\bB}_{2,d} \subseteq \Pi \subseteq (0, \ldots, 0, a) + C \cdot \ol{\bB}_{2,d}.
\ees
For the claim $\Pi \subseteq (0, \ldots, 0, a) + C \cdot \ol{\bB}_{2,d}$, we must have
\bes
(a + 1)^2 + r^2 \leq C^2.
\ees
On the other hand, for the claim $\ol{\bB}_{2,d} \subseteq \Pi$, Lemma \ref{lem:optimalsimplexplacement} demands
\bes
\cfrac{r}{d-1} \geq \sqrt{\cfrac{(a + C) + 1}{(a + C) - 1}}.
\ees
The two inequalities can be satisfied simultaneously if and only if
\bes
(a+1)^2 + (d-1)^2 \cdot \cfrac{a + C + 1}{a + C - 1} \leq C^2,
\ees
which is equivalent to
\bes
(d-1)^2 \cdot \cfrac{a + C + 1}{a + C - 1} \leq C^2 - (a + 1)^2.
\ees
Canceling linear terms (which are positive for $C > 1$) gives the equivalent form
\bes
(d-1)^2 \leq (C - (a + 1))(a + C - 1) = (C-1)^2 - a^2,
\ees
or
\bes
(C-1)^2 \geq a^2 + (d-1)^2.
\ees
Since this inequality is satisfied by definition, we know $\ol{\bB}_{2,d} \subset \Pi \subset (0, \ldots, 0, a) + C \cdot \ol{\bB}_{2,d}$.
\end{proof}

Finally, we have reached a solution of the dilation problem for $\ell^2$-balls.

\begin{theorem}\label{thm:everythingball}
Fix $C_1, C_2 > 0$, $x, y \in \bR^d$, and let $F_1, \ldots, F_d$ be as in Lemma \ref{lem:themagicmatrices}. Then the following are equivalent.

\begin{enumerate}
\item\label{item:dilate} $\Wmax{}(x +  C_1 \cdot \ol{\bB}_{2,d}) \subseteq \Wmin{}(y + C_2 \cdot \ol{\bB}_{2,d})$

\item\label{item:dilatedimension} $\Wmax{2^{d-1}}(x +  C_1 \cdot \ol{\bB}_{2,d}) \subseteq \Wmin{2^{d-1}}(y + C_2 \cdot \ol{\bB}_{2,d})$

\item\label{item:specifics} $x + C_1 \cdot (F_1, \ldots, F_d) \in \Wmin{2^{d-1}}(y + C_2 \cdot \ol{\bB}_{2,d})$

\item\label{item:numericalestimate} $C_2 \geq \sqrt{||y - x||^2 + C_1^2 (d-1)^2} + C_1$

\item\label{item:simplexcontainment} There is a simplex $\Pi$ with $x + C_1 \cdot \ol{\bB}_{2,d} \subset \Pi \subset y + C_2 \cdot \ol{\bB}_{2,d}$
\end{enumerate}
\end{theorem}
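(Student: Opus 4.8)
The plan is to establish the cyclic chain $(1)\Rightarrow(2)\Rightarrow(3)\Rightarrow(4)\Rightarrow(5)\Rightarrow(1)$, since almost all of the substance has already been packaged into the lemmas of this section. The implication $(1)\Rightarrow(2)$ is trivial, being the restriction of an inclusion of matrix convex sets to a single matrix level. For $(2)\Rightarrow(3)$, I would recall that $(F_1,\ldots,F_d)\in\Wmax{}(\ol{\bB}_{2,d})$ --- this follows from pairwise anti-commutation together with Lemma \ref{lem:antinorm}, as noted right after that lemma --- so in particular $(F_1,\ldots,F_d)\in\Wmax{2^{d-1}}(\ol{\bB}_{2,d})$; applying the affine behaviour of $\Wmax{}$ recorded in Section \ref{subsec:convex_geometry} (cf.\ Lemma \ref{lem:affinemapWminmax}) gives $x+C_1\cdot(F_1,\ldots,F_d)\in\Wmax{2^{d-1}}(x+C_1\cdot\ol{\bB}_{2,d})$, and $(2)$ then places this tuple in $\Wmin{2^{d-1}}(y+C_2\cdot\ol{\bB}_{2,d})$, which is exactly $(3)$.

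The step $(3)\Rightarrow(4)$ is precisely the content of Lemma \ref{lem:totallyrad}. For $(4)\Rightarrow(5)$, I would apply Lemma \ref{lem:simplexballcontainment} with the boundary value $C_2':=\sqrt{\|y-x\|^2+C_1^2(d-1)^2}+C_1$ to produce a simplex $\Pi$ with $x+C_1\cdot\ol{\bB}_{2,d}\subset\Pi\subset y+C_2'\cdot\ol{\bB}_{2,d}$; since $C_2\ge C_2'$ we have $y+C_2'\cdot\ol{\bB}_{2,d}\subseteq y+C_2\cdot\ol{\bB}_{2,d}$, so the same $\Pi$ witnesses $(5)$. Finally, for $(5)\Rightarrow(1)$: any such $\Pi$ contains a ball and hence is a genuine $d$-simplex, so Theorem \ref{thm:simplex_unique} gives $\Wmax{}(\Pi)=\Wmin{}(\Pi)$, and monotonicity of $\Wmax{}$ and $\Wmin{}$ yields $\Wmax{}(x+C_1\cdot\ol{\bB}_{2,d})\subseteq\Wmax{}(\Pi)=\Wmin{}(\Pi)\subseteq\Wmin{}(y+C_2\cdot\ol{\bB}_{2,d})$.

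Since each implication is already covered by an established lemma, I do not anticipate a serious obstacle; the only points requiring care are the boundary-value upgrade in $(4)\Rightarrow(5)$ noted above, and the degenerate case $d=1$ (where $2^{d-1}=1$, the magic matrices are unavailable, and Lemmas \ref{lem:themagicmatrices}, \ref{lem:totallyrad}, \ref{lem:simplexballcontainment} either require $d\ge 2$ or are trivial). In that case $\ol{\bB}_{2,1}=[-1,1]$ is itself a simplex, and one checks directly that $\Wmax{}$ and $\Wmin{}$ of an interval coincide (both equal the set of self-adjoint $X$ with $aI\le X\le bI$ for the interval $[a,b]$), so all five conditions reduce to the scalar inclusion $x+C_1[-1,1]\subseteq y+C_2[-1,1]$, i.e.\ to $C_2\ge C_1+|y-x|$, which is $(4)$ with $d-1=0$.
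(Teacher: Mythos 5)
Your proof follows exactly the same cyclic chain $(1)\Rightarrow(2)\Rightarrow(3)\Rightarrow(4)\Rightarrow(5)\Rightarrow(1)$ as the paper, with the same lemmas doing the work; the only difference is that you spell out the (trivial) step $(2)\Rightarrow(3)$ and the boundary-value upgrade in $(4)\Rightarrow(5)$ explicitly, which the paper leaves tacit. One small correction to your final remark about $d=1$: with $F_1=1$, condition $(3)$ becomes $|x+C_1-y|\le C_2$, which is strictly weaker than $(4)$, i.e.\ $C_2\ge |y-x|+C_1$ (take $y=x+C_1$ and $C_2=C_1$), so not all five conditions collapse to the same scalar inclusion; the theorem should simply be read as assuming $d\ge 2$, as the reference to Lemma \ref{lem:themagicmatrices} already implies, and the paper's proof does the same.
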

\begin{proof}
First, $(\ref{item:dilate}) \implies (\ref{item:dilatedimension})$ and $(\ref{item:dilatedimension}) \implies (\ref{item:specifics})$ are trivial, $(\ref{item:specifics}) \implies (\ref{item:numericalestimate})$ is exactly Lemma \ref{lem:totallyrad}, and $(\ref{item:numericalestimate}) \implies (\ref{item:simplexcontainment})$ is exactly Lemma \ref{lem:simplexballcontainment}. Finally, $(\ref{item:simplexcontainment}) \implies (\ref{item:dilate})$ holds because any simplex $\Pi$ has $\Wmax{}(\Pi) = \Wmin{}(\Pi)$, as in Theorem \ref{thm:simplex_unique}.

\end{proof}

This allows us to compute the dilation constant $\theta(\cdot)$ for any $\ell^2$-ball, generalizing \cite[Examples 7.22 and 7.24]{DDSS}.

\begin{corollary}\label{cor:allshiftedballscales}
Fix $d \geq 2$, $A > 0, x \in \bR^d$, and consider the ball $x + A \cdot \ol{\bB}_{2,d}$. Then
\bes
\theta(x + A \cdot \ol{\bB}_{2,d}) = \left\{\begin{array}{ccc} \infty, \hfill & & ||x|| \geq A \\ \cfrac{A(d-1)}{\sqrt{A^2 - ||x||^2}} + 1, & & ||x|| < A \end{array}  \right.
\ees
and consequently $\mathring{\theta}(\ol{\bB}_{2,d}) = \theta(\ol{\bB}_{2,d}) = d$.
\end{corollary}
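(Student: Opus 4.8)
The plan is to derive the corollary directly from Theorem~\ref{thm:everythingball}. Recall that $\theta(x + A \cdot \ol{\bB}_{2,d})$ is, by definition, the infimum of all $C > 0$ such that $\Wmax{}(x + A \cdot \ol{\bB}_{2,d}) \subseteq C \cdot \Wmin{}(x + A \cdot \ol{\bB}_{2,d})$; and since $C \cdot \Wmin{}(K) = \Wmin{}(C \cdot K)$, while $C \cdot (x + A \cdot \ol{\bB}_{2,d}) = Cx + CA \cdot \ol{\bB}_{2,d}$, we are exactly asking for the infimum of $C$ such that condition~(\ref{item:dilate}) of Theorem~\ref{thm:everythingball} holds with ``first ball'' $x + A \cdot \ol{\bB}_{2,d}$ (so $C_1 = A$ and center $x$) and ``second ball'' $Cx + CA \cdot \ol{\bB}_{2,d}$ (so $C_2 = CA$ and center $Cx$). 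Plugging into the numerical criterion~(\ref{item:numericalestimate}), this becomes
\[
CA \geq \sqrt{\|Cx - x\|^2 + A^2(d-1)^2} + A = \sqrt{(C-1)^2\|x\|^2 + A^2(d-1)^2} + A.
\]

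Next I would solve this inequality for $C$. Writing $u = C - 1 \geq 0$ (note $C \geq 1$ always, since $\theta$ is at least $1$), the inequality reads $uA \geq \sqrt{u^2\|x\|^2 + A^2(d-1)^2} - A + A = \ldots$; more carefully, $CA - A = uA$, so we need $uA \geq \sqrt{u^2\|x\|^2 + A^2(d-1)^2}$, i.e. (both sides nonnegative) $u^2 A^2 \geq u^2 \|x\|^2 + A^2(d-1)^2$, i.e. $u^2(A^2 - \|x\|^2) \geq A^2(d-1)^2$. If $\|x\| \geq A$ the left side is $\leq 0 < A^2(d-1)^2$ (using $d \geq 2$), so no finite $C$ works and $\theta = \infty$. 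If $\|x\| < A$, then $u^2 \geq \dfrac{A^2(d-1)^2}{A^2 - \|x\|^2}$, so the optimal $u$ is $\dfrac{A(d-1)}{\sqrt{A^2 - \|x\|^2}}$, giving
\[
\theta(x + A \cdot \ol{\bB}_{2,d}) = 1 + u = \frac{A(d-1)}{\sqrt{A^2 - \|x\|^2}} + 1,
\]
and since the infimum in Theorem~\ref{thm:everythingball} is attained (the criterion is a closed inequality), this value is achieved.

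For the final assertion, $\mathring{\theta}(\ol{\bB}_{2,d}) = \inf\{\theta(a + \ol{\bB}_{2,d}) : a \in \bR^d\}$ by the reduction noted before Proposition~\ref{prop:scaletoscale} (linear invariance of $\theta$ reduces the general $T(x+K)$ to translates only, since a rescaling of a ball is again a ball and the ratio is what matters, but in fact for balls one checks that among all ellipsoids only concentric round balls matter --- more simply, $\theta$ is linearly invariant and every ellipsoid is a linear image of $\ol{\bB}_{2,d}$, so $\mathring{\theta}(\ol{\bB}_{2,d}) = \inf_a \theta(a + \ol{\bB}_{2,d})$). Taking $A = 1$ in the formula just derived, $\theta(a + \ol{\bB}_{2,d}) = \frac{d-1}{\sqrt{1 - \|a\|^2}} + 1$ for $\|a\| < 1$ and $\infty$ otherwise; this is minimized at $a = 0$, yielding $d - 1 + 1 = d$. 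Hence $\mathring{\theta}(\ol{\bB}_{2,d}) = \theta(\ol{\bB}_{2,d}) = d$, which is also forced by Proposition~\ref{prop:centered} since $\ol{\bB}_{2,d}$ is symmetric. The only mild subtlety --- and the step I would be most careful about --- is making sure the passage from $\Wmax{}(K) \subseteq C \cdot \Wmin{}(K)$ to the hypotheses of Theorem~\ref{thm:everythingball} correctly tracks the shifted center $Cx$ of the dilated ball rather than $x$; once that bookkeeping is right, the rest is elementary algebra.
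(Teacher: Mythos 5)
Your proof is correct and takes essentially the same route as the paper: apply the numerical criterion (item (4)) of Theorem~\ref{thm:everythingball} with first ball $x + A\,\ol{\bB}_{2,d}$ and second ball $Cx + CA\,\ol{\bB}_{2,d}$, then square the resulting inequality $CA - A \geq \sqrt{(C-1)^2\|x\|^2 + A^2(d-1)^2}$ to isolate $C$. Your substitution $u = C-1$ and your explicit check of the final $\mathring{\theta}$ claim are cosmetic differences only; the bookkeeping of the dilated center $Cx$, which you flag as the delicate step, is handled in exactly the same way as in the paper.
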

\begin{proof}
Consider $C \geq 1$. By definition, the claim $\theta(x + A \cdot \ol{\bB}_{2,d}) \leq C$ holds if and only if $ \Wmax{}\left(x + A \cdot \ol{\bB}_{2,d}\right) \subseteq \Wmin{}\left(Cx + C A \cdot \ol{\bB}_{2,d} \right)$. From Theorem \ref{thm:everythingball}, $\Wmax{}\left(x + A \cdot \ol{\bB}_{2,d}\right) \subseteq \Wmin{}\left(Cx + C A \cdot \ol{\bB}_{2,d}\right)$ if and only if
\bes
C A \geq \sqrt{||Cx - x||^2 + A^2(d-1)^2}  + A,
\ees
that is,
\be\label{eq:anequationhere}
C - 1 \geq \sqrt{(C-1)^2 \cfrac{||x||^2}{A^2} + (d-1)^2}.
\ee
Since $C \geq 1$, squaring and solving (\ref{eq:anequationhere}) yields the equivalent form
\bes
(C-1)^2\left(1 - \cfrac{||x||^2}{A^2}\right) \geq (d-1)^2.
\ees
For finite $C$, this is impossible unless $||x|| < A$, in which case we reach the desired estimate $C \geq \cfrac{A(d-1)}{\sqrt{A^2 - ||x||^2}} + 1$. 
\end{proof}

\begin{remark}
If $K$ is a compact convex set and $0 \notin K$, then $\theta(K) = \infty$ unless $K$ is a simplex.
Indeed, if $K$ is not a simplex, then $\theta(K) > 1$ by Theorem \ref{thm:simplex_unique}, but the assumption $0 \notin K$ produces an issue at the ground level: $K \nsubseteq C \cdot K$ for $C \in (1, \infty)$, and thus $\Wmax{}(K) \nsubseteq C \cdot \Wmin{}(K)$ for $C \in (1, \infty)$.
This gives $\theta(x + A \cdot \ol{\bB}_{2,d}) = \infty$ for $\|x\|>A$, as in the previous corollary.
For $\|x\| = A$, there is also a \lq\lq positive\rq\rq\hspace{0pt} interpretation for the value $\theta(x + A \cdot \ol{\bB}_{2,d}) = \infty$.
Consider the closed half-space
\[
\mathbb{H} = \{y \in \bR^d : \langle x,y \rangle \geq 0\} = \overline{\cup_{C \geq 1} C\cdot (x + A \cdot \ol{\bB}_{2,d})}
\]
as the infinite inflation of $x + A \cdot \ol{\bB}_{2,d}$, i.e. \lq\lq $\mathbb{H} = \infty \cdot \Wmin{}(x + A \cdot \ol{\bB}_{2,d})$.\rq\rq\hspace{0pt}
The positive interpretation of $\theta(x + A \cdot \ol{\bB}_{2,d}) = \infty$ is then
\[
\Wmax{}(x + A \cdot \ol{\bB}_{2,d}) \subseteq \Wmin{}(\mathbb{H}).
\]
Indeed, one can always find a simplex $\Pi$ with $x + A \cdot \ol{\bB}_{2,d} \subset \Pi \subset \mathbb{H}$.
\end{remark}

As in \cite{DDSS}, there are also consequences for the existence of UCP maps.

\begin{corollary}\label{cor:UCP_and_UP_maps}
Fix $d \geq 2$, let $F_1, \ldots, F_d \in M_{2^{d-1}}(\bC)$ be as in Lemma \ref{lem:themagicmatrices}, and fix $C > 0$, $y \in \bR^d$. Equip $\bS^{d-1} = \partial \ol{\bB}_{2,d}$ with any positive measure of full support, and let $M_{x_1}, \ldots, M_{x_d} \in \cB(L^2(\bS^{d-1}))$ be the multiplication operators for the coordinate functions $x_j$.

\vspace{.05 cm}

\begin{itemize}
\item There is a UCP map $\phi: \cB(L^2(\bS^{d-1})) \to M_{2^{d-1}}(\bC)$ with
\bes
\phi: M_{x_j} \mapsto \cfrac{1}{C}  \, F_j - \cfrac{y_j}{C} \, I
\ees
if and only if $C \geq \sqrt{||y||^2 + (d-1)^2} + 1$.
\item There is a unital positive (not necessarily UCP) map $\psi: \operatorname{span} \{1, M_{x_1}, \ldots, M_{x_d}\} \to M_{2^{d-1}}(\bC)$ with
\bes
\psi: M_{x_j} \mapsto \cfrac{1}{C} \, F_j - \cfrac{y_j}{C} \, I
\ees
if and only if $C \geq ||y|| + 1$.
\end{itemize}
\end{corollary}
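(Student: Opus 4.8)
The plan is to derive both statements from the machinery already set up — the correspondence between UCP maps and matrix–range containment, the change–of–variables identity \eqref{eq:FcWA}, and Theorem~\ref{thm:everythingball} — together with one elementary spectral computation for the anti-commuting tuple $F=(F_1,\dots,F_d)$. Throughout I would set $B_j:=\tfrac1C F_j-\tfrac{y_j}{C}I$, so that $B=(B_1,\dots,B_d)$ is the image of $F$ under the affine map $z\mapsto\tfrac1C(z-y)$; by \eqref{eq:FcWA} this gives $\cW(B)=\tfrac1C(\cW(F)-y)$ and in particular $\cW_1(B)=\tfrac1C(\cW_1(F)-y)$. The two bullets are then handled separately.

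For the first bullet, I would first record that, since $\mu$ has full support, the normal tuple $M_x=(M_{x_1},\dots,M_{x_d})$ has joint spectrum $\bS^{d-1}$, so that by \cite[Corollary 4.4]{DDSS} we have $\cW(M_x)=\Wmin{}(\bS^{d-1})=\Wmin{}(\ol{\bB}_{2,d})$ (the second equality holds because $\Wmin{}$ of a compact set depends only on its convex hull, which is immediate from \eqref{eq:Wmin_def2}). Next I would observe that a UCP map $\cB(L^2(\bS^{d-1}))\to M_{2^{d-1}}$ sending each $M_{x_j}$ to $B_j$ exists if and only if one does on the operator system $S(M_x)$ — restrict in one direction, invoke Arveson's extension theorem in the other — and that by \cite[Theorem 5.1]{DDSS} this last condition is exactly $\cW(B)\subseteq\cW(M_x)=\Wmin{}(\ol{\bB}_{2,d})$. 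Rewriting with $\cW(B)=\tfrac1C(\cW(F)-y)$ and using that affine maps factor through $\Wmin{}$ (Lemma~\ref{lem:affinemapWminmax} and the discussion following it), this becomes $\cW(F)\subseteq\Wmin{}(y+C\cdot\ol{\bB}_{2,d})$, i.e., since $F$ consists of $2^{d-1}\times2^{d-1}$ matrices and $\Wmin{}(y+C\cdot\ol{\bB}_{2,d})$ is matrix convex, $F\in\Wmin{2^{d-1}}(y+C\cdot\ol{\bB}_{2,d})$. At that point the equivalence $(\ref{item:specifics})\Leftrightarrow(\ref{item:numericalestimate})$ of Theorem~\ref{thm:everythingball}, applied with $x=0$ and $C_1=1$, finishes the bullet: the condition is $C\geq\sqrt{\|y\|^2+(d-1)^2}+1$.

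For the second bullet, I would start by identifying positivity inside $R:=\operatorname{span}\{1,M_{x_1},\dots,M_{x_d}\}$: since $1,x_1,\dots,x_d$ are linearly independent as functions on $\bS^{d-1}$, there is a unique unital linear $\psi:R\to M_{2^{d-1}}$ with $\psi(M_{x_j})=B_j$, and an element $c_0I+\sum_j c_jM_{x_j}$ is positive exactly when $c_0,\dots,c_d\in\bR$ and $c_0\geq\|c\|_2$, where $c=(c_1,\dots,c_d)$. Hence $\psi$ is positive iff $\|c\|_2\,I+\sum_j c_jB_j\ge0$ for all $c\in\bR^d$, i.e.\ iff $\bigl\|\sum_j c_jB_j\bigr\|\le1$ for all $c\in\ol{\bB}_{2,d}$. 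The only genuine computation is then the following: the $F_j$ are self-adjoint, unitary and pairwise anti-commuting, so $\bigl(\sum_j c_jF_j\bigr)^2=\|c\|_2^2 I$; and each $F_j$ is traceless, because $F_kF_jF_k^{-1}=-F_j$ for $k\ne j$ (valid since $d\ge2$) forces $\operatorname{tr}F_j=-\operatorname{tr}F_j$. Consequently $\sum_j c_jF_j$ has spectrum $\{\|c\|_2,-\|c\|_2\}$ when $c\ne0$, so $\bigl\|\sum_j c_jB_j\bigr\|=\tfrac1C\bigl\|\sum_j c_jF_j-\langle c,y\rangle I\bigr\|=\tfrac1C\bigl(\|c\|_2+|\langle c,y\rangle|\bigr)$. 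Therefore $\psi$ is positive iff $\|c\|_2+|\langle c,y\rangle|\le C$ for all $c\in\ol{\bB}_{2,d}$, and the supremum of the left side over $\ol{\bB}_{2,d}$ equals $1+\|y\|$ (attained at $c=\pm y/\|y\|$, or any unit vector if $y=0$), which yields the stated equivalence $C\ge\|y\|+1$.

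I expect the only subtlety to be bookkeeping rather than mathematics: correctly carrying the affine reparametrization $B=\tfrac1C(F-y)$ through $\cW$, $\Wmin{}$, and the positivity condition on $R$, and invoking precisely the right equivalence from Theorem~\ref{thm:everythingball}. The single computational ingredient, the spectral description of $\sum_j c_jF_j-\langle c,y\rangle I$, is immediate from pairwise anti-commutation together with $\operatorname{tr}F_j=0$.
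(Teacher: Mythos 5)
Your proposal is correct. For the first bullet your argument is, up to bookkeeping, the same as the paper's: the paper rearranges to $CM_{x_j}+y_jI\mapsto F_j$ so that the domain tuple has joint spectrum $y+C\cdot\bS^{d-1}$ and then invokes \cite[Theorem 5.1]{DDSS} together with Theorem~\ref{thm:everythingball}, while you keep the map in the form $M_{x_j}\mapsto B_j$ and push the affine rescaling through $\cW(\cdot)$ and $\Wmin{}(\cdot)$; these are visibly the same calculation. One small point: you reduce to $\cW(F)\subseteq\Wmin{}(y+C\cdot\ol{\bB}_{2,d})$ and pass to the ``equivalent'' condition $F\in\Wmin{2^{d-1}}(y+C\cdot\ol{\bB}_{2,d})$, but your stated reason only covers the forward implication ($F\in\cW_{2^{d-1}}(F)$); the converse needs that $F\in\Wmax{}(\ol{\bB}_{2,d})$ (equation~\eqref{eq:themagicmatrices}), so that one can route through the equivalence $(\ref{item:specifics})\Leftrightarrow(\ref{item:dilate})$ of Theorem~\ref{thm:everythingball} and use $\cW(F)\subseteq\Wmax{}(\ol{\bB}_{2,d})$. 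This is a one-line fix, not a gap. For the second bullet you take a more hands-on route than the paper: rather than quoting the numerical-range criterion $\cW_1(F)\subseteq\cW_1(CM_x+y)$ together with the assertion $\cW_1(F)=\ol{\bB}_{2,d}$, you explicitly identify the positive cone of $\operatorname{span}\{1,M_{x_1},\dots,M_{x_d}\}$, reduce positivity of $\psi$ to the scalar inequality $\|\sum_j c_jB_j\|\le1$ on $\ol{\bB}_{2,d}$, and compute that norm directly using $(\sum c_jF_j)^2=\|c\|_2^2I$ plus the tracelessness of $F_j$ (valid precisely because $d\geq2$ supplies an anti-commuting partner). That is a genuinely elementary unpacking of the paper's $\cW_1$ argument and has the virtue of \emph{verifying}, rather than asserting, the spectral description of $\sum_j c_jF_j$; the paper's version is shorter but leans on the reader to supply $\cW_1(F)=\ol{\bB}_{2,d}$.
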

\begin{proof}
Because the chosen measure on $\bS^{d-1}$ has full support, we know that $(M_{x_1}, \ldots, M_{x_d})$ has joint spectrum equal to $\bS^{d-1}$, and since
\bes
M_{x_j} \mapsto \cfrac{1}{C} \,  F_j - \cfrac{y_j}{C} \, I
\ees
holds for a linear map if and only if
\be\label{eq:rearrangedUCP}
CM_{x_j} + y_j \, I \mapsto F_j,
\ee
we will use the second formulation. The joint spectrum of $(CM_{x_1} + y_1 I, \ldots, CM_{x_d} + y_d I)$ is $y + C \cdot \bS^{d-1}$, so, as for any normal tuple, the matrix range of this normal tuple is the minimal matrix convex set containing the joint spectrum, i.e. $\Wmin{}(y + C \cdot \ol{\bB}_{2,d})$.

If $C \geq \sqrt{||y||^2 + (d-1)^2} + 1$, then by Theorem \ref{thm:everythingball} we know that $\Wmax{}(\ol{\bB}_{2,d}) \subseteq \Wmin{}(y + C \cdot \ol{\bB}_{2,d})$, and therefore $\mathcal{W}(F_1, \ldots, F_d) \subseteq \Wmin{}(y + C \cdot \ol{\bB}_{2,d}) = \mathcal{W}(CM_{x_1} + y_1 I, \ldots, CM_{x_d} + y_d I)$.
By \cite[Theorem 5.1]{DDSS}, the UCP map (\ref{eq:rearrangedUCP}) exists.

On the other hand, suppose the UCP map (\ref{eq:rearrangedUCP}) exists.
Using \cite[Theorem 5.1]{DDSS} in the other direction, we obtain that $\mathcal{W}(F_1, \ldots, F_d) \subseteq \Wmin{}(y + C \cdot \ol{\bB}_{2,d})$.
In particular, $(F_1, \ldots, F_d) \in \Wmin{2^{d-1}}(y + C \cdot \ol{\bB}_{2,d})$, so by Theorem \ref{thm:everythingball}, $C \geq \sqrt{||y||^2 + (d-1)^2} + 1$, as desired.

Finally, if the map $CM_{x_j} + y_j \, I \mapsto F_j$ need only be unital and positive, on the domain $\operatorname{span} \{1, M_{x_1}, \ldots, M_{x_d}\}$, we may repeat the above argument with the matrix range $\mathcal{W}$ replaced by the numerical range $\mathcal{W}_1$. Since $\mathcal{W}_1(CM_{x_1} + y_1 \, I, \ldots, CM_{x_d} + y_d \, I) = y + C \cdot \ol{\bB}_{2,d}$ and $\mathcal{W}_1(F_1, \ldots, F_d) = \ol{\bB}_{2,d}$, the positive unital map exists if and only if $\ol{\bB}_{2,d} \subseteq y + C \cdot \ol{\bB}_{2,d}$.
That is, $C \geq ||y|| + 1$.
\end{proof}

\begin{remark}
Note that the maps $\phi$ and $\psi$ in Corollary \ref{cor:UCP_and_UP_maps} are defined on different domains. By Arveson's extension theorem, any UCP map $\phi: \operatorname{span}\{1, M_{x_1}, \ldots, M_{x_d}\} \to M_{2^{d-1}}(\bC)$ extends to a UCP map $\phi: \cB(L^2(\bS^{d-1})) \to M_{2^{d-1}}(\bC)$. However, there is no corresponding extension theorem for unital positive maps. If $||y|| + 1 \leq C < \sqrt{||y||^2 + (d-1)^2} + 1$, then it is actually guaranteed that the unital positive map $\psi$ in Corollary \ref{cor:UCP_and_UP_maps} cannot extend to a unital positive map $\psi: C^*(M_{x_1}, \ldots, M_{x_d}) \to M_{2^{d-1}}(\bC)$. In particular, if the extension exists, then it is a unital positive map whose domain is a commutative C*-algebra, so the extension is actually UCP. Applying Arveson extension to the domain $\cB(L^2(\bS^{d-1}))$ would then contradict the first part of Corollary \ref{cor:UCP_and_UP_maps}.
\end{remark}

Given a convex body $K$, if $0$ is an interior point of $K$, then we have an estimate for $\theta(K)$. Namely, since it is impossible to dilate an ellipse $E$ centered at the origin with smaller scale than $\theta(E) = \theta(\ol{\bB}_{2,d}) = d$, we know from Proposition \ref{prop:scaletoscale} that $d = \theta(\ol{\bB}_{2,d}) \leq \rho(\ol{\bB}_{2,d}, K) \, \theta(K)$.
That is, $\theta(K) \geq \cfrac{d}{\rho(\ol{\bB}_{2,d}, K)}$. If equality $\theta(K) = \cfrac{d}{\rho(\ol{\bB}_{2,d}, K)}$ occurs, we may claim that the optimal dilation scale among all translations of $K$ is found when the translation is zero.

\begin{corollary}\label{cor:bestshiftatzero?}
Let $K \subset \bR^d$ be a convex body with $0$ in the interior. Then it follows that $\mathring{\theta}(K) \geq \cfrac{d}{\rho(\ol{\bB}_{2,d},K)}$. In particular, if $\theta(K) = \cfrac{d}{\rho(\ol{\bB}_{2,d},K)}$, then $\mathring{\theta}(K) = \theta(K)$.
\end{corollary}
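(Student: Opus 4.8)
The plan is to deduce the inequality not from the per-translate constant $\theta(\cdot)$ directly, but from the shift-invariant version of Proposition \ref{prop:scaletoscale}, and then to cheaply pass from the shift-invariant Banach--Mazur distance $\mathring{\rho}$ down to $\rho$. Concretely, I would first record the two inputs that are already available: $\mathring{\theta}(\ol{\bB}_{2,d}) = \theta(\ol{\bB}_{2,d}) = d$ by Corollary \ref{cor:allshiftedballscales}, and $\mathring{\rho}(\ol{\bB}_{2,d}, K) < \infty$, which holds because $\ol{\bB}_{2,d}$ and $K$ are both convex bodies. Feeding these into the second inequality of Proposition \ref{prop:scaletoscale}, applied with $\ol{\bB}_{2,d}$ in the first slot and $K$ in the second, gives
\[
d = \mathring{\theta}(\ol{\bB}_{2,d}) \le \mathring{\rho}(\ol{\bB}_{2,d}, K)\,\mathring{\theta}(K),
\]
hence $\mathring{\theta}(K) \ge d/\mathring{\rho}(\ol{\bB}_{2,d}, K)$. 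Since specializing the translation parameters to $0$ in the definition of $\mathring{\rho}$ recovers the unshifted distance, one has $\mathring{\rho}(\ol{\bB}_{2,d}, K) \le \rho(\ol{\bB}_{2,d}, K)$, and therefore
\[
\mathring{\theta}(K) \ \ge\ \frac{d}{\mathring{\rho}(\ol{\bB}_{2,d}, K)} \ \ge\ \frac{d}{\rho(\ol{\bB}_{2,d}, K)},
\]
which is the claimed bound.

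For the final assertion I would use the trivial inequality $\mathring{\theta}(K) \le \theta(K)$, obtained by taking $x = 0$ and $T = I$ in the infimum defining $\mathring{\theta}$. Assuming $\theta(K) = d/\rho(\ol{\bB}_{2,d},K)$, the main inequality just established then sandwiches $\mathring{\theta}(K)$ between $\theta(K)$ and $\theta(K)$, forcing $\mathring{\theta}(K) = \theta(K)$.

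\textbf{Anticipated difficulty.} There is no substantive obstacle: the argument is a short chaining of estimates that are already in hand. The only points needing care are (i) invoking the \emph{shift-invariant} form of Proposition \ref{prop:scaletoscale} rather than the linear one — a naive per-translate argument bounding $\theta(a+K)$ via $\rho(\ol{\bB}_{2,d}, a+K)$ would fail precisely on the translates with $0 \notin a+K$, where $\theta(a+K)$ and $\rho(\ol{\bB}_{2,d}, a+K)$ can both be infinite and carry no information — and (ii) keeping the direction of $\mathring{\rho} \le \rho$ straight, so that the smaller distance produces the larger (hence usable) lower bound on $\mathring{\theta}(K)$.
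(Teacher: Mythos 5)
Your argument matches the paper's proof step for step: apply the shift-invariant inequality of Proposition \ref{prop:scaletoscale} to $\ol{\bB}_{2,d}$ and $K$, invoke Corollary \ref{cor:allshiftedballscales} to get $\mathring{\theta}(\ol{\bB}_{2,d}) = d$, and then pass from $\mathring{\rho}$ down to $\rho$ by noting the former infimizes over a larger family. You also spell out the sandwich argument for the "in particular" clause, which the paper leaves implicit; the proof is correct and takes the same route.
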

\begin{proof}

By Proposition \ref{prop:scaletoscale} and Corollary \ref{cor:allshiftedballscales},
\[
\mathring{\rho}(\ol{\bB}_{2,d}, K) \, \mathring{\theta}(K) \geq \mathring{\theta}(\ol{\bB}_{2,d}) = \theta(\ol{\bB}_{2,d}) = d,
\]
that is,
\bes
\mathring{\theta}(K) \geq \cfrac{d}{\mathring{\rho}(\ol{\bB}_{2,d}, K)} \geq \cfrac{d}{{\rho}(\ol{\bB}_{2,d}, K)}.
\ees
\end{proof}

The computation of Banach-Mazur distance
\be\label{eq:banachmazurballonly}
\rho (\ol{\bB}_{2,d}, K) = \inf \{ C > 0: \exists T \in GL_d(\bR), K \subseteq T(\ol{\bB}_{2,d}) \subseteq C \cdot K\}
\ee
includes many possible transformations $T$. In certain circumstances, the infimum may be attained for a scaling map $T = aI$, so the ellipse that most closely resembles $K$ is a ball. Since the ball is orthogonally invariant, we may then extend Corollary \ref{cor:bestshiftatzero?} to include orthogonal images of $K$ in addition to translations.

\begin{corollary}\label{cor:bestaffineatzero?}
Let $K \subseteq \bR^d$ be a convex body with $0$ in the interior, such that the infimum of (\ref{eq:banachmazurballonly}) is attained for a scaling map $T = aI$. Then for any $U \in O_d(\bR)$ and $y \in \bR^d$, $\theta(K, y + U(K)) \geq \cfrac{d}{\rho(\ol{\bB}_{2,d},K)}$.
\end{corollary}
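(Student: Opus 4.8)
The plan is to use the hypothesis to sandwich $K$ between two concentric Euclidean balls, transport this sandwich through the orthogonal map $U$ and the translation by $y$, and then apply the numerical criterion of Theorem \ref{thm:everythingball}.

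Write $\rho := \rho(\ol{\bB}_{2,d}, K)$. The assumption that the infimum in \eqref{eq:banachmazurballonly} is attained at a scaling $T = aI$ means precisely that there is $a>0$ with
\[
\tfrac{a}{\rho}\,\ol{\bB}_{2,d} \subseteq K \subseteq a\,\ol{\bB}_{2,d}.
\]
Since every $U \in O_d(\bR)$ fixes concentric Euclidean balls, i.e. $U(c\,\ol{\bB}_{2,d}) = c\,\ol{\bB}_{2,d}$ for all $c>0$, applying $U$ and then translating by $y$ gives
\[
y + \tfrac{a}{\rho}\,\ol{\bB}_{2,d} \subseteq y + U(K) \subseteq y + a\,\ol{\bB}_{2,d}.
\]
This is the one place where it matters that the optimal $T$ in \eqref{eq:banachmazurballonly} can be taken to be a scaling map: a general $T \in GL_d(\bR)$ need not be compatible with $U$.

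Next, let $C>0$ be any constant with $\Wmax{}(K) \subseteq C\cdot \Wmin{}(y+U(K))$. Using monotonicity of $\Wmax{}$ and $\Wmin{}$ together with the two inclusions above, and the scaling identity $C\cdot\Wmin{}(L) = \Wmin{}(C\cdot L)$, I would deduce
\[
\Wmax{}\bigl(\tfrac{a}{\rho}\,\ol{\bB}_{2,d}\bigr) \subseteq \Wmax{}(K) \subseteq C\cdot\Wmin{}(y+U(K)) \subseteq \Wmin{}(Cy + Ca\,\ol{\bB}_{2,d}).
\]
Now Theorem \ref{thm:everythingball}, applied with $x=0$, $C_1 = a/\rho$, center $Cy$, and $C_2 = Ca$, forces
\[
Ca \ \geq\ \sqrt{\,\|Cy\|^2 + \tfrac{a^2}{\rho^2}(d-1)^2\,}\ +\ \tfrac{a}{\rho}\ \geq\ \tfrac{a}{\rho}(d-1) + \tfrac{a}{\rho}\ =\ \tfrac{ad}{\rho},
\]
where the last inequality simply discards the nonnegative term $\|Cy\|^2$. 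Dividing by $a$ gives $C \ge d/\rho$, and taking the infimum over all admissible $C$ yields $\theta(K, y+U(K)) \ge d/\rho(\ol{\bB}_{2,d},K)$.

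I do not expect a genuine obstacle: the content is entirely in Theorem \ref{thm:everythingball}, and the remaining work is the bookkeeping of (i) transporting the ball-sandwich through $U$, which is exactly where orthogonal invariance of the ball (hence the scaling hypothesis) enters, and (ii) observing that a nonzero translation $y$ only enlarges the right-hand side of the numerical estimate, so the centered bound $d/\rho$ survives. The degenerate case $d=1$ is trivial, since then $K$ is an interval, $\rho = 1$, and both sides equal $1$.
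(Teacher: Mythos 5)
Your proof is correct and follows essentially the same route as the paper's: sandwich $K$ between concentric Euclidean balls using the scaling hypothesis, push the outer ball through $U$ and $y$ via orthogonal invariance, and invoke the numerical criterion of Theorem~\ref{thm:everythingball} to conclude $C \geq d/\rho$. The only cosmetic difference is that the paper compresses the chain of $\Wmax{}/\Wmin{}$ inclusions into the abstract inequality~\eqref{eq:singlescale} together with the bound $\theta\bigl(\ol{\bB}_{2,d}, \tfrac{1}{\beta}y + \ol{\bB}_{2,d}\bigr) \geq d$, whereas you unwind it directly; the substance is identical.
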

\begin{proof}
Let $\alpha \cdot \ol{\bB}_{2,d} \subseteq K \subseteq \beta \cdot \ol{\bB}_{2,d}$, where $\alpha, \beta > 0$ are such that $\rho(\ol{\bB}_{2,d}, K) = \cfrac{\beta}{\alpha}$. Such a choice exists due to the assumption that the infimum of (\ref{eq:banachmazurballonly}) is attained at a scaling map. Using orthogonal invariance of the ball, we reach the two separate containments $\alpha \cdot \ol{\bB}_{2,d} \subseteq K$ and $y + U(K) \subseteq \beta \cdot \left( \cfrac{1}{\beta} \cdot y + \ol{\bB}_{2,d} \right) $. From (\ref{eq:singlescale}) and Theorem \ref{thm:everythingball}, we find that
\bes
d \leq \theta \left(\ol{\bB}_{2,d}, \frac{1}{\beta} \cdot y + \ol{\bB}_{2,d} \right) \leq \cfrac{\beta}{\alpha} \cdot \theta(K, y + U(K)) = \rho(\ol{\bB}_{2,d}, K) \cdot \theta(K, y + U(K)),
\ees
as desired.
\end{proof}

As above, if the dilation constant $\theta(K)$ is \textit{equal} to the estimate $\cfrac{d}{\rho(\ol{\bB}_{2,d}, K)}$, then under the assumptions of Corollary \ref{cor:bestaffineatzero?} (i.e. that the Banach-Mazur distance between $K$ and the ball is attained using a scaling map), we may conclude that the best dilation of $K$ to a translated, scaled, orthogonal image of $K$ occurs when the translation is zero. We will see in Section \ref{sec:pballs} that if $K$ is the unit ball of $\ell^p$-space on $\bR^d$ or on $\bC^d \cong \bR^{2d}$, then $\theta(K)$ is indeed equal to the estimate $\cfrac{d}{\rho(\ol{\bB}_{2,d}, K)}$ or $\cfrac{2d}{\rho(\ol{\bB}_{2,2d}, K)}$, respectively.

\begin{problem}
Let $K \subseteq \bR^d $ be a convex body with $0$ in the interior. Under what circumstances must $\theta(K) = \cfrac{d}{\rho(\ol{\bB}_{2,d},K)}$? Is it sufficient to assume that $K = -K$?
\end{problem}


\section{Sharp constants for $\ell^p$ balls and their positive sections}\label{sec:pballs}

Let
\bes
\ol{\bB}_{p,d}^+ := \ol{\bB}_{p,d} \cap [0,1]^d
\ees
denote the positive section of the closed unit ball in real $\ell^p_d$ space. The methods of \cite{DDSS} (e.g., Proposition 8.1 there) show that the balls and positive sections
admit dilations governed by
\bes
\Wmax{}(\ol{\bB}_{p,d}) \subseteq d \cdot \Wmin{}(\ol{\bB}_{p,d})
\ees
and
\bes
\Wmax{}(\ol{\bB}_{p,d}^+) \subseteq d \cdot \Wmin{}(\ol{\bB}_{p,d}^+),
\ees
so $\theta(\ol{\bB}_{p,d}) \leq d$ and $\theta(\ol{\bB}_{p,d}^+) \leq d$.
In this section we find sharp constants for all $p$. Note that since $\ol{\bB}_{1,d}^+$ is a simplex,
\be\label{eq:positivediamond}
\Wmax{}(\ol{\bB}_{1,d}^+) =  \Wmin{}(\ol{\bB}_{1,d}^+),
\ee
and hence $\theta(\ol{\bB}_{1,d}^+) = 1$ (see Theorem \ref{thm:simplex_unique}).

We begin with an explicit dilation for (\ref{eq:positivediamond}), which may be found by following the first part of the proof of \cite[Theorem 2.1]{Popescu}.
We record this with details for completeness.
\begin{theorem}\label{thm:positivediamond}
Suppose that $(X_1, \ldots, X_d) \in \cB(H)^d_{sa}$ is a tuple of positive operators with $\sum_{i=1}^d X_i \leq I$. Then there exists a normal dilation $(P_1, \ldots, P_d) \in \cB(H \otimes \bC^{d+1})^d_{sa}$ such that the $P_i$ are mutually orthogonal projections. Moreover, the $P_i$ may be chosen with block entries in the real unital $C^*$-algebra generated by $X_1, \ldots, X_d$.
\end{theorem}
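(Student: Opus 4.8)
The plan is to realize Naimark's dilation concretely via the classical ``column of square roots'' construction, keeping track of where the block entries live. First I would introduce the slack operator $X_0 := I - \sum_{i=1}^d X_i$, which is positive by hypothesis, so that $\sum_{i=0}^d X_i = I_H$. Identifying $H \otimes \bC^{d+1}$ with the orthogonal direct sum $\bigoplus_{i=0}^d H$ (indices $0,\ldots,d$), I would define $V : H \to \bigoplus_{i=0}^d H$ by
\[
Vh = \big(X_0^{1/2}h,\, X_1^{1/2}h,\, \ldots,\, X_d^{1/2}h\big), \qquad h \in H .
\]
Since each $X_i^{1/2}$ is self-adjoint, $V^*V = \sum_{i=0}^d X_i = I_H$, so $V$ is an isometry; moreover each $X_i^{1/2}$ lies in the real unital $C^*$-algebra generated by $X_1,\ldots,X_d$, by applying the real continuous functional calculus to $X_i \geq 0$, and to $X_0 = I - \sum_{i=1}^d X_i \geq 0$.

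Next I would take $P_i$ ($1 \leq i \leq d$) to be the orthogonal projection of $\bigoplus_{j=0}^d H$ onto its $i$-th summand; as a $(d+1)\times(d+1)$ operator matrix, $P_i = E_{ii}\otimes I_H$ has $I_H$ in the $(i,i)$ position and $0$ elsewhere. These are self-adjoint idempotents with $P_iP_j = 0$ for $i \neq j$ (disjoint supports), so $(P_1,\ldots,P_d)$ is a commuting family of mutually orthogonal projections, hence a normal tuple, whose joint spectrum is contained in the vertex set $\{0, e_1, \ldots, e_d\}$ of the simplex $\ol{\bB}_{1,d}^+$. A one-line computation gives $V^* P_i V = X_i^{1/2}\cdot X_i^{1/2} = X_i$, so $X \prec (P_1,\ldots,P_d)$, which is the asserted dilation. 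With this choice every block entry of every $P_i$ equals $0$ or $I_H$, both of which lie in the real unital $C^*$-algebra generated by $X_1,\ldots,X_d$; this already settles the ``moreover'' clause. If instead one wants the $P_i$ themselves to carry the data of $X$, one may conjugate by any unitary on $H\otimes\bC^{d+1}$ taking the coordinate embedding $h\mapsto h\oplus 0\oplus\cdots\oplus 0$ to $V$, after which the projections have block entries that are $*$-monomials in the $X_i$ and the $X_i^{1/2}$, again inside that $C^*$-algebra.

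I do not expect a genuine obstacle here: once the construction is written down, each verification is one or two lines. The only point that needs attention is the count of copies --- one cannot dilate into $H\otimes\bC^d$, because making $V$ isometric forces the defect term $X_0^{1/2}$ and hence an extra summand, which is exactly why the dilation space is $H\otimes\bC^{d+1}$. The claim that the block entries lie in the generated real $C^*$-algebra is then automatic from the coordinate-projection form, with the functional-calculus remark covering the alternative $X$-dependent version.
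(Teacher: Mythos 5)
Your construction is correct and proves the theorem as literally stated: with $X_0 := I - \sum_{i=1}^d X_i \geq 0$, the column $Vh = (X_0^{1/2}h, X_1^{1/2}h, \ldots, X_d^{1/2}h)$ is an isometry with $V^*(E_{ii}\otimes I_H)V = X_i$, so the coordinate projections $E_{ii}\otimes I_H$ dilate $X$ under the paper's general-isometry definition of compression, and their block entries $0$, $I_H$ lie trivially in the generated algebra. But the paper's proof does something stronger: it dilates through the \emph{standard} coordinate inclusion $h \mapsto h \oplus 0 \oplus \cdots \oplus 0$. Writing $T = [\sqrt{X_1}\ \cdots\ \sqrt{X_d}]$, $\Delta = \sqrt{I - T^*T}$, and $V_i = [\sqrt{X_i}\ \Delta_{i1}\ \cdots\ \Delta_{id}]$, the projections $P_i = V_i^*V_i$ have $(1,1)$-block equal to $X_i$ and all block entries in the real unital $C^*$-algebra $\cA$ generated by $X_1, \ldots, X_d$, since $\Delta \in M_d(\cA)$ by functional calculus applied inside $M_d(\cA)$. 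These $P_i$ are the conjugates of your $E_{ii}\otimes I_H$ by the Halmos unitary dilation of the column $T^*$ --- a \emph{specific} unitary, and this is where your alternative sketch has a gap: an \emph{arbitrary} unitary taking the coordinate embedding to $V$ has no reason to have block entries in $\cA$, so the conjugated projections need not either. Your literal reading renders the ``moreover'' clause vacuous, whereas the paper's version --- a dilating tuple living over the standard inclusion and built by continuous functional calculus from $X$ --- is the substance that powers, e.g., the concluding remark of Section 5 about realizing these dilations continuously. Replace ``any unitary'' with the Halmos unitary and unwind, and you recover the paper's proof.
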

\begin{proof}
Consider the row operator
\bes
T := \left[ \sqrt{X_1} \hspace{.3 cm}\sqrt{X_2} \hspace{.3 cm} \ldots \hspace{.3 cm} \sqrt{X_d} \right],
\ees
which satisfies $TT^* \leq I_{H}$ and consequently $T^*T \leq I_{H \otimes \bC^d}$. Let
\bes
\Delta := \sqrt{I_{H \otimes \bC^d} - T^*T},
\ees
so that when $\Delta^2$ is viewed as a $d \times d$ block matrix with entries in $\cB(H)$,
\bes
(\Delta^2)_{ii} = I_H - X_i \hspace{.5 cm} \textrm{and} \hspace{.5 cm} i \not= j \implies (\Delta^2)_{ij} = -\sqrt{X_i}\sqrt{X_j}.
\ees
Define another row operator
\bes
V_i := \left[ \sqrt{X_i} \hspace{.3 cm} \Delta_{i1} \hspace{.3 cm} \Delta_{i2}\hspace{.3 cm} \ldots \hspace{.3 cm} \Delta_{id}  \right],
\ees
so that
\bes
V_iV_i^* = X_i + \sum_{k} \Delta_{ik} \Delta_{ik}^* = X_i + \sum_{k} \Delta_{ik} \Delta_{ki} = X_i + (\Delta^2)_{ii} = I_H.
\ees
It follows that $P_i := V_i^* V_i$ is a projection in $\cB(H \otimes \bC^{d+1})$, and examination of the top left block entry shows that $P_i$ dilates $X_i$. For $i \not= j$, the identity
\bes
V_i V_j^* = \sqrt{X_i} \sqrt{X_j} + \sum_k \Delta_{ik} \Delta_{jk}^* = \sqrt{X_i} \sqrt{X_j} + \sum_k \Delta_{ik} \Delta_{kj} = \sqrt{X_i} \sqrt{X_j} + (\Delta^2)_{ij} = 0
\ees
implies that the projections are mutually orthogonal.
\end{proof}

\begin{remark} Recall that it is possible to dilate any contraction $S$ to a unitary
\bes
U := \begin{pmatrix} S & \sqrt{I - SS^*} \\ \sqrt{I - S^*S} & -S^* \end{pmatrix},
\ees
as in \cite{Halmos50}. In fact, $S$ is not required to be a square operator. If this dilation is applied to $S = T^* = \begin{pmatrix} \sqrt{X_1} \\ \vdots \\ \sqrt{X_d} \end{pmatrix}$ for $X_i$ as in Theorem \ref{thm:positivediamond}, then the first $d$ block rows of $U$ are the same as the row operators $V_i$ obtained in the proof.
\end{remark}

Theorem \ref{thm:positivediamond} demonstrates that (\ref{eq:positivediamond}) may be realized with dilations that annihilate each other, which in general is much stronger than commutation.
This is not surprising, as distinct extreme points of $\ol{\bB}_{1,d}^+$ never have matching nonzero entries (recall that by Proposition \ref{prop:dilation_finite_infinite}, the spectrum of a normal dilation can be pushed to the extreme points).
On the other hand, the following lemma shows that we may reach annihilating dilations of certain tuples $(X_1, \ldots, X_d)$ of positive operators in a very specific way, which will aid us in later computations.

\begin{lemma}\label{lem:poscommgivesann}
Let $(X_1, \ldots, X_d) \in \cB(H)^d_{sa}$ be a tuple of positive operators such that if $i \not= j$, then the only operator $T$ with $0 \leq T \leq X_i$ and $0 \leq T \leq X_j$ is $T = 0$. Suppose that $(Y_1, \ldots, Y_d) \in \cB(K)^d_{sa}$ is a normal dilation of $(X_1, \ldots, X_d)$ such that $Y_i \geq 0$ for each $i$. Then there is a normal dilation $(Z_1, \ldots, Z_d) \in \cB(K)^d_{sa}$ of $(X_1, \ldots, X_d)$ which satisfies $0 \leq Z_i \leq Y_i$, $Z_i Z_j = 0$ for $i \not= j$, and $\sigma(Z_1,\ldots,Z_d) \subseteq \sigma(Y_1,\ldots,Y_d) \cup \{0\}$.
\end{lemma}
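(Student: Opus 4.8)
The plan is to build $Z$ from $Y$ by Borel functional calculus, collapsing each $Y_i$ onto the ``$i$-th axis part'' of the joint spectral measure of $Y$. Since $Y=(Y_1,\dots,Y_d)$ is a commuting tuple of positive operators, $\sigma(Y)\subseteq[0,\infty)^d$ and $Y$ has a joint spectral measure $E$ supported on $\sigma(Y)$. For $1\le i\le d$ write $A_i=\{t\in[0,\infty)^d : t_k=0 \text{ for all } k\ne i\}$ for the $i$-th coordinate half-axis, $\mathcal{A}=\bigcup_i A_i$, and set
\[
g_i(t)=t_i\,\mathbf{1}_{A_i}(t),\qquad Z_i:=g_i(Y).
\]
Then $Z=(Z_1,\dots,Z_d)$ is a commuting tuple of self-adjoint operators on $K$; since $0\le g_i(t)\le t_i$ on $\sigma(Y)$ we get $0\le Z_i\le Y_i$; for $i\ne j$ the functions $g_i,g_j$ have disjoint supports $A_i\setminus\{0\}$ and $A_j\setminus\{0\}$, so $g_ig_j\equiv 0$ and $Z_iZ_j=0$; and the map $g=(g_1,\dots,g_d)$ is the identity on $\mathcal{A}$ and is $0$ off $\mathcal{A}$, whence $g(\sigma(Y))\subseteq\sigma(Y)\cup\{0\}$ and therefore $\sigma(Z)\subseteq\overline{g(\sigma(Y))}\subseteq\sigma(Y)\cup\{0\}$.

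It remains to verify that $Z$ still dilates $X$. Let $V\colon H\to K$ be an isometry with $V^*Y_iV=X_i$ for all $i$. The single place where the hypothesis on $(X_1,\dots,X_d)$ enters is the following: for $i\ne j$ the operator $\min(Y_i,Y_j)$ (defined by the continuous function $t\mapsto\min(t_i,t_j)$) is positive and dominated by both $Y_i$ and $Y_j$, so $T:=V^*\min(Y_i,Y_j)V$ satisfies $0\le T\le V^*Y_iV=X_i$ and $0\le T\le X_j$; by hypothesis $T=0$, and since $\min(Y_i,Y_j)\ge 0$ this forces $\min(Y_i,Y_j)V=0$.

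I would then pass to scalar measures: for $h\in H$ let $\mu_h$ be the positive Borel measure $B\mapsto\langle E(B)Vh,Vh\rangle$ on $\sigma(Y)$. From $\min(Y_i,Y_j)V=0$ we get $\int\min(t_i,t_j)\,d\mu_h=0$, and as the integrand is nonnegative, $\min(t_i,t_j)=0$ for $\mu_h$-a.e.\ $t$; intersecting over all pairs $i\ne j$ shows that $\mu_h$ is concentrated on $\mathcal{A}$. Since $g_i(t)=t_i$ on $\mathcal{A}$,
\[
\langle V^*Z_iV\,h,h\rangle=\int_{\sigma(Y)}g_i\,d\mu_h=\int_{\mathcal{A}}t_i\,d\mu_h=\int_{\sigma(Y)}t_i\,d\mu_h=\langle V^*Y_iV\,h,h\rangle=\langle X_ih,h\rangle,
\]
and since $V^*Z_iV$ and $X_i$ are self-adjoint this yields $V^*Z_iV=X_i$, completing the proof.

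The conceptual heart of the argument --- that the no-common-positive-minorant hypothesis forces the part of the spectral measure of $Y$ seen through $V$ to live on the union of coordinate axes, where replacing $Y_i$ by its axis part $Z_i$ is invisible --- is quite short. I expect the only mild obstacle to be the routine bookkeeping with the Borel functional calculus of a commuting tuple and the translation between operator identities (such as $\min(Y_i,Y_j)V=0$) and $\mu_h$-almost-everywhere statements; none of this should present genuine difficulty.
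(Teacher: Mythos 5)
Your proof is correct and follows essentially the same strategy as the paper's: both represent $Y$ via a spectral measure, show that the hypothesis forces the measure compressed through $V$ to concentrate on the union of coordinate axes $\mathcal{A}$, and then truncate $Y$ to $\mathcal{A}$ to obtain $Z$. The one difference is a mild streamlining: where the paper uses the level sets $S_{i,j,n}=\{t : g_i(t)>1/n,\ g_j(t)>1/n\}$ together with a countable union to show the compressed operator-valued measure $E=V^*FV$ vanishes off $\mathcal{A}$, you deduce this directly from $\min(Y_i,Y_j)V=0$ and phrase the conclusion in terms of the scalar measures $\mu_h=\langle E(\cdot)Vh,Vh\rangle$ rather than $E$ itself; both implementations are sound.
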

\begin{proof}
Since $Y_1, \ldots, Y_d$ are positive, bounded, and commute, by the spectral theorem there is a positive, bounded spectral measure $F$ and a collection of nonnegative measurable functions $g_i$ such that
\bes
Y_i = \int g_i \,dF.
\ees
Let $V: H \to K$ be an isometric embedding satisfying $X_i = V^* Y_i V$. The operator-valued measure $E(\cdot) = V^*F(\cdot)V$, which is positive and bounded but not necessarily spectral, satisfies
\bes
X_i = \int g_i \,dE.
\ees
Fix $n \geq 1$, $i \not= j$, and let $S_{i,j,n} = \{t: g_i(t) > 1/n \textrm{ and } g_j(t) > 1/n \}$, so that $\frac{1}{n}E(S_{i,j,n}) = \int \frac{1}{n}I_{S_{i,j,n}} dE$ is a positive operator that is bounded above by both $X_i$ and $X_j$. Therefore, by the assumptions about $X_i$ and $X_j$, $E(S_{i,j,n}) = 0$, and from a countable union we see that $S := \{t: \exists i\not=j \textrm{ with } g_i(t) > 0 \textrm{ and } g_j(t) > 0 \}$ also has $E(S) = 0$. We are then free to change the value of $g_i$ on $S$:
\bes
X_i = \int g_i \cdot I_{S^c} \,dE.
\ees
It follows that
\bes
Z_i := \int g_i \cdot I_{S^c} \,dF
\ees
are the desired positive dilations of $X_i$. The $Z_i$ annihilate each other because the functions $g_i \cdot I_{S^c}$ are never simultaneously nonzero, and the $Z_i$ are bounded above by $Y_i$ since $g_i \cdot I_{S^c} \leq g_i$. Finally,
\bes
\sigma(Z_1,\ldots,Z_d) = \mathrm{EssRan}(g_1 \cdot I_{S^c}, \ldots, g_d \cdot I_{S^c}) \subseteq \mathrm{EssRan}(g_1,\ldots,g_d) \cup \{0\} = \sigma(Y_1,\ldots,Y_d) \cup \{0\}.
\ees
\end{proof}

Any $d$ self-adjoint operators $Z_1, \ldots, Z_d$ with $||Z_i|| \leq M$ and $Z_i Z_j = 0$ for all $i$ and $j \neq i$ automatically satisfy $Z_1 + \ldots + Z_d \leq M$, so Lemma \ref{lem:poscommgivesann} may be used to relate dilation of positive sets to the positive diamond.

\begin{theorem}\label{thm:BplusWminmax}
For all $p \in [1, \infty]$ and all $d \in \bZ^+$, the positive $\ell^p_d$ ball $\ol{\bB}_{p,d}^+$ satisfies
\be\label{eq:pospball}
\Wmax{}(\ol{\bB}_{p,d}^+) \subseteq d^{1-1/p}\cdot \Wmin{}(\ol{\bB}_{1,d}^+) \subseteq d^{1-1/p}\cdot \Wmin{}(\ol{\bB}_{p,d}^+).
\ee
The scale $d^{1-1/p}$ is optimal: $\theta(\ol{\bB}_{p,d}^+) = \theta(\ol{\bB}_{p,d}^+, \ol{\bB}_{1,d}^+) = d^{1-1/p}$. Moreover, given a tuple $(X_1, \ldots, X_d) \in \cB(H)^d_{sa}$ of positive operators with $\mathcal{W}_1(X) \subseteq \ol{\bB}_{p,d}^+$, there is a normal dilation $(N_1, \ldots, N_d) \in \cB(H \otimes \bC^{d+1})^d_{sa}$ with $N_i = d^{1-1/p} P_i$, where $P_1, \ldots, P_d$ are mutually orthogonal projections with block entries in the real unital $C^*$-algebra generated by $X_1, \ldots, X_d$.
\end{theorem}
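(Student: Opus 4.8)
The plan is to prove, in order: (i) the inclusion chain \eqref{eq:pospball} together with the explicit description of the dilation; (ii) the resulting upper bound $\theta(\ol{\bB}_{p,d}^+,\ol{\bB}_{1,d}^+)\le d^{1-1/p}$, hence $\theta(\ol{\bB}_{p,d}^+)\le d^{1-1/p}$; and (iii) optimality.

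\textbf{Inclusions and the explicit dilation.} The second inclusion in \eqref{eq:pospball} is immediate: $\ol{\bB}_{1,d}^+\subseteq\ol{\bB}_{p,d}^+$ (since $\|x\|_p\le\|x\|_1$), and $\Wmin{}$ is monotone under set inclusion. For the first inclusion, and simultaneously the ``moreover'' clause, fix $X=(X_1,\dots,X_d)\in\cB(H)_{sa}^d$ with $\cW_1(X)\subseteq\ol{\bB}_{p,d}^+$. Since $\ol{\bB}_{p,d}^+\subseteq[0,1]^d$, each $X_i$ is a positive contraction, and for every unit vector $\xi$ the scalars $a_i:=\ip{X_i\xi,\xi}\ge0$ satisfy $\sum_i a_i^p\le1$; by Hölder's inequality $\sum_i a_i\le d^{1-1/p}\big(\sum_i a_i^p\big)^{1/p}\le d^{1-1/p}$, so $\sum_{i=1}^dX_i\le d^{1-1/p}I$. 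Put $X_i':=d^{-(1-1/p)}X_i$, so that the $X_i'$ are positive with $\sum_iX_i'\le I$. Theorem \ref{thm:positivediamond} yields mutually orthogonal projections $P_1,\dots,P_d\in\cB(H\otimes\bC^{d+1})$, with block entries in the real unital C*-algebra generated by $X_1',\dots,X_d'$ (equivalently, by $X_1,\dots,X_d$), such that $P_i$ dilates $X_i'$. Set $N_i:=d^{1-1/p}P_i$; then $(N_1,\dots,N_d)$ is a normal dilation of $X$, and since the $P_i$ are mutually orthogonal projections every joint eigenvalue tuple of $(P_1,\dots,P_d)$ lies in $\{0,e_1,\dots,e_d\}$, whence
\[
\sigma(N_1,\dots,N_d)\ \subseteq\ d^{1-1/p}\{0,e_1,\dots,e_d\}\ \subseteq\ d^{1-1/p}\cdot\ol{\bB}_{1,d}^+,
\]
the last containment because $\ol{\bB}_{1,d}^+=\conv\{0,e_1,\dots,e_d\}$. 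By \eqref{eq:Wmin_def1}, $X\in d^{1-1/p}\cdot\Wmin{}(\ol{\bB}_{1,d}^+)$. This establishes \eqref{eq:pospball} and the final sentence of the theorem, hence $\theta(\ol{\bB}_{p,d}^+,\ol{\bB}_{1,d}^+)\le d^{1-1/p}$; combined with $\Wmin{}(\ol{\bB}_{1,d}^+)\subseteq\Wmin{}(\ol{\bB}_{p,d}^+)$ it also gives $\theta(\ol{\bB}_{p,d}^+)\le d^{1-1/p}$.

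\textbf{Reduction of optimality to a pencil estimate.} The inclusion $\Wmin{}(\ol{\bB}_{1,d}^+)\subseteq\Wmin{}(\ol{\bB}_{p,d}^+)$ shows $\theta(\ol{\bB}_{p,d}^+,\ol{\bB}_{1,d}^+)\ge\theta(\ol{\bB}_{p,d}^+)$, so it suffices to prove $\theta(\ol{\bB}_{p,d}^+)\ge d^{1-1/p}$, i.e., to exhibit for each $C<d^{1-1/p}$ a matrix tuple $X$ with $\cW_1(X)\subseteq\ol{\bB}_{p,d}^+$ admitting no commuting self-adjoint dilation $N$ with $\sigma(N)\subseteq C\cdot\ol{\bB}_{p,d}^+$. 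I would detect the obstruction through pencil norms: if $X\prec N$ with $X_i=V^*N_iV$ for an isometry $V$, $N$ commuting self-adjoint, and $\sigma(N)\subseteq C\cdot\ol{\bB}_{p,d}^+$, then for every self-adjoint tuple $A=(A_1,\dots,A_d)$,
\[
\Big\|\sum_iA_i\otimes X_i\Big\|\ \le\ \Big\|\sum_iA_i\otimes N_i\Big\|\ =\ \sup_{\lambda\in\sigma(N)}\Big\|\sum_i\lambda_iA_i\Big\|\ \le\ C\sup_{\lambda\in\ol{\bB}_{p,d}^+}\Big\|\sum_i\lambda_iA_i\Big\|,
\]
the equality being the spectral theorem for the commuting tuple $N$. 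Thus it is enough to produce a positive tuple $X$ with $\cW_1(X)\subseteq\ol{\bB}_{p,d}^+$ and a self-adjoint tuple $A$ with $\big\|\sum_iA_i\otimes X_i\big\|\ge d^{1-1/p}\sup_{\lambda\in\ol{\bB}_{p,d}^+}\big\|\sum_i\lambda_iA_i\big\|$.

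\textbf{Construction of the extremal tuple.} For $1\le p\le 2$ the natural candidate is $A_i=F_i$, the anti-commuting self-adjoint unitaries of Lemma \ref{lem:themagicmatrices}, together with $X_i=\beta^{-1}(I+F_i)$, where $\beta$ is the least constant making $\cW_1(X)\subseteq\ol{\bB}_{p,d}^+$: then the supremum on the right equals $\sup_{\lambda\in\ol{\bB}_{p,d}^+}\|\lambda\|_2=1$ by anti-commutation (Lemma \ref{lem:antinorm}), while $\big\|\sum_iF_i\otimes X_i\big\|=\beta^{-1}\big\|(\sum_iF_i)\otimes I+\sum_iF_i\otimes F_i\big\|$ is forced to be large by Lemma \ref{lem:themagicmatrices} applied with $y=-\mathbf 1$ (after swapping the two tensor legs) together with the fact that $\sum_iF_i\otimes F_i$ has eigenvalue $d$; computing $\beta$ from $\cW_1(F)=\ol{\bB}_{2,d}$ and pinning down this pencil norm via the eigenvector constructed in the proof of Lemma \ref{lem:themagicmatrices} is intended to yield exactly the ratio $d^{1-1/p}$. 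For $2\le p\le\infty$ this particular test degrades and a different tuple is required; the extreme case is $p=\infty$, where $\ol{\bB}_{\infty,d}^+=[0,1]^d$ and one must show $\theta([0,1]^d)\ge d$ — note this is strictly larger than $\mathring\theta([0,1]^d)=\sqrt d$, so the argument must use the placement of the origin, i.e. that the dilating operators $N_i$ are \emph{positive}. A natural choice is a tuple of generic rank-one projections $X_i=u_iu_i^*$ (automatically $\cW_1(X)\subseteq[0,1]^d$): by \eqref{eq:Wmin_def2}, membership of $C^{-1}X$ in $\Wmin{}([0,1]^d)$ amounts to the existence of positive $(Q_S)_{S\subseteq\{1,\dots,d\}}$ with $\sum_SQ_S=I$ and $C^{-1}X_i=\sum_{S\ni i}Q_S$, a system of $2^d$ operator positivity constraints which one checks fails for $C<d$ when the $u_i$ are in general position. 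Alternatively, optimality for all $p$ can be read off from the later fact that $d^{1-1/p}\cdot\ol{\bB}_{1,d}^+$ is a \emph{minimal} dilation hull of $\ol{\bB}_{p,d}^+$ (Corollary \ref{cor:mindil_B1plus}, via the simplex-pointedness result Theorem \ref{thm:pointednonsense}). I expect this optimality half — and especially the precise constant in the cube case $p=\infty$ — to be the main obstacle; the inclusions and the explicit dilation, by contrast, follow directly from Hölder's inequality and Theorem \ref{thm:positivediamond}.
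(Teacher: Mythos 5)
Your treatment of the inclusion chain and the explicit dilation is correct and follows the paper's own route: observe $\ol{\bB}_{1,d}^+ \subseteq \ol{\bB}_{p,d}^+$ for the second inclusion, and for the first use H\"older (equivalently $\ol{\bB}_{p,d}^+ \subseteq d^{1-1/p}\cdot\ol{\bB}_{1,d}^+$) to get $\sum X_i \le d^{1-1/p}I$, then feed the rescaled tuple into Theorem~\ref{thm:positivediamond}.

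The optimality direction, however, is not actually established. Your first route (pencil norms with the anticommuting $F_i$ and $X_i=\beta^{-1}(I+F_i)$) is only a heuristic, and you yourself note it ``degrades'' for $p>2$; nothing is computed. Your second route correctly identifies the right test objects --- a tuple of rank-one projections $P_j$ onto lines in general position, scaled by $d^{-1/p}$ --- but stops at ``a system of $2^d$ operator positivity constraints which one checks fails for $C<d$,'' which is precisely the part that requires an argument. The device the paper uses to make this check succeed is Lemma~\ref{lem:poscommgivesann}: because the $P_j$ project onto lines meeting pairwise trivially, the only operator sandwiched below two of the $X_j$'s is $0$, and hence any positive commuting dilation $Y$ with $\sigma(Y)\subseteq C\cdot\ol{\bB}_{p,d}^+\subseteq [0,C]^d$ can be replaced by a mutually annihilating one $Z$ with $0\le Z_i\le Y_i$. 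Mutual annihilation gives $\sum Z_i \le C I$, compressing yields $\sum X_i\le C I$, and reading off the top-left entries while sending the lines together forces $C\ge d\cdot d^{-1/p}=d^{1-1/p}$. Without Lemma~\ref{lem:poscommgivesann} (or a substitute) this step is genuinely missing.

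Your third ``alternative'' via the minimal dilation hull in Corollary~\ref{cor:mindil_B1plus} does not close the gap either: that corollary says no \emph{proper subset} of $d^{1-1/p}\cdot\ol{\bB}_{1,d}^+$ is a dilation hull of $\ol{\bB}_{p,d}^+$. But for $1<C<d^{1-1/p}$, the set $C\cdot\ol{\bB}_{p,d}^+$ is \emph{not} a subset of $d^{1-1/p}\cdot\ol{\bB}_{1,d}^+$ (the vertex $(Cd^{-1/p},\dots,Cd^{-1/p})$ has $\ell^1$-norm $Cd^{1-1/p}>d^{1-1/p}$), so minimality of the latter says nothing about whether the former is a dilation hull. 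So this deduction does not go through, and $\theta(\ol{\bB}_{p,d}^+)\ge d^{1-1/p}$ needs the direct argument via Lemma~\ref{lem:poscommgivesann}.
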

\begin{proof}
Suppose $(X_1, \ldots, X_d) \in \cB(H)^d_{sa}$ has $\mathcal{W}_1(X) \subseteq \ol{\bB}_{p,d}^+$. Since this set is contained in $d^{1-1/p} \cdot \ol{\bB}_{1,d}^+$, it follows that the positive operators $X_i$ satisfy $X_1 + \ldots + X_d \leq d^{1-1/p} I$. From Theorem \ref{thm:positivediamond}, there is a normal dilation $(N_1, \ldots, N_d)$ of the form $N_i = d^{1-1/p} P_i$, where $P_1, \ldots, P_d$ are mutually orthogonal projections in $\cB(H \otimes \bC^{d+1})$ with block entries in the real unital $C^*$-algebra generated by $X_1, \ldots, X_d$. For tuples of matrices, the above dilation procedure is the explicit witness to the computation
\bes
\Wmax{}(\ol{\bB}_{p,d}^+) \subseteq d^{1-1/p} \cdot \Wmax{}(\ol{\bB}_{1,d}^+) = d^{1-1/p} \cdot \Wmin{}(\ol{\bB}_{1,d}^+) \subseteq d^{1-1/p} \cdot \Wmin{}(\ol{\bB}_{p, d}^+),
\ees
which establishes both desired containments.

For optimality of the constant $d^{1-1/p}$, suppose that $\Wmax{}(\ol{\bB}_{p,d}^+) \subseteq C \cdot \Wmin{}(\ol{\bB}_{p,d}^+)$. Fix $0 < \varepsilon < 1/d$ and let $P_j \in M_2(\bC)$ be the projection onto the span of $(\sqrt{1-j^2\varepsilon^2}, j\varepsilon)$, so that the top left entry of $P_j$ is $1 - j^2\varepsilon^2$ and the $P_j$ have ranges with pairwise trivial intersection. It follows that the only positive semidefinite matrix bounded above by two distinct $P_i$ and $P_j$ is zero. Define $X_j = d^{-1/p} P_j$, so that $0 \leq X_j \leq d^{-1/p} I_2$. Since $(d^{-1/p}, \ldots, d^{-1/p})$ has $\ell^p$ norm $1$, it follows that $(X_1, \ldots, X_d) \in \Wmax{}(\ol{\bB}_{p,d}^+)$. From $\Wmax{}(\ol{\bB}_{p,d}^+) \subseteq C \cdot \Wmin{}(\ol{\bB}_{p,d}^+)$, there is then an isometry $V: \bC^2 \hookrightarrow H$ and a normal dilation $(Y_1, \ldots, Y_d) \in \cB(H)^d_{sa}$ of positive operators with $||Y_j|| \leq C$. By Lemma \ref{lem:poscommgivesann}, there is a different normal dilation $(Z_1, \ldots, Z_d) \in \cB(H)^d_{sa}$ of positive operators which annihilate each other and have $||Z_j|| \leq C$. The joint spectrum of $(Z_1, \ldots, Z_d)$ is then contained in $C \cdot \ol{\bB}_{1,d}^+$, and
\bes
Z_1 + \ldots + Z_d \leq C.
\ees
From applying $V^*(\cdot)V$ we see that
\bes
X_1 + \ldots + X_d \leq C,
\ees
and from the top-left entry of $X_j$ we reach
\bes
\sum_{j=1}^d d^{-1/p} (1 - j^2 \varepsilon^2) \leq C.
\ees
Taking $\varepsilon \to 0^+$ gives $d \cdot d^{-1/p} = d^{1-1/p} \leq C$.
This shows that $\theta(\ol{\bB}_{p,d}^+) \geq d^{1-1/p}$, and since we have $d^{1-1/p} \geq \theta(\ol{\bB}_{p,d}^+,\ol{\bB}_{1,d}^+) \geq \theta(\ol{\bB}_{p,d}^+) \geq d^{1-1/p}$, the proof is complete.
\end{proof}

That is, the dilation constant for $\ol{\bB}_{p,d}^+$ is the highest $\ell^1$ norm attained in that set. This computation will be refined and generalized in Section \ref{sec:minhulls}.
We may also use the explicit dilation of the positive diamond $\ol{\bB}_{1,d}^+$ found in Theorem \ref{thm:positivediamond} to form an explicit dilation of the full diamond, improving the estimate obtained in \cite{DDSS}. Moreover, these estimates may also be phrased in terms of the self-adjoint matrix ball
\[
\mathcal{B}^{(d)} = \{ X \in \bM^d_{sa}: X_1^2 + \ldots + X_d^2 \leq I\}
\]
after a quick computation.

\begin{lemma}\label{lem:diamondinmatrixball}
If $Y = (Y_1, \ldots, Y_d) \in \cB(H)^d_{sa}$ has $\mathcal{W}_1(Y) \subseteq \ol{\bB}_{1,d}$, then $\sum Y_i^2 \leq I$. In particular, it holds that $\Wmax{}(\ol{\bB}_{1,d}) \subseteq \mathcal{B}^{(d)}$.
\end{lemma}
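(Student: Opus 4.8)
The plan is to convert the hypothesis $\cW_1(Y)\subseteq\ol{\bB}_{1,d}$ into a finite list of operator inequalities and then average over sign patterns. Since the polar dual of $\ol{\bB}_{1,d}$ is the cube $\ol{\bB}_{\infty,d}=[-1,1]^d$, whose extreme points are the $2^d$ sign vectors $\epsilon\in\{-1,1\}^d$, the ball $\ol{\bB}_{1,d}$ is the intersection of the half-spaces $\{x\in\bR^d:\sum_{i=1}^d\epsilon_i x_i\le 1\}$. Concretely, for a fixed $\epsilon$ and any unit vector $\xi\in H$, the point $(\langle Y_1\xi,\xi\rangle,\dots,\langle Y_d\xi,\xi\rangle)\in\cW_1(Y)\subseteq\ol{\bB}_{1,d}$ satisfies $\sum_i\epsilon_i\langle Y_i\xi,\xi\rangle\le 1$, so $\sum_i\epsilon_iY_i\le I$; applying this with $-\epsilon$ as well gives $-I\le\sum_i\epsilon_iY_i\le I$, hence $\bigl(\sum_{i=1}^d\epsilon_iY_i\bigr)^2\le I$ for every $\epsilon\in\{-1,1\}^d$.

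Now I would average these $2^d$ inequalities. Using the orthogonality relation $2^{-d}\sum_{\epsilon\in\{-1,1\}^d}\epsilon_i\epsilon_j=\delta_{ij}$ and expanding each square,
\[
\frac{1}{2^d}\sum_{\epsilon\in\{-1,1\}^d}\Bigl(\sum_{i=1}^d\epsilon_iY_i\Bigr)^2
=\sum_{i,j=1}^d\Bigl(\frac{1}{2^d}\sum_\epsilon\epsilon_i\epsilon_j\Bigr)Y_iY_j
=\sum_{i=1}^d Y_i^2 .
\]
Each of the $2^d$ summands on the left is $\le I$, so the average is $\le I$, i.e.\ $\sum_i Y_i^2\le I$. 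For the ``in particular'' statement: if $X\in\Wmax{n}(\ol{\bB}_{1,d})$ then $\cW_1(X)\subseteq\ol{\bB}_{1,d}$ by \eqref{eq:Wmax_def2}, so the computation just given yields $\sum_i X_i^2\le I_n$, meaning $X\in\cB^{(d)}$; thus $\Wmax{}(\ol{\bB}_{1,d})\subseteq\cB^{(d)}$.

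This argument is short and I do not anticipate a genuine obstacle; the one point worth stating carefully is the reduction in the first paragraph --- that $\cW_1(Y)\subseteq\ol{\bB}_{1,d}$ is equivalent to the $2^d$ operator inequalities $\pm\sum_i\epsilon_iY_i\le I$ indexed by the extreme points of the dual cube --- after which everything is the standard Rademacher averaging identity. One could alternatively phrase the first paragraph through \eqref{eq:Wmax_def1}, since $Y\in\Wmax{}(\ol{\bB}_{1,d})$ by \eqref{eq:Wmax_def2}.
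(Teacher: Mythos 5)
Your proof is correct and is essentially identical to the paper's: both extract the $2^d$ operator inequalities $-I \le \sum_i (\pm 1) Y_i \le I$ from the scalar containment, square each (valid since the operator is self-adjoint with norm at most $1$), and average over all sign patterns so that the cross terms $Y_iY_j$ ($i\ne j$) cancel, leaving $\sum_i Y_i^2 \le I$. The only cosmetic difference is that the paper indexes signs by $\sigma\in\{0,1\}^d$ via $(-1)^{\sigma_i}$ whereas you use $\epsilon\in\{-1,1\}^d$ directly.
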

\begin{proof}
Assume $\mathcal{W}_1(Y) \subseteq \ol{\bB}_{1,d}$. Then for all $\sigma \in \{0,1\}^d$,
\bes
-I \leq (-1)^{\sigma_1} Y_1 + (-1)^{\sigma_2} Y_2 + \ldots + (-1)^{\sigma_d} Y_d \leq I.
\ees
Squaring this inequality (which is valid due to the $C^*$-norm identity) for a fixed $\sigma$ leads to
\bes
0 \leq Y_1^2 + \ldots + Y_d^2 + \sum_{j \not= k} (-1)^{\sigma_j + \sigma_k} Y_j Y_k \leq I,
\ees
and averaging over all choices of $\sigma$ cancels out all cross terms, so
\bes
0 \leq Y_1^2 + \ldots + Y_d^2 \leq I.
\ees
Consequently, $\Wmax{}(\ol{\bB}_{1,d}) \subseteq \mathcal{B}^{(d)}$.
\end{proof}

In \cite{DDSS}, shortly before Corollary 9.9, it is shown that $\mathcal{B}^{(d)} \subseteq \sqrt{d} \cdot \Wmin{}(\ol{\bB}_{2,d})$. The following theorem improves upon this result (in particular, by using the minimal matrix convex set over a diamond instead of a ball) and contains the optimal dilation scale for $\ol{\bB}_{1,d}$ as well.

\begin{theorem}\label{thm:diamonddilationgeneral}
For a list of positive numbers $a_1, \ldots, a_d$, let $D(a_1, \ldots, a_d)$ be the convex hull of $(\pm a_1, 0, \ldots, 0), \ldots, (0, \ldots, 0, \pm a_d)$. Then
\be\label{eq:diamonddogs}
\Wmax{}(\ol{\bB}_{1,d}) \subseteq \Wmin{}(D(a_1, \ldots, a_d))
\ee
holds if and only if $\sum \cfrac{1}{a_i^2} \leq 1$. In fact, if  $\sum \cfrac{1}{a_i^2} \leq 1$, then the stronger claim
\be\label{eq:matrixballinsquishydiamond}
\mathcal{B}^{(d)} \subseteq \Wmin{}(D(a_1, \ldots, a_d))
\ee
holds, and if $(X_1, \ldots, X_d) \in \cB(H)^d_{sa}$ satisfies $\sum X_i^2 \leq I$, then there is a normal dilation $(N_1, \ldots, N_d) \in \cB(H \otimes \bC^{2d+1})^d_{sa}$ with joint spectrum in $D(a_1, \ldots, a_d)$. Moreover, we may insist that the block entries of $N_j$ are in the real unital $C^*$-algebra generated by $X_1, \ldots, X_d$ and that $N_iN_j = 0$ for $i \not= j$.
\end{theorem}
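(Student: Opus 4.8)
The plan is to treat the two implications separately. For ``$\Longleftarrow$'' I will in fact prove the stronger containment \eqref{eq:matrixballinsquishydiamond}, namely $\mathcal{B}^{(d)}\subseteq\Wmin{}(D(a_1,\dots,a_d))$, together with the explicit dilation; then \eqref{eq:diamonddogs} follows from Lemma~\ref{lem:diamondinmatrixball}, which gives $\Wmax{}(\ol{\bB}_{1,d})\subseteq\mathcal{B}^{(d)}$. For ``$\Longrightarrow$'' I will insert a concrete tuple built from the anti-commuting unitaries $F_1,\dots,F_d$ into the hypothesis and extract $\sum_i 1/a_i^2\le1$ from the norm identity \eqref{eq:antitocomm}. (The case $d=1$ is immediate, since then $D(a_1)=[-a_1,a_1]$, $\ol{\bB}_{1,1}=[-1,1]$ is a $1$-simplex, and both statements reduce to $a_1\ge1$; so assume $d\ge2$.)

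For the sufficiency, assume $\sum_i 1/a_i^2\le1$, so in particular $a_i\ge1$ for each $i$, and let $X=(X_1,\dots,X_d)\in\cB(H)^d_{sa}$ satisfy $\sum_i X_i^2\le I$. The crucial step is the operator inequality $\sum_{i=1}^d\frac1{a_i}|X_i|\le I$, which I would prove by testing on a unit vector $\xi$: Cauchy--Schwarz gives $\langle|X_i|\xi,\xi\rangle\le\||X_i|\xi\|=\|X_i\xi\|$, and a second Cauchy--Schwarz across the sum gives
\[
\sum_i\tfrac1{a_i}\langle|X_i|\xi,\xi\rangle\le\sum_i\tfrac1{a_i}\|X_i\xi\|\le\Big(\sum_i\tfrac1{a_i^2}\Big)^{1/2}\Big(\sum_i\|X_i\xi\|^2\Big)^{1/2}\le\Big\langle\sum_i X_i^2\,\xi,\xi\Big\rangle^{1/2}\le1.
\]
Next split $X_i=X_i^+-X_i^-$ into positive and negative parts by functional calculus, so the $X_i^\pm$ lie in the real unital $C^*$-algebra generated by $X_i$, with $X_i^+X_i^-=0$ and $|X_i|=X_i^++X_i^-$. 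The $2d$ positive operators $\frac1{a_1}X_1^+,\frac1{a_1}X_1^-,\dots,\frac1{a_d}X_d^+,\frac1{a_d}X_d^-$ sum to $\sum_i\frac1{a_i}|X_i|\le I$, so Theorem~\ref{thm:positivediamond} (with $2d$ in place of $d$) yields mutually orthogonal projections $P_1^+,P_1^-,\dots,P_d^+,P_d^-\in\cB(H\otimes\bC^{2d+1})$, dilating these operators through a single isometry $V\colon H\hookrightarrow H\otimes\bC^{2d+1}$, with block entries in the real unital $C^*$-algebra generated by the $X_i^\pm$, hence by $X_1,\dots,X_d$. I then set $N_i:=a_i(P_i^+-P_i^-)$. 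Each $N_i$ is self-adjoint with $\|N_i\|\le a_i$, and $N_iN_j=0$ for $i\ne j$ since all the projections are mutually orthogonal; hence the $N_i$ commute and $N=(N_1,\dots,N_d)$ is a normal tuple. Decomposing $H\otimes\bC^{2d+1}$ into the ranges of the $P_i^\pm$ and their orthogonal complement, $N$ acts as $\pm a_ie_i$ on the range of $P_i^\pm$ and as $0$ on the complement, so $\sigma(N)\subseteq\{0,\pm a_1e_1,\dots,\pm a_de_d\}\subseteq D(a_1,\dots,a_d)$; and $V^*N_iV=a_i(\tfrac1{a_i}X_i^+-\tfrac1{a_i}X_i^-)=X_i$, so $X\prec N$. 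This establishes \eqref{eq:matrixballinsquishydiamond} and the concrete dilation, and composing with Lemma~\ref{lem:diamondinmatrixball} gives \eqref{eq:diamonddogs}.

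For the necessity, suppose $\Wmax{}(\ol{\bB}_{1,d})\subseteq\Wmin{}(D(a_1,\dots,a_d))$. Put $t:=(\sum_j 1/a_j^2)^{-1/2}$ and $c_i:=t/a_i$, so $\sum_i c_i^2=1$. Since the $F_i$ pairwise anti-commute, Lemma~\ref{lem:antinorm} gives $\|\sum_i\varepsilon_i c_iF_i\|=(\sum_i c_i^2)^{1/2}\le1$ for every sign vector $\varepsilon$, hence $\cW_1(c_1F_1,\dots,c_dF_d)\subseteq\ol{\bB}_{1,d}$, i.e.\ $(c_1F_1,\dots,c_dF_d)\in\Wmax{}(\ol{\bB}_{1,d})$. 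By hypothesis it has a normal dilation $(N_1,\dots,N_d)$ with $\sigma(N)\subseteq D(a_1,\dots,a_d)$ and an isometry $W$ with $W^*N_iW=c_iF_i$. Because the $N_i$ commute while the $F_i$ anti-commute, all off-diagonal terms cancel upon squaring, so $\big(\sum_i\tfrac1{a_i}N_i\otimes F_i\big)^2=\sum_i\tfrac1{a_i^2}N_i^2\otimes I$. For $x\in\sigma(N)\subseteq D(a_1,\dots,a_d)$ one has $\sum_i\tfrac1{a_i^2}x_i^2\le(\sum_i|x_i|/a_i)^2\le1$, whence $\|\sum_i\tfrac1{a_i^2}N_i^2\|\le1$ and therefore $\|\sum_i\tfrac1{a_i}N_i\otimes F_i\|\le1$. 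Compressing by $W\otimes I$ and applying \eqref{eq:antitocomm} with the positive weights $c_i/a_i$,
\[
\sqrt{\textstyle\sum_i 1/a_i^2}=t\sum_i\tfrac1{a_i^2}=\sum_i\tfrac{c_i}{a_i}=\Big\|\sum_i\tfrac{c_i}{a_i}F_i\otimes F_i\Big\|\le\Big\|\sum_i\tfrac1{a_i}N_i\otimes F_i\Big\|\le1,
\]
so $\sum_i 1/a_i^2\le1$.

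The two points that require real care are: first, the operator inequality $\sum_i\frac1{a_i}|X_i|\le I$ in the sufficiency part --- the $X_i$ do not commute, so one cannot diagonalize them simultaneously, and the layered Cauchy--Schwarz argument above is what makes it go through; and second, in the necessity part, choosing the weights $1/a_i$ in the pairing and the test scalars $c_i=t/a_i$ so that the estimate produces exactly $\sqrt{\sum_i 1/a_i^2}$. With the ``obvious'' equal-weight pairing $\sum_i N_i\otimes F_i$ one would bound $\|\sum_i N_i^2\|$ by $\max_i a_i^2$ and obtain only the much weaker conclusion $\max_i a_i\ge\sqrt d$; everything beyond these two choices is routine bookkeeping with mutually orthogonal projections and the $C^*$-identity.
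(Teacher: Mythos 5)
Your proof is correct and follows essentially the same route as the paper. For sufficiency you use the same two-layer Cauchy--Schwarz argument to establish $\sum_i \tfrac{1}{a_i}|X_i| \le I$, the same decomposition $X_i = X_i^+ - X_i^-$, the same appeal to Theorem~\ref{thm:positivediamond} on the resulting $2d$-tuple of positive operators, and the same formula $N_i = a_i(P_i^+ - P_i^-)$; for necessity you test the same anti-commuting tuple $(F_1,\ldots,F_d)$ (pre-normalized by $c_i = t/a_i$ rather than the paper's post-hoc scaling, a purely cosmetic choice) and extract the bound from \eqref{eq:antitocomm} and the squaring identity $\bigl(\sum_i \tfrac{1}{a_i}N_i\otimes F_i\bigr)^2 = \sum_i \tfrac{1}{a_i^2}N_i^2\otimes I$ exactly as the paper does.
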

\begin{proof}
Suppose $\Wmax{}(\ol{\bB}_{1,d}) \subseteq \Wmin{}(D(a_1, \ldots, a_d))$, and let $F_1, \ldots, F_d$ be as in Lemma \ref{lem:themagicmatrices}. If a normal tuple $(N_1, \ldots, N_d)$ has joint spectrum in $D(a_1, \ldots, a_d)$, then
\bes
\left|\left|\cfrac{1}{a_1^2} \cdot N_1^2 + \ldots + \cfrac{1}{a_d^2} \cdot N_d^2 \right|\right| \leq 1
\ees
follows because $\left(\cfrac{1}{a_1} \,N_1, \ldots, \cfrac{1}{a_d} \,N_d \right)$ is a normal tuple with joint spectrum in $\ol{\bB}_{1,d} \subseteq \ol{\bB}_{2,d}$.
Now, since $F_1, \ldots, F_d$ pairwise anti-commute, we also see from Lemma \ref{lem:antinorm} that
\be\label{eq:somanyequationsohmygod}
\begin{aligned}
\left|\left| \cfrac{1}{a_1} \cdot N_1 \otimes F_1 + \ldots + \cfrac{1}{a_d} \cdot N_d \otimes F_d \right|\right| = \sqrt{\left|\left| \left( \cfrac{1}{a_1} \cdot N_1 \otimes F_1 + \ldots + \cfrac{1}{a_d} \cdot N_d \otimes F_d \right)^2\right|\right|} = \\
\sqrt{\left|\left| \cfrac{1}{a_1^2} \cdot N_1^2 \otimes I + \ldots + \cfrac{1}{a_j^2}  \cdot N_d^2 \otimes I \right|\right|} = \sqrt{\left|\left|\cfrac{1}{a_1^2}  \cdot N_1^2 + \ldots + \cfrac{1}{a_d^2}  \cdot N_d^2 \right| \right|} \leq 1. \hspace{1.5 cm}
\end{aligned}
\ee
Applying Lemma \ref{lem:antinorm} to the tuple $\left(\cfrac{1}{a_1} \, F_1, \ldots, \cfrac{1}{a_d} \,F_d \right)$ shows that
\bes
\left (\cfrac{1}{a_1} \, F_1, \ldots,  \cfrac{1}{a_d} \, F_d \right) \in   \sqrt{\sum_{j=1}^d \cfrac{1}{a_j^2}} \cdot \Wmax{}(\ol{\bB}_{1,d}) \subseteq \sqrt{\sum_{j=1}^d \cfrac{1}{a_j^2}} \cdot \Wmin{}(D(a_1, \ldots, a_d)),
\ees
so $\left(\cfrac{1}{a_1} \, F_1, \ldots,  \cfrac{1}{a_d} \, F_d\right)$ has a normal dilation $M = \sqrt{\sum\limits_{j=1}^d \cfrac{1}{a_j^2}} \, \cdot N$, $\sigma(N) \subseteq D(a_1, \ldots, a_d)$.
Recalling \eqref{eq:antitocomm} and \eqref{eq:somanyequationsohmygod}, it follows that
\bes
\begin{aligned}
\sum_{j=1}^d \cfrac{1}{a_j^2} = \left|\left| \cfrac{1}{a_1}  \left (\cfrac{1}{a_1} \,  F_1 \right) \otimes F_1 + \ldots + \cfrac{1}{a_d} \left (\cfrac{1}{a_d} \,  F_d \right) \otimes F_d \right|\right| \leq \left|\left| \cfrac{1}{a_1} \, M_1 \otimes F_1 + \ldots + \cfrac{1}{a_d} \, M_d \otimes F_d \right|\right| = \\
\sqrt{\sum_{j=1}^d \cfrac{1}{a_j^2}} \cdot \left|\left| \cfrac{1}{a_1} \, N_1 \otimes F_1 + \ldots + \cfrac{1}{a_d} \, N_d \otimes F_d \right|\right| \leq \sqrt{\sum_{j=1}^d \cfrac{1}{a_j^2}} \,. \hspace{3.5 cm}
\end{aligned}
\ees
That is, $\cfrac{1}{a_1^2} + \ldots + \cfrac{1}{a_d^2}  \leq 1$.

For the other direction, consider any tuple $(X_1, \ldots, X_d) \in \cB(H)^d_{sa}$ which satisfies
\bes
0 \leq X_1^2 + \ldots + X_d^2 = |X_1|^2 + \ldots + |X_d|^2 \leq I.
\ees
First, applying the positive linear functional $\langle (\cdot)  {v},  {v} \rangle$ for arbitrary $ {v} \in H$ shows that
\be\label{eq:thesquaredform}
0 \leq ||\hspace{2 pt} |X_1| \hspace{2 pt}  {v} ||^2 + \ldots + || \hspace{2 pt} |X_d| \hspace{2 pt}  {v} ||^2 \leq ||  {v} ||^2.
\ee
Given any list $a_1, \ldots, a_d$ of positive numbers, the Cauchy-Schwarz Inequality and (\ref{eq:thesquaredform}) give that
\be\label{eq:abssum}
0 \leq \cfrac{1}{a_1} \cdot ||\hspace{2 pt} |X_1| \hspace{2 pt}  {v} || + \ldots + \cfrac{1}{a_d} \cdot || \hspace{2 pt} |X_d| \hspace{2 pt}  {v} || \leq \sqrt{\cfrac{1}{a_1^2} + \ldots + \cfrac{1}{a_d^2}} \cdot || {v}||.
\ee
From the Cauchy-Schwarz Inequality once more we also know that
\bes
0 \leq \left \langle |X_j|  {v},  {v} \right \rangle \leq  || \hspace{2 pt} |X_j| \hspace{2 pt}  {v} || \cdot || {v}||,
\ees
which combined with (\ref{eq:abssum}) gives that
\bes
0 \leq \left \langle \cfrac{1}{a_1} \cdot |X_1|  {v},  {v} \right\rangle + \ldots + \left\langle \cfrac{1}{a_d} \cdot |X_d|  {v},  {v} \right\rangle \leq \sqrt{\cfrac{1}{a_1^2} + \ldots + \cfrac{1}{a_d^2}} \cdot || {v}||^2.
\ees
Since $v$ is arbitrary, this implies
\be\label{eq:ell1sum}
0 \leq \cfrac{1}{a_1} \cdot |X_1| + \ldots + \cfrac{1}{a_d} \cdot |X_d| \leq \sqrt{\cfrac{1}{a_1^2} + \ldots + \cfrac{1}{a_d^2}} \cdot I.
\ee
If $\sum \cfrac{1}{a_j^2} \leq 1$, then we have reached $\sum \cfrac{1}{a_j} \, |X_j| \leq I$. In this case, write $X_j = A_j - B_j$, $|X_j| = A_j + B_j$, where $A_j$ and $B_j$ are positive and determined from the functional calculus (using real coefficients). We then know that
\bes
0 \leq \cfrac{1}{a_1} \cdot A_1 + \cfrac{1}{a_1} \cdot B_1 + \ldots +  \cfrac{1}{a_d} \cdot A_d + \cfrac{1}{a_d} \cdot B_d \leq I.
\ees
From Theorem \ref{thm:positivediamond}, the tuple $\left( \cfrac{1}{a_1} \cdot A_1, \cfrac{1}{a_1} \cdot B_1, \ldots, \cfrac{1}{a_d} \cdot A_d,\cfrac{1}{a_d} \cdot B_d \right) \in \cB(H)^{2d}_{sa}$ may be dilated to a tuple $(K_1, L_1, \ldots, K_d, L_d) \in \cB(H \otimes \bC^{2d+1})^{2d}_{sa}$ of positive contractions which pairwise annihilate each other. It follows that $N_j = a_j \cdot (K_j - L_j)$ is a self-adjoint dilation of $X_j$ such that $N_i N_j = 0$ for $i \not= j$ and $||N_j|| \leq a_j$. The joint spectrum of $(N_1, \ldots, N_d)$ is then in $D(a_1, \ldots, a_d)$. Finally, tracing the steps used shows that the block entries of the $N_j$ are in the real unital $C^*$-algebra generated by $X_1, \ldots, X_d$. With Lemma \ref{lem:diamondinmatrixball}, this explicit dilation procedure shows that if $\sum \cfrac{1}{a_j^2} \leq 1$, then $\Wmax{}(\ol{\bB}_{1,d}) \subseteq \mathcal{B}^{(d)} \subseteq \Wmin{}(D(a_1, \ldots, a_d))$.
\end{proof}

From the constant sequence $a_1 = \ldots = a_d = \sqrt{d}$, we obtain
\bes
\Wmax{}(\ol{\bB}_{1,d}) \subseteq \sqrt{d} \cdot \Wmin{}(\ol{\bB}_{1,d}),
\ees
and this constant is optimal. That is, $\theta(\ol{\bB}_{1,d}) = \sqrt{d}$. It follows that the dilation scale $\sqrt{d}$ is also optimal in the containment
\bes
\mathcal{B}^{(d)} \subseteq \sqrt{d} \cdot \Wmin{}(\ol{\bB}_{1,d}).
\ees
This result improves the claim that $\mathcal{B}^{(d)} \subseteq \sqrt{d} \cdot \Wmin{}(\ol{\bB}_{2,d})$ from \cite{DDSS}.

Consider now the dual problem of dilating the cube $\ol{\bB}_{\infty,d} = [-1,1]^d$.
First, using duality (Lemma \ref{lem:duality_inclusion}), we immediately obtain
\bes
\Wmax{}([-1,1]^d) \subseteq \Wmin{}\left([-a_j,a_j]^d \right) \iff \cfrac{1}{a_1^2} + \ldots + \cfrac{1}{a_d^2} \leq 1,
\ees
and in particular
\bes
\Wmax{}([-1,1]^d) \subseteq \sqrt{d} \cdot \Wmin{}([-1, 1]^d),
\ees
with $\theta([-1,1]^d) = \sqrt{d}$ optimal.
So, we know that given a $d$-tuple $X$ of self-adjoint contractions, we should be able to find a $d$-tuple  $N$ of commuting self-adjoints each of norm at most $\sqrt{d}$, such that $X \prec N$. Our next goal is to find such a dilation explicitly; this will be achieved in Theorem \ref{thm:cubedilationgeneral} below. A crucial step in this process is the dilation technique of Halmos in \cite{Halmos50}; if a self-adjoint operator $X$ has $||X|| \leq 1$, then
\be\label{eq:scaledunitarydilation}
Y :=  \begin{pmatrix} X & \sqrt{I - X^2} \\ \sqrt{I - X^2} & -X \end{pmatrix}
\ee
is a self-adjoint dilation of $X$ which has $Y^2 = I$.
That is, $Y$ is self-adjoint and unitary.
The block entries of $Y$ also belong to the real unital $C^*$-algebra generated by $X$.

\begin{theorem}\label{thm:cubedilationgeneral}
Let $a_1, \ldots, a_d$ be a list of positive numbers. Then
\be\label{eq:wmaxpropertyp}
\Wmax{}([-1,1]^d) \subseteq \Wmin{}\left(\prod_{j=1}^d [-a_j, a_j]\right)
\ee
holds if and only if $\sum \cfrac{1}{a_j^2} \leq 1$. In this case, if $(X_1, \ldots, X_d) \in \cB(H)^d_{sa}$ is such that $||X_j|| \leq 1$ for each $j$, then there is a normal dilation $(N_1, \ldots, N_d) \in \cB(H \otimes \bC^{4^{d-1}})^d_{sa}$ with $||N_j|| \leq a_j$ for each $j$. The $N_j$ may be chosen with block entries in the real unital $C^*$-algebra generated by $X_1, \ldots, X_d$.
\end{theorem}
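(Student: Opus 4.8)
The statement contains two parts: the equivalence ``\eqref{eq:wmaxpropertyp} $\iff \sum 1/a_j^2\le 1$'', and an explicit normal dilation witnessing the forward implication. My plan is to obtain the equivalence almost for free from the diamond result (Theorem~\ref{thm:diamonddilationgeneral}) together with polar duality, and then to build the explicit dilation separately, since polar duality is not itself a dilation construction.

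\textbf{The equivalence, via duality.} First I would record the relevant scalar polar duals: $([-1,1]^d)'=\ol{\bB}_{1,d}$, $(\ol{\bB}_{1,d})'=[-1,1]^d$, $D(a_1,\dots,a_d)'=\prod_{j=1}^d[-1/a_j,1/a_j]$, and $\bigl(\prod_j[-a_j,a_j]\bigr)'=D(1/a_1,\dots,1/a_d)$. For the implication $\Leftarrow$: if $\sum 1/a_j^2\le 1$ then each $a_j\ge 1$, so $0\in\ol{\bB}_{1,d}\subseteq D(a_1,\dots,a_d)$, and Theorem~\ref{thm:diamonddilationgeneral} gives $\Wmax{}(\ol{\bB}_{1,d})\subseteq\Wmin{}(D(a_1,\dots,a_d))$. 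Lemma~\ref{lem:duality_inclusion} turns this into $\Wmax{}\bigl(\prod_j[-1/a_j,1/a_j]\bigr)\subseteq\Wmin{}([-1,1]^d)$, and applying the invertible linear map $\diag(a_1,\dots,a_d)$ through Lemma~\ref{lem:affinemapWminmax} yields exactly \eqref{eq:wmaxpropertyp}. For $\Rightarrow$: the containment \eqref{eq:wmaxpropertyp} forces $[-1,1]^d\subseteq\prod_j[-a_j,a_j]$ at level one, hence $a_j\ge 1$ and $0\in\prod_j[-1/a_j,1/a_j]\subseteq[-1,1]^d$; applying $\diag(1/a_1,\dots,1/a_d)$ and then Lemma~\ref{lem:duality_inclusion} reverses the chain to $\Wmax{}(\ol{\bB}_{1,d})\subseteq\Wmin{}(D(a_1,\dots,a_d))$, whence $\sum 1/a_j^2\le 1$ by Theorem~\ref{thm:diamonddilationgeneral}. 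Note that in this route the genuine \emph{content} of the theorem for the $\Leftarrow$ direction is not the set containment but the claim that the dilation can be produced explicitly, with the stated dimension and with block entries in the real $C^*$-algebra generated by $X_1,\dots,X_d$.

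\textbf{The explicit dilation.} This I would construct by induction on $d$ using the scaled Halmos/rotation dilation \eqref{eq:scaledunitarydilation}. For $d=1$, $[-1,1]$ is a simplex and $N_1=X_1$ already works on $H\otimes\bC=H$ (and $1/a_1^2\le 1$ forces $a_1\ge 1\ge\|X_1\|$, matching $4^{0}=1$). For the inductive step, with $\sum_{j=1}^d 1/a_j^2\le 1$, isolate $X_d$: applying \eqref{eq:scaledunitarydilation} to $X_d/a_d$ produces on $H\otimes\bC^2$ a self-adjoint unitary $U$ with $X_d/a_d\prec U$ and entries in the real $C^*$-algebra generated by $X_d$, and the amplification of $a_dU$ will be the final $N_d$. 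It then remains to dilate $(X_1,\dots,X_{d-1})$, now sitting diagonally inside $H\otimes\bC^2$, to commuting self-adjoints of norms at most $a_1,\dots,a_{d-1}$ (the constraint $\sum_{j<d}1/a_j^2\le 1-1/a_d^2\le 1$ permits this) that \emph{also} commute with $U$; the inductive hypothesis supplies the mutual commutativity, and the dimension growth by a factor $4=2^2$ per variable (morally: one step to unitarize $X_d$ and one step to reconcile it with the lower-dimensional dilation) is what makes room for the extra requirement. All operators produced are manufactured from the $X_j$ by functional calculus and $2\times2$ block operations, keeping block entries in the real unital $C^*$-algebra generated by $X_1,\dots,X_d$; at the end one checks $(N_1,\dots,N_d)$ is commuting, self-adjoint, has $\|N_j\|\le a_j$, and dilates $X_j$.

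\textbf{The main obstacle.} The hard part is precisely the commutativity of the output: since the $X_j$ need not commute, naively applying \eqref{eq:scaledunitarydilation} to each $X_j$ in isolation fails, because the commutator of the resulting operators carries an irreducible $[X_i,X_j]$ term that does not vanish. The additional dilation step attached to each variable must therefore be arranged so as to absorb these cross terms. I expect this to be most cleanly handled not by a bare induction but by adapting the block constructions already used in Theorem~\ref{thm:positivediamond} and Theorem~\ref{thm:diamonddilationgeneral}, and the bulk of the work will be the (routine but lengthy) verification that the amplified tuple on $H\otimes\bC^{4^{d-1}}$ has all the required properties.
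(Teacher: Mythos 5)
Your duality argument for the equivalence is correct, and in fact the paper already records exactly this argument (in the text immediately following Theorem~\ref{thm:diamonddilationgeneral}). So you correctly identify that the genuine content of Theorem~\ref{thm:cubedilationgeneral} is the \emph{explicit} dilation with dimension $4^{d-1}$ and block entries in the real unital $C^*$-algebra generated by the $X_j$. That is precisely the part you do not prove: you give a high-level plan that correctly identifies the obstacle (the cross terms $[X_i,X_j]$), but you never produce the construction that overcomes it.

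Concretely, your sketch takes $N_d = a_d U$ with $U$ a Halmos unitary dilation of $X_d/a_d$, and then hopes that the inductive dilation of $(X_1,\dots,X_{d-1})$ can be made to commute with $U$. But the induction as stated gives you no handle on commutativity with $U$, and you acknowledge this without resolving it. The paper's proof uses a different and more delicate device. Writing $a_{d+1} = \sqrt{1+1/s^2}$, it first Halmos-dilates \emph{every} $X_j$ to self-adjoint unitaries $Y_j$ on $H\otimes\bC^2$, then forms on $H\otimes\bC^4$ the block operators
\[
Z_j = \begin{pmatrix} Y_j & \tfrac{s}{2}(Y_{d+1}Y_j - Y_jY_{d+1}) \\ \tfrac{s}{2}(Y_jY_{d+1} - Y_{d+1}Y_j) & Y_j \end{pmatrix}, \quad
Z_{d+1} = \begin{pmatrix} Y_{d+1} & \tfrac{1}{s}I \\ \tfrac{1}{s}I & -Y_{d+1} \end{pmatrix}.
\]
The off-diagonal of $Z_j$ is exactly $\tfrac{s}{2}[Y_{d+1},Y_j]$, chosen so that $[Z_j, Z_{d+1}] = 0$; the diagonal/off-diagonal blocks of $Z_j$ anti-commute, giving $\|Z_j\| \le \sqrt{1+s^2}$ by Lemma~\ref{lem:antinorm}, while $\|Z_{d+1}\| \le \sqrt{1+1/s^2} = a_{d+1}$. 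The inductive hypothesis is then applied to $(Z_1,\dots,Z_d)$ with scaled norms $b_j = a_j/\sqrt{1+s^2}$, and the crucial feature that the inductive dilations have block entries in the real unital $C^*$-algebra generated by $Z_1,\dots,Z_d$ is what forces them to commute with $\bigoplus Z_{d+1}$. Note in particular that $Z_{d+1}$ is \emph{not} a scaled unitary: the slack $a_{d+1} > 1$ is spent on the $\tfrac{1}{s}I$ off-diagonal entry needed to cancel the commutator. Your proposal, by taking $N_d = a_d U$ with $U$ unitary, uses up the slack wrongly and has no mechanism to reconcile $N_d$ with the remaining dilations. Since you also explicitly flag the missing step rather than fill it, this is a genuine gap, not a stylistic difference.
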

\begin{proof}
First, suppose that $\Wmax{}([-1,1]^d) \subseteq \Wmin{}\left(\prod_{j=1}^d [-a_j, a_j]\right)$ holds. From Lemma \ref{lem:themagicmatrices}, (\ref{eq:antitocomm}), and (\ref{eq:themagicmatrices}), the tuple $(F_1, \ldots, F_d) \in \Wmax{}([-1,1]^d)$ is such that for all $t_1, \ldots, t_d > 0$, $||t_1 F_1 \otimes F_1 + \ldots + t_d F_d \otimes F_d|| = t_1 + \ldots + t_d$. On the other hand, if $(M_1, \ldots, M_d)$ is a normal tuple of self-adjoints with $||M_j|| \leq a_j$, then by Lemma \ref{lem:antinorm}, $|| t_1 M_1 \otimes F_1 + \ldots + t_d M_d \otimes F_d|| \leq \sqrt{t_1^2 a_1^2 + \ldots + t_d^2 a_d^2}$. Therefore we must have
\bes
\sqrt{t_1^2 a_1^2 + \ldots + t_d^2 a_d^2} \geq t_1 + \ldots + t_d.
\ees
Plugging in $t_j = \cfrac{1}{a_j^2}$ yields
\bes
\sqrt{\cfrac{1}{a_1^2} + \ldots + \cfrac{1}{a_d^2}} \geq \cfrac{1}{a_1^2} + \ldots + \cfrac{1}{a_d^2},
\ees
that is, $\cfrac{1}{a_1^2} + \ldots + \cfrac{1}{a_d^2} \leq 1$.

We will show the converse dilation procedure inductively, noting that $d = 1$ is trivial. Therefore, we may suppose the theorem holds for all lists $b_1, \ldots, b_d$ of positive numbers such that $\sum \limits_{j=1}^d \cfrac{1}{b_j^2} \leq 1$. Suppose $a_1, \ldots, a_{d+1}$ is a list of positive numbers with $\sum \limits_{j=1}^{d+1} \cfrac{1}{a_j^2} \leq 1$. Since $a_{d+1} > 1$, write $a_{d+1} = \sqrt{1 + 1/s^2}$ for $s > 0$, and define $b_j = \frac{a_j}{\sqrt{1+s^2}}$. Then
\bes
\cfrac{1}{b_1^2} + \ldots + \cfrac{1}{b_d^2} = (1+s^2) \cdot \left(\cfrac{1}{a_1^2} + \ldots + \cfrac{1}{a_d^2} \right) \leq (1+s^2) \cdot \left(1 - \cfrac{1}{a_{d+1}^2}  \right) = (1 + s^2) \cdot \left( 1 - \cfrac{1}{1 + 1/s^2} \right) = 1.
\ees
Let $X_1, \ldots, X_{d+1} \in \cB(H)$ be self-adjoint with $||X_j|| \leq 1$. Apply the Halmos dilation (\ref{eq:scaledunitarydilation}): $Y_1, \ldots, Y_{d+1} \in \cB(H \otimes \bC^2)$ are self-adjoint dilations with $Y_j^2 = I$. The block operators
\bes
Z_j =
\begin{pmatrix}
Y_j & \cfrac{s}{2} \cdot(Y_{d+1}Y_j - Y_j Y_{d+1}) \\ \cfrac{s}{2} \cdot (Y_j Y_{d+1} - Y_{d+1}Y_j ) & Y_j
\end{pmatrix}, 1 \leq j \leq d, \hspace{.6 cm} Z_{d+1} = \begin{pmatrix} Y_{d+1} & \cfrac{1}{s} \cdot I \\\cfrac{1}{s} \cdot I & -Y_{d+1} \end{pmatrix}
\ees
are self-adjoint operators in $\cB(H \otimes \bC^4)$ such that $Z_j$ commutes with $Z_{d+1}$ for each $j$. Moreover, the block diagonal and off-diagonal entries of $Z_j$ anti-commute, so the norms of $Z_j$ are then bounded by Lemma \ref{lem:antinorm}, which gives
\bes
||Z_j|| \leq \sqrt{||Y_j||^2 + \left| \left| \frac{s}{2 } \cdot (Y_{d+1}Y_j - Y_j Y_{d+1}) \right| \right|^2} \leq  \sqrt{1 + s^2}, \hspace{.5 cm} 1 \leq j \leq d,
\ees
\bes
||Z_{d+1}|| \leq \sqrt{||Y_{d+1}||^2 + \left| \left| \frac{1}{s} \cdot I \right| \right|^2 } \leq \sqrt{1+1/s^2} = a_{d+1}.
\ees
Since $\sum\limits_{j=1}^d \cfrac{1}{b_j^2} \leq 1$, the inductive assumption (scaled by $\sqrt{1+s^2}$) shows that $(Z_1, \ldots, Z_d) \in \cB(H \otimes \bC^4)^d_{sa}$ admits a normal dilation $(N_1, \ldots, N_d) \in \cB(H \otimes \bC^4 \otimes \bC^{4^{d-1}})^d_{sa}$ with $||N_j|| \leq \sqrt{1 + s^2} \cdot b_j = a_j$ such that the block entries of each $N_j$ are in the real unital $C^*$-algebra generated by $Z_1, \ldots, Z_d$.
The $N_j$ therefore also commute with $N_{d+1} := \bigoplus\limits_{m=1}^{4^{d-1}} Z_{d+1}$, which has $||N_{d+1}|| = ||Z_{d+1}|| \leq a_{d+1}$.
Tracing back the steps used shows that the block entries of $N_1, \ldots, N_{d+1}$ are in the real unital $C^*$-algebra generated by the original self-adjoints $X_1, \ldots, X_{d+1}$, so the induction is complete. Applied to tuples of matrices, the above dilation procedure shows that if $\sum_{j=1}^d \cfrac{1}{a_j^2} \leq 1$, then $\Wmax{}([-1,1]^d) \subseteq \Wmin{}\left( \prod [-a_j, a_j] \right)$.
\end{proof}

As before, the constant sequence $a_1 = \ldots = a_d = \sqrt{d}$ gives
\be\label{eq:cubecontainment}
\Wmax{}([-1,1]^d) \subseteq \sqrt{d} \cdot \Wmin{}([-1,1]^d),
\ee
and this constant is optimal. That is, $\theta(\ol{\bB}_{\infty,d}) = \sqrt{d}$.

\begin{remark}
If one takes the containment of Theorem \ref{thm:cubedilationgeneral},
\bes
\Wmax{}([-1,1]^d) \subseteq \Wmin{}\left(\prod_{j=1}^d [-a_j, a_j]\right) \iff \sum \cfrac{1}{a_j^2} \leq 1,
\ees
and applies a scaling transformation $x_j \mapsto \frac{1}{a_j} x_j$, the result is
\be\label{eq:totheleft,totheleft}
\Wmax{}\left(\prod_{j=1}^d \left[-\cfrac{1}{a_j}, \cfrac{1}{a_j} \right]\right) \subseteq \Wmin{}([-1,1]^d) \iff \sum \cfrac{1}{a_j^2} \leq 1.
\ee
Since each set $\prod_{j=1}^d \left[-\frac{1}{a_j}, \frac{1}{a_j} \right]$ is contained in $\ol{\bB}_{2,d}$ due to the inequality $\sum \frac{1}{a_j^2} \leq 1$, it is very tempting to replace the left hand side of (\ref{eq:totheleft,totheleft}) with $\Wmax{}(\ol{\bB}_{2,d})$ in order to construct and prove a more general result. However, if $d \geq 2$, it holds that
\be\label{eq:hopeisnotgoodenough}
\Wmax{}(\ol{\bB}_{2,d}) \not\subseteq \Wmin{}([-1,1]^d),
\ee
as $[-1,1]^d \subseteq \sqrt{d} \cdot \ol{\bB}_{2,d}$ and $\theta(\ol{\bB}_{2,d}) = d > \sqrt{d}$.
\end{remark}

We have now computed $\theta(\ol{\bB}_{1,d}) = \sqrt{d} = \theta(\ol{\bB}_{\infty, d})$, where we note that the proof of optimality of the constant $\sqrt{d}$ essentially returns to the claim $\theta(\ol{\bB}_{2,d}) = d$ from \cite[Example 7.24]{DDSS}. Indeed, we may use the explicit dilations of the diamond and the cube, as well as the minimal dilation constant of the ball, to interpolate $\theta(\ol{\bB}_{p,d})$ for all $1 \leq p \leq \infty$.

\begin{theorem}\label{thm:balldilationgeneral}
For $1 \leq p \leq \infty$, $\theta(\ol{\bB}_{p,d}) = d^{1-|1/p - 1/2|}$.
\end{theorem}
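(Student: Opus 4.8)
The plan is to prove the two inequalities $\theta(\ol{\bB}_{p,d}) \leq d^{1-|1/p-1/2|}$ and $\theta(\ol{\bB}_{p,d}) \geq d^{1-|1/p-1/2|}$ separately, splitting the analysis according to whether $p \leq 2$ or $p \geq 2$. For the lower bound, the work is already done: in the discussion following Proposition \ref{prop:scaletoscale} it was observed that $\theta(\ol{\bB}_{2,d}) = d$ together with $\rho(\ol{\bB}_{p,d}, \ol{\bB}_{2,d}) \leq d^{|1/p-1/2|}$ forces $\theta(\ol{\bB}_{p,d}) \geq d/d^{|1/p-1/2|} = d^{1-|1/p-1/2|}$. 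So the entire content is the upper bound.

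For the upper bound I would first treat $1 \leq p \leq 2$, where $|1/p - 1/2| = 1/p - 1/2$ and the target constant is $d^{1-1/p+1/2} = \sqrt{d}\cdot d^{1/2-1/p}$. The key geometric fact is the norm comparison $\ol{\bB}_{p,d} \subseteq d^{1/p-1/2}\cdot \ol{\bB}_{1,d}$ (equivalently $\|x\|_1 \leq d^{1-1/p}\|x\|_p$ by Hölder, but here we want it relative to $\ell^2$: actually $\|x\|_1 \le d^{1/2}\|x\|_2$ and $\|x\|_2 \le d^{1/p-1/2}\|x\|_p$, so $\ol{\bB}_{p,d} \subseteq d^{1/p - 1/2}\ol{\bB}_{2,d} \subseteq d^{1/p-1/2}\ol{\bB}_{1,d}\cdot$... one must be careful). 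The cleanest route uses $\ol{\bB}_{p,d}\subseteq d^{1-1/p}\ol{\bB}_{1,d}$ (this is $\|x\|_1\le d^{1-1/p}\|x\|_p$) — wait, that gives $d^{1-1/p}$, not what we want. The right play: combine Theorem \ref{thm:diamonddilationgeneral}, which gives $\Wmax{}(\ol{\bB}_{1,d}) \subseteq \sqrt{d}\cdot\Wmin{}(\ol{\bB}_{1,d})$, with the inclusions $\ol{\bB}_{p,d} \subseteq \ol{\bB}_{1,d}$ rescaled appropriately. Precisely: $\ol{\bB}_{p,d}$ sits inside $\ol{\bB}_{1,d}$ only after scaling up, but it contains $d^{-(1-1/p)}\ol{\bB}_{1,d}$... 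I would instead use Proposition \ref{prop:scaletoscale}: $\theta(\ol{\bB}_{p,d}) \leq \rho(\ol{\bB}_{p,d},\ol{\bB}_{1,d})\,\theta(\ol{\bB}_{1,d})$. Since $\rho(\ol{\bB}_{p,d},\ol{\bB}_{1,d}) \leq d^{1/1 - 1/p} = d^{1-1/p}$ is the naive estimate, this only yields $\sqrt{d}\cdot d^{1-1/p}$, too weak. The fix is a more careful two-sided sandwich: one wants constants $\alpha\ol{\bB}_{1,d}\subseteq \ol{\bB}_{p,d} \subseteq \beta\ol{\bB}_{1,d}$... no. The genuinely correct approach runs through the self-adjoint matrix ball $\mathcal{B}^{(d)}$: for $1\le p\le 2$ one shows $\Wmax{}(\ol{\bB}_{p,d})\subseteq \Wmax{}(\ol{\bB}_{2,d})\subseteq$ something, but that fails since $\ol{\bB}_{p,d}\subseteq \ol{\bB}_{2,d}$ gives $\Wmax{}(\ol{\bB}_{p,d})\subseteq\Wmax{}(\ol{\bB}_{2,d})$ which is the wrong inclusion direction for getting a small hull.

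Here is the route I would actually commit to. For $1 \le p \le 2$: by Lemma \ref{lem:diamondinmatrixball} and Theorem \ref{thm:diamonddilationgeneral} we have $\Wmax{}(\ol{\bB}_{1,d}) \subseteq \mathcal{B}^{(d)} \subseteq \Wmin{}(D(\sqrt d,\dots,\sqrt d)) = \sqrt d\cdot\Wmin{}(\ol{\bB}_{1,d})$. Now $\ol{\bB}_{p,d} \supseteq d^{1/p-1}\ol{\bB}_{1,d}$ is false; rather $\ol{\bB}_{p,d}\supseteq \ol{\bB}_{1,d}$ and $\ol{\bB}_{p,d}\subseteq d^{1-1/p}\ol{\bB}_{1,d}$... these two plus \eqref{eq:singlescale} give $\theta(\ol{\bB}_{p,d})\le d^{1-1/p}\theta(\ol{\bB}_{1,d}) = d^{3/2-1/p}$, and $3/2-1/p \ge 1-|1/p-1/2| = 3/2 - 1/p$ — equality! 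So in fact for $1\le p\le 2$, $\theta(\ol{\bB}_{p,d}) \le d^{1-1/p}\cdot\sqrt d = d^{3/2 - 1/p} = d^{1-(1/p-1/2)}$, exactly the target. Dually (Lemma \ref{lem:duality_inclusion}), for $2\le p\le \infty$ with conjugate exponent $q\le 2$: $\ol{\bB}_{p,d} = \ol{\bB}_{q,d}'$ up to the identification, and $\theta(\ol{\bB}_{p,d}) = \theta(\ol{\bB}_{q,d}) \le d^{3/2-1/q} = d^{3/2 - (1-1/p)} = d^{1/2+1/p} = d^{1-(1/2-1/p)} = d^{1-|1/p-1/2|}$. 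Thus I would write: (1) invoke Theorem \ref{thm:diamonddilationgeneral} / \eqref{eq:cubecontainment} for the endpoints $\theta(\ol{\bB}_{1,d}) = \theta(\ol{\bB}_{\infty,d}) = \sqrt d$; (2) for $1\le p\le 2$ use the elementary inclusion $\ol{\bB}_{1,d}\subseteq\ol{\bB}_{p,d}\subseteq d^{1-1/p}\ol{\bB}_{1,d}$ together with \eqref{eq:singlescale} to get $\theta(\ol{\bB}_{p,d})\le d^{1-1/p}\theta(\ol{\bB}_{1,d}) = d^{3/2-1/p}$; (3) for $2\le p\le\infty$ use $\ol{\bB}_{\infty,d}\subseteq\ol{\bB}_{p,d}$ scaled — actually $d^{-1/p}\ol{\bB}_{\infty,d}$... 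I would use $\ol{\bB}_{p,d}\subseteq\ol{\bB}_{\infty,d}$ and $d^{-1/p}\ol{\bB}_{\infty,d}\subseteq\ol{\bB}_{p,d}$, giving $\theta(\ol{\bB}_{p,d})\le d^{1/p}\theta(\ol{\bB}_{\infty,d}) = d^{1/p+1/2}$; (4) match with the lower bound from Proposition \ref{prop:scaletoscale} and $\theta(\ol{\bB}_{2,d})=d$ to conclude equality. The main obstacle — and the only place requiring care — is getting the norm-comparison constants and their exponents exactly right so that the upper bounds $d^{3/2-1/p}$ (for $p\le 2$) and $d^{1/2+1/p}$ (for $p\ge 2$) coincide precisely with $d^{1-|1/p-1/2|}$, and confirming that the elementary inclusions among $\ell^p$ balls carry the stated constants; the dilation-theoretic input is entirely black-boxed from the earlier theorems.
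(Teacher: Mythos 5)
Your final committed argument is correct and coincides with the paper's proof: both use the endpoint values $\theta(\ol{\bB}_{1,d}) = \theta(\ol{\bB}_{\infty,d}) = \sqrt{d}$ together with the scaling inclusions between $\ol{\bB}_{p,d}$ and $\ol{\bB}_{1,d}$ (for $p \le 2$) or $\ol{\bB}_{\infty,d}$ (for $p \ge 2$) to obtain the upper bound $d^{1-|1/p-1/2|}$ (you invoke \eqref{eq:singlescale} where the paper invokes Proposition \ref{prop:scaletoscale}, but since the optimal sandwich of $\ell^p$ balls here is by pure scaling these give identical estimates), and both obtain the matching lower bound by combining $\theta(\ol{\bB}_{2,d}) = d$ with $\rho(\ol{\bB}_{2,d},\ol{\bB}_{p,d}) \le d^{|1/p-1/2|}$ via Proposition \ref{prop:scaletoscale}. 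The exploratory false starts earlier in your write-up do not affect the validity of the route you ultimately commit to.
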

\begin{proof}
It is known from Theorems \ref{thm:diamonddilationgeneral} and \ref{thm:cubedilationgeneral} that $\theta(\ol{\bB}_{\infty, d}) = \sqrt{d} = \theta(\ol{\bB}_{1, d})$, and the Banach-Mazur distance between $\ell^p$ norms is bounded by $\rho(\ol{\bB}_{p,d}, \ol{\bB}_{q,d}) \leq d^{|1/p - 1/q|}$. We first use the boundary cases $p = 1$ and $p = \infty$ to obtain dilation estimates from Proposition \ref{prop:scaletoscale}:
\bes
1 \leq p \leq 2 \implies \theta(\ol{\bB}_{p,d}) \leq \rho(\ol{\bB}_{p, d}, \ol{\bB}_{1,d}) \, \theta(\ol{\bB}_{1,d}) \leq d^{1 - 1/p} \cdot d^{1/2} = d^{3/2 - 1/p} = d^{1-|1/p - 1/2|},
\ees
\bes
2 \leq p \leq \infty \implies \theta(\ol{\bB}_{p,d}) \leq \rho(\ol{\bB}_{p,d}, \ol{\bB}_{\infty, d}) \, \theta(\ol{\bB}_{\infty,d}) \leq d^{1/p} \cdot d^{1/2} = d^{1/2 + 1/p} = d^{1-|1/p - 1/2|}.
\ees
Then we use the fact that $\theta(\ol{\bB}_{2,d}) = d$ (see \cite[Example 7.24]{DDSS} or Corollary \ref{cor:allshiftedballscales}) for minimality:
\bes
1 \leq p \leq \infty \implies d = \theta(\ol{\bB}_{2,d}) \leq \rho(\ol{\bB}_{2,d}, \ol{\bB}_{p,d}) \, \theta(\ol{\bB}_{p,d}) \leq d^{|1/p - 1/2|} \cdot \theta(\ol{\bB}_{p,d}) \implies
\ees
\bes
\theta(\ol{\bB}_{p,d}) \geq d^{1 - |1/p - 1/2|}.
\ees
\end{proof}

Because the constants $\theta(\ol{\bB}_{p,d})$ given in Theorem \ref{thm:balldilationgeneral} are achieved by pushing to the boundary cases $p = 1$ and $p = \infty$, note that different properties of the explicit dilations are inherited. For example, the dilation technique bearing witness to
\bes
1 \leq p \leq 2 \implies \Wmax{}(\ol{\bB}_{p,d}) \subseteq d^{3/2-1/p} \cdot \Wmin{}(\ol{\bB}_{p,d})
\ees
produces dilations which annihilate each other. This places tighter bounds on the joint spectrum, so that
\be\label{eq:smallpbettersets}
1 \leq p \leq 2 \implies \Wmax{}(\ol{\bB}_{p,d}) \subseteq d^{3/2-1/p} \cdot \Wmin{}(\ol{\bB}_{1,d})
\ee
holds.
Thus,
\[
\theta(\ol{\bB}_{p,d},\ol{\bB}_{1,d}) = \theta(\ol{\bB}_{p,d}) = d^{3/2-1/p}, \quad \textrm{ for } \quad p \in [1,2].
\]
Moreover, for $1 \leq p \leq 2$, the technique dilates $\cB(H)^d_{sa}$ to $\cB(H \otimes \bC^{2d+1})^d_{sa}$, although this dimension is not proved minimal.

On the other hand, we know that the dilation scheme
\bes
p > 2 \implies \Wmax{}(\ol{\bB}_{p,d}) \subseteq d^{1/2+1/p} \cdot \Wmax{}(\ol{\bB}_{p,d})
\ees
\textit{cannot} always be achieved with annihilating operators, as otherwise the joint spectrum of a dilation would actually be in $d^{1/2+1/p} \cdot \ol{\bB}_{1,d} \subset d^{1/2+1/p} \cdot \ol{\bB}_{2,d}$. Since $d^{1/2+1/p}$ is strictly less than $d$ and $\ol{\bB}_{2,d} \subseteq \ol{\bB}_{p,d}$, such a claim would contradict the minimality of $\theta(\ol{\bB}_{2,d}) = d$. However, this dilation is obtained from the cube:
\be\label{eq:bigpbettersets}
p \geq 2 \implies \Wmax{}(\ol{\bB}_{p,d}) \subseteq \Wmax{}(\ol{\bB}_{\infty,d}) \subseteq d^{1/2} \cdot \Wmin{}(\ol{\bB}_{\infty,d}).
\ee
Thus,
\[
\theta(\ol{\bB}_{p,d}, \ol{\bB}_{\infty,d}) = \theta(\ol{\bB}_{\infty,d}) = d^{1/2} , \quad \textrm{ for } \quad p \in [2, \infty].
\]
Similarly, for $p \geq 2$, the technique dilates $\cB(H)^d_{sa}$ to $\cB(H \otimes \bC^{4^{d-1}})^d_{sa}$, but again we do not know whether this dimension is necessarily minimal.

A similar computation holds for the closed unit balls of $\ell^p$ space in $\bC^d \cong \bR^{2d}$.

\begin{corollary}
Let $\ol{\bB}_{p,d}(\bC)$ denote the closed unit ball of $\bC^d$ in the $\ell^p$ norm, viewed as a subset of $\bR^{2d}$. Then
\bes
\theta(\ol{\bB}_{p,d}(\bC)) = 2 \, \theta( \ol{\bB}_{p,d}) = 2 \, d^{1 - |1/p - 1/2|}.
\ees
\end{corollary}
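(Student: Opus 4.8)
The plan is to deduce everything from the already-established value $\theta(\ol{\bB}_{p,d}) = d^{1-|1/p-1/2|}$ (Theorem~\ref{thm:balldilationgeneral}), so that the asserted identity $\theta(\ol{\bB}_{p,d}(\bC)) = 2\,\theta(\ol{\bB}_{p,d})$ is exactly $\theta(\ol{\bB}_{p,d}(\bC)) = 2\,d^{1-|1/p-1/2|}$; write $\delta := |1/p - 1/2| \in [0,1/2]$. The key observation is that the complex unit ball $\ol{\bB}_{p,d}(\bC) \subseteq \bR^{2d}$ is \emph{not} the real unit ball $\ol{\bB}_{p,2d}$ unless $p = 2$, but it is trapped between scalar multiples of $\ol{\bB}_{p,2d}$ (which will give the upper bound, via Theorem~\ref{thm:balldilationgeneral} in dimension $2d$) and between scalar multiples of $\ol{\bB}_{2,2d}$ (which will give the lower bound, via $\theta(\ol{\bB}_{2,2d}) = 2d$). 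Both sandwichings are elementary consequences of comparing $\ell^p$ and $\ell^2$ norms, and once they are in place Proposition~\ref{prop:scaletoscale} does the rest.

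For the upper bound I would first record the coordinatewise estimate $r^p \le |a|^p + |b|^p \le 2^{1-p/2} r^p$ when $p \le 2$ (and the reversed inequality when $p \ge 2$), where $r = (a^2+b^2)^{1/2}$; summing over the $d$ complex coordinates $z_j = a_j + i b_j$ and taking $p$-th roots yields, in every case, scalings $c_1\,\ol{\bB}_{p,2d} \subseteq \ol{\bB}_{p,d}(\bC) \subseteq c_2\,\ol{\bB}_{p,2d}$ with $c_2/c_1 = 2^{\delta}$, hence $\rho(\ol{\bB}_{p,d}(\bC), \ol{\bB}_{p,2d}) \le 2^{\delta}$. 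Then Proposition~\ref{prop:scaletoscale} together with Theorem~\ref{thm:balldilationgeneral} in ambient dimension $2d$ gives
\[
\theta(\ol{\bB}_{p,d}(\bC)) \le \rho(\ol{\bB}_{p,d}(\bC), \ol{\bB}_{p,2d})\,\theta(\ol{\bB}_{p,2d}) \le 2^{\delta}(2d)^{1-\delta} = 2\,d^{1-\delta}.
\]

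For the lower bound I would use that the complex and real Euclidean balls coincide, $\ol{\bB}_{2,d}(\bC) = \ol{\bB}_{2,2d}$, so $\theta(\ol{\bB}_{2,2d}) = 2d$ by Theorem~\ref{thm:balldilationgeneral} (or Corollary~\ref{cor:allshiftedballscales}). Comparing $\ell^p$ and $\ell^2$ norms on $\bC^d$ gives $d^{1/2-1/p}\,\ol{\bB}_{2,2d} \subseteq \ol{\bB}_{p,d}(\bC) \subseteq \ol{\bB}_{2,2d}$ when $p \le 2$ and $\ol{\bB}_{2,2d} \subseteq \ol{\bB}_{p,d}(\bC) \subseteq d^{1/2-1/p}\,\ol{\bB}_{2,2d}$ when $p \ge 2$, so $\rho(\ol{\bB}_{2,2d}, \ol{\bB}_{p,d}(\bC)) \le d^{\delta}$ in both regimes. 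Applying Proposition~\ref{prop:scaletoscale} with $K = \ol{\bB}_{2,2d}$ and $L = \ol{\bB}_{p,d}(\bC)$ yields $2d = \theta(\ol{\bB}_{2,2d}) \le d^{\delta}\,\theta(\ol{\bB}_{p,d}(\bC))$, i.e.\ $\theta(\ol{\bB}_{p,d}(\bC)) \ge 2\,d^{1-\delta}$, which combined with the previous paragraph finishes the proof. The only real obstacle is bookkeeping: one must track which way each $\ell^p$--$\ell^2$ inequality points in the two cases $p \le 2$ and $p \ge 2$, both for the planar comparison (real vs.\ complex $\ell^p$) and for the $d$-dimensional comparison ($\ell^p$ vs.\ $\ell^2$); there is no deeper difficulty, and in particular no new dilation-theoretic input is needed beyond Theorem~\ref{thm:balldilationgeneral} and Proposition~\ref{prop:scaletoscale}.
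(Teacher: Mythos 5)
Your proof is correct. The lower bound is the same argument the paper uses: compare $\ol{\bB}_{p,d}(\bC)$ with $\ol{\bB}_{2,2d} = \ol{\bB}_{2,d}(\bC)$ via $\rho(\ol{\bB}_{2,2d},\ol{\bB}_{p,d}(\bC)) \le d^{\delta}$, then apply Proposition~\ref{prop:scaletoscale} with $\theta(\ol{\bB}_{2,2d}) = 2d$.

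Your upper bound, however, takes a slightly different intermediate than the paper. The paper sandwiches $\ol{\bB}_{p,d}(\bC)$ between scalings of the real boundary balls $\ol{\bB}_{1,2d}$ (for $p \le 2$) or $\ol{\bB}_{\infty,2d}$ (for $p \ge 2$), picking up a factor $\sqrt{2}$ from the planar comparison and a factor $d^{1-1/p}$ or $d^{1/p}$ from the complex $\ell^p$--$\ell^1$ (resp.\ $\ell^p$--$\ell^\infty$) comparison, and then uses only $\theta(\ol{\bB}_{1,2d}) = \theta(\ol{\bB}_{\infty,2d}) = \sqrt{2d}$. You instead compare $\ol{\bB}_{p,d}(\bC)$ directly with the real ball $\ol{\bB}_{p,2d}$, using the single planar estimate $r^p \le |a|^p + |b|^p \le 2^{1-p/2}r^p$ (for $p\le 2$; reversed for $p\ge 2$) to obtain $\rho(\ol{\bB}_{p,d}(\bC),\ol{\bB}_{p,2d}) \le 2^{\delta}$, and then invoke the full force of Theorem~\ref{thm:balldilationgeneral} in dimension $2d$, i.e.\ $\theta(\ol{\bB}_{p,2d}) = (2d)^{1-\delta}$. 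This is a modest streamlining: the final case split on $p \gtrless 2$ disappears, and the arithmetic $2^{\delta}\cdot(2d)^{1-\delta} = 2\,d^{1-\delta}$ is particularly clean. The only trade-off is that it leans on Theorem~\ref{thm:balldilationgeneral} for arbitrary $p$, whereas the paper's route only needs the boundary values $p = 1,\infty$; since that theorem is already available at this point in the paper, this is entirely unobjectionable. Both routes are correct.
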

\begin{proof}
We proceed as in the real case, continuing to use the boundary cases from real $\ell^p$ balls: $\theta(\ol{\bB}_{\infty, 2d}) = \sqrt{2d} = \theta(\ol{\bB}_{1,2d})$. Note that the Banach-Mazur distance is governed by
\bes
\rho( \ol{\bB}_{p,d}(\bC), \ol{\bB}_{\infty, 2d}) \leq \sqrt{2} \, \rho( \ol{\bB}_{p,d}(\bC), \ol{\bB}_{\infty, d}(\bC)) \leq \sqrt{2} \, d^{1/p},
\ees
\bes
\rho( \ol{\bB}_{p,d}(\bC), \ol{\bB}_{1, 2d}) \leq \sqrt{2} \, \rho( \ol{\bB}_{p,d}(\bC), \ol{\bB}_{1, d}(\bC)) \leq \sqrt{2} \, d^{1-1/p},
\ees
so we have
\bes
1 \leq p \leq 2 \implies \theta(\ol{\bB}_{p,d}(\bC)) \leq \rho(\ol{\bB}_{p, d}(\bC), \ol{\bB}_{1,2d}) \, \theta(\ol{\bB}_{1,2d}) \leq \sqrt{2} \, d^{1 - 1/p} \, \sqrt{2d} = 2 \, d^{3/2 - 1/p},
\ees
\bes
2 \leq p \leq \infty \implies \theta(\ol{\bB}_{p,d}(\bC)) \leq \rho(\ol{\bB}_{p,d}(\bC), \ol{\bB}_{\infty, 2d}) \, \theta(\ol{\bB}_{\infty,2d}) \leq \sqrt{2} \, d^{1/p} \, \sqrt{2d} = 2 \, d^{1/2 + 1/p}.
\ees
Finally, since real and complex $\ell^2$-balls (of the appropriate dimension) are identical, we have
\bes
\rho( \ol{\bB}_{p,d}(\bC), \ol{\bB}_{2, 2d}) = \rho( \ol{\bB}_{p,d}(\bC), \ol{\bB}_{2, d}(\bC)) \leq d^{|1/p-1/2|},
\ees
which along with $\theta(\ol{\bB}_{2, 2d}) = 2d$ gives that
\bes
1 \leq p \leq \infty \implies 2d = \theta(\ol{\bB}_{2,2d}) \leq \rho(\ol{\bB}_{2,2d}, \ol{\bB}_{p,d}(\bC)) \, \theta(\ol{\bB}_{p,d}(\bC)) = d^{|1/p - 1/2|} \cdot \theta(\ol{\bB}_{p,d}) \implies
\ees
\bes
\theta(\ol{\bB}_{p,d}(\bC)) \geq 2 \, d^{1 - |1/p - 1/2|} = \left\{ \begin{array}{ccccc} 2 \, d^{3/2 - 1/p}, & & 1 \leq p \leq 2 \\ 2 \, d^{1/2 + 1/p}, & & 2 \leq p \leq \infty \end{array}\right. .
\ees
\end{proof}

A special case of the above is that the optimal dilation constant for the polydisk $\ol{\bB}_{\infty, d}(\bC) = \overline{\bD}^d$ is given by
\be\label{eq:polydisk}
\theta\left(\overline{\bD}^d \right) = 2 \sqrt{d},
\ee
which improves the estimate $2d$ of \cite[Corollary 7.10]{DDSS}.
Now, $\theta(\ol{\bD}^d) = 2 \sqrt{d}$ implies that every $d$-tuple $T$ of (not necessarily normal) contractions dilates to a normal tuple $N$ such that $\|N_i\| \leq 2 \sqrt{d}$ for all $i$.
Indeed, since any tuple of contractions $T = (T_1, \ldots, T_d)$ has $(\mathrm{Re}(T_1), \mathrm{Im}(T_1), \ldots, \mathrm{Re}(T_d), \mathrm{Im}(T_d)) \in \Wmax{}\left(\overline{\bD}^d \right)$, we may conclude from (\ref{eq:polydisk}) that $T$ admits a normal dilation consisting of operators with norm at most $2 \sqrt{d}$. In particular, it does not matter if $\cH$ is infinite-dimensional, as the explicit dilations presented capture this case (alternatively, see subsection \ref{subsec:finitetoinfinite}). However, the converse argument does \textit{not} hold; knowing that every $d$-tuple of contractions $T$ dilates to a normal tuple $N$ such that $\|N_i\| \leq C$ for all $i$, does not imply that $\theta\left(\ol{\bD}^d\right) \leq C$. Even for $d = 1$, $\Wmax{}(\ol{\bD})$ includes tuples $(X, Y)$ such that $X + iY$ is not a contraction. Similarly, if $T = T_1$ is a contraction, then the Halmos dilation
\be\label{eq:halmosgeneralcase}
\begin{pmatrix}T & \sqrt{I - TT^*} \\ \sqrt{I - T^*T} & -T^* \end{pmatrix}
\ee
is unitary, so it has norm $1$, which is strictly less than $\theta\left(\overline{\bD} \right)$.

We caution the reader that while the mathematical content and constants of \cite[Corollary 7.10]{DDSS} are correct, the presentation of the corollary suggests that this faulty converse may hold. (Indeed, while two facts are presented in the corollary, bridged by an implication, the proof provides the implication in reverse order.) Regardless, we give a refinement of that result.

\begin{corollary}\label{cor:complex_balls}
Fix $d \geq 1$. Then
\bes
\Wmax{}\left(\overline{\bD}^d \right) \subseteq 2 \sqrt{d} \cdot \Wmin{}\left(\overline{\bD}^d \right),
\ees
and this constant is optimal for all $d$.
\vspace{.2 cm}
Further, if $(T_1, \ldots, T_d) \in \cB(H)^d$ is a tuple of (not necessarily normal) contractions, then there exists a normal dilation $N = (N_1, \ldots, N_d)$ such that $||N_j|| \leq \min\{d, 2 \sqrt{d}\}$. This constant is not necessarily optimal for $d \geq 2$.
\end{corollary}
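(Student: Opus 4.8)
The first assertion is not new: it is exactly the $p=\infty$ instance of the preceding corollary, recorded as \eqref{eq:polydisk}, which states $\theta(\overline{\bD}^d)=2\sqrt d$ (this already includes the optimality of the constant $2\sqrt d$ in $\Wmax{}(\overline{\bD}^d)\subseteq C\cdot\Wmin{}(\overline{\bD}^d)$). So the work lies in the dilation statement for contractions, and the plan is to produce \emph{two} competing dilations of a $d$-tuple of contractions and keep whichever one is better for the given $d$.

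For the scale $2\sqrt d$: given contractions $T_1,\dots,T_d\in\cB(H)$, set $X=(\re T_1,\im T_1,\dots,\re T_d,\im T_d)\in\cB(H)_{sa}^{2d}$. Since the numerical range of each contraction $T_j$ lies in $\overline{\bD}$, we have $\cW_1(X)\subseteq\overline{\bD}^d$, that is, $X\in\Wmax{}(\overline{\bD}^d)$; the infinite-dimensional case is covered by Proposition \ref{prop:dilation_finite_infinite}, which makes $\Wmax{}(\overline{\bD}^d)\subseteq 2\sqrt d\cdot\Wmin{}(\overline{\bD}^d)$ hold at the operator level. Applying \eqref{eq:polydisk} to $X$ produces a single isometry $V$ and a commuting family $(A_1,B_1,\dots,A_d,B_d)$ of self-adjoint operators with $V^*A_jV=\re T_j$, $V^*B_jV=\im T_j$, and joint spectrum contained in $2\sqrt d\cdot\overline{\bD}^d$. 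Then $N_j:=A_j+iB_j$ are mutually commuting normal operators, $V^*N_jV=\re T_j+i\,\im T_j=T_j$, and $\sigma(N_j)\subseteq 2\sqrt d\cdot\overline{\bD}$, so $\|N_j\|\le 2\sqrt d$.

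For the scale $d$: dilate each contraction $T_j$ to a unitary $U_j$ on $K_j:=H\oplus H$ by the Halmos construction \eqref{eq:halmosgeneralcase}, so that $P_HU_j|_H=T_j$. On $\mathcal K:=\bigoplus_{j=1}^dK_j$ define $N_j$ to be $d\,U_j$ on the $j$-th summand and $0$ on all other summands; each $N_j$ is normal with $\|N_j\|=d$. Since $N_j$ maps $\mathcal K$ into the $j$-th summand and annihilates every summand other than the $j$-th, for $j\ne k$ we get $N_jN_k=0=N_kN_j$, so $\{N_1,\dots,N_d\}$ is a commuting family. Embedding $V\colon H\to\mathcal K$ by $Vh=\tfrac1{\sqrt d}(\iota_1h,\dots,\iota_dh)$, where $\iota_j\colon H\hookrightarrow K_j$ is the inclusion, one checks that $V$ is an isometry and $V^*N_jV=P_HU_j|_H=T_j$ for every $j$; hence $(N_1,\dots,N_d)$ is a normal dilation of $(T_1,\dots,T_d)$ with $\|N_j\|\le d$. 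Taking the construction of the previous paragraph when $d\ge 4$ and this one when $d\le 4$ yields a normal dilation with $\|N_j\|\le\min\{d,2\sqrt d\}$ in every case.

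Finally I would add a short remark on why $\min\{d,2\sqrt d\}$ need not be sharp for $d\ge 2$. The lower bound $\theta(\overline{\bD}^d)\ge 2\sqrt d$ is witnessed by a tuple $X\in\Wmax{}(\overline{\bD}^d)$ that is genuinely larger than anything of the form $(\re T_j,\im T_j)_j$ with the $T_j$ contractions --- indeed the latter always satisfy $(\re T_j)^2+(\im T_j)^2=\tfrac12(T_jT_j^*+T_j^*T_j)\le I$ for each $j$ --- so that bound does not transfer to the dilation problem for contractions; and the $d$-block construction above is visibly wasteful (it is the same phenomenon observed for $d=1$ just before the statement). The exact optimal constant is left open. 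I do not expect any genuine obstacle in this argument: the first assertion is a citation, the $2\sqrt d$ bound is a routine real/imaginary-part reduction to \eqref{eq:polydisk}, and the $d$ bound has the clean mutually-annihilating amplification above; the only points requiring care are confirming the operator case via Proposition \ref{prop:dilation_finite_infinite} and being suitably modest about optimality.
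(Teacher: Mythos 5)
Your proof is correct and follows essentially the same route as the paper: the first assertion is a citation of \eqref{eq:polydisk}, the $2\sqrt{d}$ bound is the same real/imaginary-part reduction to $\Wmax{}(\overline{\bD}^d)\subseteq 2\sqrt d\cdot\Wmin{}(\overline{\bD}^d)$, and the $d$ bound is the same Halmos-dilation-plus-mutually-annihilating-amplification trick. The only difference is cosmetic: you realize the $d$-bound amplification as $N_j = dU_j$ on the $j$-th block of $\bigoplus_{k=1}^d (H\oplus H)$ with the embedding $Vh=\tfrac1{\sqrt d}(\iota_1 h,\dots,\iota_d h)$, whereas the paper takes $N_j = U_j\otimes dP_j$ on $(H\oplus H)\otimes\bC^d$ with $P_j$ rank-one projections onto roots-of-unity lines chosen so $(P_j)_{11}=1/d$; these are unitarily equivalent constructions, and yours is arguably slightly more transparent.
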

\begin{proof}
The first statement is just (\ref{eq:polydisk}), and the existence of a normal dilation with norms bounded by $2 \sqrt{d}$ was shown directly after that equation.

Alternatively, one may apply the Halmos dilation (\ref{eq:halmosgeneralcase}) to the members of $(T_1, \ldots, T_d)$ to obtain a dilation $(U_1, \ldots, U_d)$ of unitaries. Let $P_1, \ldots, P_d$ be mutually orthogonal projections in $M_d(\bC)$ such that each $P_j$ has $1/d$ in the top left entry; for example we may take
\bes
P_j = \textrm{ projection onto } \left(\cfrac{1}{\sqrt{d}}, \cfrac{\omega^j}{\sqrt{d}}, \ldots, \cfrac{\omega^{j(d-1)}}{\sqrt{d}}\right),
\ees
where $\omega$ is a primitive $d$th root of unity. Then $(U_1 \otimes d P_1, \ldots, U_d \otimes dP_d)$ is a normal dilation of $T$ (in particular, the normal operators $U_j \otimes d P_j$ annihilate each other) such that $||U_j \otimes d P_j || \leq d$ for each $j$.
\end{proof}

Since $d < 2\sqrt{d}$ for $d \in \{1, 2, 3\}$, it is evident that $\theta\left(\overline{\bD}^d \right) = 2 \sqrt{d}$ is not the same as the optimal constant for dilating tuples of contractions. On the other hand, because dilations of self-adjoint contractions are governed by $\theta([-1,1]^d) = \sqrt{d}$ (where we note that if a dilation is not self-adjoint, we can remove the imaginary component), we know we can achieve no better scale than $\sqrt{d}$. At the moment we do not see a clear gap between the self-adjoint and general cases. Indeed, since the Halmos dilation (\ref{eq:halmosgeneralcase}) gives normality of the individual operators $T_i$ for free, it is conceivable that no gap exists.

\begin{problem}
Let $T = (T_1, \ldots, T_d)$ be a tuple of contractions. Must there exist a normal dilation $N = (N_1, \ldots, N_d)$ of $T$ such that $||N_j|| \leq \sqrt{d}$ for all $j$?
\end{problem}

Finally, we note that the real and complex $\ell^p$-balls satisfy the hypotheses of Corollaries \ref{cor:bestshiftatzero?} and \ref{cor:bestaffineatzero?}, and the estimate for $\theta(\ol{\bB}_{p,d})$ or $\theta(\ol{\bB}_{p,d}(\bC))$ given by computing the Banach-Mazur distance to an $\ell^2$-ball is actually attained. Therefore we can conclude the following.

\begin{corollary}
Let $K, L \subset \bR^d$ or $K,L \subset \bR^{2d}$ be orthogonal images of the same real or complex $\ell^p$ unit ball.
If there is some $C > 0$ and a translation $b$ such that
\bes
\Wmax{}(K) \subseteq C \cdot \Wmin{}(b + L)
\ees
holds, then $C \geq \theta(K)$. That is, $\theta(K) \leq \theta(K, b + L)$, and in particular, $\mathring{\theta}(K) = \theta(K)$.
\end{corollary}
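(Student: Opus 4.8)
The plan is to reduce everything to the single inequality
\[
\theta(K, b+L) \ \geq\ \theta(K),
\]
and then to deduce this from Corollary~\ref{cor:bestaffineatzero?}. Granting the displayed inequality, the hypothesis $\Wmax{}(K) \subseteq C\cdot\Wmin{}(b+L)$ forces $\theta(K,b+L)\leq C$ straight from the definition (\ref{eq:thetadef1}), hence $C\geq\theta(K)$; and $\mathring{\theta}(K)=\theta(K)$ is the case $L=K$: by (\ref{eq:simpledist}), any $C>0$ and $b$ with $\Wmax{}(K)\subseteq\Wmin{}(b+C\cdot K)$ also satisfy $\Wmax{}(K)\subseteq C\cdot\Wmin{}(C^{-1}b+K)$, so $C\geq\theta(K,C^{-1}b+K)\geq\theta(K)$, giving $\mathring{\theta}(K)\geq\theta(K)$, while the reverse inequality is trivial.

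To prove $\theta(K,b+L)\geq\theta(K)$, let $B$ denote the relevant $\ell^p$ unit ball ($B=\ol{\bB}_{p,d}\subset\bR^d$, in which case $m:=d$, or $B=\ol{\bB}_{p,d}(\bC)\subset\bR^{2d}$, in which case $m:=2d$), and fix $U_0,V_0\in O_m(\bR)$ with $K=U_0(B)$ and $L=V_0(B)$. Applying the invertible linear map $U_0^{-1}$ to both arguments of $\theta$ — using linear invariance of $\theta$ together with the identity $U_0^{-1}(b+L)=U_0^{-1}(b)+(U_0^{-1}V_0)(B)$ — yields $\theta(K,b+L)=\theta\bigl(B,\,U_0^{-1}(b)+(U_0^{-1}V_0)(B)\bigr)$ and $\theta(K)=\theta(B)$. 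Since $U_0^{-1}V_0\in O_m(\bR)$, Corollary~\ref{cor:bestaffineatzero?} applies — once its hypotheses are verified for $B$ — and gives
\[
\theta\bigl(B,\ U_0^{-1}(b)+(U_0^{-1}V_0)(B)\bigr)\ \geq\ \frac{m}{\rho(\ol{\bB}_{2,m},B)}.
\]
Thus it remains to check that $B$ is a convex body with $0$ in its interior (immediate) whose Banach--Mazur distance to $\ol{\bB}_{2,m}$ is attained at a scaling map, and that $m/\rho(\ol{\bB}_{2,m},B)=\theta(B)$.

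This last point is the only real bookkeeping, and it is where I expect the only (minor) friction. The identity map between $\ell^p$ and $\ell^2$ spaces on $d$ real (resp. complex) coordinates gives $\ol{\bB}_{2,m}\subseteq B\subseteq d^{|1/p-1/2|}\,\ol{\bB}_{2,m}$ when $p\geq2$ and $B\subseteq\ol{\bB}_{2,m}\subseteq d^{|1/p-1/2|}B$ when $p\leq2$, so in all cases a scaling map witnesses $\rho(\ol{\bB}_{2,m},B)\leq d^{|1/p-1/2|}$. In the opposite direction, $\theta(\ol{\bB}_{2,m})=m$ by Corollary~\ref{cor:allshiftedballscales}, so Proposition~\ref{prop:scaletoscale} gives $m=\theta(\ol{\bB}_{2,m})\leq\rho(\ol{\bB}_{2,m},B)\,\theta(B)$, i.e. $\rho(\ol{\bB}_{2,m},B)\geq m/\theta(B)$. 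Finally, Theorem~\ref{thm:balldilationgeneral} gives $\theta(B)=d^{1-|1/p-1/2|}$ in the real case and the corresponding computation gives $\theta(B)=2d^{1-|1/p-1/2|}$ in the complex case, so in both cases $m/\theta(B)=d^{|1/p-1/2|}$. Combining the bounds, $\rho(\ol{\bB}_{2,m},B)=d^{|1/p-1/2|}=m/\theta(B)$, attained by a scaling map; this verifies the hypothesis of Corollary~\ref{cor:bestaffineatzero?} and identifies $m/\rho(\ol{\bB}_{2,m},B)$ with $\theta(B)$, which closes the argument. The conceptual content is carried entirely by Corollary~\ref{cor:bestaffineatzero?} (itself resting on Theorem~\ref{thm:everythingball} and the anti-commuting self-adjoint unitaries of Lemma~\ref{lem:themagicmatrices}); what is new here is only the observation that the $\ell^p$-balls fall under its hypotheses with equality in the resulting estimate, so that no translation can improve the dilation scale.
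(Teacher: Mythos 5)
Your proposal is correct, and it follows the route the paper itself indicates: reduce by linear invariance of $\theta$ to the ball $B$ itself, apply Corollary \ref{cor:bestaffineatzero?}, and verify that for $B=\ol{\bB}_{p,d}$ (or $\ol{\bB}_{p,d}(\bC)$) the Banach--Mazur distance $\rho(\ol{\bB}_{2,m},B)=d^{|1/p-1/2|}$ is attained at a scaling map and equals $m/\theta(B)$, via Corollary \ref{cor:allshiftedballscales}, Proposition \ref{prop:scaletoscale} and Theorem \ref{thm:balldilationgeneral}. The only cosmetic difference is that you derive $\mathring{\theta}(K)=\theta(K)$ from your main inequality through \eqref{eq:simpledist} rather than citing Corollary \ref{cor:bestshiftatzero?} directly; both routes are valid and the conceptual content is identical.
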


\begin{remark}
Since all the explicit dilations in this section dilate operators $X_1, \ldots, X_d \in \cB(H)$ to block-operators whose entries are in the real unital $C^*$-algebra generated by $X_1, \ldots, X_d$, it follows that such dilations can always be achieved continuously. For example, we may consider $C^*$-algebras such as $C(X, M_k(\bC))$ for any compact Hausdorff space $X$.
\end{remark}


\section{Connections with cones and operator systems}\label{sec:cones}

In \cite{DDSS}, the focus was on closed and bounded matrix convex sets --- these correspond to matrix ranges of $d$-tuples of bounded operators.
In \cite{FNT}, the focus was on closed and salient matrix convex cones with an interior point, which were referred to by the authors {\em abstract operator systems}  --- as these correspond to finite dimensional operator systems.
In this section, we describe how to go back and forth between both points of view, and see how the minimal and maximal matrix convex sets over a set behave with respect to this.

\subsection{Operator systems and matrix convex cones}
Recall that a {\em $(d+1)$-dimensional abstract operator system} is a free set $\cC$ such that
\begin{enumerate}
\item $\cC = \cup \cC(n) \subseteq \bM^{d+1}_{sa}$ is a closed matrix convex set,
\item each $\cC(n)$ is a closed cone,
\item each $\cC(n)$ is salient: $\cC(n) \cap (-\cC(n)) = \{0\}$, and
\item there exists $u \in \bR^d$ such that $u \otimes I_n$ is an interior point for all $n$.
\end{enumerate}
The assumption $\cC(1) \cap (-\cC(1)) = \{0\}$ implies that after an affine change of coordinates, we may assume that $\cC(1) \setminus \{0\} \subseteq \{x \in \bR^{d+1} : x_{d+1} > 0\}$.
By applying another affine change of coordinates that fixes $e_1 = (1,0,\ldots, 0), \ldots, e_d = (0,\ldots, 0, 1,0)$, we may assume in addition that the unit is given by $u = e_{d+1} = (0,\ldots,0,1)$. 
Thus we shall make this a standing assumption. 

\vskip 5pt
\noindent {\bf Standing assumption.} 
All $(d+1)$-dimensional abstract operator systems considered below will be assumed to satisfy 
\be\label{SA1}
\cC(1) \setminus \{0\} \subseteq \{x \in \bR^{d+1} : x_{d+1} > 0\} , 
\ee
and 
\be\label{SA2}
\textrm{ the order unit of } \cC \textrm{ is given by } \, u = e_{d+1} = (0,\ldots,0,1) . 
\ee
\vskip 5pt

A {\em realization} of $\cC$ consists of a concrete operator system in $\cB(H)$ with an ordered basis $T_1, \ldots, T_d, I$, such that a $(d+1)$-tuple $X = (X_1, \ldots, X_{d+1}) $ is in $\cC(n)$ if and only if
\[
{}^h L_T(X) := T_1 \otimes X_1 + \ldots + T_d \otimes X_d + I \otimes X_{d+1} \geq 0,
\]
which means that $\cC = \cD_{{}^hL_T} : = \{X \in \bM^{d+1}_{sa} \ : {}^hL_T(X) \geq 0\}$.

It is easy to see why a matrix convex set $\cC$ as above is called an operator system: for every finite dimensional operator system  one can find a basis $T_1, \ldots, T_d, I$ consisting of self-adjoint elements.
Then letting $S = S_T$ be the operator system generated by $T_1, \ldots, T_d, I$, we see that elements in $M_n(S)$ correspond to sums of the form ${}^h L_T(X) := T_1 \otimes X_1 + \ldots + T_d \otimes X_d + I \otimes X_{d+1}$, where $X \in \bM^{d+1}$.
An element ${}^h L_T(X) \in M_n(S)$ is then in $M_n(S)_+$ if and only if ${}^h L_T(X) \geq 0$.
Note that since $T_1, \ldots, T_d$ are all self-adjoint and linearly independent, ${}^h L_T(X)$ can be self-adjoint only if $X \in \bM^{d+1}_{sa}$, i.e., if $X_1, \ldots, X_{d+1}$ are all self-adjoints. (If we used a basis $T_1, \ldots, T_d, I$ that is not required to be self-adjoint, we would need to consider tuples $X \in \bM^{d+1}$).

\begin{lemma}\label{lem:boundedbasis}
Let $S$ be an operator system, spanned by a basis of self-adjoints $\{I, T_1, \ldots, T_d\}$.
Then there exists a basis of self-adjoints $\{I, A_1, \ldots, A_d\}$ for $S$ such that $0 \in \operatorname{int}\cW(A)$ and such that $\cD_{L_{A}}$ is bounded.
\end{lemma}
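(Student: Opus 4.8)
The goal is to replace an arbitrary self-adjoint basis $\{I, T_1,\ldots,T_d\}$ of an operator system $S$ by a new self-adjoint basis $\{I, A_1,\ldots,A_d\}$ with two properties: $0 \in \operatorname{int}\cW(A)$ and $\cD_{L_A}$ bounded. The plan is to realize these two adjustments as two successive affine changes of variables among the $T_i$ that fix the unit $I$ — exactly the kind of operations discussed in Section~\ref{sec:convex_geometry}, and in the passage above on realizations. Note first that, because $S = \operatorname{span}\{I,T_1,\ldots,T_d\}$, any tuple $A = MT + y$ (with $M \in GL_d(\bR)$, $y \in \bR^d$, where $y$ is shorthand for $(y_1 I,\ldots,y_d I)$) again gives a self-adjoint basis of $S$, and $\cW(A) = F(\cW(T))$ for the affine map $F(x) = Mx + y$ by \eqref{eq:FcWA}. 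So the problem reduces to a statement purely about the compact convex body $\cW_1(T) \subseteq \bR^d$: find an affine map $F$ fixing nothing in particular but invertible, with $0 \in \operatorname{int} F(\cW_1(T))$, and such that the associated free spectrahedron $\cD_{L_A}$ is bounded.

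**The first reduction: centering.** The set $\cW_1(T)$ is compact and convex; I claim it has nonempty interior. Indeed, if $\cW_1(T)$ were contained in a proper affine hyperplane $H = \{x : \langle c, x\rangle = \alpha\}$, then $\langle c, T\rangle := \sum c_i T_i = \alpha I$ in $\cB(H)$ (evaluating against all vector states), contradicting linear independence of $I, T_1,\ldots,T_d$. So $\cW_1(T)$ is a convex body. Pick any interior point $y_0 \in \operatorname{int}\cW_1(T)$ and translate: set $B_i = T_i - (y_0)_i I$, so $\{I, B_1,\ldots,B_d\}$ is a self-adjoint basis with $0 \in \operatorname{int}\cW_1(B)$. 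This handles the first requested property.

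**The second reduction: boundedness of $\cD_{L_B}$.** This is the main obstacle. The free spectrahedron $\cD_{L_B}$ is bounded if and only if its first level $\cD_{L_B}(1) = \{x \in \bR^d : I + \sum x_i B_i \geq 0\}$ is bounded — this equivalence is standard (an unbounded direction at level one furnishes, via scalar dilations $\bigoplus x$, unbounded tuples at every level; conversely boundedness at level one forces a uniform bound, essentially because $\cD_{L_B}$ is the polar-type dual of $\cW(B)$, cf.\ the relation $\Wmax{}(K) = \cD_{L}$ when $K = \cD_L(1)$). Now $\cD_{L_B}(1)$ is the polar-type set $\{x : \langle x, b\rangle \geq -1 \ \forall b \in \cW_1(B)\}$ essentially — more precisely it is unbounded in direction $v$ iff $\langle v, b \rangle \geq 0$ for all $b \in \cW_1(B)$, i.e.\ iff the supporting functional of $\cW_1(B)$ in direction $-v$ is $\leq 0$. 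Since $0 \in \operatorname{int}\cW_1(B)$, for \emph{every} direction $v \neq 0$ there is a point of $\cW_1(B)$ with $\langle v, b\rangle < 0$; hence $\cD_{L_B}(1)$ is already bounded. So in fact the single centering step \emph{also} delivers boundedness of $\cD_{L_B}$, and we may take $A = B$. The only genuine work is to verify carefully the claim ``$0 \in \operatorname{int}\cW_1(B) \Rightarrow \cD_{L_B}(1)$ bounded $\Rightarrow \cD_{L_B}$ bounded,'' which I would do by the elementary support-function argument just sketched together with the level-one-to-all-levels reduction.

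**Summary of steps and the hard point.** (1) Show $\cW_1(T)$ has nonempty interior, using linear independence of the basis. (2) Translate the basis to put $0$ in the interior of the numerical range, invoking \eqref{eq:FcWA} so that the numerical range transforms accordingly; this gives $0 \in \operatorname{int}\cW(A)$. (3) Show that $0 \in \operatorname{int}\cW_1(A)$ forces $\cD_{L_A}(1)$ to be bounded (support-function / separation argument), and that boundedness at level $1$ implies boundedness of $\cD_{L_A}$ at all levels (scalar-dilation argument in one direction, and in the other the standard uniform-bound estimate for spectrahedra whose first level is bounded). The main obstacle is step (3), specifically the passage from a bounded first level to a bounded free spectrahedron — though this is a known fact it should be stated and justified cleanly rather than quoted, and its proof is where the real content lies.
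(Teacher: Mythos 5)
Your proof takes essentially the same route as the paper's: show $\cW_1(T)$ has nonempty interior using linear independence of $\{I, T_1,\ldots,T_d\}$ (arguing via states), translate an interior point to the origin, and deduce both stated properties from $0 \in \operatorname{int}\cW_1(A)$. The one divergence is that the paper simply cites \cite[Lemma 3.4]{DDSS} for the equivalence of the three conditions $0 \in \operatorname{int}\cW(A)$, boundedness of $\cD_{L_A}$, and $0 \in \operatorname{int}\cW_1(A)$, whereas you sketch a direct argument for the boundedness implication (correctly, if informally); note however that you also need — and assert essentially without proof — that $0 \in \operatorname{int}\cW_1(A)$ implies $0 \in \operatorname{int}\cW(A)$ (interior at every matrix level), which is part of what that cited lemma supplies and does not follow from \eqref{eq:FcWA} alone.
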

\begin{proof}
By \cite[Lemma 3.4]{DDSS}, the conditions (i) $0 \in \operatorname{int}\cW(A)$, and (ii) $\cD_{L_{A}}$ is bounded, are both equivalent to (iii) $0 \in \operatorname{int}\cW_1(A)$.
We therefore consider $\cW_1(T)$.
If the convex set $\cW_1(T)$ had no interior point, then it would have to be contained in a hyperplane.
This means that there are constants $a_0, a_1, \ldots, a_d \in \bR$, not all zero, such that
\[
\phi\left(\sum_{i=1}^d a_i T_i\right) = \sum_{i=1}^d a_i \phi(T_i) = a_0 = \phi\left(a_0 I\right).
\]
for every state $\phi$.
As $\{I, T_1, \ldots, T_d\}$ is a linearly independent set, this is impossible.

Choose an interior point $c = (c_1, \ldots, c_d) \in \operatorname{int}(\cW_1(T))$.
Now putting $A_i = T_i - c_i I$, we obtain a basis $\{I, A_1, \ldots, A_d\}$ such that $0 \in \operatorname{int}\cW_1(A)$.
\end{proof}

\subsection{The operations $\cC \mapsto \check{\cC}$ and $\cS \mapsto \hat{\cS}$}
Given an abstract operator system $\cC \subseteq \bM^{d+1}_{sa}$, we define $\check{\cC} \subseteq \bMsad$ by
\[
\check{\cC}(n) = \{A = (A_1, \ldots, A_d) \in (M_n)_{sa}^d :(A,I_n) \in \cC(n)\}.
\]
Note that this is the same as defining
\[
\check{\cC}(n) = \{A = (A_1, \ldots, A_d) \in (M_n)_{sa}^d : \exists B \in (M_n)_{sa}, \|B\|\leq 1 \textrm{ and } (A,B) \in \cC(n)\}.
\]
It is straightforward to check that $\check{\cC}$ is a closed matrix convex set with $0 \in \operatorname{int}\check{\cC}$ (this makes use of the standing assumption \eqref{SA2}).
It follows from the definition and from the comments above that, if $\{T_1, \ldots, T_d, I\}$ is a realization for $\cC$, then  $\check{\cC}$ is the free spectrahedron determined by the pencil $L_T$, defined by
\[
L_T(A) := T_1 \otimes A_1 + \ldots + T_d \otimes A_d + I \otimes I.
\]
Thus $\check{\cC} = (\cD_{{}^hL_T})^{\vee} = \cD_{L_T} := \{A \in \bMsad : L_T(A) \geq 0\}$.

In general, given an abstract operators system $\cC$, the matrix convex set $\check{\cC}$ constructed as above might be unbounded; however the standing assumption \eqref{SA1} implies that $\check{\cC}(1)$ is bounded, and hence $\check{\cC}$ is too (by, e.g., \cite[Lemma 3.4]{DDSS}).

Conversely, if $\cS$ is a matrix convex set with $0$ in the interior, then $\cS  = \cD_{L_T}$ for some $T$, i.e., $\cS$ is the free spectrahedron determined by some pencil $L_T(A) = \sum_{i=1}^d T_i \otimes A_i + I \otimes I$ (see \cite[Proposition 3.5]{DDSS}).
But then the homogeneous pencil
\[
 {}^h L_T(X) := \sum_{i=1}^d T_i \otimes X_i + I \otimes X_{d+1}
\]
determines a free spectrahedron which is a closed matrix convex salient cone in $\bM^{d+1}_{sa}$, that is, a $(d+1)$-dimensional operator system.
We write $\hat{\cS}$ for this free spectrahedron $\cD_{{}^h L_T}$.
Note that if $\cS$ is bounded, then $\hat{\cS}(1) \setminus \{0\}$ is contained in the open halfspace $\{x \in \bR^{d+1} : x_{d+1} > 0\}$.
Moreover, given any $(d+1)$-dimensional operator system $S$, Lemma \ref{lem:boundedbasis} says that we can find a basis $\{I, T_1, \ldots, T_d\}$ such that $\cD_{L_T}$ is bounded, so there is no loss of generality.

Therefore, starting from a $(d+1)$-dimensional operator system, and choosing a basis $\{I, T_1, \ldots, T_d\}$ such that $\cD_{L_T}$ is bounded, we find that the abstract operator system $\cC = \cD_{{}^h L_T} = (\cD_{L_T})^\wedge$ satisfies the assumptions \eqref{SA1} and \eqref{SA2}, that is, that $\cC(1) \setminus \{0\} \subseteq \{x \in \bR^{d+1} : x_{d+1} > 0\}$ and that $u = e_{d+1} = (0,\ldots,0,1)$ is the order unit, i.e., the point corresponding to $I$ in the realization. 

By a Theorem of Zalar (\cite[Theorem 2.5]{Zalar}), under the assumption that $\cD_{L_T}$ is bounded,
\be\label{eq:Zalarinclusion}
\cD_{L_{T_1}} \subseteq \cD_{L_{T_2}} \Longleftrightarrow \cD_{{}^hL_{T_1}} \subseteq \cD_{{}^hL_{T_2}}.
\ee
Since a matrix convex set has the form $\cD_{L_T}$ if and only if it contains $0$ in its interior \cite[Proposition 3.5]{DDSS}, we conclude that the maps $\cC \mapsto \check{\cC}$ and $\cS \mapsto \hat{\cS}$ are mutual inverses between the set of $(d+1)$-dimensional abstract operators systems satisfying \eqref{SA1} and \eqref{SA2}, and closed and bounded matrix convex sets in $d$ variables that contain the origin in their interior.

\subsection{The operations $C \mapsto \check{C}$ and $K \mapsto \hat{K}$ at the scalar level. $\Wmin{}$ and $\Wmax{}$ for cones}
Given a closed salient cone $C \subseteq \{x \in \bR^{d+1} : x_{d+1} > 0\}$ with $(0,\ldots, 0,1) \in \Int C$, we define
\[
\check{C} = \{x \in \bR^d : (x,1) \in C\}.
\]
Conversely, given a closed convex set $K \subseteq \bR^d$, we let $\hat{K}$ denote the closed convex cone generated by $0\in\bR^{d+1}$ and $\{(x,1) : x \in K\} \subseteq \bR^{d+1}$.

Recall from Section \ref{sec:notation} the definitions of the polar dual of a closed convex or matrix convex set.
For cones, a slightly different description of the polar duals is available:
if $C$ is a closed convex cone, then one sees that
\[
C' = \{y  : \langle x,y \rangle \leq 0 \, \textrm{ for all } \, x \in C\};
\]
if $\cC$ is an operator system, then it is also easy to check that $\cC^\bullet$ is also given by
\be\label{eq:dual_cone}
\cC^\bullet = \{Y : \sum_i X_i \otimes Y_i \leq 0 \, \textrm{ for all } \, X \in \cC\}.
\ee
It is not hard to check that --- up to a sign change in the inequality --- this polar duality corresponds with the duality of operator spaces defined by Paulsen, Todorov and Tomforde in \cite[Definition 4.1]{PTT11}.

\begin{proposition}\label{prop:Wminmaxhat}
Let $K \subseteq \bR^d$ be a compact convex set with $0 \in \Int K$.
Then
\[
\Wmin{}(K)^{\wedge} = \Wmin{}(\hat{K}) \quad \hspace{.2 cm}\mathrm{ and } \hspace{.2 cm} \quad \Wmax{}(K)^{\wedge} = \Wmax{}(\hat{K}) .
\]
\end{proposition}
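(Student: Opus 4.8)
The plan is to prove the two identities $\Wmin{}(K)^{\wedge} = \Wmin{}(\hat K)$ and $\Wmax{}(K)^{\wedge} = \Wmax{}(\hat K)$ by working directly with the concrete descriptions of $\Wmin{}$ and $\Wmax{}$ reviewed in Section~\ref{subsec:minmaxmatrixconvex}, together with the elementary relationship between the ``homogenization'' operation $(\cdot)^{\wedge}$ on matrix convex sets (via $\cS \mapsto \hat\cS = \cD_{{}^hL_T}$) and the cone construction $K \mapsto \hat K$ at the scalar level. The key observation that drives everything is that homogenization is compatible with the defining linear inequalities: a point $(x,1) \in \hat K$ exactly when $x \in K$, and more generally the facet inequalities $\sum \alpha_i x_i \le a$ defining $K$ correspond to the homogeneous inequalities $\sum \alpha_i x_i - a x_{d+1} \le 0$ defining $\hat K$.

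For the $\Wmax{}$ identity I would argue as follows. Using the description \eqref{eq:Wmax_def1}, a tuple $X = (X_1,\dots,X_{d+1}) \in \bM^{d+1}_{sa}$ lies in $\Wmax{n}(\hat K)$ iff $\sum_{i=1}^{d} \alpha_i X_i + \alpha_{d+1} X_{d+1} \le 0$ whenever $\sum_{i=1}^d \alpha_i x_i + \alpha_{d+1}\le 0$ for all $x \in K$ (here I use that $\hat K$ is the cone generated by $\{(x,1)\}$, so its supporting halfspaces through the origin are precisely the homogenizations of those of $K$, plus possibly the halfspace $x_{d+1}\ge 0$). On the other hand, $\Wmax{}(K)^{\wedge}$ is, by definition of $(\cdot)^\wedge$, the free spectrahedron $\cD_{{}^hL_T}$ where $\cD_{L_T} = \Wmax{}(K)$; by \eqref{eq:Wmax_def2} and the fact that $\Wmax{}(K)$ contains $0$ in its interior (since $0 \in \Int K$), this set is determined by the scalar inequalities that cut out $K$. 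Homogenizing each such inequality and comparing with the inequality description just obtained for $\Wmax{n}(\hat K)$ gives the equality. A clean way to organize this is to invoke Zalar's equivalence \eqref{eq:Zalarinclusion}: $\Wmax{}(K)^\wedge$ and $\Wmax{}(\hat K)$ are both operator systems / free spectrahedra, and to show they coincide it suffices to show they have the same realization up to the homogenization bracket, which reduces precisely to the scalar statement $\check{(\hat K)} = K$ and the compatibility of $\cD_{L_T}\mapsto \cD_{{}^hL_T}$ with the passage $K \mapsto \hat K$ for the particular pencil realizing $\Wmax{}(K)$.

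For the $\Wmin{}$ identity I would use the representation \eqref{eq:Wmin_def2}. An element of $\Wmin{n}(\hat K)^{\vee\text{-side}}$, i.e.\ of $\Wmin{}(\hat K)$, has the form $\sum_j z^{(j)}\otimes P_j$ with $z^{(j)} \in \hat K$, $P_j \ge 0$, $\sum P_j = I$. Writing each nonzero $z^{(j)}$ as $t_j (x^{(j)},1)$ with $t_j \ge 0$ and $x^{(j)} \in K$ (absorbing the zero cone generators into a single term with $x=0$ is not possible since $0\notin\hat K$ unless we keep them as genuine $0$'s — but $0\otimes P$ contributes nothing, so we may drop those terms, adjusting that $\sum t_j P_j$ need no longer be $I$), we see that $\sum_j z^{(j)}\otimes P_j = \sum_j (x^{(j)},1)\otimes (t_j P_j)$, and the last coordinate is $\sum_j t_j P_j$. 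Thus $\Wmin{}(\hat K)$ consists exactly of tuples $(\sum x^{(j)}\otimes Q_j,\ \sum Q_j)$ with $x^{(j)} \in K$, $Q_j \ge 0$; intersecting with $x_{d+1} = I$ recovers $\Wmin{}(K)$ via \eqref{eq:Wmin_def2}, and unscaling shows this set is precisely $\Wmin{}(K)^{\wedge}$, the cone generated by $\{(Y,I):Y\in\Wmin{}(K)\}$. One has to be a little careful about closures — the cone generated by a compact matrix convex set need not a priori be closed at each level — but boundedness of $\Wmin{}(K)$ together with $0\in\Int K$ makes the cone closed, or alternatively one transfers the closedness from the spectrahedral description of $(\cdot)^\wedge$.

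The main obstacle, I expect, is the bookkeeping around the inhomogeneous-to-homogeneous passage: precisely identifying the supporting hyperplanes of $\hat K$ in terms of those of $K$ (including the extra halfspace $\{x_{d+1}\ge 0\}$ and the role of $0\in\Int K$ in ensuring no degeneracies), and handling the closure/zero-generator issues in the $\Wmin{}$ computation so that \eqref{eq:Wmin_def2} applies cleanly at the level of the cone. Once those are pinned down, both equalities follow by matching the explicit descriptions, and the cleanest writeup probably treats $\Wmin{}$ by the direct parametrization argument and $\Wmax{}$ either dually (via Lemma~\ref{lem:duality_inclusion} / the polar formulas $\Wmin{}(C)^\bullet = \Wmax{}(C')$ and their cone analogues in \eqref{eq:dual_cone}) or by the spectrahedral/Zalar route.
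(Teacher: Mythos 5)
Your $\Wmax{}$ argument is essentially the same as the paper's: both write the inequalities cutting out $K$ (equivalently, coming from $K'$), homogenize them, and observe that these are exactly the inequalities cutting out $\hat K$, from which $\Wmax{}(K)^\wedge = \Wmax{}(\hat K)$ falls out by comparing \eqref{eq:Wmax_def1} on both sides. Your proposed alternative via Zalar's equivalence \eqref{eq:Zalarinclusion} is unnecessary for this half and would be circular in spirit (the paper saves it for the $\Wmin{}$ direction).

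For the $\Wmin{}$ identity you take a genuinely different route. The paper observes the easy inclusion $\Wmin{}(\hat K) \subseteq \Wmin{}(K)^\wedge$ (both are matrix convex cones with ground level $\hat K$, and the right-hand side is one of them), realizes $\Wmin{}(\hat K)$ as a free spectrahedron $\cD_{{}^hL_S}$, checks $\cD_{L_T} \subseteq \cD_{L_S}$ at level one (so for all levels, since $\cD_{L_S}$ is matrix convex and $\cD_{L_T} = \Wmin{}(K)$), and then invokes Zalar's theorem to get the reverse inclusion of the homogenizations. You instead compute both sides by hand via the parametrization \eqref{eq:Wmin_def2}: decompose each $z^{(j)} \in \hat K$ as $t_j(x^{(j)},1)$, absorb $t_j$ into $Q_j := t_j P_j$, and match against the "cone over $\{(Y,I) : Y \in \Wmin{}(K)\}$". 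This is more elementary and avoids Zalar, but it carries two obligations you only partly discharge. First, the validity of the representation \eqref{eq:Wmin_def2} for the unbounded cone $\hat K$, and the closedness of the parametrized set at each level, are not automatic; you flag this correctly, and it can be handled with a Carath\'eodory bound on the number of terms plus compactness of $K$. Second, and more substantively, you assert without proof that $\Wmin{}(K)^\wedge$ is "the cone generated by $\{(Y,I) : Y \in \Wmin{}(K)\}$" --- but the paper's definition of $\cS^\wedge$ is the free spectrahedron $\cD_{{}^hL_T}$, and one must check these agree. They do, but it requires an argument: $(X,X_{d+1}) \in \cD_{{}^hL_T}$ with $X_{d+1}$ invertible is conjugate by $I \otimes X_{d+1}^{-1/2}$ to a point of the form $(Y,I)$, and the general case follows by perturbing $X_{d+1}$ to $X_{d+1} + \varepsilon I$ and passing to the limit, using that $X_{d+1}\ge 0$ (which itself uses $0 \in \operatorname{int} \cW_1(T)$, i.e.\ $0 \in \Int K$). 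Finally, a small factual slip: your parenthetical claims $0 \notin \hat K$, but $0 \in \hat K$ by the paper's definition (the closed cone generated by $0$ and $\{(x,1):x\in K\}$); your subsequent handling --- drop the $z^{(j)}=0$ terms since $0\otimes P$ contributes nothing --- is correct anyway, so this does not affect the argument, but the justification should be fixed.
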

\begin{proof}
Recall that $K = K'' = \{x \in \bR^d: \sum_{i=1}^d x_i y_i \leq 1 \textrm{ for all } y \in K'\}$, and therefore
\[
\Wmax{}(K) = \left\{X = (X_1, \ldots, X_d) : \sum_{i=1}^d y_i X_i \leq I \, \textrm{ for all }\, y \in K'\right\}.
\]
Then
\[
\Wmax{}(K)^{\wedge} = \left\{X = (X_1, \ldots, X_{d+1}) : X_{d+1}-\sum_{i=1}^{d}  y_i X_i \geq 0 \, \textrm{ for all }\, y \in K'\right\}.
\]
On the other hand,
\[
\hat{K} = \left\{(x_1, \ldots, x_{d+1}) : x_{d+1} - \sum_{i=1}^d x_i y_i \geq 0 \, \textrm{ for all }\, y \in K'  \geq 0\right\},
\]
so
\[
\Wmax{}(\hat{K}) = \left\{(X_1, \ldots, X_{d+1}) : X_{d+1} - \sum_{i=1}^d  y_i X_i \geq 0 \, \textrm{ for all }\, y \in K' \right\}.
\]
That proves $\Wmax{}(K)^{\wedge} = \Wmax{}(\hat{K})$.

For the other equality, first let us note that $\Wmin{}(K)^{\wedge} \supseteq \Wmin{}(\hat{K})$ is immediate, since $\hat{K} \subseteq \Wmin{}(K)^{\wedge}$.
Now, suppose that $\Wmin{}(K) = \cD_{L_T}$.
Then $\Wmin{}(K)^{\wedge} = \cD_{{}^h L_T}$, and note that $\cD_{{}^h L_T}(1) = \hat{K}$.
The set $\Wmin{}(\hat{K})$ is also a matrix convex cone, which is salient and contains an order unit $(0,\ldots,0,1)$, so it is therefore realized as the operator system generated by a $(d+1)$-tuple $(S_1, \ldots, S_d,I)$. Equivalently, it is the free spectrahedron determined by a homogeneous linear pencil: $\Wmin{}(\hat{K}) = \cD_{{}^h L_S}$.
We claim that
\[
\cD_{L_T} \subseteq \cD_{L_S}.
\]
Assuming the claim, we invoke \cite[Theorem 2.5]{Zalar} to find that
\[
\Wmin{}(K)^{\wedge} = \cD_{{}^h L_T} \subseteq \cD_{{}^h L_S} = \Wmin{}(\hat{K}).
\]
It remains to prove the claim.
Now, $\cD_{L_S}$ is a matrix convex set, and to prove that it contains $\cD_{L_T} = \Wmin{}(K)$, it suffices to show that $K = \cD_{L_T}(1) \subseteq \cD_{L_S}(1)$.
But if $x \in K$, then $(x,1) \in \hat{K} = \cD_{{}^h L_S}(1)$, so ${}^h L_S(x,1) = L_S(x) \geq 0$.
Whence $x \in \cD_{L_S}(1)$, and the proof is complete.
\end{proof}

Below we will require the following closely related lemma, the proof of which is omitted.
\begin{lemma}\label{lem:hatKtag}
Let $K$ be a compact convex subset of $\bR^d$, and assume that $0 \in \operatorname{int}K$.
Then $(-\hat{K})' = (-K')^{\wedge}$.
\end{lemma}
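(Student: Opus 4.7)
The plan is a direct computation: I would show both sides equal the same explicit subset of $\mathbb{R}^{d+1}$, namely
\[
\{(z,\tau) \in \mathbb{R}^{d+1} : \tau \geq 0,\ \langle x,z\rangle + \tau \geq 0 \text{ for all } x \in K\}.
\]
First, since $K$ is compact, $\hat{K}$ is just the union of the scalings of $K \times \{1\}$ by nonnegative scalars (no new points are added upon taking closure), so
\[
-\hat{K} = \{s(-x,-1) : s \geq 0,\ x \in K\}.
\]
Using the cone-polar formula $C' = \{w : \langle z,w\rangle \leq 0 \text{ for all } z \in C\}$ and plugging in generators, $(z,\tau) \in (-\hat{K})'$ iff $-s\langle x,z\rangle - s\tau \leq 0$ for all $s \geq 0$, $x \in K$, equivalently $\langle x,z\rangle + \tau \geq 0$ for all $x \in K$. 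Taking $x=0 \in K$ also forces $\tau \geq 0$.

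Next, using $0 \in \operatorname{int} K$, the polar body $K'$ is compact, so the same argument gives
\[
(-K')^{\wedge} = \{s(-y,1) : s \geq 0,\ y \in K'\}.
\]
For $\tau > 0$, I would divide by $\tau$: $(z,\tau) \in (-K')^{\wedge}$ iff $-z/\tau \in K'$, iff $\langle x, -z/\tau\rangle \leq 1$ for all $x \in K$, iff $\langle x,z\rangle + \tau \geq 0$ for all $x \in K$, which is precisely the condition characterizing $(-\hat{K})'$. This handles the interior of both sides.

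For the boundary $\tau = 0$, on the $(-K')^{\wedge}$ side closedness of $(-K')$ (consequence of compactness of $K'$) forces the scaling $s = 0$, so $(z,0) = 0$. On the $(-\hat{K})'$ side, the condition becomes $\langle x,z\rangle \geq 0$ for all $x \in K$; since $K$ contains a ball around $0$, taking $x = \pm \varepsilon e_i$ forces $z = 0$. Thus both sides collapse to $\{0\}$ at $\tau=0$, and $\tau < 0$ is excluded from both. The main obstacle is just sign bookkeeping in the cone-vs-body polar formulas and handling the $\tau = 0$ edge case, both of which are neatly controlled by the hypothesis $0 \in \operatorname{int} K$.
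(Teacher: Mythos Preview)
Your argument is correct. The paper actually omits the proof of this lemma entirely (``the proof of which is omitted''), so your direct computation---identifying both sides with $\{(z,\tau) : \langle x,z\rangle + \tau \geq 0 \text{ for all } x \in K\}$ via the explicit cone descriptions $\hat{K} = \{s(x,1) : s \geq 0,\ x \in K\}$ and $(-K')^{\wedge} = \{s(-y,1) : s \geq 0,\ y \in K'\}$, which hold because $K$ and $K'$ are compact---is exactly the kind of routine verification the authors had in mind.
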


\subsection{Inclusions of cones and finite-dimensional representations}

Following \cite{FNT}, for a closed salient cone $C \subseteq \{x \in \bR^{d+1} : x_{d+1} > 0\}$ with $(0,\ldots, 0,1) \in \Int C$, and a positive number $\nu > 0$, let us define $\nu \uparrow C := (\nu \check{C})^\wedge$.
In \cite[Section 5]{FNT}, Fritz, Netzer and Thom found constants $\nu$ such that $\Wmax{}(\nu \uparrow C) \subseteq \Wmin{}(C)$.
For example, in \cite[Theorem 5.8]{FNT} they show that if $\check{C}$ is symmetric with respect to $0$, then $\Wmax{}(\frac{1}{d} \uparrow C) \subseteq \Wmin{}(C)$.
They note that for applications one is interested in finding the largest $\nu$ for which this inclusion occurs.
Our results apply to this setting: for example, we see that
\be
\check{C} = [-1,1]^d \implies \Wmax{}\left(\frac{1}{\sqrt{d}} \uparrow C \right) \subseteq \Wmin{}(C)
\ee
holds by Theorem \ref{thm:cubedilationgeneral} and Proposition \ref{prop:Wminmaxhat}, and this improves upon the previously known constant $\nu = \frac{1}{d}$.

The problem of containment of matrix convex cones is relevant to interpolation problems of UCP maps as well.
By \cite[Theorem 2.5]{Zalar}, under assumptions \eqref{SA1} and \eqref{SA2}, whenever $\cC = \cD_{{}^hL_T}$ and $\cC' = \cD_{{}^h L_{T'}}$, then $\cC\subseteq \cC'$ if and only if there exists a UCP map $S(T) \to S(T')$ that maps $T_i$ to $T'_i$.
This also follows from \eqref{eq:Zalarinclusion} together with results (such as Corollary 5.10 and Remark 5.11 in \cite{DDSS}) saying that $\cD_{L_T} \subseteq \cD_{L_{T'}}$ if and only if there exists a UCP map $S(T) \to S(T')$ that maps $T_i$ to $T'_i$.

Any closed and bounded matrix convex set $\cS \subseteq \bMsad$ has the form $\cS = \cW(A)$ for some tuple $A \in \cB(H)_{sa}^d$ (\cite[Proposition 3.5]{DDSS}).
It is natural to ask, under what geometric conditions on $\cS$ can this tuple $A$ be chosen to act on a finite dimensional Hilbert space $H$? A dual version of this problem has been treated for cones in  \cite[Theorems 3.2 and 4.7]{FNT}. Their results show that the maximal operator system over a cone $C$ is \textit{finite-dimensionable realizable}, i.e. can be realized as $\cD_{{}^h L_A}$ for $A$ acting on a finite-dimensional space, if and only if $C$ is polyhedral. Further, under the assumption that $C$ is a salient \textit{polyhedral} cone, it holds that the minimal operator system over $C$ is finite-dimensional realizable if and only if $C$ is simplicial, which holds if and only if the minimal and maximal operator systems over $C$ are equal. (Recall that a cone $C\subseteq \bR^{d+1}$ is said to be a {\em simplicial cone} if it is affinely isomorphic to a positive  orthant $\{x \in \bR^{d+1}: x_1 \geq 0 , \ldots, x_{d+1} \geq 0\}$.
A cone $C$ such that $C \setminus \{0\} \subseteq \{x \in \bR^{d+1} : x_{d+1} > 0\}$ is simplicial if and only $\check{C}$ is a simplex.)

In this subsection we treat the corresponding problems for $\Wmin{}(K)$ and $\Wmax{}(K)$, giving alternative proofs and improvements of \cite{FNT} whenever possible. The following theorem may be proved by importing the results of \cite{FNT} (where we note that ``min" and ``max" are interchanged when translating the results to our setting), but we give an alternative proof for the statement regarding $\Wmin{}(K)$.


\begin{theorem}\label{thm:WminmaxWA}
Let $K$ be a compact convex subset of $\bR^d$.
Then $\Wmin{}(K) = \cW(A)$ for $A \in \bMsad$ if and only if $K$ is a polytope.
If $K$ is a polytope, then
$\Wmax{}(K) = \cW(A)$ for $A \in \bMsad$ if and only if $K$ is a simplex.
\end{theorem}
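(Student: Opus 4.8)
I would prove the two equivalences separately, handling the ``$\Wmin{}$'' statement first since it is the cleaner of the two and admits a self-contained argument, then leveraging duality and Theorem \ref{thm:simplex_unique} for the ``$\Wmax{}$'' statement.

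For $\Wmin{}(K)$: if $K = \conv\{v^{(1)}, \ldots, v^{(m)}\}$ is a polytope, then by \eqref{eq:Wmin_def2} every element of $\Wmin{n}(K)$ has the form $\sum_j v^{(j)} \otimes P_j$ with $P_j \geq 0$, $\sum P_j = I$, which is manifestly a compression of the normal tuple obtained by Naimark-dilating the POVM $(P_1, \ldots, P_m)$; hence $\Wmin{}(K) = \cW(N)$ where $N$ is any normal tuple of commuting self-adjoints with $\sigma(N) = \{v^{(1)}, \ldots, v^{(m)}\}$ acting on $\bC^m$, using \cite[Corollary 4.4]{DDSS} which identifies $\cW(N) = \Wmin{}(\sigma(N))$. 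Conversely, suppose $\Wmin{}(K) = \cW(A)$ for $A \in (M_N)_{sa}^d$. Then $\cW_1(A) = K$, so $K$ is compact convex; I must show it has finitely many extreme points. Since $\Wmin{}(K) = \cW(A)$ and every element of $\cW(A)$ is a UCP image of $A$, in particular every point of $K = \cW_1(A)$ is a value $\phi(A)$ for a state $\phi$ on $M_N$, i.e. $K$ is the joint numerical range of $A$. But $\Wmin{n}(K)$ for $n$ large must, by \eqref{eq:Wmin_def1}, consist of tuples dilating to normals with spectrum in $K$; applying this together with the fact that $\cW(A)$ can only be $\Wmin{}(K)$ if the minimal Stinespring dilation of the defining states has spectrum confined to $\ol{\operatorname{ext}(K)}$ (compare the final claim of Proposition \ref{prop:dilation_finite_infinite}), I would argue that $A$ itself must be (unitarily equivalent to a compression of) a normal tuple with finite spectrum $S \subseteq \bR^d$, and then $K = \cW_1(A) = \conv(S)$ is a polytope. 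The cleanest route is: if $\Wmin{}(K) = \cW(A)$, the polar dual $\Wmin{}(K)^\bullet = \Wmax{}(K')$ equals $\cW(A)^\bullet$, and one checks $\cW(A)^\bullet = \cD_{L_A}$ (a free spectrahedron defined by matrix coefficients); so $\Wmax{}(K')$ is a matrix-coefficient free spectrahedron, and it is standard (this is essentially \cite[Theorem 4.7]{FNT} translated, or can be proved directly from \eqref{eq:Wmax_def1}) that $\Wmax{}(L)$ is a matrix-coefficient free spectrahedron iff $L$ is a polytope; hence $K'$ is a polytope, hence $K = K''$ is a polytope.

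For $\Wmax{}(K)$ with $K$ a polytope: if $K$ is a simplex, then by Theorem \ref{thm:simplex_unique}, $\Wmax{}(K) = \Wmin{}(K)$, and by the first part (a simplex is a polytope) $\Wmin{}(K) = \cW(A)$ for a finite-dimensional normal $A$; done. Conversely, suppose $K$ is a polytope and $\Wmax{}(K) = \cW(A)$ for $A \in (M_N)_{sa}^d$. I want to conclude $K$ is a simplex. The idea: $\cW(A)$ is, for the purposes of the level $n = 2^{d-1}$, already determined, and $\Wmax{}(K)$ being $\cW(A)$ for finite-dimensional $A$ forces a rigidity. Concretely, $\cW(A)^\bullet = \cD_{L_A}$ is a matrix-coefficient free spectrahedron with $\cW(A)^\bullet = \Wmax{}(K)^\bullet = \Wmin{}(K')$; so $\Wmin{}(K')$ is a matrix-coefficient free spectrahedron. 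Combined with the first part (which gives $\Wmin{}(K') = \cW(B)$ for some finite-dimensional $B$, forcing $K'$ to be a polytope — already known — but now also a free spectrahedron), I would invoke the known fact that $\Wmin{}(L)$ is a matrix-coefficient free spectrahedron iff $L$ is a simplex (the polytope-to-simplex step is exactly where \cite[Theorem 4.7]{FNT} applies, or alternatively follows from Theorem \ref{thm:simplex_unique}: $\Wmin{}(L)$ a free spectrahedron $\cD_{{}^hL_A}^\vee$ means $\Wmax{}(L) \subseteq \Wmin{}(L)$ by a Positivstellensatz/duality computation, forcing equality and hence $L$ a simplex). Thus $K'$ is a simplex, so $K = K''$ is a simplex.

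\textbf{Main obstacle.} The genuinely delicate point is the direction ``$\Wmin{}(K) = \cW(A)$ with $A$ finite-dimensional $\Rightarrow K$ polytope'' and its sharpening to simplices for $\Wmax{}$. The slogan ``$\Wmax{}(L)$ is a matrix-coefficient free spectrahedron iff $L$ is a polytope, and $\Wmin{}(L)$ is iff $L$ is a simplex'' is the crux; proving it from scratch requires showing that a bounded matrix-coefficient free spectrahedron $\cD_{L_B}$ whose ground level is $L$ cannot equal $\Wmax{}(L)$ unless $L$ has finitely many facets (the $\Wmax{}$ case) resp.\ finitely many and ``compatible'' facets making the dual a polytope with the simplex property (the $\Wmin{}$ case). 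Rather than reprove this, I would explicitly cite the translation of \cite[Theorems 3.2 and 4.7]{FNT} via the $\cC \mapsto \check\cC$ correspondence and Proposition \ref{prop:Wminmaxhat} established earlier in Section \ref{sec:cones}, and present only the alternative proof for the $\Wmin{}$ direction as promised in the text, namely the explicit Naimark construction above for the ``if'' part and the polar-dual-plus-\cite{FNT} argument for the ``only if'' part.
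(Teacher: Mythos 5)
Your overall strategy matches the paper's: the two ``if'' directions are handled exactly as you propose (vertex-supported normal tuple via \cite[Corollary 2.8]{DDSS}, and Theorem \ref{thm:simplex_unique} for the simplex), and the ``$\Wmax{}(K)=\cW(A)\Rightarrow K$ simplex'' direction goes through polar duality, Proposition \ref{prop:Wminmaxhat}, Lemma \ref{lem:hatKtag}, and \cite[Theorem 4.7]{FNT}, just as you suggest. Where you and the paper part ways is the ``$\Wmin{}(K)=\cW(A)\Rightarrow K$ polytope'' direction. You sketch a direct argument but ultimately declare the ``cleanest route'' to be polar duality together with the imported FNT statement that $\Wmax{}(L)=\cD_{L_B}$ with matrix coefficients forces $L$ to be a polytope. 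The paper instead uses the direct route, and this is precisely the ``alternative proof'' it advertises: since $A\in\cW(A)=\Wmin{}(K)$, the final claim of Proposition \ref{prop:dilation_finite_infinite} produces a \emph{finite-dimensional} normal dilation $N$ of $A$ with $\sigma(N)\subseteq K$; then $\cW(A)\subseteq\cW(N)$ gives $K=\cW_1(A)\subseteq\cW_1(N)=\conv\sigma(N)\subseteq K$, so $K=\conv\sigma(N)$, which is a polytope because $\sigma(N)$ is finite. Your own sketch of the direct argument circles this idea but is muddier than it needs to be --- the appeal to ``the minimal Stinespring dilation of the defining states has spectrum confined to $\ol{\operatorname{ext}(K)}$'' is superfluous; the only input required is that a matrix tuple admitting \emph{some} normal dilation with spectrum in $K$ admits a finite-dimensional one, which is exactly what Proposition \ref{prop:dilation_finite_infinite} gives. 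Both routes are valid, but if the point of the theorem's proof is to replace one FNT citation with a self-contained argument (which the paper explicitly claims it will do), the direct argument is the one to present; your preferred duality route merely relocates the FNT dependence from the $\Wmin{}$ half to the $\Wmax{}$ half rather than eliminating it there.
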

\begin{proof}
By \cite[Corollary 2.8]{DDSS}, if $A$ is a normal tuple, then $\cW(A)$ is the smallest matrix convex set containing $\sigma(A)$, so $\cW(A) = \Wmin{}(\conv(\sigma(A))$.
If $K$ is a polytope, then letting $A$ be the normal tuple that has joint eigenvalues at the vertices of $K$, we have that $\Wmin{}(A) = \cW(A)$.
Moreover, if $K$ is a simplex, this still holds, and by Theorem \ref{thm:simplex_unique}, $\Wmax{}(K) = \Wmin{}(K) = \cW(A)$, and we are done with one direction.

Now for the converse direction. If $\Wmin{}(K) = \cW(A)$ for $A \in \bMsad$, then $A \in \Wmin{}(K)$, so by Proposition \ref{prop:dilation_finite_infinite} $A$ has a normal dilation $N \in \bMsad$ with $\sigma(N) \subseteq K$.
But then $\cW(A) \subseteq \cW(N)$, so $K \subseteq \cW_1(A) \subseteq \cW_1(N) = \conv \sigma(N)$.
We conclude that $\conv\sigma(N) = K$.
Since the spectrum of $N$ has finitely many points, $K$ is polytope.

Suppose now that $K$ is polytope.
Assume without loss of generality that $0 \in \operatorname{int} K$, and that $\cW(A) = \Wmax{}(K)$.
Then, using Proposition \ref{prop:Wminmaxhat}, $\cW(A)^\wedge = \Wmax{}(K)^\wedge = \Wmax{}(\hat{K})$ is an abstract operator system.
Taking polar duals and applying Lemma \ref{lem:hatKtag}, we find that $\Wmax{}(\hat{K})^\bullet = \Wmin{}((\hat{K})') = -\Wmin{}({(-K')^\wedge})$.
Since $\cW(A) = \Wmax{}(K)$, we get $\cW(A)^\bullet = \Wmin{}(K')$.
On the other hand, $\cW(-A)^\bullet = \cD_{L_A}$ (this follows from \cite[Proposition 3.1]{DDSS}; note the change in sign convention).
Thus, $-K' = \cD_{L_A}(1)$,
and using Proposition \ref{prop:Wminmaxhat} again,
\[
\Wmin{}({(-K')^\wedge}) = \Wmin{}(-K')^\wedge = (\cD_{L_A})^\wedge = \cD_{{}^h L_A} .
\]
So $A$ is a finite dimensional realization for $\Wmin{}((-K')^\wedge)$.
By \cite[Theorem 4.7]{FNT}, $(-K')^\wedge$ must be simplicial, therefore $K=K''$, together with $K'$, must be simplices.

\end{proof}

\begin{remark}
Dual to the comments in \cite{FNT}, note that even if $K$ is not a polytope, it is still possible that $\Wmax{}(K)$ could equal the matrix range of a tuple of matrices.
For this, let $F = (F^{[2]}_1, F^{[2]}_2) \in (M_2)^2_{sa}$ be as in Section \ref{sec:ball}.
By \cite[Corollary 14.15]{HKMS15}, $\cD_{L_F} = \Wmin{}(\ol{\bB}_{2,2})$.
It follows that $\cW(F) = \Wmax{}(\ol{\bB}_{2,2})$.
\end{remark}

While Theorem \ref{thm:WminmaxWA} is a direct translation of results from \cite{FNT}, the following theorem is a slight improvement of its corresponding result, having removed the assumption that $C$ is polyhedral.

\begin{corollary}\label{cor:Wminmaxcones}
Let $C \subseteq \bR^{d+1}$ be a closed salient convex cone with nonempty interior.
Then $\Wmin{}(C) = \Wmax{}(C)$ if and only if $C$ is a simplicial cone.
\end{corollary}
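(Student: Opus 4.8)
The plan is to reduce the statement about cones to Theorem \ref{thm:simplex_unique} about convex bodies, using the correspondence $C \mapsto \check{C}$, $K \mapsto \hat{K}$ together with Proposition \ref{prop:Wminmaxhat}. First I would normalize $C$: since $C$ is a closed salient convex cone with nonempty interior, a change of coordinates on $\bR^{d+1}$ of the kind described before the standing assumption brings $C$ into the position $C \setminus \{0\} \subseteq \{x \in \bR^{d+1} : x_{d+1} > 0\}$ with order unit $e_{d+1}$. Both the conclusion $\Wmin{}(C) = \Wmax{}(C)$ and the property of being a simplicial cone are invariant under such a change of coordinates (by functoriality of $\Wmin{}$, $\Wmax{}$ and of homogenization under linear maps, and since $\check{(\cdot)}$ and $\widehat{(\cdot)}$ are mutual inverses on the relevant classes), so there is no loss of generality. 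Set $K := \check{C}$; by the standing assumption $K$ is a compact convex subset of $\bR^d$ with $0 \in \operatorname{int} K$, and $\hat{K} = C$.

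By Proposition \ref{prop:Wminmaxhat} we have $\Wmin{}(C) = \Wmin{}(K)^{\wedge}$ and $\Wmax{}(C) = \Wmax{}(K)^{\wedge}$. The core step is then to show that $\Wmin{}(C) = \Wmax{}(C)$ if and only if $\Wmin{}(K) = \Wmax{}(K)$. The reverse implication is immediate: applying the homogenization $(\cdot)^{\wedge}$ to $\Wmin{}(K) = \Wmax{}(K)$ gives $\Wmin{}(K)^{\wedge} = \Wmax{}(K)^{\wedge}$, i.e.\ $\Wmin{}(C) = \Wmax{}(C)$. For the forward implication: since $0 \in \operatorname{int} K$, both $\Wmin{}(K)$ and $\Wmax{}(K)$ are free spectrahedra, say $\Wmin{}(K) = \cD_{L_S}$ and $\Wmax{}(K) = \cD_{L_T}$, and they are bounded because $K$ is compact (resp.\ bounded); their homogenizations are exactly $\Wmin{}(K)^{\wedge} = \cD_{{}^hL_S}$ and $\Wmax{}(K)^{\wedge} = \cD_{{}^hL_T}$. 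If these two homogenizations coincide, then Zalar's equivalence \eqref{eq:Zalarinclusion} applied to both inclusions $\cD_{{}^hL_S} \subseteq \cD_{{}^hL_T}$ and $\cD_{{}^hL_T} \subseteq \cD_{{}^hL_S}$ yields $\cD_{L_S} = \cD_{L_T}$, that is, $\Wmin{}(K) = \Wmax{}(K)$. Equivalently, one may simply apply the inverse operation $\check{(\cdot)}$, which undoes $(\cdot)^{\wedge}$ on bounded matrix convex sets containing the origin in their interior.

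Finally I would chain three equivalences: by Theorem \ref{thm:simplex_unique}, $\Wmin{}(K) = \Wmax{}(K)$ if and only if $K$ is a simplex; and, as recalled in the text, under the standing assumption $C$ is a simplicial cone if and only if $\check{C} = K$ is a simplex. Combining these with the equivalence of the previous paragraph gives $\Wmin{}(C) = \Wmax{}(C) \iff K \text{ is a simplex} \iff C \text{ is simplicial}$, which is the assertion.

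The argument contains no genuinely new idea; the only points requiring care are entirely bookkeeping: (i) confirming that the coordinate normalization really preserves both sides of the equivalence, which rests on the functoriality just mentioned and on $\check{(\cdot)}$, $\widehat{(\cdot)}$ being inverse bijections on the appropriate classes; and (ii) verifying the boundedness hypotheses needed to invoke Zalar's theorem \eqref{eq:Zalarinclusion}, which hold because $K$ compact with $0$ in its interior forces $\Wmin{}(K)$ and $\Wmax{}(K)$ to be bounded free spectrahedra containing the origin in their interior.
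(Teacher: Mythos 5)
The proposal is correct and takes essentially the same route as the paper: normalize $C$ via the standing assumption, apply Proposition \ref{prop:Wminmaxhat} to reduce the equation $\Wmin{}(C)=\Wmax{}(C)$ to $\Wmin{}(\check{C})=\Wmax{}(\check{C})$, and invoke Theorem \ref{thm:simplex_unique}. The only cosmetic difference is that you offer Zalar's equivalence \eqref{eq:Zalarinclusion} as an alternative justification for the step the paper handles directly via the mutual-inverse property of $\check{(\cdot)}$ and $\widehat{(\cdot)}$.
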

\begin{proof}
As we discussed in the beginning of the section, we may assume that $C \setminus \{0\} \subseteq \{x \in \bR^{d+1} : x_{d+1} > 0\}$ and $(0,\ldots, 0,1) \in \Int C$.
Suppose that $\Wmin{}(C) = \Wmax{}(C)$.
By Proposition \ref{prop:Wminmaxhat},
\[
\Wmin{}(\check{C})^\wedge = \Wmin{}(C) = \Wmax{}(C) = \Wmax{}(\check{C})^\wedge.
\]
Because the maps $\cC \mapsto \check{\cC}$ and $\cS \mapsto \hat{\cS}$ are mutual inverses, we find that $\Wmin{}(\check{C}) = \Wmax{}(\check{C})$.
By Theorem \ref{thm:simplex_unique}, $\check{C}$ is a simplex, so $C$ is a simplicial cone.

The converse follows from Theorem \ref{thm:simplex_unique} and Proposition \ref{prop:Wminmaxhat} in a similar way.
\end{proof}

\begin{problem}\label{prob:matrixrangeequalsmore}
Characterize the tuples $A$ for which there exists some $K$ such that $\mathcal{W}(A) = \Wmin{}(K)$.
Likewise, characterize the tuples $A$ for which there exists some $K$ such that $\mathcal{W}(A) = \Wmax{}(K)$.
\end{problem}

\section{Dilation constants versus minimal dilation hulls}\label{sec:minhulls}

Given compact convex sets $K,L \subseteq \bR^d$, we have considered the dilation constants $\theta(K,L)$, $\theta(K)$ and $\mathring{\theta}(K)$ (see equations \eqref{eq:thetadef1} \eqref{eq:thetadef2} and \eqref{eq:mathringthetadef}). We know from Theorems \ref{thm:BplusWminmax} and \ref{thm:balldilationgeneral} that $\theta(\ol{\bB}_{p,d}^+) = d^{1-1/p}$ and $\theta(\ol{\bB}_{p,d}) = d^{1-|1/2-1/p|}$. However, the explicit dilations which were used to compute $\theta(\cdot)$ often had joint spectrum in smaller sets, which were merely \textit{contained} in a multiple of $\ol{\bB}_{p,d}$ or $\ol{\bB}_{p,d}^+$. We now consider a sense of minimality among dilation hulls which can see this distinction.

Define
\be\label{eq:the_set_of_dilation_hulls}
\cE(K) = \{L \supseteq K \textrm{ compact and convex } : \Wmax{}(K) \subseteq \Wmin{}(L)\}.
\ee
From the definitions \eqref{eq:Wmax_def2} and \eqref{eq:Wmin_def1} of $\Wmax{}$ and $\Wmin{}$, we know that a compact convex set $L \subseteq \bR^d$ is in $\cE(K)$ if and only if for every $X \in (M_n)^d_{sa}$ with $\cW_1(X) \subseteq K$, there exists a tuple $N \in \cB(H)^d_{sa}$ of commuting self-adjoint operators such that $\sigma(N) \subseteq L$ and $N$ is a dilation of $X$. (Recall that $N$ is a dilation of $X$ if and only if
\[
X_i = V^*N_iV \,\, , \,\, i=1, \ldots, d,
\]
holds for some isometry $V : \bC^n \to H$.)
If $L \in \cE(K)$, then we say that $L$ is a {\em dilation hull} of $K$.
It is worth pointing out that $\cE(K)$ is not closed under intersections (e.g., by using Theorem \ref{thm:cubedilationgeneral}).

\begin{proposition}\label{prop:minhullsdoexist}
Let $K \subseteq \bR^d$ be a compact convex set. 
If $\cE(K)$ is ordered by inclusion, then $\cE(K)$ has a minimal element.
\end{proposition}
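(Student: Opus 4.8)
The plan is to apply Zorn's lemma to $(\cE(K),\supseteq)$; equivalently, I must show that every chain $\mathcal{L}\subseteq\cE(K)$ (totally ordered by inclusion) has a lower bound in $\cE(K)$. Nonemptiness of $\cE(K)$, needed so that the empty chain has a lower bound, follows from $\mathring{\theta}(K)\leq d+2<\infty$ (Corollary \ref{cor:dplustwo_symmetric}), which produces a dilation hull of $K$. The natural candidate for a lower bound of a nonempty chain $\mathcal{L}$ is $L:=\bigcap_{M\in\mathcal{L}}M$, which is automatically compact and convex and contains $K$; the substance of the proof is to verify $L\in\cE(K)$, i.e.\ $\Wmax{}(K)\subseteq\Wmin{}(L)$.

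By Proposition \ref{prop:dilation_finite_infinite}, this containment is equivalent to the following: for every $n$ and every $A\in(M_n)_{sa}^d$ with $\cW_1(A)\subseteq K$, there is a normal tuple $N$ on a finite-dimensional space with $A\prec N$ and $\sigma(N)\subseteq L$. Fix such an $A$. First I would pass to a cofinal subchain: choosing any $M_*\in\mathcal{L}$ and setting $\mathcal{L}'=\{M\in\mathcal{L}:M\subseteq M_*\}$, total ordering forces every $M\in\mathcal{L}\setminus\mathcal{L}'$ to contain $M_*$, hence to contain $\bigcap\mathcal{L}'$, so $L=\bigcap\mathcal{L}'$; moreover every member of $\mathcal{L}'$ lies in the fixed compact set $M_*$. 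For each $M\in\mathcal{L}'$, since $M\in\cE(K)$, the tuple $A$ has a normal dilation with spectrum in $M$, and the ``Moreover'' clause of Proposition \ref{prop:dilation_finite_infinite} lets me take this dilation $A\prec N_M$ to act on a space of dimension exactly $m:=2n^3(d+1)+1$ (pad with extra coordinates if necessary), say $A_i=V_M^*(N_M)_iV_M$ for an isometry $V_M:\bC^n\to\bC^m$, with $\|(N_M)_i\|\leq R$ for a constant $R$ depending only on $M_*$.

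Next comes a compactness argument. The assignment $M\mapsto\big((N_M)_i,V_M\big)$ is a net indexed by the directed set $(\mathcal{L}',\supseteq)$ taking values in the compact space consisting of commuting self-adjoint $d$-tuples on $\bC^m$ of norm at most $R$ together with the isometries $\bC^n\to\bC^m$; pass to a convergent subnet with limit $(N,V)$. Then $N$ is a normal tuple (commutativity and self-adjointness are closed conditions), $V$ is an isometry, and passing to the limit in $A_i=V_M^*(N_M)_iV_M$ gives $A_i=V^*N_iV$, i.e.\ $A\prec N$. To see $\sigma(N)\subseteq L$, fix $M\in\mathcal{L}'$; the subnet eventually has index contained in $M$, so eventually $\sigma(N_{M'})\subseteq M'\subseteq M$. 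If some $\mu\in\sigma(N)$ had $\delta:=\dist(\mu,M)>0$, then a unit joint eigenvector $v$ of $N$ at $\mu$ would satisfy $\langle\sum_i((N_{M'})_i-\mu_i)^2v,v\rangle\geq\delta^2$ for $M'$ far enough along the subnet (by simultaneously diagonalizing the commuting tuple $N_{M'}$, whose joint eigenvalues lie in $M$), yet this quantity converges to $\langle\sum_i(N_i-\mu_i)^2v,v\rangle=\sum_i\|(N_i-\mu_i)v\|^2=0$, a contradiction. Hence $\sigma(N)\subseteq M$ for all $M\in\mathcal{L}'$, so $\sigma(N)\subseteq\bigcap\mathcal{L}'=L$.

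This proves $\Wmax{}(K)\subseteq\Wmin{}(L)$, so $L\in\cE(K)$ is a lower bound for $\mathcal{L}$, and Zorn's lemma yields a minimal element of $\cE(K)$. The step requiring the most care is the limiting argument: it is essential to have first reduced, via Proposition \ref{prop:dilation_finite_infinite}, to matrix dilations on a Hilbert space of a single fixed finite dimension so that norm-compactness is available, and one must ensure the joint spectrum of the limiting normal tuple does not escape $L$, which is precisely the closedness statement verified above.
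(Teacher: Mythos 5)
Your proof takes essentially the same approach as the paper: for a chain, one shows the intersection is still a dilation hull by reducing to a fixed finite matrix dimension via Proposition~\ref{prop:dilation_finite_infinite} and passing to a convergent subnet. Your write-up is somewhat more detailed than the paper's — making explicit both the cofinal reduction giving a uniform norm bound and the spectral closedness argument showing $\sigma(N)\subseteq L$ in the limit, which the paper treats as evident. One small remark: to establish that $\cE(K)$ is nonempty you cite Corollary~\ref{cor:dplustwo_symmetric}, which assumes $K$ is a convex body; since the proposition allows arbitrary compact convex $K$, it is cleaner to observe that any $d$-simplex $\Delta\supseteq K$ lies in $\cE(K)$, because $\Wmax{}(K)\subseteq\Wmax{}(\Delta)=\Wmin{}(\Delta)$ by Theorem~\ref{thm:simplex_unique}.
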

\begin{proof}
Suppose that $\{L_\alpha\}$ is a chain (decreasing by inclusion) in $\cE(K)$.
We must show that $L:= \cap_\alpha L_\alpha \in \cE(K)$.
Let $X \in \Wmax{n}(K)$.
For every $\alpha$, there is an isometry $V^{(\alpha)} : \bC^n \to H^{(\alpha)}$ and a normal tuple $N^{(\alpha)} \in \cB(H^{(\alpha)})_{sa}^d$ such that $X = V^{(\alpha)*} N^{(\alpha)} V^{(\alpha)}$ and $\sigma(N^{(\alpha)}) \subseteq L_\alpha$.
By Proposition \ref{prop:dilation_finite_infinite}, we may assume that $H^{(\alpha)} = \bC^N$, for $N = {2n^3(d+1)+1}$.
Therefore, one can choose convergent subnets $V^{(\beta)} \to V$, $N^{(\beta)}\to N$, so that $X = V^* N V$ and $\sigma(N) \subseteq \cap_\beta L_\beta = \cap_\alpha L_\alpha = L$.
\end{proof}

Let us call a minimal element of $\cE(K)$ a {\em minimal dilation hull} and define
\[
\operatorname{md}(K) :=\{L \in \cE(K) : L \textrm{ is minimal}\} .
\]
First, there is no reason for elements in $\operatorname{md}(K)$ to resemble $K$ in any way; the $\ell^2$-ball provides a counterexample.

\begin{proposition}
There is no $\ell^2$-ball in $\operatorname{md}(\ol{\bB}_{2,d})$.
\end{proposition}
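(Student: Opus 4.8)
The plan is to argue by contradiction: suppose some $\ell^2$-ball $B = y + C_2 \cdot \ol{\bB}_{2,d}$ lies in $\operatorname{md}(\ol{\bB}_{2,d})$. Since $B \in \cE(\ol{\bB}_{2,d})$, we have $\Wmax{}(\ol{\bB}_{2,d}) \subseteq \Wmin{}(B)$, so by Theorem \ref{thm:everythingball} (applied with $x = 0$, $C_1 = 1$, the base ball being $\ol{\bB}_{2,d}$ itself) the center and radius satisfy the numerical estimate $C_2 \geq \sqrt{\|y\|^2 + (d-1)^2} + 1$, and moreover there is a simplex $\Pi$ with $\ol{\bB}_{2,d} \subseteq \Pi \subseteq B$. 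The key point is that such a simplex $\Pi$ is itself a dilation hull of $\ol{\bB}_{2,d}$: indeed $\ol{\bB}_{2,d} \subseteq \Pi$ at the ground level, and $\Wmax{}(\Pi) = \Wmin{}(\Pi)$ by Theorem \ref{thm:simplex_unique}, so $\Wmax{}(\ol{\bB}_{2,d}) \subseteq \Wmax{}(\Pi) = \Wmin{}(\Pi)$; hence $\Pi \in \cE(\ol{\bB}_{2,d})$ with $\Pi \subseteq B$. If $\Pi \subsetneq B$ strictly, this contradicts minimality of $B$. So I need to rule out $\Pi = B$, which is immediate: a simplex is not an $\ell^2$-ball (for $d \geq 1$ a simplex has extreme points that are not all of its boundary, whereas every boundary point of a ball is extreme — or simply, a simplex is a polytope and a ball is not). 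Therefore $\Pi \subsetneq B$, giving the contradiction.

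First I would dispose of trivial dimensions: for $d = 1$ every compact convex set is a simplex (a closed interval), and $\ol{\bB}_{2,1} = [-1,1]$ is already a simplex, so $\operatorname{md}(\ol{\bB}_{2,1}) = \{[-1,1]\}$ consists of an interval, which one may or may not wish to regard as an $\ell^2$-ball — in any case the interesting content is $d \geq 2$, and I would state the proposition for $d \geq 2$ or note the degeneracy. Then I would carry out the contradiction argument above. The only mild subtlety is ensuring the simplex $\Pi$ produced by Theorem \ref{thm:everythingball}(\ref{item:simplexcontainment}) is genuinely contained in $B$ and genuinely a proper subset; the theorem states $\ol{\bB}_{2,d} \subset \Pi \subset B$ with strict inclusions already built in (the statement uses $\subset$), so properness of $\Pi \subsetneq B$ is automatic, and I do not even need the polytope-versus-ball observation, though I might include it as a remark.

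I do not anticipate a real obstacle here — the proposition is essentially a corollary of Theorem \ref{thm:everythingball} combined with the simplex rigidity of Theorem \ref{thm:simplex_unique}. The one thing to be careful about in writing is to phrase the contradiction correctly: a minimal dilation hull cannot strictly contain another dilation hull, and I have exhibited $\Pi \in \cE(\ol{\bB}_{2,d})$ with $\Pi \subsetneq B$, so $B$ is not minimal. This completes the proof that no $\ell^2$-ball is a minimal dilation hull of $\ol{\bB}_{2,d}$.

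\begin{proof}
We may assume $d \geq 2$, the case $d=1$ being degenerate since then $\ol{\bB}_{2,1} = [-1,1]$ is itself a simplex and $\cE(\ol{\bB}_{2,1}) = \{[-1,1]\}$.

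Suppose, for contradiction, that $B \in \operatorname{md}(\ol{\bB}_{2,d})$ is an $\ell^2$-ball, say $B = y + C_2 \cdot \ol{\bB}_{2,d}$ with $C_2 > 0$ and $y \in \bR^d$. Since $B \in \cE(\ol{\bB}_{2,d})$ we have $\Wmax{}(\ol{\bB}_{2,d}) \subseteq \Wmin{}(B)$, so by Theorem \ref{thm:everythingball} (taking $x = 0$ and $C_1 = 1$) there exists a simplex $\Pi$ with
\[
\ol{\bB}_{2,d} \subset \Pi \subset B.
\]
By Theorem \ref{thm:simplex_unique}, $\Wmax{}(\Pi) = \Wmin{}(\Pi)$, and since $\ol{\bB}_{2,d} \subseteq \Pi$ we obtain
\[
\Wmax{}(\ol{\bB}_{2,d}) \subseteq \Wmax{}(\Pi) = \Wmin{}(\Pi),
\]
so $\Pi \in \cE(\ol{\bB}_{2,d})$. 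But $\Pi \subsetneq B$, which contradicts the minimality of $B$ in $\cE(\ol{\bB}_{2,d})$. Hence no $\ell^2$-ball can be a minimal dilation hull of $\ol{\bB}_{2,d}$.
\end{proof}
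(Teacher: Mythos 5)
Your proof is correct and takes essentially the same approach as the paper: invoke Theorem \ref{thm:everythingball} to produce a simplex $\Pi$ with $\ol{\bB}_{2,d} \subset \Pi \subset B$, observe that $\Pi$ is itself a dilation hull of $\ol{\bB}_{2,d}$ by Theorem \ref{thm:simplex_unique}, and conclude $B$ is not minimal. Your explicit separation of the degenerate case $d=1$ (where $\ol{\bB}_{2,1}=[-1,1]$ is simultaneously a ball and a simplex, and where the $F_j$-matrices underlying Theorem \ref{thm:everythingball} are only defined for $d\ge 2$) is a reasonable added caution that the paper leaves implicit.
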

\begin{proof}
Suppose a ball $L = x + C \cdot \ol{\bB}_{2,d}$ is in $\cE(\ol{\bB}_{2,d})$. Then from Theorem \ref{thm:everythingball} it follows that there is a simplex $\Pi$ with $\ol{\bB}_{2,d} \subset \Pi \subset L$, and certainly $\Pi \in \cE(\Pi) \subseteq \cE(\ol{\bB}_{2,d})$.
Therefore $L$ is not minimal: $L \not\in \operatorname{md}(\ol{\bB}_{2,d})$.
\end{proof}

However, note that while Theorem \ref{thm:everythingball} shows that if an $\ell^2$-ball belongs to $\cE(\ol{\bB}_{2,d})$, then there is a smaller simplex which is also a dilation hull, the theorem does not apply for other shapes. For example, the $d$-diamond $d \cdot \ol{\bB}_{1,d}$ is a minimal dilation hull of $\ol{\bB}_{2,d}$.

\begin{proposition}\label{prop:mindil_B1B2}
It holds that $d \cdot \ol{\bB}_{1,d} \in \operatorname{md}(\ol{\bB}_{2,d})$.
\end{proposition}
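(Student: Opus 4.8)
The plan is to establish the two assertions separately: that $d\cdot\ol{\bB}_{1,d}\in\cE(\ol{\bB}_{2,d})$, and that it is minimal there. We may assume $d\ge 2$, the case $d=1$ being trivial.

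\emph{Membership.} By \eqref{eq:smallpbettersets} with $p=2$, $\Wmax{}(\ol{\bB}_{2,d})\subseteq d^{3/2-1/2}\cdot\Wmin{}(\ol{\bB}_{1,d})=\Wmin{}(d\cdot\ol{\bB}_{1,d})$. (Equivalently, averaging the inequalities $\big(\sum_i v_iX_i\big)^2\le I$ over $v$ ranging in an orthonormal basis gives $\Wmax{}(\ol{\bB}_{2,d})\subseteq\sqrt d\cdot\mathcal{B}^{(d)}$, and then Theorem \ref{thm:diamonddilationgeneral} with $a_1=\dots=a_d=\sqrt d$ applies.) Since $\ol{\bB}_{2,d}\subseteq d\cdot\ol{\bB}_{1,d}$, this gives $d\cdot\ol{\bB}_{1,d}\in\cE(\ol{\bB}_{2,d})$.

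\emph{Minimality.} Let $L$ be compact convex with $\ol{\bB}_{2,d}\subseteq L\subseteq d\cdot\ol{\bB}_{1,d}$ and $\Wmax{}(\ol{\bB}_{2,d})\subseteq\Wmin{}(L)$; we must show $L=d\cdot\ol{\bB}_{1,d}$. Since $0\in\operatorname{int}L$, Lemma \ref{lem:duality_inclusion} yields $\Wmax{}(L')\subseteq\Wmin{}(\ol{\bB}_{2,d}')=\Wmin{}(\ol{\bB}_{2,d})$, while $L\subseteq d\cdot\ol{\bB}_{1,d}$ gives $[-1/d,1/d]^d=(d\cdot\ol{\bB}_{1,d})'\subseteq L'$. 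As $L''=L$, it suffices to prove $L'=[-1/d,1/d]^d$. Suppose not, so $L'$ contains a point outside $[-1/d,1/d]^d$. The cube, the ball, and the relation $\Wmax{}(L')\subseteq\Wmin{}(\ol{\bB}_{2,d})$ are all invariant under signed coordinate permutations, so after such a change of variables we may assume $L'\ni q$ with $q_1>1/d$; and since $L'\supseteq\conv\big([-1/d,1/d]^d\cup\{q\}\big)$, a short convexity computation (moving $q$ toward the $e_1$-axis through cube points) shows $L'$ contains $q_1'e_1$ for some $q_1'>1/d$, so we may further assume $q=q_1e_1$.

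\emph{The test tuple.} Let $F_1,\dots,F_d$ be as in Lemma \ref{lem:themagicmatrices}, and for small $\eta>0$ put $X=\big(\tfrac1dF_1+\eta I,\tfrac1dF_2,\dots,\tfrac1dF_d\big)$, so that $\mathcal{W}_1(X)\subseteq\eta e_1+\tfrac1d\ol{\bB}_{2,d}$. A direct computation shows that for all sufficiently small $\eta>0$ one has $\eta e_1+\tfrac1d\ol{\bB}_{2,d}\subseteq\conv\big([-1/d,1/d]^d\cup\{q_1e_1\}\big)\subseteq L'$ (the shifted ball still fits inside the cone joining the cube to $q_1e_1$), hence $X\in\Wmax{}(L')\subseteq\Wmin{}(\ol{\bB}_{2,d})=\Wmax{}(\ol{\bB}_{2,d})^\bullet$; since $(F_1,\dots,F_d)\in\Wmax{}(\ol{\bB}_{2,d})$ by \eqref{eq:themagicmatrices}, this forces $F_1\otimes X_1+\dots+F_d\otimes X_d\le I$. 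To contradict this, let $\psi$ be a joint unit eigenvector of $(F_1\otimes F_1,\dots,F_d\otimes F_d)$ with all eigenvalues $1$ (such $\psi$ exists, as $(1,\dots,1)$ lies in the joint spectrum of this commuting tuple), and set $\phi=(F_1\otimes I)\psi$. From the relations among the $F_j$ one computes $\big(\sum_jF_j\otimes F_j\big)\phi=(2-d)\phi$; hence $\phi\perp\psi$ (as $2-d<d$), $\|\phi\|=1$, and the compression of $\sum_jF_j\otimes X_j$ to $\spn\{\psi,\phi\}$ equals $\begin{pmatrix}1&\eta\\\eta&(2-d)/d\end{pmatrix}$, whose largest eigenvalue $\tfrac1d+\sqrt{(d-1)^2/d^2+\eta^2}$ exceeds $1$ for $\eta>0$. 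Thus $\sum_jF_j\otimes X_j\not\le I$, the desired contradiction. Therefore $L'=[-1/d,1/d]^d$ and $L=d\cdot\ol{\bB}_{1,d}$.

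The crux — and the main obstacle — is the construction of $X$: because $L'$ exceeds the cube $[-1/d,1/d]^d$ only slightly, $X$ can only be a mild perturbation of $\tfrac1d(F_1,\dots,F_d)$, and one must simultaneously control that $\mathcal{W}_1(X)$ still lies in $L'$ and that pairing $X$ against $F$ genuinely exceeds $1$. The anticommutation structure of the $F_j$ is exactly what renders that pairing computable on the two-dimensional subspace $\spn\{\psi,(F_1\otimes I)\psi\}$.
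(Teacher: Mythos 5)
Your proof is correct and takes a genuinely different route from the paper's. The paper argues purely geometrically: if a proper compact convex $L \subsetneq d \cdot \ol{\bB}_{1,d}$ were a dilation hull, an extreme point of the diamond would be missing, so $L$ would fit inside a ball of radius $\sqrt{d^2+\varepsilon^2}$ centered at $(\varepsilon,0,\ldots,0)$; Theorem \ref{thm:everythingball} then forces $\sqrt{d^2+\varepsilon^2} \geq \sqrt{\varepsilon^2+(d-1)^2}+1$, which fails for $\varepsilon > 0$. You instead pass to polar duals via Lemma \ref{lem:duality_inclusion} (so $[-1/d,1/d]^d \subseteq L'$ and $\Wmax{}(L') \subseteq \Wmin{}(\ol{\bB}_{2,d})$), reduce by signed coordinate permutations and a convex-hull computation to the case where $L'$ contains an axis point beyond the cube, and then defeat this with the explicit perturbed tuple $X = \tfrac1d F + \eta e_1$: pairing $X$ against $(F_1,\ldots,F_d)\in\Wmax{}(\ol{\bB}_{2,d})$ and compressing $\sum_j F_j \otimes X_j$ to $\spn\{\psi, (F_1\otimes I)\psi\}$ produces $\begin{pmatrix}1 & \eta \\ \eta & (2-d)/d\end{pmatrix}$, whose top eigenvalue $\tfrac1d + \sqrt{(d-1)^2/d^2 + \eta^2} > 1$ for $\eta > 0$. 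I verified all the computations (the joint eigenvector $\psi$ exists by the discussion following Lemma \ref{lem:themagicmatrices}; the action of $\sum F_j \otimes F_j$ on $\phi=(F_1\otimes I)\psi$ yields $(2-d)\phi$ via anti-commutation; the geometric containment of the shifted small ball holds for small $\eta$). The paper's proof is shorter because it can simply cite Theorem \ref{thm:everythingball}; yours is in effect a dual re-run of the mechanism underlying Lemma \ref{lem:totallyrad}, directly exposing how the anti-commutation structure of the $F_j$ detects any bulge of $L'$ beyond the cube. Both approaches are sound, and your dual/spectral version could be adapted to other polar-dual minimality questions where the primal geometry is less tractable than for balls.
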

\begin{proof}
The containment (\ref{eq:smallpbettersets}) shows that $d \cdot \ol{\bB}_{1,d} \in \cE(\ol{\bB}_{2,d})$. Suppose that there is a proper compact convex subset $L \subset d \cdot \ol{\bB}_{1,d}$ such that $\Wmax{}(\ol{\bB}_{2,d}) \subseteq \Wmin{}(L)$. Then one of the extreme points of the diamond is missing, so by rotational symmetry of the ball we may suppose that $(-d, 0, \ldots, 0) \not\in L$. Therefore
\bes
L \subseteq M = \{ \vec{x} \in d \cdot \ol{\bB}_{1,d}: -d + \varepsilon < x_1 \leq d\}
\ees
for some $\varepsilon > 0$, which we may suppose has $\varepsilon < 1$. Now, $M$ is the convex hull of points of the form $(-d+\varepsilon, b)$ with $||b||_{\ell^1} \leq \varepsilon$, as well as the extreme points of $d \cdot \ol{\bB}_{1,d}$, except $(-d, 0, \ldots, 0)$. The expression $\sqrt{(x_1 - \varepsilon)^2 + \sum_{k=2}^d x_k^2}$ is bounded by $\sqrt{d^2 + \varepsilon^2}$ on these points, so as any $\ell^2$-ball is convex, it follows that $L \subseteq M$ is contained in the ball of radius $C = \sqrt{d^2 + \varepsilon^2}$ centered at $y = (\varepsilon, 0, \ldots, 0)$. From Theorem \ref{thm:everythingball}, we conclude
\bes
\sqrt{d^2 + \varepsilon^2} \geq \sqrt{\varepsilon^2 + (d-1)^2} + 1,
\ees
but this is a contradiction.
\end{proof}

Certainly this also implies that there is no simplex $\Pi$ with $\ol{\bB}_{2,d} \subset \Pi \subset d \cdot \ol{\bB}_{1,d}$. On the other hand, while $[-\sqrt{d}, \sqrt{d}]^d \in \cE(\ol{\bB}_{2,d})$, $[-\sqrt{d}, \sqrt{d}]^d$ is not generally a minimal dilation hull.

\begin{proposition}
The claim $[-\sqrt{d}, \sqrt{d}]^d \in \operatorname{md}(\ol{\bB}_{2,d})$ is true for $d = 2$ and false for $d = 4$.
\end{proposition}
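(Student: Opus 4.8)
The plan is to handle $d=2$ and $d=4$ separately, after first noting that in both cases $[-\sqrt{d},\sqrt{d}]^d=\sqrt{d}\cdot\ol{\bB}_{\infty,d}$ really does lie in $\cE(\ol{\bB}_{2,d})$: since $\ol{\bB}_{2,d}\subseteq\ol{\bB}_{\infty,d}$, one has $\Wmax{}(\ol{\bB}_{2,d})\subseteq\Wmax{}(\ol{\bB}_{\infty,d})\subseteq\sqrt{d}\cdot\Wmin{}(\ol{\bB}_{\infty,d})=\Wmin{}([-\sqrt{d},\sqrt{d}]^d)$ by \eqref{eq:cubecontainment}. So for $d=2$ the task is to show this hull is minimal, and for $d=4$ to exhibit a strictly smaller one.

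\emph{The case $d=2$.} Suppose $L\subsetneq[-\sqrt2,\sqrt2]^2$ is a compact convex dilation hull of $\ol{\bB}_{2,2}$; recall $L\supseteq\ol{\bB}_{2,2}$. As $[-\sqrt2,\sqrt2]^2$ is the convex hull of its four corners, $L$ omits one of them, and applying an element of the symmetry group of the square (these are orthogonal, hence fix $\ol{\bB}_{2,2}$, and linear, so by Lemma \ref{lem:affinemapWminmax} they carry dilation hulls of $\ol{\bB}_{2,2}$ to dilation hulls of $\ol{\bB}_{2,2}$) I may assume $L$ omits $(-\sqrt2,-\sqrt2)$. Since $-(x_1+x_2)$ attains its maximum over the square only at that corner, compactness of $L$ together with $\ol{\bB}_{2,2}\subseteq L$ gives $\delta\in(0,2\sqrt2)$ with $L\subseteq S_\delta:=[-\sqrt2,\sqrt2]^2\cap\{x_1+x_2\ge-2\sqrt2+\delta\}$. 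The crucial input is a necessary condition coming from the pair $F=(F_1^{[2]},F_2^{[2]})$ of anticommuting self-adjoint unitaries: since $F\in\Wmax{}(\ol{\bB}_{2,2})$ by \eqref{eq:themagicmatrices}, the hypothesis $L\in\cE(\ol{\bB}_{2,2})$ forces a normal dilation $N=(N_1,N_2)$ of $F$ with $\sigma(N)\subseteq L$; for each $y\in\bR^2$ the operators $(N_1-y_1)\otimes F_1^{[2]}$ and $(N_2-y_2)\otimes F_2^{[2]}$ anticommute, so combining Lemma \ref{lem:antinorm}, the relations $(F_j^{[2]})^2=I$, compression by the dilation isometry, and Lemma \ref{lem:themagicmatrices} yields
\[
\sqrt{\|y\|^2+1}+1\;\le\;\max_{z\in L}\|z-y\|\qquad\text{for all }y\in\bR^2.
\]
To contradict this I take $y=(t,t)$ with $t>0$ small. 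The maximum of $\|z-y\|$ over the pentagon $S_\delta$ is attained at a vertex, and by the coordinate-swap symmetry one gets $\max_{z\in S_\delta}\|z-(t,t)\|^2=\max\{\,2(\sqrt2-t)^2,\ 4+2t^2,\ (\sqrt2+t-\delta)^2+(\sqrt2+t)^2\,\}$. Using $(\sqrt{2t^2+1}+1)^2\ge 4+3t^2$ (valid for $|t|\le 2$), the first two terms are strictly below $4+3t^2$ for every $t>0$, while the third is strictly below $4+3t^2$ exactly when $t^2+(2\delta-4\sqrt2)t+\delta(2\sqrt2-\delta)>0$; as the constant term $\delta(2\sqrt2-\delta)$ is positive, this holds for all sufficiently small $t>0$. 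For such $t$, $\max_{z\in L}\|z-y\|\le\max_{z\in S_\delta}\|z-y\|<\sqrt{\|y\|^2+1}+1$, a contradiction. Hence no proper such $L$ exists, i.e.\ $[-\sqrt2,\sqrt2]^2\in\operatorname{md}(\ol{\bB}_{2,2})$.

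\emph{The case $d=4$.} Here I exhibit a strictly smaller dilation hull. Let $H$ be a $4\times4$ Hadamard matrix, so that $U:=\tfrac12H\in O_4(\bR)$ and the columns $\epsilon^{(1)},\dots,\epsilon^{(4)}$ of $H$ lie in $\{\pm1\}^4$. Put $L:=2H\cdot\ol{\bB}_{1,4}=\conv\{\pm2\epsilon^{(1)},\dots,\pm2\epsilon^{(4)}\}$. Each $\pm2\epsilon^{(m)}$ is a corner of $[-2,2]^4$, so $L\subseteq[-2,2]^4$, and $L$ has at most $8$ vertices whereas $[-2,2]^4$ has $16$, so $L\subsetneq[-2,2]^4$. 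Also $\ol{\bB}_{2,4}\subseteq L$, since for $\|x\|_2\le1$ we get $\|\tfrac12H^{-1}x\|_1=\|\tfrac18H^Tx\|_1\le 2\|\tfrac18H^Tx\|_2=\tfrac14\|H^Tx\|_2=\tfrac12\|x\|_2\le\tfrac12$, whence $x\in2H\cdot\ol{\bB}_{1,4}$. Finally, by \eqref{eq:smallpbettersets} with $p=2$, $d=4$ we have $\Wmax{}(\ol{\bB}_{2,4})\subseteq4\cdot\Wmin{}(\ol{\bB}_{1,4})=\Wmin{}(4\,\ol{\bB}_{1,4})$; applying the linear map $U$ (Lemma \ref{lem:affinemapWminmax}) and using $U\,\ol{\bB}_{2,4}=\ol{\bB}_{2,4}$ gives $\Wmax{}(\ol{\bB}_{2,4})\subseteq\Wmin{}(4U\,\ol{\bB}_{1,4})=\Wmin{}(2H\,\ol{\bB}_{1,4})=\Wmin{}(L)$. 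Thus $L\in\cE(\ol{\bB}_{2,4})$ is strictly contained in $[-\sqrt4,\sqrt4]^4$, so $[-\sqrt4,\sqrt4]^4\notin\operatorname{md}(\ol{\bB}_{2,4})$.

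\emph{Expected main obstacle.} The $d=2$ half is the delicate one: the estimate degenerates to equality in the double limit $t\to0^+$, $\delta\to0^+$ — which is exactly the statement that $[-\sqrt2,\sqrt2]^2$ sits on the boundary of $\cE(\ol{\bB}_{2,2})$ — so one must choose the perturbation $y=(t,t)$ with $t$ small relative to the amount $\delta$ by which the offending corner is cut off, and making these rates cooperate (via the Clifford pair $F^{[2]}$ and Lemma \ref{lem:themagicmatrices}) is the heart of the argument. The $d=4$ half, by contrast, is just a matter of spotting the Hadamard construction.
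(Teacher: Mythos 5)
Your proof is correct, but it is worth separating the two halves when comparing to the paper. For $d=4$, your argument is essentially identical to the paper's: the set $L=2H\cdot\ol{\bB}_{1,4}=\conv\{\pm 2\epsilon^{(m)}\}$ is exactly the rotated diamond the paper exhibits (the paper lists the vertices $(2,2,2,2)$, $(2,-2,2,-2)$, etc., which are the Hadamard columns up to normalization), and the justification that $L\in\cE(\ol{\bB}_{2,4})$ via \eqref{eq:smallpbettersets}, orthogonal invariance of the ball, and Lemma~\ref{lem:affinemapWminmax} is the intended one. For $d=2$, however, you take a noticeably longer route. The paper's argument is a one-liner: $[-\sqrt 2,\sqrt 2]^2$ is the image of $2\cdot\ol{\bB}_{1,2}$ under the rotation by $\pi/4$, that rotation fixes $\ol{\bB}_{2,2}$, and $2\cdot\ol{\bB}_{1,2}\in\operatorname{md}(\ol{\bB}_{2,2})$ by the already-established Proposition~\ref{prop:mindil_B1B2}; hence the cube is minimal by orthogonal invariance (just as you use orthogonal invariance in the $d=4$ half, but here it closes the argument with no computation). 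You instead redo the extremal estimate from scratch on the truncated square $S_\delta$: you extract from Lemma~\ref{lem:antinorm} and Lemma~\ref{lem:themagicmatrices} the inequality $\sqrt{\|y\|^2+1}+1\le\max_{z\in L}\|z-y\|$ (which is essentially the content of Lemma~\ref{lem:totallyrad} stated for a general hull $L$ rather than a ball) and then contradict it for $y=(t,t)$, $t$ small. I have checked the vertex computation and the three inequalities (in particular that $(\sqrt{2t^2+1}+1)^2\ge 4+3t^2$ for $|t|\le 2$ and that $t^2+(2\delta-4\sqrt2)t+\delta(2\sqrt2-\delta)>0$ for small $t>0$), and they are all correct. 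What your approach buys is independence from Proposition~\ref{prop:mindil_B1B2} — it is self-contained modulo Lemma~\ref{lem:themagicmatrices} — and it shows the cube is \emph{directly} tight against the Clifford pair rather than only after rotating to a diamond. What it costs is length and a calculation whose delicate double limit (as you yourself note) is entirely avoidable once one notices that in $d=2$ the cube \emph{is} a rotated diamond; you would do well to record that observation, since it also explains structurally why the $d=2$ and $d=4$ cases behave differently (only in $d=2$ is the cube an orthogonal image of the diamond of the same dilation scale).
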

\begin{proof}
If $d = 2$, then $[-\sqrt{2}, \sqrt{2}]^2$ is an orthogonal image of the diamond $2 \cdot \ol{\bB}_{1,2}$, which is a minimal dilation hull of the ball $\ol{\bB}_{2,2}$. Since the ball is orthogonally invariant, $[-\sqrt{2}, \sqrt{2}]^2$ is also minimal.

On other other hand, if $d = 4$, then while $[-\sqrt{4}, \sqrt{4}]^4 = [-2,2]^4$ is a dilation hull of $\ol{\bB}_{2,4}$, so is every orthogonal image of the diamond $4 \cdot \ol{\bB}_{1,4}$. Such an image may be found properly inside $[-2,2]^4$; consider the convex hull of the mutually orthogonal vectors
\bes
(2,2,2,2), \hspace{.5 cm} (2,-2,2,-2), \hspace{.5 cm} (2,2,-2,-2),  \hspace{.5 cm} (2,-2,-2,2),
\ees
and their opposites, all of which have norm $4$.
\end{proof}

We note that the shape of minimal dilation hulls of $K$ is not necessarily unique, once again seen when $K$ is an $\ell^2$-ball. Namely, Proposition \ref{prop:mindil_B1B2} and the following proposition produce diamonds and simplices as minimal dilation hulls, respectively.

\begin{proposition}\label{prop:minsimplexball}
Let $\Delta \subset \bR^d$ be any $d$-simplex such that $\ol{\bB}_{2,d} \subset \Delta$ and each vertex of $\Delta$ lies on $d \cdot \partial \ol{\bB}_{2,d}$. Then $\Delta \in \operatorname{md}(\ol{\bB}_{2,d})$.
\end{proposition}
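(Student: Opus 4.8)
The plan is to show that such a simplex $\Delta$ admits no proper dilation hull of $\ol{\bB}_{2,d}$ below it, i.e.\ that $\Delta$ is minimal in $\cE(\ol{\bB}_{2,d})$. First, $\Delta \in \cE(\ol{\bB}_{2,d})$ is immediate: since $\ol{\bB}_{2,d}\subset\Delta$ and $\Delta$ is a simplex, Theorem~\ref{thm:simplex_unique} gives $\Wmax{}(\Delta)=\Wmin{}(\Delta)$, hence $\Wmax{}(\ol{\bB}_{2,d})\subseteq\Wmax{}(\Delta)=\Wmin{}(\Delta)$. So the content is minimality. Suppose for contradiction that $L\subsetneq\Delta$ is compact and convex with $\Wmax{}(\ol{\bB}_{2,d})\subseteq\Wmin{}(L)$; we wish to derive a contradiction.

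The key geometric observation I would exploit is that each vertex $v$ of $\Delta$ lies on $d\cdot\partial\ol{\bB}_{2,d}$, so in particular $\|v\|=d$, and the ball $\ol{\bB}_{2,d}$ is \emph{inscribed} in $\Delta$ in the extremal position forced by Leichtweiss's theorem \eqref{eq:distoptimizer}: indeed $\ol{\bB}_{2,d}\subset\Delta\subset d\cdot\ol{\bB}_{2,d}$ with $\mathring\rho=d$ achieved, which (by the uniqueness in \eqref{eq:distoptimizer} and the known structure of the extremal configuration) pins down $\Delta$ up to the obvious symmetries: the facet of $\Delta$ opposite $v$ is tangent to $\ol{\bB}_{2,d}$ at the point $-v/d$. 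Now if $L\subsetneq\Delta$, then some vertex $v$ of $\Delta$ is not in $L$ (a simplex is the convex hull of its vertices, so omitting an interior-of-edge point still leaves a vertex excluded after shrinking; more carefully, $L\subsetneq\Delta$ compact convex forces $L$ to miss a point arbitrarily close to some vertex $v$, and then by convexity $L$ is contained in a set of the form $\Delta\cap\{\langle v,x\rangle \le d-\varepsilon\}$ for some $\varepsilon>0$). I would then bound this truncated simplex inside a \emph{smaller} shifted ball, exactly as in the proof of Proposition~\ref{prop:mindil_B1B2}: the truncation $M:=\Delta\cap\{\langle v,x\rangle\le d-\varepsilon\}$ has all its vertices lying either on the facet $\langle v,x\rangle=d-\varepsilon$ (within distance $O(\varepsilon)$ of $-v/d$ direction adjustments) or among the remaining $d$ original vertices, all of which satisfy $\langle v,x\rangle=d-d=0$ wait — rather, the $d$ vertices other than $v$ lie on the hyperplane through $-v/d$ tangent to the sphere, hence each has $\langle v, x\rangle$ equal to a fixed negative-or-zero value; computing $\|x - \delta v/\|v\|\,\|^2$ for these points shows they all lie in a ball of radius $\sqrt{d^2 - c\varepsilon}$ centered at a point $\delta v/\|v\|$ with $\delta=\Theta(\varepsilon)>0$.

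Once $L\subseteq M\subseteq y+C\cdot\ol{\bB}_{2,d}$ with $\|y\|=\delta=\Theta(\varepsilon)$ and $C=\sqrt{d^2-c\varepsilon}<d$ (for $\varepsilon$ small), Theorem~\ref{thm:everythingball} applied to $\ol{\bB}_{2,d}\subseteq L$ (so also $\ol{\bB}_{2,d}=\mathring 1\cdot\ol{\bB}_{2,d}$ centered at $x=0$ must dilate into $y+C\cdot\ol{\bB}_{2,d}$) yields the necessary condition $C\ge\sqrt{\|y\|^2+(d-1)^2}+1=\sqrt{\delta^2+(d-1)^2}+1$. Since $\sqrt{\delta^2+(d-1)^2}+1 > (d-1)+1 = d > C$, this is a contradiction. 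The main obstacle I anticipate is the careful bookkeeping in the second paragraph: verifying that the extremal inscribed configuration really does force the facets of $\Delta$ to be tangent to $\ol{\bB}_{2,d}$ (so that I can locate the vertices precisely enough to get the estimate $C<d$), and handling the truncation geometry cleanly so that both the radius drops strictly below $d$ and the center shift stays $o(1)$ as $\varepsilon\to 0$. Everything else is a direct application of Theorem~\ref{thm:everythingball} in the spirit of Proposition~\ref{prop:mindil_B1B2}.
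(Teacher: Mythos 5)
Your approach is essentially the same as the paper's: show that any proper compact convex $L \subsetneq \Delta$ must miss a vertex $v$ of $\Delta$, enclose the remaining set in a shifted ball, and invoke Theorem~\ref{thm:everythingball} to derive a contradiction. However, the paper's execution is considerably cleaner, and your sketch has a sign error that would need to be fixed. First, the paper keeps the radius equal to $d$ and only shifts the center: since $\Delta \subset d \cdot \ol{\bB}_{2,d}$ with $v$ the unique point of $\Delta$ at distance $d$ from the origin in the direction $v/\|v\|$, and $L$ compact and missing $v$, one can translate the radius-$d$ ball by a small amount and still contain $L$. Theorem~\ref{thm:everythingball} then requires $d \geq \sqrt{\|x\|^2 + (d-1)^2}+1$, which already fails for any $x \neq 0$; shrinking the radius to $\sqrt{d^2-c\varepsilon}$ is unnecessary and only adds bookkeeping. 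Second, and more substantively, your shift goes in the wrong direction: with center $+\delta v/\|v\|$, the other vertices $w$ of $\Delta$ (which also lie on $d\cdot\partial\ol{\bB}_{2,d}$ and satisfy $\langle v,w\rangle<0$ for the regular simplex) move \emph{farther} from the center, since $\|w - \delta v/d\|^2 = d^2 - 2\delta\langle w,v\rangle/d + \delta^2 > d^2$; so your candidate ball would fail to contain them, let alone with a shrunken radius. The correct shift is toward $-v$, i.e., center $-\delta v/\|v\|$. With that fix and the simplification of keeping $C=d$, your argument collapses exactly onto the paper's. Your recognition that the hypotheses force $\Delta$ into the Leichtweiss/Palmon extremal (regular) position is a point the paper uses implicitly, and it is indeed needed to know the geometry of the remaining vertices.
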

\begin{proof}
Since $\Wmax{}(\Delta) = \Wmin{}(\Delta)$, it is clear that $\Delta$ is a dilation hull of $\ol{\bB}_{2,d}$. Suppose a proper compact convex subset $L \subset \Delta$ is a dilation hull, so one of the vertices of $\Delta$ is missing from $L$. It follows that $L$ may be contained in a ball of radius $d$ centered at some $x \not= 0$, so $\Wmax{}(\ol{\bB}_{2,d}) \subseteq \Wmin{}(x + d \cdot \ol{\bB}_{2,d})$. This contradicts Theorem \ref{thm:everythingball}.
\end{proof}

An example of such a simplex $\Delta = \Delta(d)$ can be defined by the base case $\Delta(1) = [-1,1]$ and the recursive definition of $\Delta(d)$ as the convex hull of $\left( \sqrt{\frac{d+1}{d-1}} \cdot \Delta(d-1) \right) \times \{-1\}$ and $(0,\ldots, 0, d)$. By repeated application of Lemma \ref{lem:optimalsimplexplacement}, it is seen that $\ol{\bB}_{2,d} \subseteq \Delta$. Further, all of the vertices of $\Delta(d)$ have norm $d$ by the computation $\left( \sqrt{\frac{d+1}{d-1}} \cdot (d-1) \right)^2 + (-1)^2 = d^2$.

\begin{problem}
For what types of collections $\mathcal{K}, \mathcal{L}$ of compact convex sets does it hold that if $K \in \mathcal{K}, L \in \mathcal{L}$, and $\Wmax{}(K) \subseteq \Wmin{}(L)$, then there is a simplex $\Pi$ with $K \subseteq \Pi \subseteq L$?
\end{problem}
If $\mathcal{K} = \mathcal{L}$ is the collection of all $\ell^2$-balls in $\bR^d$, then interpolating simplices can be found, but they cannot necessarily be found when $\mathcal{K} = \mathcal{L}$ is the collection of cubes, or diamonds, and so on. Can this be generalized?

We now consider another special case in which a circumscribed simplex over $K$ is a minimal dilation hull of $K$. This also gives a partial answer to Problem \ref{prob:whatdimensionmin=max}.

\begin{definition}
We say that a convex body $K \subset \bR^d$ is \textit{simplex-pointed at} $x$ if $x \in K$ and there is an open set $O \subset \bR^d$ such that $x \in O$ and $ \overline{O \cap K}$ is a $d$-simplex with $x$ as a vertex. Equivalently, there is an invertible affine transformation $A$ on $\bR^d$ such that
\bes
A(x) = (0, \ldots, 0), \hspace{2 cm}A(K) \subset [0, \infty)^d,
\ees
and
\bes
\exists \varepsilon > 0 \text{ with } \{(x_1, \ldots, x_d) \in \bR^d: 0 \leq x_j \leq \varepsilon \textrm{ for all } j \} \subseteq K.
\ees

\begin{figure}[htbp]
\centering
\begin{minipage}{.5\textwidth}
  \centering
  \includegraphics[width=.76\linewidth]{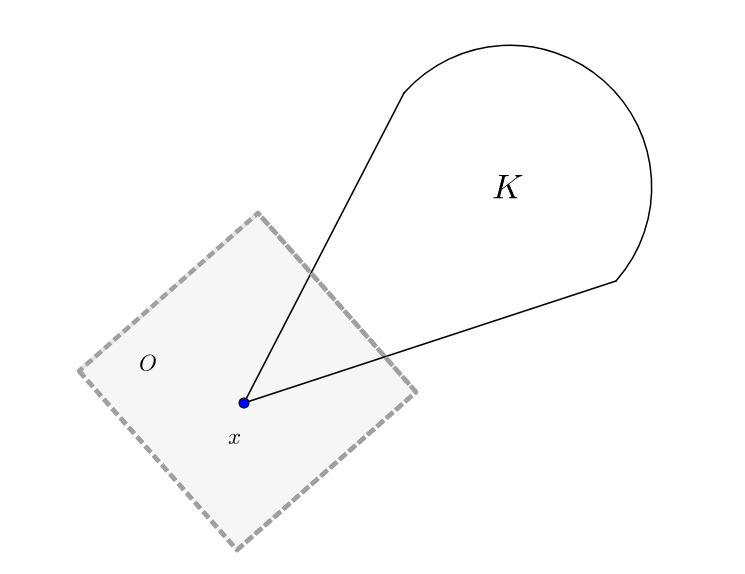}
 \captionof{figure}{\small $K$ is simplex-pointed at $x$, but not polyhedral.}
  \label{fig:simplexpointed}
\end{minipage}%
\begin{minipage}{.5\textwidth}
  \centering
  \includegraphics[width=.76\linewidth]{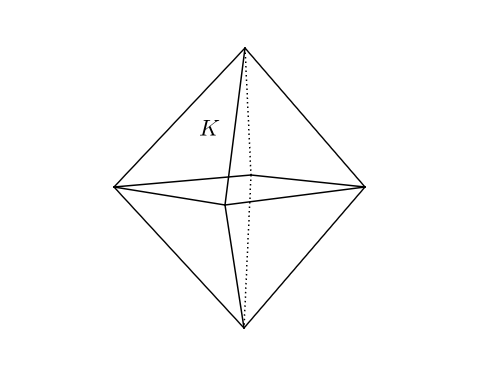}
  \captionof{figure}{\small $K$ is polyhedral, but not simplex-pointed at any $x$.}
  \label{fig:notsimplexpointed}
\end{minipage}
\end{figure}

\end{definition}

\begin{theorem}\label{thm:pointednonsense}
Let $K \subset \bR^d$ be a convex body which is simplex-pointed at $x \in K$. Suppose $\Delta$ is a $d$-simplex such that $K \subseteq \Delta$, $x$ is a vertex of $\Delta$, and the edges of $\Delta$ which emanate from $x$ point in the same direction as the edges of the simplex $\overline{O \cap K}$ based at $x$. Let $F \subset \Delta$ be the face of $\Delta$ which does not include $x$, and suppose that the interior of the face $F$ includes a point $y \in K$. Then $\Delta \in md(K)$.

\vspace{.25 cm}

\noindent Moreover, if $K$ meets these conditions and $\Wmax{2}(K) = \Wmin{2}(K)$, then $K = \Delta$, and in particular, $K$ is a simplex.

\end{theorem}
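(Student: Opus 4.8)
The plan is to work almost entirely in the affine coordinates provided by the definition of ``simplex-pointed'': after an invertible affine transformation (which, by Lemma \ref{lem:affinemapWminmax}, affects neither the $\Wmax{}/\Wmin{}$ equality nor membership in $\operatorname{md}(K)$, since affine maps of simplices are simplices), we may assume $x = 0$, that $\Delta$ is the standard simplex $\conv\{0, t_1 e_1, \ldots, t_d e_d\}$ for some scalars $t_j > 0$, that $K \subseteq [0,\infty)^d$, that a small box $[0,\varepsilon]^d \subseteq K$, and that $K \subseteq \Delta$ with $y$ in the relative interior of the opposite face $F = \conv\{t_1 e_1, \ldots, t_d e_d\}$. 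For the first assertion ($\Delta \in \operatorname{md}(K)$), note $\Delta \in \cE(K)$ is immediate from $K \subseteq \Delta$ and $\Wmax{}(\Delta) = \Wmin{}(\Delta)$ (Theorem \ref{thm:simplex_unique}). So I must show no proper compact convex subset $L \subsetneq \Delta$ lies in $\cE(K)$. The key is to produce, for each candidate ``missing point'' of $\Delta$, an explicit tuple $X$ with $\cW_1(X) \subseteq K$ that cannot be dilated with spectrum avoiding that point.

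The main engine is the behavior of $\Wmax{}/\Wmin{}$ on simplices together with a compactness/extreme-point argument. If $L \subsetneq \Delta$ is compact convex with $L \in \cE(K)$, then since $\Delta$ has finitely many extreme points $0, t_1 e_1, \ldots, t_d e_d$, the set $L$ must omit (a neighborhood within $\Delta$ of) one of them. Suppose first $L$ omits the vertex $0$: because a box $[0,\varepsilon]^d \subseteq K$, the diagonal box tuple built from the $2^d$ vertices of $[0,\varepsilon]^d$ — or more efficiently, a single level-$2$ tuple $X$ whose first-level numerical range is exactly $[0,\varepsilon]^d$ — lies in $\Wmax{2}(K)$; but a commuting normal dilation of it must have $0$ (or a point arbitrarily near $0$) in its joint spectrum, since $0$ is an extreme point of $[0,\varepsilon]^d$ and the compression of a commuting tuple to a state recovers convex combinations of spectral points. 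This contradicts $\sigma(N) \subseteq L$ once $L$ is bounded away from $0$. If instead $L$ omits a vertex $t_j e_j$ of the opposite face, one uses the hypothesis that $y$ lies in the relative interior of $F$: a neighborhood of $y$ within $F$ is contained in $K$, so $K$ contains a small $(d-1)$-simplex around $y$ spanning directions parallel to $F$; applying Lemma \ref{lem:projWminmax} (projection onto the affine hull of $F$) together with the fact that $\Wmax{} = \Wmin{}$ for that small simplex, I obtain a tuple forcing spectrum near $t_j e_j$ — more carefully, I combine the box at $0$ and the simplex at $y$ to build a single tuple whose required spectrum must reach every vertex of $\Delta$. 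The cleanest route is probably: show directly that $\Delta$ is the convex hull of the ``spectral supports'' forced by these two local simplices, hence any $L \in \cE(K)$ must contain all of $\conv(\text{those supports}) = \Delta$.

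For the ``Moreover'' part, suppose $K$ meets the hypotheses and $\Wmax{2}(K) = \Wmin{2}(K)$. Pick a circumscribed simplex $\Delta \supseteq K$ as in the statement (one exists: extend the $d$ edges of the local simplex $\overline{O\cap K}$ at $x$ until they, together with a supporting hyperplane at $y$, bound a simplex; the interior-of-$F$ condition on $y$ is arranged by choosing the supporting hyperplane at the right point — this is where a short geometric argument is needed). Then the level-$2$ equality $\Wmax{2}(K) = \Wmin{2}(K)$, combined with $K \subseteq \Delta$ and the level-$2$ dilation results just used, forces every $X \in \Wmax{2}(\Delta)$ arising from the local boxes/simplices to already dilate with spectrum in $K$; pushing the spectrum to extreme points (Proposition \ref{prop:dilation_finite_infinite}) shows each vertex of $\Delta$ must lie in $K$, whence $\Delta \subseteq \conv(\text{vertices}) \subseteq K \subseteq \Delta$, so $K = \Delta$ is a simplex.

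The main obstacle I anticipate is the bookkeeping in the ``opposite face'' case: transferring the forced-spectrum statement from a small interior simplex at $y$ (lying in the hyperplane $\mathrm{aff}(F)$) up to the conclusion that the actual far vertices $t_j e_j$ of $\Delta$ must belong to any dilation hull. This requires carefully combining the local information at $0$ and at $y$ into one matrix tuple and invoking the simplex identity $\Wmax{} = \Wmin{}$ at the correct level ($2$ suffices because $2^{d-1} \ge 2$ and the relevant tuples can be taken $2\times 2$, by mimicking the $\varepsilon$-perturbation construction in the proof of Theorem \ref{thm:BplusWminmax}); getting the constants and the direction of the edges to line up so that $\conv$ of the forced supports is exactly $\Delta$, and not something larger or smaller, is the delicate step.
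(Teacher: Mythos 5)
The central gap in your proposal is the claim, made in the ``opposite face'' case, that ``a neighborhood of $y$ within $F$ is contained in $K$, so $K$ contains a small $(d-1)$-simplex around $y$ spanning directions parallel to $F$.'' This is not part of the hypotheses. The theorem only assumes $y \in K \cap \operatorname{relint}(F)$; typically $K$ touches the affine hull of $F$ at exactly one point, so $K \cap \operatorname{aff}(F)$ can be a single point and the projection argument via Lemma~\ref{lem:projWminmax} collapses: $P_{\operatorname{aff}(F)}(L)$ gives you no leverage since $H \cap K$ is essentially $\{y\}$. Your sketch, taken at face value, would also prove that every inscribed simplex touching all faces of a circumscribing simplex makes that circumscribing simplex minimal, which is false without the edge-direction and interior-of-$F$ hypotheses (see the figures accompanying the theorem's remark).

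The paper's actual argument does not project onto $F$ at all. After normalizing $\Delta = \ol{\bB}_{1,d}^+$, it approximates $y$ by interior points $c \in \operatorname{int} K$ with $\|c\|_1$ close to $1$, picks $d$ projections $P_1, \ldots, P_d \in M_2(\bC)$ that are pairwise within $\varepsilon$ in norm (so the joint numerical range of $(c_1 P_1, \ldots, c_d P_d)$ is $\varepsilon$-close to the segment $[0,c]$, hence inside $K$ for $\varepsilon$ small, using the box near $0$), yet whose ranges pairwise intersect trivially. Lemma~\ref{lem:poscommgivesann} then upgrades any commuting positive dilation to a mutually annihilating one $Z$ with $\sigma(Z) \subseteq L$, and a norm computation forces $\|Z_j\|$ close to $\|c\|_1 \approx 1$; since the $Z_j$ annihilate each other and $\sigma(Z) \subseteq \ol{\bB}_{1,d}^+$, the extreme points $\|Z_j\|\, e_j$ must all lie in $L$, and in the limit $e_j \in L$ for every $j$, giving $L = \Delta$. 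The ``Moreover'' part follows by running the identical computation with $L = K$: the tuples involved are $2\times 2$, so $\Wmax{2}(K) = \Wmin{2}(K)$ already suffices. You gesture toward this mechanism (``mimicking the $\varepsilon$-perturbation construction in the proof of Theorem~\ref{thm:BplusWminmax}''), and that instinct is correct, but the route through Lemma~\ref{lem:projWminmax} that your sketch actually commits to does not close the gap.
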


\begin{proof}
After an affine transformation, we may suppose  $K \subset [0, \infty)^d$, $x = (0, \ldots, 0) \in K$, $K \subseteq \Delta = \ol{\bB}_{1,d}^+$, and there exists $\alpha > 0$ such that
\be\label{eq:positiveinteriorpoint}
0 \leq a_j \leq\alpha \hspace{.2 cm} \forall j \in \{1, \ldots, d\} \hspace{.05 cm} \implies \hspace{.05 cm} (a_1, \ldots, a_d) \in K.
\ee
 Moreover, there is a point $y = (y_1, \ldots, y_d) \in K$ such that $y_j > 0$ for all $j$, and $||y||_{\ell^1} = 1$.

Suppose $L \subseteq \ol{\bB}_{1,d}^+$ is a compact convex set such that $\Wmax{}(K) \subseteq \Wmin{}(L)$. Since $(0, \ldots, 0) \in K$, it certainly follows that $(0, \ldots, 0) \in L$. We will show that each standard basis vector $e_j$ is also in $L$, which implies $L = \ol{\bB}_{1,d}^+$.
Fix $1 \leq j \leq d$ and note that since $||y||_{\ell^1} = 1$ and each $y_j$ is strictly greater than zero, we can fix $r \in (0, 1)$ such that for each $j$, $\sum\limits_{k \not= j} y_k < r < 1$. Approximate $y$ with an interior point $c$ of $K$ such that
\be\label{eq:technicalbs}
2 \cdot \cfrac{1 - ||c||_{\ell^1}}{1 - r} < 1 \hspace{1 cm} \textrm{ and } \hspace{1 cm}  \sum_{k \not= j} c_k < r.
\ee
Also, fix any $\varepsilon > 0$ small enough that a strengthening of (\ref{eq:technicalbs}) holds:
\be\label{eq:gotchajournalism}
\delta := 2 \cdot \cfrac{1 - ||c||_{\ell^1}\cdot (1 - \varepsilon)}{1 - r} < 1.
\ee
Next, choose projections $P_1, \ldots, P_d \in M_2(\bC)$ (dependent on $\varepsilon$) such that $||P_j - P_k|| < \varepsilon$ for each $j$ and $k$, but $P_1, \ldots, P_d$ project onto lines which pairwise intersect trivially. The tuple $(c_1P_1, c_2 P_2 \ldots, c_d P_d)$ has joint numerical range which is contained in $[0, \infty)^d$ and is within $\varepsilon$ of the joint numerical range of $(c_1 P_1, c_2 P_1, \ldots, c_d P_1)$, i.e. the line segment between $c$ and $0$. Since $c$ is an honest interior point of $K$ and (\ref{eq:positiveinteriorpoint}) shows that there is a \textit{nonnegative} neighborhood of $0$ within $K$, it follows that for sufficiently small $\varepsilon > 0$, the joint numerical range of $(c_1P_1, \ldots, c_d P_d)$ is within $K$. That is, $(c_1P_1, \ldots, c_d P_d) \in \Wmax{}(K)$.
Since we have assumed $\Wmax{}(K) \subseteq \Wmin{}(L)$, there is a normal dilation $N$ of $(c_1P_1, \ldots, c_d P_d)$ with joint spectrum in $L$.
By design, $(c_1P_1, \ldots, c_d P_d)$ satisfies the conditions of  Lemma \ref{lem:poscommgivesann}, and we may find a different normal dilation $Z$ such that $Z_i Z_j = 0$ for $i \not= j$ and $\sigma(Z) \subseteq \sigma(N) \cup \{0\} \subseteq L$.
Because the operators $Z_k$ annihilate each other and have norm bounded by $1$ (as $L \subseteq \ol{\bB}_{1,d}^+)$, it follows that for each $j$,
\bes
\left|\left|Z_j + (1 - \delta) \sum\limits_{k\not=j} Z_k \right|\right| \leq \max\{||Z_j||, (1 - \delta)||Z_k||\} \leq \max\{||Z_j||, 1 - \delta\}.
\ees
On the other hand,
\bes\begin{aligned}
\left|\left| c_j P_j + (1 - \delta) \sum\limits_{k \not = j} c_k P_k \right|\right| & \geq \left| \left| \left(\sum_{k=1}^d c_k \right) P_j \right| \right| - \left| \left| \sum_{k \not= j} c_k (P_k - P_j) \right| \right| - \delta \left| \left| \sum_{k\not=j} c_k P_k \right| \right| \\
&\geq ||c||_{\ell^1} - ||c||_{\ell^1} \cdot \varepsilon - \delta \cdot \sum_{k \not= j} c_k \\
& \geq ||c||_{\ell^1} \cdot (1 - \varepsilon) - \delta \cdot r.
\end{aligned} \ees
We may therefore conclude that
\bes
\max\{||Z_j||, 1 - \delta\} \geq ||c||_{\ell^1} \cdot (1 - \varepsilon) - \delta \cdot r.
\ees
The definition of $\delta = 2 \cdot \cfrac{1 - ||c||_{\ell^1} \cdot (1 - \varepsilon)}{1 - r}$ shows that this inequality cannot be satisfied by $1 -\delta$. It follows that
\bes\begin{aligned}
||Z_j|| \geq ||c||_{\ell^1} \cdot (1 - \varepsilon) - \delta \cdot r = ||c||_{\ell^1} \cdot (1 - \varepsilon) - 2r \cdot \cfrac{1 - ||c||_{\ell^1} \cdot (1 - \varepsilon)}{1 - r} := b_j(c, \varepsilon).
\end{aligned}\ees
As $(Z_1, \ldots, Z_d)$ is a tuple of mutually annihilating operators with joint spectrum in $L$, and $0 \in L$, it follows that $b_j(c, \varepsilon) \cdot e_j \in L$.
Since this claim holds for all $\varepsilon > 0$ which are sufficiently small, it follows that
\bes
\left(||c||_{\ell^1} - 2r \cdot \cfrac{1 - ||c||_{\ell^1}}{1 - r} \right) \cdot e_j \in L.
\ees
As $c$ approaches  $y$, and consequently $||c||_{\ell^1}$ approaches $1$, this implies $e_j \in L$. Finally, $L = \ol{\bB}_{1,d}^+$.

Similarly, if $\Wmax{2}(K) = \Wmin{2}(K)$, then we may apply the same affine transformation to suppose $K \subseteq \ol{\bB}_{1,d}^+$ is positioned as above, repeating the argument with $K = L$.
Because the argument above only dilates $2 \times 2$ matrices, we may similarly conclude that $K$ is equal to $\ol{\bB}_{1,d}^+$, which in particular is a simplex.
\end{proof}

Theorem \ref{thm:pointednonsense} describes a particular scenario in which a circumscribing simplex over $K$ is a minimal dilation hull of $K$. It may be used to refine the dilation constants found in Theorem \ref{thm:BplusWminmax}.

\begin{corollary}\label{cor:mindil_B1plus}
For $1 \leq p \leq \infty$, $d^{1-1/p} \cdot \ol{\bB}_{1,d}^+ \in \operatorname{md}(\ol{\bB}_{p,d}^+)$.
\end{corollary}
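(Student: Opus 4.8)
The plan is to deduce Corollary \ref{cor:mindil_B1plus} from Theorem \ref{thm:pointednonsense} by verifying that $\ol{\bB}_{p,d}^+$ is simplex-pointed at the origin, with the scaled simplex $\Delta = d^{1-1/p}\cdot\ol{\bB}_{1,d}^+$ playing the role of the circumscribing simplex $\Delta$. First I would recall from Theorem \ref{thm:BplusWminmax} that $\Wmax{}(\ol{\bB}_{p,d}^+) \subseteq d^{1-1/p}\cdot\Wmin{}(\ol{\bB}_{1,d}^+)$, so $d^{1-1/p}\cdot\ol{\bB}_{1,d}^+ \in \cE(\ol{\bB}_{p,d}^+)$; thus the only content is \emph{minimality}, i.e. that no proper compact convex $L \subset d^{1-1/p}\cdot\ol{\bB}_{1,d}^+$ lies in $\cE(\ol{\bB}_{p,d}^+)$. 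This is exactly the conclusion $\Delta\in\operatorname{md}(K)$ of Theorem \ref{thm:pointednonsense}, once its hypotheses are checked.

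Next I would verify the three geometric hypotheses of Theorem \ref{thm:pointednonsense} with $K = \ol{\bB}_{p,d}^+$, $x = 0$, and $\Delta = d^{1-1/p}\cdot\ol{\bB}_{1,d}^+$ (the convex hull of $0$ and $d^{1-1/p}e_1,\dots,d^{1-1/p}e_d$). (i) $K$ is simplex-pointed at $0$: near $0$ the set $\ol{\bB}_{p,d}^+$ coincides with the cube section $[0,\varepsilon]^d$ for small $\varepsilon$, since the constraint $\|x\|_p\le 1$ is inactive there; the closure of a small neighborhood of $0$ intersected with $K$ is a translate/scaling of $[0,\varepsilon]^d$, hence affinely a $d$-simplex (the standard simplex scaled) with $0$ as a vertex, and its edges emanating from $0$ point along the coordinate directions $e_1,\dots,e_d$. (ii) $K\subseteq\Delta$: for $x\in\ol{\bB}_{p,d}^+$ one has $x\ge 0$ and $\|x\|_1 \le d^{1-1/p}\|x\|_p \le d^{1-1/p}$ by Hölder's inequality, so $x\in d^{1-1/p}\cdot\ol{\bB}_{1,d}^+ = \Delta$; moreover the edges of $\Delta$ at $0$ point along the coordinate directions, matching those of $\overline{O\cap K}$. (iii) The face $F$ of $\Delta$ opposite $0$ is $\{x\ge 0 : \|x\|_1 = d^{1-1/p}\}$, and its relative interior contains $y := (d^{-1/p},\dots,d^{-1/p})$, which has all coordinates positive and $\|y\|_p = 1$, so $y\in K$ lies in the relative interior of $F$. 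With all hypotheses in place, Theorem \ref{thm:pointednonsense} gives $\Delta = d^{1-1/p}\cdot\ol{\bB}_{1,d}^+ \in \operatorname{md}(\ol{\bB}_{p,d}^+)$.

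I expect the only subtlety to be bookkeeping around the affine normalization in Theorem \ref{thm:pointednonsense}: the theorem is stated so that after an affine change of coordinates one reduces to $K\subseteq\ol{\bB}_{1,d}^+$ with $x=0$, and here the circumscribing simplex already \emph{is} a scalar multiple of $\ol{\bB}_{1,d}^+$, so the reduction is just the scaling $x\mapsto d^{1/p-1}x$, which is linear and invertible. One should double-check that all of the hypotheses of Theorem \ref{thm:pointednonsense}, in particular the requirement that the directions of the edges of $\Delta$ at $x$ agree with those of $\overline{O\cap K}$, are invariant under this scaling — they are, since scaling by a positive scalar preserves all ray directions. No genuine obstacle arises; the work is entirely in confirming that the explicit sets $\ol{\bB}_{p,d}^+$ and $d^{1-1/p}\cdot\ol{\bB}_{1,d}^+$ satisfy the hypotheses, the crucial inequality being the Hölder bound $\|x\|_1\le d^{1-1/p}\|x\|_p$ together with its sharpness at the equal-coordinates point $y$.
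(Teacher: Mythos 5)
Your overall strategy matches the paper's proof exactly: Theorem \ref{thm:BplusWminmax} gives that $d^{1-1/p}\cdot\ol{\bB}_{1,d}^+$ is a dilation hull, and minimality is deduced from Theorem \ref{thm:pointednonsense}, with base point $x=0$ and opposite point $y=(d^{-1/p},\dots,d^{-1/p})$; your H\"older containment and the verification that $y$ lies in $K$ and in the relative interior of the face $F$ of $\Delta$ opposite $0$ are all correct.

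There is, however, a slip in your verification that $K=\ol{\bB}_{p,d}^+$ is simplex-pointed at $0$. You take $O$ to be a cube neighborhood and assert that $\ol{O\cap K}=[0,\varepsilon]^d$ is ``affinely a $d$-simplex (the standard simplex scaled).'' This is false for $d\ge 2$: the cube $[0,\varepsilon]^d$ has $2^d$ extreme points, whereas a $d$-simplex has only $d+1$, so the cube is not affinely equivalent to a simplex. The intended statement is that $K$ is simplex-pointed because its local cone at $0$ is the positive orthant; the correct open set $O$ to take in the first clause of the definition is an $\ell^1$-ball $\{\,z:\|z\|_1<\varepsilon\,\}$, which for small $\varepsilon$ gives $\ol{O\cap K}=\varepsilon\cdot\ol{\bB}_{1,d}^+$, a genuine $d$-simplex with vertex at $0$. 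Alternatively, you can simply invoke the ``equivalently'' reformulation in the definition: after the identity affine map, $K\subseteq[0,\infty)^d$ and $[0,\varepsilon]^d\subseteq K$ for small $\varepsilon$, which is exactly what you observed about the $\|x\|_p\le 1$ constraint being inactive near $0$. With that correction the edges of $\ol{O\cap K}$ at $0$ are the coordinate rays, which agree with the edges of $\Delta=d^{1-1/p}\cdot\ol{\bB}_{1,d}^+$ at $0$, so the hypotheses of Theorem \ref{thm:pointednonsense} are met and the conclusion follows as you state.
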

\begin{proof}
The simplex $d^{1-1/p} \cdot \ol{\bB}_{1,d}^+$ contains $\ol{\bB}_{p,d}^+$, so it is certainly a dilation hull. Further, $\bB_{p,d}^+$ is simplex-pointed at $(0, \ldots, 0)$, and the simplex $d^{1-1/p} \cdot \ol{\bB}_{1,d}^+$ is positioned over $\ol{\bB}_{p,d}^+$ exactly as demanded in Theorem \ref{thm:pointednonsense}, with base point $x = (0, \ldots, 0)$ and opposite point $y = (d^{-1/p}, \ldots, d^{-1/p})$.
\end{proof}

\begin{remark} In Theorem \ref{thm:pointednonsense}, it is crucial that $y \in K \cap \Delta$ is in the \textit{interior} of the face $F$ of $\Delta$ which excludes $x$, and that the edges of $\Delta$ based at $x$ point in the same directions as the edges of the simplex $\overline{O \cap K}$. Indeed, without these assumptions it is possible that there is another simplex $\Pi$ such that $K \subseteq \Pi \subsetneq \Delta$. Because we know that $\Wmax{}(\Pi) = \Wmin{}(\Pi)$, this fact would certainly rule out $\Delta$ as a minimal dilation hull of $K$.
 The figures below show one example which meets the conditions of Theorem \ref{thm:pointednonsense}, and two examples which do not meet the conditions. 
\begin{figure}[htbp]
  \includegraphics[scale=0.65]{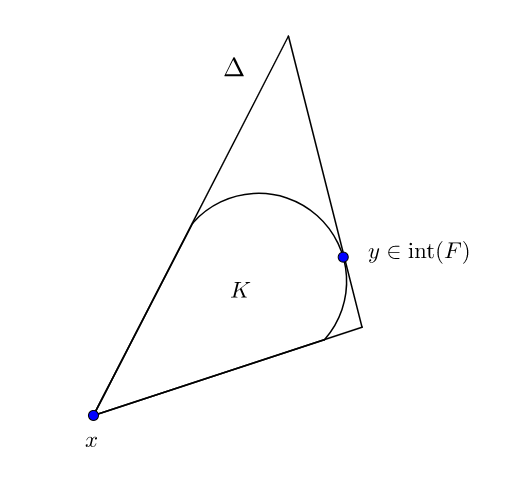}
  \caption{$K$ and $\Delta$ meet the conditions of Theorem \ref{thm:pointednonsense}.}
  \label{fig:circumscribedsimplex}
\end{figure}

\begin{figure}[htbp]
  \includegraphics[scale=0.5]{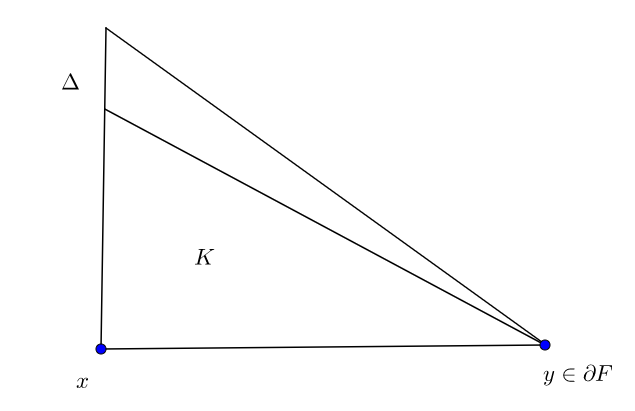}
  \caption{$\Delta$ is not a minimal dilation hull of $K$. Theorem \ref{thm:pointednonsense} does not apply, as the chosen intersection point $y$ is in $\partial F$, where $F$ is the face of $\Delta$ that excludes $x$. }
  \label{fig:verticalcontainment}
\end{figure}

\begin{figure}[htbp]
  \includegraphics[scale=0.5]{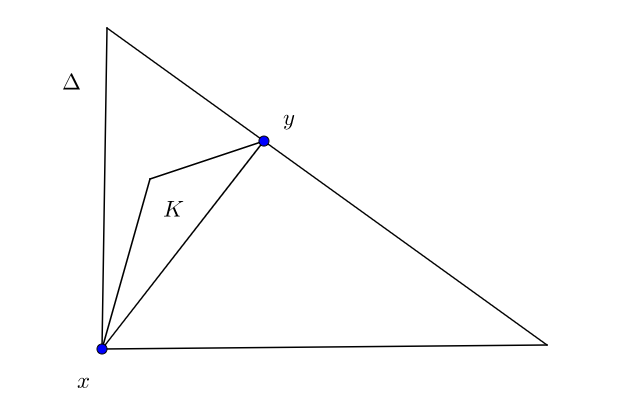}
  \caption{$\Delta$ is not a minimal dilation hull of $K$. Theorem \ref{thm:pointednonsense} does not apply, as the edges of $\Delta$ do not point in the same direction as the edges of $K$ that are defined near $x$.}
  \label{fig:pointingorigin}
\end{figure}

\end{remark}

We conclude with the following problem, which could potentially generalize Theorem \ref{thm:pointednonsense} and Proposition \ref{prop:minsimplexball}.

\begin{problem}
Let $K$ be a convex body and let $\Delta$ be a simplex with $K \subseteq \Delta$. If $\Delta$ is a circumscribing simplex, must $\Delta \in \operatorname{md}(K)$?
\end{problem}

\section*{acknowledgments}

We would like to thank Emanuel Milman for giving us the lead to the reference \cite{Palmon}. We also thank the referee for helpful feedback.

\bibliographystyle{amsplain}

\end{document}